\documentclass{article}

\title{Faithfulness of highest-weight modules for Iwasawa algebras in type D}
\author{Stephen Mann}

\usepackage{amsmath, amsthm, amssymb, amsfonts, mathtools,stmaryrd,hyperref,tikz-cd,setspace,etoolbox,expl3,pgfkeys,pgfopts,tikz,xparse,xstring,dynkin-diagrams}
\theoremstyle{definition}
\newtheorem{mydef}{Definition}[subsection]

\theoremstyle{plain}
\newtheorem{mythm}[mydef]{Theorem}
\newtheorem{mylem}[mydef]{Lemma}
\newtheorem{mycor}[mydef]{Corollary}
\newtheorem{myprop}[mydef]{Proposition}
\newtheorem{Theorem}{Theorem}

\theoremstyle{remark}
\newtheorem{myrem}[mydef]{Remark}

\DeclareMathOperator{\Ann}{Ann}
\DeclareMathOperator{\End}{End}
\newcommand{\Ug}{\ensuremath{\widehat{U(\mathfrak{g})_{n,K}}}}

\newcommand{\ug}{\ensuremath{U(\mathfrak{g}_{K})}}

\newcommand{\Up}{\ensuremath{\widehat{U(\mathfrak{p})_{n,K}}}}

\newcommand{\up}{\ensuremath{U(\mathfrak{p}_{K})}}

\newcommand{\Ul}{\ensuremath{\widehat{U(\mathfrak{l})_{n,K}}}}

\newcommand{\ul}{\ensuremath{U(\mathfrak{l}_{K})}}

\newcommand{\upp}{\ensuremath{U(\mathfrak{p}'_{K})}}

\newcommand{\Ugg}{\ensuremath{\widehat{U(\mathfrak{g}')_{n,K}}}}
\newcommand{\ugg}{\ensuremath{U(\mathfrak{g}'_{K})}}

\newcommand{\Un}{\ensuremath{\widehat{U(\mathfrak{n})_{n,K}}}}

\newcommand{\un}{\ensuremath{U(\mathfrak{n}_{K})}}

\newcommand{\Of}{\mathcal{O}_{F}}

\begin{document}

\maketitle

\begin{abstract}
\noindent We prove that infinite-dimensional highest-weight modules are faithful for Iwasawa algebras corresponding to a simple Lie algebra of type D. We use this to prove that all non-zero two-sided ideals of the Iwasawa algebra have finite codimension in this case.
\end{abstract}

\section{Introduction}

Let $p$ be an odd prime, $F$ a finite field extension of $\mathbb{Q}_{p}$, and $K$ a finite field extension of $F$. Write $\Of$ for the ring of integers of $F$, $R$ for the ring of integers of $K$, and let $\pi$ be a uniformiser of $K$.

Now suppose $G$ is an $F$-uniform group as defined in \cite[\S 2.4]{me}, and suppose the corresponding Lie algebra $L_{G}$ has the form $L_{G} = p^{n+1} \mathfrak{g}$, where $\mathfrak{g}$ is an $\Of$-lattice of a split simple $F$-Lie algebra, and $\mathfrak{g}$ is spanned by a Chevalley basis, with a Cartan subalgebra $\mathfrak{h}$.

The key example for this paper will be when $\mathfrak{g}$ has type $D_{m}$, in which case $G$ is the subgroup $\ker \left( SO_{2m}(\Of) \rightarrow SO_{2m}(\Of / p^{n+1}\Of) \right)$ of $SO_{2m}(\Of)$.

We can define the Iwasawa algebras corresponding to $G$ over $R$ and $K$ as follows:
\begin{mydef}
\begin{itemize}
\item $RG = \underset{N \unlhd_{o} G}{\varprojlim} R[G/N]$, 

\item $KG = K \otimes_{R} RG$.

\end{itemize}

\end{mydef}

\noindent The motivation behind studying Iwasawa algebras is as follows. If we wish to study the representations of $G$ over $K$, the class of all representations will turn out to be too large to reasonably work with. As such, we will want to restrict to studying representations with some continuity conditions. Then whereas modules over the classical group algebra $K[G]$ precisely correspond to $K$-representations of $G$, the finitely generated modules over $KG$ will instead correspond to representations of $G$ with certain continuity conditions.

More precisely, there is a correspondence between finitely generated $KG$-modules and admissible $K$-Banach space representations of $G$, see \cite[Theorem 3.5]{Schneider2}. 

There is interest in studying the algebraic properties of Iwasawa algebras. One of the main questions is if we can classify the prime ideals of Iwasawa algebras, which is an ongoing project, see for instance \cite{nilpotent} and \cite{verma}. This paper is a part of this project.

As we are working in the setting of semisimple groups, we will be able to use the theory of highest-weight modules for Lie algebras.

We can define an algebra $\Ug =\underset{i>0}{\varprojlim} U(p^{n}\mathfrak{g}_{R}) / \pi^{i} U(p^{n} \mathfrak{g}_{R})$, which is a completion of the usual universal enveloping algebra $\ug$. Then $KG$ can be considered as a subalgebra of $\Ug$, by \cite[Corollary 2.5.5]{me}.

If $M$ is a highest-weight $\ug$-module, then we can define a completion $\widehat{M} = \Ug \otimes_{\ug} M$.

\begin{mydef}
An \emph{affinoid highest-weight module} for $\Ug$ is a module of the form $\hat{M} = \Ug \otimes_{\ug} M$, where $M$ is a highest-weight module with highest weight $\lambda$ satisfying $\lambda(p^{n} \mathfrak{h}_{R}) \subseteq R$.
\end{mydef}

The condition that $\lambda(p^{n} \mathfrak{h}_{R}) \subseteq R$ is imposed because otherwise we will have $\hat{M} = 0$.

In \cite{verma}, two questions are asked.

\bigskip

\noindent \emph{Question A}: does every primitive ideal of $\Ug$ with $K$-rational central character arise as the annihilator of a simple affinoid highest-weight module?

\bigskip

\noindent \emph{Question B}: is every affinoid highest-weight module that is infinite-dimensional over $K$ faithful as a $KG$-module?

\bigskip

\noindent If both questions are answered positively, we can prove (in the case where $F = \mathbb{Q}_{p}$ and $\pi = p$) that every non-zero two-sided ideal of $KG$ has finite codimension. Question A has already been answered positively (see \cite{Ioan}), so it is our goal to answer question B.

Question B has already been answered positively for the case of generalised Verma modules in \cite{gvmcase}, and also in the case where $\mathfrak{g}$ has type A in \cite{me}.

In this paper we will give a positive answer to Question B in the case where $\mathfrak{g}$ has type D.

\begin{Theorem}
\label{maintheorem}
Suppose $G = \ker \left( SO_{2m}(\Of) \rightarrow SO_{2m}(\Of / p^{n+1} \Of) \right)$ for some integers $n \geq 0$ and $m \geq 4$, and suppose $p \geq 5$. Then any infinite-dimensional affinoid highest-weight module for $KG$ is a faithful $KG$-module.
\end{Theorem}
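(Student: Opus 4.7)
The plan is to adapt the strategy of \cite{me} for type A to the type D setting, arguing by contradiction. Assume that $\hat{M}$ is infinite-dimensional and that $I := \Ann_{KG}(\hat{M})$ is non-zero. Since $KG$ embeds into $\Ug$, the two-sided ideal generated by $I$ in $\Ug$ is non-zero and lies in $\Ann_{\Ug}(\hat{M})$, so it suffices to study the annihilator of $\hat{M}$ in the larger completed enveloping algebra and to show it cannot meet $KG$ non-trivially unless $\hat{M}$ is finite-dimensional.

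The first main step is to use the affinoid version of Duflo's theorem from \cite{Ioan} to realise $\Ann_{\Ug}(\hat{M})$ as the annihilator of a simple affinoid highest-weight module, and then to attach to it a controlling parabolic $\mathfrak{p} = \mathfrak{l} \oplus \mathfrak{n}$ whose Levi $\mathfrak{l}$ is the one generated by those simple roots to which the highest weight $\lambda$ is integral. Following the philosophy of \cite{verma}, the goal is to reduce faithfulness over $KG$ to faithfulness of a highest-weight module $\hat{N}$ over the Iwasawa algebra $KL$ attached to the Levi, in such a way that infinite-dimensionality of $\hat{M}$ forces infinite-dimensionality of $\hat{N}$. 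The case where $\hat{M}$ is a generalised Verma module is already handled in \cite{gvmcase}, so one may assume that it is not, which is precisely the situation in which the Levi appearing is a proper subalgebra.

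The reduction achieved, one performs a case analysis on the possible Levi subalgebras of $\mathfrak{g}$ of type $D_{m}$. Inspection of the Dynkin diagram shows that every proper Levi is a direct product of type A factors together with at most one type D factor of strictly smaller rank. When every factor of $\mathfrak{l}$ is of type A, the main theorem of \cite{me} applies to each factor and the faithfulness of $\hat{N}$ follows by combining these on tensor factors; when there is a type D component, the proof proceeds by induction on $m$, with base case $m = 4$. In both cases, applying the Levi reduction in reverse yields the desired contradiction against the assumption $I \neq 0$.

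The main obstacle will be formalising the Levi-reduction step in the Iwasawa-algebraic setting: one must show that a non-zero element of $I \subseteq KG$ descends to a non-zero element of $KL$ that annihilates $\hat{N}$, while also verifying that $\hat{N}$ remains infinite-dimensional whenever $\hat{M}$ is. Combining the completion structure with the interaction between annihilators and parabolic subalgebras is delicate, and the base case $D_{4}$ will require separate attention since the extra symmetry of the Dynkin diagram produces Levi configurations not present in higher rank. Once these technical points are handled, the combination of the type A result, the generalised Verma result, and the inductive hypothesis should close the argument.
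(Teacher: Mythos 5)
Your proposal takes a genuinely different route from the paper, and it has a critical gap that causes it to fail. The paper never performs a Levi reduction in the sense you describe. Instead, after reducing (Lemma \ref{irreducibleslem}) to proving faithfulness of $\widehat{L(\lambda)}$ for $\lambda$ not dominant integral, it constructs an explicit composition $KG \otimes_{KP_{\lambda}} V_{\lambda} \hookrightarrow \widehat{M_{\Delta_{\lambda}}(\lambda)} \twoheadrightarrow \widehat{L(\lambda)}$, uses the known faithfulness of $\widehat{M_{\Delta_{\lambda}}(\lambda)}$ from \cite{gvmcase}, and reduces faithfulness of $\widehat{L(\lambda)}$ to injectivity of this composition (``condition (1)''). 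Condition (1) is then established via an interplay of simple reflections (\S\ref{simplerefsection}), translation functors (\S\ref{transfunctorssection}), a dedicated abelian-nilradical argument (\S\ref{abeliansection}) relying on \cite{metaplectic}, and an induction that removes a \emph{single simple root} $\alpha_{1}$ at a time, reducing $D_{m}$ to $D_{m-1}$ and eventually to $A_{3}$ --- not an induction over Levi subalgebras. Duflo's theorem from \cite{Ioan} is cited only in connection with Question A, and plays no role in the proof of Theorem \ref{maintheorem}.

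The decisive gap in your approach is the regular integral case. When $\lambda$ is regular integral (but not dominant), $\Phi_{\lambda} = \Phi$, so the Levi ``generated by the simple roots to which $\lambda$ is integral'' is all of $\mathfrak{g}$, and your reduction is vacuous. This is precisely the hardest case, and the paper devotes \S\ref{abeliansection} to it, using the abelian nilradical $\mathfrak{n}_{\Delta \setminus \alpha_{1}}$ and a non-trivial input from \cite{metaplectic} on infinite-dimensionality of images in $\End_{K}V$. Your claim that ``one may assume $\hat{M}$ is not a generalised Verma module, which is precisely the situation in which the Levi appearing is a proper subalgebra'' is false: $L(\lambda)$ for $\lambda$ regular integral non-dominant is not a generalised Verma module, yet its integral Weyl group is the full Weyl group. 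Moreover, even in the non-integral cases, the reduction step you sketch --- ``a non-zero element of $I \subseteq KG$ descends to a non-zero element of $KL$ annihilating an infinite-dimensional $\hat{N}$'' --- is left as a black box; the restriction of $L(\lambda)$ to the Levi is typically a direct sum of infinitely many \emph{finite-dimensional} Levi modules, so it is unclear what $\hat{N}$ would even be or why it would remain infinite-dimensional. Absent a precise mechanism for this step, the strategy does not go through.
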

  
\begin{Theorem}
\label{maintheoremapplication}
Suppose $G$ is a compact open subgroup of \\ $\ker \left( SO_{2m}(\mathbb{Z}_{p}) \rightarrow SO_{2m}(\mathbb{Z}_{p} /  p\mathbb{Z}_{p}) \right)$ for some integer $m \geq 4$, and suppose $p \geq 5$. Also assume $R$ has uniformiser $p$.

\begin{itemize}
\item Every two-sided ideal of $KG$ is either $0$ or has finite $K$-codimension in $KG$.

\item Every non-zero prime ideal of $KG$ is the annihilator of a finite-dimensional simple module.
\end{itemize}
\end{Theorem}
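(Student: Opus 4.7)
I would derive Theorem~\ref{maintheoremapplication} from Theorem~\ref{maintheorem} combined with the affirmative answer to Question~A obtained in \cite{Ioan}. The second bullet follows from the first: a non-zero prime ideal $P$ of $KG$ will then have $KG/P$ finite-dimensional over $K$, hence (being a prime $K$-algebra) simple Artinian by Wedderburn, so $KG/P \cong M_k(D)$ for some finite-dimensional division $K$-algebra $D$, and $P$ is the annihilator of the unique simple $KG/P$-module. The substantive task is therefore the first bullet.

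The first step is a reduction to the uniform case already covered by Theorem~\ref{maintheorem}. The given compact open subgroup $G$ contains, as an open normal subgroup of finite index, a uniform congruence kernel $G'$ of the form $\ker(SO_{2m}(\mathbb{Z}_p) \to SO_{2m}(\mathbb{Z}_p/p^{n+1}\mathbb{Z}_p))$. Since $KG$ is a crossed product of $KG'$ by the finite group $G/G'$, finite $K$-codimension of non-zero two-sided ideals transfers from $KG'$ to $KG$: a non-zero two-sided ideal $J \subseteq KG$ meets $KG'$ non-trivially, and $(J \cap KG') \cdot KG$ is then a finite-codimensional two-sided ideal of $KG$ contained in $J$.

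Assuming henceforth that $G$ is the uniform kernel, let $I \subseteq KG$ be a non-zero two-sided ideal and pick a primitive ideal $P \supseteq I$ (by Noetherianity of $KG$), so that $P = \Ann_{KG}(V)$ for a simple $KG$-module $V$. After a finite extension of $K$ to secure $K$-rationality of the central character of $V$, $P$ lifts to a primitive ideal $\widehat{P}$ of $\Ug$ satisfying $\widehat{P} \cap KG = P$ via the inclusion $KG \hookrightarrow \Ug$ from \cite[Corollary 2.5.5]{me}. By the positive answer to Question~A in \cite{Ioan}, $\widehat{P} = \Ann_{\Ug}(\widehat{M})$ for a simple affinoid highest-weight module $\widehat{M}$; Theorem~\ref{maintheorem} forces $\widehat{M}$ to be finite-dimensional, since otherwise $\widehat{P} \cap KG = 0$, contradicting $P \neq 0$. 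Hence $KG/P$ is finite-dimensional, so every non-zero primitive ideal of $KG$ has finite $K$-codimension; a standard Noetherian promotion argument from the Iwasawa-algebra literature then upgrades this to $\dim_K(KG/I) < \infty$ for every non-zero two-sided ideal $I$.

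The principal obstacle is the ideal-control step in the middle of this chain: attaching a primitive ideal $\widehat{P} \subseteq \Ug$ to $P \subseteq KG$ in such a way that $\widehat{P} \cap KG = P$, thereby preserving non-triviality of $P$ under the completion. This depends on flatness-type properties of the embedding $KG \hookrightarrow \Ug$ together with the central-character techniques of \cite{verma}; the type-A arguments of \cite{me} provide the template, and the main task is to verify they transfer cleanly to the type-D setting of this paper.
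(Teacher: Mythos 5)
Your proposal follows the same approach as the paper: the paper's entire proof of Theorem~\ref{maintheoremapplication} is a citation of \cite[Theorem~5.0.1]{me}, plus the remark that the latter is stated under the hypothesis that faithfulness of $\widehat{L(\lambda)}$ holds for \emph{arbitrary} completely discretely valued extensions $K/F$, but its proof only ever invokes faithfulness over finite extensions $K'/K$, so Theorem~\ref{maintheorem} suffices. Your reconstruction of the cited argument correctly identifies the ingredients — reduction to the uniform kernel via the crossed-product structure of $KG$ over $KG'$, the positive answer to Question~A from \cite{Ioan}, and the faithfulness from Theorem~\ref{maintheorem}, together with the Wedderburn observation for the second bullet.

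Two points in the reconstruction do not quite work as stated and are worth flagging. First, the ``lifting'' step — producing a primitive ideal $\widehat{P}$ of $\Ug$ with $\widehat{P}\cap KG = P$ from a primitive $P$ of $KG$ — is not a direct construction; the argument in \cite{verma} and \cite{me} in fact runs the other way, starting from a prime of $\Ug$ lying over the extended ideal of $I$, whose contraction to $KG$ still contains $I$, so the contradiction with faithfulness is obtained without ever needing $\widehat{P}\cap KG = P$ exactly. You do flag this as the principal obstacle, which is fair. Second, the final ``standard Noetherian promotion argument'' is more delicate than your phrasing suggests: knowing that every non-zero primitive ideal has finite codimension does not by itself bound $\dim_K(KG/I)$, since a primitive $P\supseteq I$ of finite codimension says nothing about $I$. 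What one actually needs is that every non-zero \emph{prime} over $I$ has finite codimension; then the finitely many minimal primes, the nilpotency of the prime radical of $KG/I$, and the fact that a product of finite-codimension ideals again has finite codimension (since $KG$ is Noetherian) give the conclusion. Passing from primitives to primes in turn uses a Nullstellensatz/Duflo-type input in $\Ug$, which is part of what the citation of \cite{me} is absorbing. Finally, you do not address the one substantive point the paper does make — the weakening of the $K/F$ finiteness hypothesis — which is worth at least acknowledging since it is the only part of the paper's proof that is not a bare citation.
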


Given a weight $\lambda$, there is a unique irreducible highest-weight module $L(\lambda)$ for $\ug$ with highest-weight $\lambda$. Moreover $L(\lambda)$ is infinite-dimensional precisely when $\lambda$ is not a dominant integral weight. We will see later that any infinite-dimensional affinoid highest-weight module has an infinite-dimensional subquotient of the form $\widehat{L(\lambda)}$ for some $\lambda$, and so the following theorem is enough to imply Theorem 1.

\begin{mythm}
\label{maintheoremvariant}
Suppose $G = \ker \left( SO_{2m}(\Of) \rightarrow SO_{2m}(\Of / p^{n+1} \Of) \right)$ for some integers $n \geq 0$ and $m \geq 4$, and suppose $p \geq 5$. Let $\lambda \in \mathfrak{h}_{K}^{*}$ be a weight that is not dominant integral, and such that $\lambda(\mathfrak{h}_{R}) \subseteq p^{n} R$. 

Then $\widehat{L(\lambda)}$ is a faithful $KG$-module.
\end{mythm}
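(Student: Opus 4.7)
The plan is to reduce the $KG$-faithfulness of $\widehat{L(\lambda)}$ to the known $KG$-faithfulness of affinoid generalised Verma modules proved in \cite{gvmcase}, following the strategy the author used for type $A$ in \cite{me} and adapting it to the $D_m$ root system. Write $I = \Ann_{KG}\widehat{L(\lambda)}$; via the inclusion $KG \hookrightarrow \Ug$, every element of $I$ lies in the closure inside $\Ug$ of $\Ann_{\ug} L(\lambda)$, so the task is to show that this closure meets $KG$ only at zero.

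The mechanism I would use is as follows. Because $\lambda$ is not dominant integral, there is a proper standard parabolic $\mathfrak{p} = \mathfrak{l}\oplus\mathfrak{n}\subsetneq\mathfrak{g}$ and a weight $\mu$ such that $L(\lambda)$ is related to the generalised Verma module $M_{\mathfrak{p}}(\mu)$ through translation functors: one splits on the integrality of $\lambda$, using the integral root subsystem $\Phi_{[\lambda]} = \{\alpha\in\Phi : \langle\lambda+\rho,\alpha^{\vee}\rangle\in\mathbb{Z}\}$ in the non-integral case, and passing to an antidominant Weyl-orbit representative in the integral-but-non-dominant case. Applying a suitable translation functor to $\widehat{L(\lambda)}$ produces a summand of the form $\widehat{M_\mathfrak{p}(\mu)}$, and because the translation functors involved are exact and compatible with the action of $KG$, $KG$-faithfulness passes from $\widehat{M_\mathfrak{p}(\mu)}$ (known by \cite{gvmcase}) back to $\widehat{L(\lambda)}$. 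Equivalently, one shows $\Ann_{\ug} L(\lambda)$ and $\Ann_{\ug} M_\mathfrak{p}(\mu)$ have equal (or comparable) closures in $\Ug$, and then intersects with $KG$.

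The main obstacle is that type $D_m$ admits primitive ideals in $U(\mathfrak{g})$ that are not of generalised Verma type --- a feature absent in type $A$. These exceptional primitive ideals are attached to left cells in the Weyl group of $D_m$ that contain no antidominant Duflo element, and they sit at the two ``spin'' simple roots at the fork of the Dynkin diagram. Handling them requires a case analysis on the integral Weyl group data of $\lambda$ inside $W(D_m)$; I would exploit the diagram involution swapping the two spin nodes to halve the number of sub-cases, and construct, in each exceptional case, an auxiliary faithful module against which to compare $\widehat{L(\lambda)}$ via translation. A second technical point is checking that each step is compatible with the completion from $\ug$ to $\Ug$ and with the inclusion $KG \hookrightarrow \Ug$; this should follow from faithful flatness results implicit in \cite{me}, but needs to be re-verified for the $D_m$ parabolic structures appearing above.
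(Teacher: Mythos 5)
Your proposal contains a genuine gap at its central step. You claim that applying a suitable translation functor to $\widehat{L(\lambda)}$ produces a summand of the form $\widehat{M_{\mathfrak{p}}(\mu)}$ and that faithfulness passes back; this is not correct. Translation functors applied to simple modules yield simple modules (or zero), not generalised Verma modules, and even setting that aside, what is always true is the inclusion $\Ann_{\ug} M_{\mathfrak{p}}(\mu) \subseteq \Ann_{\ug} L(\lambda)$ (since $M_{\mathfrak{p}}(\mu)$ surjects onto $L(\lambda)$ for $\mu=\lambda$, $\mathfrak{p}=\mathfrak{p}_{\Delta_\lambda}$), which is exactly the \emph{wrong} direction for transferring faithfulness. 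Establishing that the inclusion of annihilators does not destroy faithfulness when one intersects with $KG$ is the entire content of the theorem; it cannot be dispatched by exactness of translation functors.

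The paper's actual mechanism is to reformulate faithfulness as an injectivity statement about the composition $KG\otimes_{KP}V_\lambda\hookrightarrow\widehat{M_{\Delta_\lambda}(\lambda)}\twoheadrightarrow\widehat{L(\lambda)}$ (``Condition (1)''), which, because $KG\otimes_{KP}V_\lambda$ is a dense faithful $KG$-submodule of the faithful module $\widehat{M_{\Delta_\lambda}(\lambda)}$, immediately yields faithfulness of $\widehat{L(\lambda)}$. Condition (1) is then established by an induction on the rank (removing the node $\alpha_1$ of $D_m$ to land in $D_{m-1}$, with base case $A_3$), and the inductive step requires a concrete analysis of highest-weight vectors in $M_{\Delta_\lambda}(\lambda)$ living in a specified subspace $U(\mathfrak{n}_{2,K})(W_{\alpha_1}\cap U(\mathfrak{n}_{1,K})V_\lambda)$, together with simple reflections and translation functors \emph{for simple modules} to normalise the weight. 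Your worry about exceptional primitive ideals not of generalised Verma type in type $D$ is a real phenomenon, but the paper never classifies primitive ideals or reasons about left cells or Duflo involutions; the injectivity condition sidesteps this entirely, while your proposal to ``construct an auxiliary faithful module against which to compare'' is left unspecified and does not correspond to any step that would close the gap. You would need to replace the translation-functor-to-Verma idea by a genuine injectivity or density argument of the kind the paper carries out.
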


\subsection{Main approach}

Suppose $G$ is an $F$-uniform group with corresponding Lie algebra $L_{G} = p^{n+1} \mathfrak{g}$, where $\mathfrak{g}$ is an $\Of$-lattice of a split semisimple $F$-Lie algebra. Write $\Phi$ for the root system of $\mathfrak{g}$, and fix a set of simple roots $\Delta$. 

Assume $\mathfrak{g}$ is spanned over $\Of$ by a Chevalley basis $\{e_{\alpha},f_{\alpha} \mid \alpha \in \Phi^{+} \} \cup \{h_{\alpha} \mid \alpha \in \Delta \}$ (in the sense of Definition \ref{chevalley}). Then we have a corresponding triangular decomposition $\mathfrak{g} = \mathfrak{n}^{+} \oplus \mathfrak{h} \oplus \mathfrak{n}^{-}$.

Let $\lambda : \mathfrak{h}_{K} \rightarrow K$ be a weight. Then we define a subset $\Delta_{\lambda} = \{ \alpha \in \Delta \mid \lambda(h_{\alpha}) \in \mathbb{N}_{0} \}$, a corresponding nilpotent subalgebra $\mathfrak{n}_{\Delta_{\lambda}} = \Of\{ f_{\alpha} \mid \alpha \in \Phi^{+}, \alpha \text{ has a non-zero coefficient of some element of } \Delta_{\lambda} \}$, and a corresponding parabolic subalgebra $\mathfrak{p}_{\Delta_{\lambda}}$ with $\mathfrak{g} = \mathfrak{p}_{\Delta_{\lambda}} \oplus \mathfrak{n}_{\Delta_{\lambda}}$.

We can then define a generalised Verma module $M_{\Delta_{\lambda}}(\lambda)$, which is a highest-weight module with highest weight $\lambda$. $M_{\Delta_{\lambda}}(\lambda)$ has the form $\ug \otimes_{U(\mathfrak{p}_{\Delta_{\lambda},K})} V_{\lambda}$, where $V_{\lambda}$ is a finite-dimensional, irreducible $\mathfrak{p}_{\Delta_{\lambda}}$-module.

Suppose $P_{\lambda}$ is the $F$-uniform subgroup of $G$ with  $L_{P_{\lambda}}=p^{n+1}\mathfrak{p}_{\Delta_{\lambda}}$, and $N_{\lambda}$ is the $F$-uniform subgroup with $L_{N_{\lambda}} = p^{n+1}\mathfrak{n}_{\Delta_{\lambda}}$. Then $V_{\lambda}$ can be considered as a $KP_{\lambda}$-module (see \cite[Lemma 2.2.17]{me}). 

So we can consider a $KG$-module $KG \otimes_{KP_{\lambda}} V_{\lambda}$.

Now there is a composition of $KG$-module homomorphisms:

\begin{equation*}
KG \otimes_{KP_{\lambda}} V_{\lambda} \hookrightarrow \widehat{M_{\Delta_{\lambda}}(\lambda)} \twoheadrightarrow \widehat{L(\lambda)}.
\end{equation*}

We will later consider a topology on $\widehat{M_{\Delta_{\lambda}}(\lambda)}$ which makes $KG \otimes_{KP_{\lambda}} V_{\lambda}$ a dense $KG$-submodule.

By \cite[Theorem 1]{me}, we know that $\widehat{M_{\Delta_{\lambda}}(\lambda)}$ is faithful over $KG$ (as long as $p$ is sufficiently large). The density then implies that $KG \otimes_{KP_{\lambda}} V_{\lambda}$ is a faithful $KG$-module.

So if we can show that the mapping $KG \otimes_{KP_{\lambda}} V_{\lambda} \rightarrow \widehat{L(\lambda)}$ is injective, it will follow that $\widehat{L(\lambda)}$ is a faithful $KG$-module. This will be our main approach for proving faithfulness of $\widehat{L(\lambda)}$.

We can reformulate the condition that the composition $KG \otimes_{KP_{\lambda}} V_{\lambda} \rightarrow \widehat{L(\lambda)}$ is injective by using the fact that $KG \otimes_{KP_{\lambda}} V_{\lambda}$ is linearly isomorphic to $KN_{\lambda} \otimes_{K} V_{\lambda}$, and using the fact that $V_{\lambda}$ embeds in $L(\lambda)$ and hence $\widehat{L(\lambda)}$.

We now state the following two conditions on $\lambda$.

\bigskip

\noindent \textbf{Condition (1)}: the multiplication map $KN_{\lambda} \otimes_{K} V_{\lambda} \rightarrow \widehat{L(\lambda)}$ is injective.

\bigskip

\noindent \textbf{Condition (2)}: the multiplication map $KN_{\lambda} \otimes_{K} L(\lambda) \rightarrow \widehat{L(\lambda)}$ is injective.

\bigskip

\noindent Note that Condition (2) is a stronger form of Condition (1). Either of these conditions is sufficient to prove that $\widehat{L(\lambda)}$ is faithful over $KG$. The vast majority of the time we will want to prove the weaker form, Condition (1).

\subsection{Notation}

We use the convention that $\mathbb{N}$ does not include $0$, and write $\mathbb{N}_{0}$ for the set of non-negative integers.

Throughout, $p$ is a fixed odd prime. We will always take $K / F$ to be finite field extensions of $\mathbb{Q}_{p}$, with rings of integers $R$ and $\mathcal{O}_{F}$ respectively, and $\pi$ a uniformiser of $K$.

We will take $\mathfrak{g}$ to be an $\Of$-lattice of a split simple $F$-Lie algebra with a root system $\Phi$, and a fixed set of simple roots $\Delta$. We will assume $\mathfrak{g}$ is spanned over $\Of$ by a Chevalley basis $\{ e_{\alpha},f_{\alpha} \mid \alpha \in \Phi^{+} \} \cup \{h_{\alpha} \mid \alpha \in \Delta \}$. 

Then $\mathfrak{g}$ has a corresponding triangular decomposition $\mathfrak{g} = \mathfrak{n}^{+} \oplus \mathfrak{h} \oplus \mathfrak{n}^{-}$. 

We will take $G$ to be the $F$-uniform group with corresponding Lie algebra $L_{G} = p^{n+1} \mathfrak{g}$, with the choice of $n \in \mathbb{N}_{0}$ fixed.

The Weyl group of $\Phi$ will be denoted $W$, and we write  $\rho = \frac{1}{2} \underset{\alpha \in \Phi^{+}}{\sum} \alpha$.

Suppose $\mathcal{L}$ is an $A$-Lie algebra, where $A$ is a commutative ring, and suppose $B$ is an $A$-algebra. Then we write $\mathcal{L}_{B} = \mathcal{L} \otimes_{A} B$, which is a $B$-Lie algebra.

Suppose $\lambda \in \mathfrak{h}_{K}^{*}$ is a weight for $\mathfrak{g}_{K}$. We will write:

\begin{itemize}
 \item $\Phi_{\lambda} = \{ \alpha \in \Phi \mid \langle \lambda + \rho, \alpha^{\lor} \rangle \in \mathbb{Z} \}$,

\item $\Delta_{\lambda} = \{ \alpha \in \Delta \mid \lambda(h_{\alpha}) \in \mathbb{N}_{0} \}$.
\end{itemize}

\begin{itemize}
\item The Verma module with highest weight $\lambda$ is denoted $M(\lambda)$, and the irreducible hightest-weight module with highest weight $\lambda$ is denoted $L(\lambda)$.

\item We will write $w_{\lambda}$ for the highest-weight vector generating $M_{\Delta_{\lambda}}(\lambda)$ (defined in \S \ref{gvm}) and $v_{\lambda}$ for the highest-weight vector generating $L(\lambda)$.
\end{itemize}

\noindent Suppose also $\alpha_{1} \in \Delta$.

\begin{itemize}

\item Given $\alpha \in \Phi^{+}$ and $\alpha_{1} \in \Delta$, we will use the notation $\alpha_{1} \in \alpha$ to mean that $\alpha$ has a non-zero $\alpha_{1}$-coefficient, and $\alpha_{1} \notin \alpha$ otherwise.

\item Given a highest-weight module $M$ of highest-weight $\lambda$ and $x \in M$ with weight decomposition $x = \sum x_{\mu}$, and given $\alpha_{1} \in \Delta$, we say the $\alpha_{1}$-free component of $x$ is the sum of all $x_{\mu}$ with $\mu = \lambda - \underset{\alpha \in \Delta}{\sum} c_{\alpha} \alpha$ such that $c_{\alpha_{1}} = 0$. If $x$ equals its $\alpha_{1}$-free component, we say $x$ is $\alpha_{1}$-free.
\end{itemize}

\noindent The dot action of $W$ on $\mathfrak{h}_{K}^{*}$ is defined by $w \cdot \lambda = w(\lambda + \rho) - \rho$. for $w \in W, \lambda \in \mathfrak{h}_{K}^{*}$.

\bigskip

\noindent Finally, if $S$ is some index set and $t \in \mathbb{N}_{0}^{S}$, we will write $|t| = \underset{s \in S}{\sum} t_{s}$.

\bigskip

\noindent \textbf{Acknowledgements:} I would like to thank Simon Wadsley for his continuing advice and help.

\section{Background}

For more detailed background we refer to \cite[Chapter 2]{me}.

\subsection{Preliminaries about Lie algebras}
\label{Liealgsec}

Assume that $\mathfrak{g}$ is an $\Of$-lattice (in the sense of \cite[Definition 2.1.1]{me}) of a split simple $F$-Lie algebra, with root system $\Phi$ and a fixed set of simple roots $\Delta$.

\begin{mydef}
\label{chevalley}
We say that an $\Of$-basis $\{h_{\alpha} \mid \alpha \in \Delta \} \cup \{e_{\alpha},f_{\alpha} \mid \alpha \in \Phi^{+} \}$ for $\mathfrak{g}$
is a \emph{Chevalley basis} if the following relations are satisfied:

\begin{itemize}
\item $[h_{\alpha}, h_{\beta}] = 0$ for all $\alpha, \beta \in \Delta$,

\item $[h_{\alpha}, e_{\beta}] = \langle \beta, \alpha^{\lor} \rangle e_{\beta}$ for all $\alpha \in \Delta, \beta \in \Phi^{+}$,

\item $[h_{\alpha}, f_{\beta}] = -\langle \beta, \alpha^{\lor} \rangle f_{\beta}$ for all $\alpha \in \Delta, \beta \in \Phi^{+}$,

\item $[e_{\alpha},f_{\alpha}] = h_{\alpha}$ for all $\alpha \in \Delta$.

\item Suppose we write $e_{-\alpha} = f_{\alpha}$ whenever $\alpha \in \Phi^{+}$. Then for $\alpha, \beta \in \Phi$ with $\alpha \neq - \beta$, we have $[e_{\alpha},e_{\beta}] = C_{\alpha,\beta}e_{\alpha + \beta}$ for some $C_{\alpha,\beta} \in  \mathbb{Z}$, where $C_{\alpha, \beta} \neq 0 $ if and only if $\alpha + \beta \in \Phi$, and where $C_{\alpha,\beta} =  C_{-\beta, -\alpha}$.

\end{itemize}
\end{mydef}

\noindent This is analogous to the concept of a Chevalley basis for a semisimple Lie algebra over an algebraically closed field of characteristic $0$, which is known to always exist. In our setting we will assume that a Chevalley basis exists, for example in our main case of $\mathfrak{g} = \mathfrak{so}_{2m}(\Of)$ it does exist.

From now on, we will assume that $\mathfrak{g}$ does have a Chevalley basis $\{h_{\alpha} \mid \alpha \in \Delta \} \cup \{e_{\alpha},f_{\alpha} \mid \alpha \in \Phi^{+} \}$.

We now look at some subalgebras of $\mathfrak{g}$. Given a subset $I \subseteq \Delta$, we can then define a corresponding root subsystem $\Phi_{I} = \mathbb{Z}I \cap \Phi$ with positive roots $\Phi_{I}^{+} = \Phi_{I} \cap \Phi^{+}$. Then we have the following subalgebras of $\mathfrak{g}$:

\begin{mydef}
\begin{itemize}
\label{subalgebras}
\item $\mathfrak{h}$ will denote the Cartan subalgebra spanned by $\{h_{\alpha} \mid \alpha \in \Delta \}$,

\item $\mathfrak{n}^{+}$ is the subalgebra spanned by all $e_{\alpha}$ for $\alpha \in \Phi^{+}$,

\item $\mathfrak{n}^{-}$ is the subalgebra spanned by all $f_{\alpha}$ for $\alpha \in \Phi^{+}$,

\item $\mathfrak{p}_{I}$ is the parabolic subalgebra spanned by $\{h_{\alpha} \mid \alpha \in \Delta \} \cup \{e_{\alpha} \mid \alpha \in \Phi^{+} \} \cup \{f_{\alpha} \mid \alpha \in \Phi_{I}^{+} \}$,

$\mathfrak{n}_{I}$ is the nilpotent subalgebra spanned by $\{ f_{\alpha} \mid \alpha \in \Phi^{+} \setminus \Phi_{I}^{+} \}$,

\item $\mathfrak{l}_{I}$ is the Levi subalgebra spanned by $\{h_{\alpha} \mid \alpha \in \Delta \} \cup \{e_{\alpha},f_{\alpha} \mid \alpha \in \Phi_{I}^{+} \}$,

\item $\mathfrak{g}_{I}$ is the semisimple subalgebra spanned by $\{h_{\alpha} \mid \alpha \in I \} \cup \{e_{\alpha},f_{\alpha} \mid \alpha \in \Phi_{I}^{+} \}$.

\end{itemize}

\end{mydef}

\noindent In particular $\mathfrak{g} = \mathfrak{n}^{+} \oplus \mathfrak{h} \oplus \mathfrak{n}^{-}$ and $\mathfrak{g} = \mathfrak{p}_{I} \oplus \mathfrak{n}_{I}$.

\bigskip

\noindent Finally, we will give an explicit description of the root system $D_{m}$, where $m \geq 4$. Take $\Delta = \{\alpha_{1},\dots,\alpha_{m}\}$, with the corresponding Dynkin diagram as follows.

\begin{dynkinDiagram}[
edge length=1cm,
labels={1,2,m-3,m-2,m-1,m},
label directions={,,,right,,},
label macro/.code={\alpha_{\drlap{#1}}}
]D{}
\end{dynkinDiagram}

We can then describe the positive roots in $\Phi$ as follows:

\begin{itemize}
\item $\alpha_{i,j}:=\alpha_{i}+\dots+\alpha_{j}$ for $1 \leq i \leq j \leq m$,

\item $\beta_{i}:=\alpha_{i}+\dots+\alpha_{m-2}+\alpha_{m}$ for $1 \leq i \leq m-2$,

\item $\gamma_{i,j}:=\alpha_{i}+\dots+\alpha_{j-1}+2\alpha_{j}+\dots+2\alpha_{m-2}+\alpha_{m-1}+\alpha_{m}$ for $1 \leq i < j \leq m-2$.
\end{itemize}

\begin{proof}
The standard description of the root system $D_{m}$ (e.g. \cite[12.1]{otherHumphreys}) is as follows: let $e_{1},\dots,e_{m}$ be the standard basis for $\mathbb{R}^{m}$. Then our simple roots are $\alpha_{i} = e_{i} - e_{i+1}$ for $1 \leq i \leq m$ and $\alpha_{m} = e_{m-1}+e_{m}$. The positive roots are then $e_{i} - e_{j},e_{i}+e_{j}$ for $i < j$.

We then describe these positive roots in terms of the simple roots, using explicit calculation.

\begin{itemize}
\item For $1 \leq i < j \leq m$, $e_{i} - e_{j} = \alpha_{i}+\dots+\alpha_{j-1} = \alpha_{i,j-1}$.

\item $e_{m-1}+e_{m} = \alpha_{m}$, and for $1 \leq i \leq m-2$, $e_{i} + e_{m} = \alpha_{i} + \dots + \alpha_{m-2} + \alpha_{m} = \beta_{i}$.

\item For $1 \leq i \leq m-2$, $e_{i} + e_{m-1} = \alpha_{1} + \dots + \alpha_{m-1} + \alpha_{m} = \alpha_{i,m}$.

\item For $1 \leq i < j \leq m-2$, $e_{i} + e_{j}  = \alpha_{i} + \dots + \alpha_{j-1} + 2 \alpha_{j} + \dots + \alpha_{m-2} + \alpha_{m-1} + \alpha_{m} = \gamma_{i,j}$. 
\end{itemize}

\end{proof}

\subsection{Completions}
\label{completions}

For now, we take $\mathfrak{g}$ to be any $\Of$-Lie algebra which is free of finite rank as an $\Of$-module. We use a fixed choice of the deformation parameter $n \in \mathbb{N}_{0}$, and recall we write $\pi$ for the uniformiser of $R$.

\begin{mydef}
We fix a descending filtration of $\ug$ as follows: first take $\Gamma_{0}\ug = U(p^{n} \mathfrak{g}_{R})$. Then take $\Gamma_{i}\ug = \pi^{i} \Gamma_{0} \ug$ for each $i \in \mathbb{Z}$.

Define $\widehat{U(\mathfrak{g})_{n,R}}$ to be the completion of $\Gamma_{0}\ug$ with respect to the corresponding positive filtration. Then define $\Ug = K \otimes_{R} \widehat{U(\mathfrak{g})_{n,R}}$, which is a filtered ring with the filtration $\Gamma_{i} \Ug = \pi^{i} \widehat{U(\mathfrak{g})_{n,R}}$.
\end{mydef}

\begin{mydef}
For any finitely generated $\ug$-module $M$, we define the completion of $M$ as $\hat{M} = \Ug \otimes_{\ug} M$. 
\end{mydef}

\noindent As in \cite{me}, we will restrict ourselves to filtered modules in the following category.

\begin{mydef}
The \emph{category} $\mathcal{M}_{n}(\mathfrak{g})$ is the category of all finitely generated filtered $\ug$-modules $(M, \Gamma)$ such that $\Gamma$ is a good filtration, and such that $\Gamma_{0}M$ is free as an $R$-module. The morphisms are filtered module homomorphisms. We will write $M \in \mathcal{M}_{n}(\mathfrak{g})$ if there exists a filtration for $M$ such that $(M,\Gamma) \in \mathcal{M}_{n}(\mathfrak{g})$.

\end{mydef}

\noindent  Let $(M,\Gamma) \in \mathcal{M}_{n}(\mathfrak{g})$. In \cite[Proposition 2.2.12]{me}, it is shown that for any $(M, \Gamma) \in \mathcal{M}_{n}(\mathfrak{g})$, $\hat{M} = \Ug \otimes_{\ug} M$ is isomorphic to the completion of $M$ with respect to $\Gamma$.

Given a free $R$-basis $\mathcal{B}$ for $\Gamma_{0}M$, we can describe $\hat{M}$ as consisting precisely of the infinite sums $\underset{v \in \mathcal{B}}{\sum} C_{v} v$ which are convergent in the sense that for any $i$, $C_{v} \in \pi^{i}R$ for all but finitely many $v$. We can then describe $M \subseteq \hat{M}$ as the subset of all the finite sums.

We will generally want to restrict further to modules in the category $\mathcal{O}_{n}(\mathfrak{g})$, as defined in \cite[Definition 2.2.20]{me}. By \cite[Corollary 2.2.23]{me}, any $M \in \mathcal{O}_{n}(\mathfrak{g})$ has a good filtration $\Gamma$ such that $(M,\Gamma) \in \mathcal{M}_{n}(\mathfrak{g})$.

\noindent By \cite[Lemma 2.2.27]{me}, given $M \in \mathcal{O}_{n}(\mathfrak{g})$, it makes sense to discuss weight vectors in $\hat{M}$. It also makes sense to describe weight components of elements of $v \in \hat{M}$. We will choose our good filtration $\Gamma$ of $M$ such that $\Gamma_{0}M$ has an $R$-basis $\mathcal{B}$ consisting of weight vectors. 

Then we know any element of $\hat{M}$ can be written as a convergent sum of the form $\underset{v \in \mathcal{B}}{\sum} a_{v} v$, which means in particular that every element of $\hat{M}$ is a convergent sum of weight vectors. This means that every element of $\hat{M}$ has a weight decomposition: if $x = \underset{v \in \mathcal{B}}{\sum} a_{v} v \in \hat{M}$, then the $\lambda$-weight component of $x$ consists of the sum of all $a_{v} v$ such that $v$ has weight $\lambda$.

\begin{mylem}
\label{closurelem}
Suppose $M \in \mathcal{O}_{n}(\mathfrak{g})$, and $\Gamma$ is a good filtration of $M$ such that $\Gamma_{0}M$ has an $R$-basis $\mathcal{B}$ consisting of weight vectors. 

Suppose $W$ is a vector subspace of $M$ spanned by weight vectors. Let $\bar{W}$ be the closure of $W$ in $\hat{M}$ (using the topology corresponding to $\Gamma$). Then if $x \in \bar{W}$, all weight components of $x$ lie in $W$.

\end{mylem}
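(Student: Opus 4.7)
The plan is to exploit the fact that the basis $\mathcal{B}$ of $\Gamma_0 M$ consists of weight vectors in order to build an explicit and continuous projection $\pi_\lambda : \hat{M} \to \hat{M}_\lambda$ onto each weight space, and then apply it to a sequence in $W$ converging to $x$.

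First I would argue that the $\lambda$-weight projection is continuous. Let $\mathcal{B}_\lambda \subseteq \mathcal{B}$ denote the weight-$\lambda$ vectors. From the description of $\hat{M}$ recalled just before the lemma, every element $y \in \hat{M}$ can be written uniquely as a convergent sum $y = \sum_{v \in \mathcal{B}} c_v v$, and its $\lambda$-weight component is simply $\pi_\lambda(y) = \sum_{v \in \mathcal{B}_\lambda} c_v v$. Since $\Gamma_j \hat{M}$ is characterised as those sums with all $c_v \in \pi^j R$, restriction to the sub-basis $\mathcal{B}_\lambda$ sends $\Gamma_j \hat{M}$ into $\Gamma_j \hat{M}$. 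Hence $\pi_\lambda$ is continuous and takes the subspace $M \subseteq \hat{M}$ of finite sums into the weight space $M_\lambda$.

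Second, I would use the hypothesis that $W$ is spanned by weight vectors to conclude that $W$ decomposes as a direct sum $W = \bigoplus_\mu W_\mu$ with $W_\mu := W \cap M_\mu$, so in particular $\pi_\lambda(W) \subseteq W_\lambda$. Picking a sequence $(w_k)$ in $W$ with $w_k \to x$ in $\hat{M}$, continuity of $\pi_\lambda$ yields $\pi_\lambda(w_k) \to x_\lambda$ with each $\pi_\lambda(w_k) \in W_\lambda$.

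Finally, since $M$ lies in $\mathcal{O}_n(\mathfrak{g})$, the weight space $M_\lambda$ is finite-dimensional over $K$, hence so is the subspace $W_\lambda \subseteq M_\lambda$. Any finite-dimensional $K$-subspace of the Hausdorff $K$-Banach space $\hat{M}$ is closed, so the limit $x_\lambda$ of the sequence $(\pi_\lambda(w_k)) \subseteq W_\lambda$ must itself lie in $W_\lambda \subseteq W$, as required. The main subtlety to get right is the continuity of $\pi_\lambda$, which relies critically on having chosen the good filtration with a basis of weight vectors; once that is in place, the finite-dimensionality input from $\mathcal{O}_n(\mathfrak{g})$ closes the argument cleanly.
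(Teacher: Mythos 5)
Your proposal is correct and follows essentially the same route as the paper's proof: both observe that the weight-$\lambda$ projection does not increase the filtration degree (your $\pi_\lambda$ is continuous; the paper writes this as $\|x_\lambda\| \leq \|x\|$), both apply this to a sequence in $W$ converging to $x$, and both finish by invoking the finite-dimensionality of $W_\lambda$ together with the fact that finite-dimensional subspaces of $\hat{M}$ are closed. The only cosmetic difference is that you package the estimate as continuity of a projection operator, whereas the paper phrases it directly in terms of the norm on $\hat{M}$.
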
 

\begin{proof}
We can describe the topology on $\hat{M}$ as being given by the norm $||x|| = p^{-r}$, where $r$ is maximal such that $x \in \Gamma_{r} \hat{M}$. We can also describe $\Gamma_{r} \hat{M} = \{ \underset{\mathcal{B}}{\sum} a_{v} v \in \hat{M} \mid a_{v} \in \pi^{r} R \text{ for all } v \}$.

For each weight $\lambda$, write $\mathcal{B}_{\lambda}$ for the set of elements of $\mathcal{B}$ with weight $\lambda$. Then if $x = \underset{\mathcal{B}}{\sum}a_{v}v \in \hat{M}$, we can describe the weight components of $x$ as $x_{\lambda} = \underset{\mathcal{B}_{\lambda}}{\sum}a_{v}v$. Now we can see that if $x \in \Gamma_{r} \hat{M}$, then all $x_{\lambda} \in \Gamma_{r} \hat{M}$. In particular, $||x_{\lambda}|| \leq ||x||$ for each $\lambda$.

Suppose $x \in \bar{W}$. Then say $x_{i}$ is a sequence in $W$ that converges to $x$ in the norm $|| \cdot ||$. Let $\lambda$ be a weight, and $x_{i,\lambda}$, $x_{\lambda}$ the $\lambda$-weight components of $x_{i}$, $x$ respectively. Then we have $x_{i,\lambda} \rightarrow x_{\lambda}$ in $|| \cdot ||$, and also $x_{i,\lambda} \in W$ for each $i$ since $W$ is spanned by weight vectors.

Let $W_{\lambda}$ be the $\lambda$-weight component of $W$. Then $W_{\lambda}$ is finite-dimensional, since $M \in \mathcal{O}_{n}(\mathfrak{g})$ implies $M$ has finite-dimensional weight spaces. By \cite[Proposition 2.2.18]{me}, finite-dimensional subspaces of $\hat{M}$ are closed, so $x_{\lambda} \in W$.
\end{proof}

\noindent Recall given $\lambda \in \mathfrak{h}_{K}^{*}$, we write $L(\lambda)$ for the unique irreducible highest-weight $\ug$-module with highest weight $\lambda$. It is a standard fact that $L(\lambda)$ is infinite-dimensional precisely when $\lambda$ is not dominant integral.

\begin{mylem}
\label{irreducibleslem}
Suppose $M \in \mathcal{O}_{n}(\mathfrak{g})$ is infinite-dimensional over $K$. Then $\hat{M}$ has a subquotient of the form $\widehat{L(\lambda)}$, where $\lambda \in \mathfrak{h}_{K}^{*}$ is not dominant integral and $\lambda(p^{n} \mathfrak{h}_{R}) \subseteq R$.
\end{mylem}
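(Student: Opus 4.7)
The plan is to extract an infinite-dimensional simple highest-weight subquotient of $M$ at the level of $\ug$-modules, and then lift it through the completion functor to get a subquotient of $\widehat{M}$.

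First I would invoke a Jordan--H\"older type theorem for $\mathcal{O}_{n}(\mathfrak{g})$: every object $M \in \mathcal{O}_{n}(\mathfrak{g})$ admits a finite filtration $0 = M_{0} \subset M_{1} \subset \cdots \subset M_{r} = M$ whose successive quotients are of the form $M_{i}/M_{i-1} \cong L(\mu_{i})$ for some $\mu_{i} \in \mathfrak{h}_{K}^{*}$. This is the natural $p$-adic analogue of the classical finite-length result in category $\mathcal{O}$, and should follow directly from \cite[Definition 2.2.20]{me} and the standard arguments (finite generation, locally nilpotent $\mathfrak{n}^{+}$-action, finite-dimensional weight spaces bounded above in the dominance order). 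Since $M$ is infinite-dimensional over $K$ while each finite-dimensional factor contributes only finitely many dimensions, at least one $L(\mu_{i})$ must be infinite-dimensional; I set $\lambda = \mu_{i}$ for such an index. By the standard fact recalled immediately before the lemma, infinite-dimensionality of $L(\lambda)$ is equivalent to $\lambda$ failing to be dominant integral, giving the first required property.

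Next I would verify the integrality condition $\lambda(p^{n} \mathfrak{h}_{R}) \subseteq R$. Since $\lambda$ is a weight of the subquotient $M_{i}/M_{i-1}$, it is in particular a weight of $M$, so there is a weight vector $v \in M$ of weight $\lambda$. After scaling by a power of $\pi$ I may assume $v \in \Gamma_{0}M \setminus \pi \Gamma_{0}M$. For any $h \in p^{n} \mathfrak{h}_{R} \subseteq \Gamma_{0}\ug$, the action of $h$ preserves $\Gamma_{0}M$, so $\lambda(h) v = h \cdot v \in \Gamma_{0}M$; primitivity of $v$ in the $R$-lattice $\Gamma_{0}M$ then forces $\lambda(h) \in R$, as required.

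Finally, I would use that $\Ug$ is flat over $\ug$ (equivalently, that the completion functor is exact on $\mathcal{O}_{n}(\mathfrak{g})$, which is set up in the earlier sections of \cite{me}) to deduce that applying $\Ug \otimes_{\ug} -$ to the filtration of $M$ yields a filtration of $\widehat{M}$ whose successive quotients are $\widehat{L(\mu_{i})}$. In particular $\widehat{L(\lambda)}$ appears as a subquotient of $\widehat{M}$, proving the lemma.

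The main obstacle is really just invoking the right black boxes from \cite{me}: namely the finite-length property of $\mathcal{O}_{n}(\mathfrak{g})$ and the exactness of the completion functor $\Ug \otimes_{\ug} -$ on this subcategory. Once those are taken as given, the remaining steps (choosing an infinite-dimensional composition factor and checking the integrality of its highest weight against the lattice $\Gamma_{0}M$) are essentially immediate.
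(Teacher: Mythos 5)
Your proof is correct and follows essentially the same route as the paper: take a composition series with simple factors $L(\mu_i)$, observe one factor must be infinite-dimensional (hence its highest weight is not dominant integral), and push through the flat completion functor $\Ug \otimes_{\ug} -$ to obtain $\widehat{L(\lambda)}$ as a subquotient of $\hat{M}$. The only cosmetic difference is that the paper cites Humphreys directly for the finite-length theorem in classical category $\mathcal{O}$ and treats the integrality $\lambda(p^{n}\mathfrak{h}_{R}) \subseteq R$ as immediate from the definition of $\mathcal{O}_{n}(\mathfrak{g})$, whereas you unpack that last point with a short primitivity argument on $\Gamma_{0}M$ — a worthwhile elaboration, though not a different method.
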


\begin{proof}
By \cite[Theorem 1.11]{Humphreys}, every element of $\mathcal{O}(\mathfrak{g})$ has finite composition length, and also by \cite[Theorem 1.3]{Humphreys} the irreducible elements of $\mathcal{O}(\mathfrak{g})$ are precisely the $L(\lambda)$. So $M$ has a composition series with composition factors of the form $L(\lambda)$, which must further have $\lambda(p^{n} \mathfrak{h}_{R}) \subseteq R$ since $M \in \mathcal{O}_{n}(\mathfrak{g})$.

Since $M$ is infinite-dimensional, some composition factor of $M$ must be infinite-dimensional. So $M$ has a subquotient of the form $L(\lambda)$ where $\lambda(p^{n} \mathfrak{h}_{R}) \subseteq R$ and also $\lambda$ is not dominant integral. Then since the completion functor is flat (\cite[Proposition 2.2.12]{me}), we can see that $\widehat{L(\lambda)}$ is a subquotient of $\hat{M}$.
\end{proof}

\subsection{Induced modules}
\label{gvm}

Assume $\mathfrak{g}$ is an $\Of$-lattice of a split semisimple $F$-Lie algebra with root system $\Phi$ and a set of simple roots $\Delta$, and assume $\mathfrak{g}$ has a Chevalley basis $\{h_{\alpha} \mid \alpha \in \Delta \} \cup \{e_{\alpha},f_{\alpha} \mid \alpha \in \Phi^{+} \}$. Then we have corresponding subalgebras $\mathfrak{g}_{I}$, $\mathfrak{p}_{I}, \mathfrak{l}_{I}$ as in Definition \ref{subalgebras}.

Suppose $\lambda: \mathfrak{h}_{K} \to K$ is a weight such that $\lambda(h_{\alpha}) \in \mathbb{N}_{0}$ for all $\alpha \in I$. Then let $V_{\lambda}$ be the unique irreducible highest-weight $U(\mathfrak{g}_{I,K})$-module of highest weight $\lambda |_{\mathfrak{h}_{I,K}}$, which is finite-dimensional over $K$. Moreover, $V_{\lambda}$ can be made into a $\up$-module, as in \cite[\S 9.4]{Humphreys}.

\begin{mydef}
The \emph{generalised Verma module} corresponding to $I$ and $\lambda$ is $M_{I}(\lambda) = U(\mathfrak{g}_{K}) \otimes_{U(\mathfrak{p}_{I,K})} V_{\lambda}$.
\end{mydef}

Now assume further that $\lambda(p^{n} \mathfrak{h}_{R}) \subseteq R$. Then $V_{\lambda}$ lies in the category $\mathcal{O}_{n}(\mathfrak{p})$, being a highest-weight $\up$-module with highest-weight $\lambda$. So we can choose a good filtration $\Gamma_{i}V_{\lambda}$ such that $\Gamma_{0} V_{\lambda}$ is free as an $R$-module. Fix such a filtration $\Gamma$, and let $\mathcal{B}$ be an $R$-basis for $\Gamma_{0}V_{\lambda}$ consisting of weight vectors.
 
Let $x_{1},\dots,x_{r}$ be the elements of $\{f_{\alpha} \mid \alpha \in \Phi^{+} \setminus \Phi^{+}_{I} \}$. Let $x_{r+1},\dots,x_{s}$ be an $\Of$-basis for $\mathfrak{a}$. Then by the PBW theorem (see \cite[2.1.12]{Dixmier}), $M_{I}(\lambda)$ has basis $\{x^{t}v\mid t \in \mathbb{N}_{0}^{s}, v \in \mathcal{B} \}$. Here, $x^{t}$ denotes $x_{1}^{t_{1}} \dots x_{s}^{t_{s}}$. 

Then we can define a filtration $\Gamma_{i}M_{I}(\lambda) = (\Gamma_{i}\ug) \cdot \Gamma_{0}V_{\lambda}$, which gives $(M_{I}(\lambda),\Gamma) \in \mathcal{M}_{n}(\mathfrak{g})$. 

We can then describe $\widehat{M_{I}(\lambda)}$ as consisting of all convergent sums of the form $\underset{v \in \mathcal{B}}{\sum}  \underset{t \in \mathbb{N}_{0}^{r}}{\sum} C_{v,t} (p^{n}x)^{t}v$, where $(p^{n}x)^{t}$ denotes $(p^{n}x_{1})^{t_{1}} \dots (p^{n} x_{r})^{t_{r}}$, and $C_{v,t} \in K$ with $C_{v,t} \rightarrow 0 $ as $|t| \rightarrow \infty$. 

In particular, $\widehat{M_{I}(\lambda)}$ is isomorphic as a $\Un$-module to $\Un \otimes_{K} V_{\lambda}$.

Finally, we define the concept of a scalar generalised Verma module, which is a special case of particular interest to us.

\begin{mydef}
We say $M_{I}(\lambda)$ is a \emph{scalar} generalised Verma module if $\lambda(h_{\alpha}) = 0$ for all $\alpha \in I$. In this case, $V_{\lambda}$ is one-dimensional over $K$.
\end{mydef}

\section{Proving Theorem \ref{maintheorem}}

\subsection{Outline}
\label{outline}

From now on, we will always assume that $\mathfrak{g}$ is an $\Of$-lattice of a split simple $F$-Lie algebra with root system $\Phi$. Take a fixed set of simple roots $\Delta$ of $\Phi$, with corresponding positive system $\Phi^{+}$, and we will assume $\mathfrak{g}$ has a Chevalley basis $\{h_{\alpha} \mid \alpha \in \Delta \} \cup \{e_{\alpha},f_{\alpha} \mid \alpha \in \Phi^{+}\}$. Then $\mathfrak{g}$ has a corresponding triangular decomposition $\mathfrak{g} = \mathfrak{n}^{+} \oplus \mathfrak{h} \oplus \mathfrak{n}^{-}$. 

We will fix a choice of the deformation parameter $n \in \mathbb{N}_{0}$. Then we will take $G$ to be the $F$-uniform group with $L_{G} = p^{n+1} \mathfrak{g}$.

Suppose $\lambda \in \mathfrak{h}_{K}^{*}$ is a weight with $\lambda(p^{n}\mathfrak{h}_{R}) \subseteq R$, and assume $\lambda$ is not a dominant integral weight. Recall that we define a subset $\Delta_{\lambda} = \{ \alpha \in \Delta \mid \lambda(h_{\alpha}) \in \mathbb{N}_{0}$. We then take $\mathfrak{p} = \mathfrak{p}_{\lambda} = \mathfrak{p}_{\Delta_{\lambda}}$ and $\mathfrak{n} = \mathfrak{n}_{\lambda} =  \mathfrak{n}_{\Delta_{\lambda}}$ in the sense of Definition \ref{subalgebras}, and also write $P,N$ for the $F$-uniform groups with $L_{P}=p^{n+1} \mathfrak{p}$, $L_{N} = p^{n+1} \mathfrak{n}$.

Then using the construction of \S \ref{gvm}, we can define a generalised Verma module $M_{\Delta_{\lambda}}(\lambda)$. This module has the form $\ug \otimes_{U(\mathfrak{p}_{K})} V_{\lambda}$, where $V_{\lambda}$ is the irreducible highest-weight $U(\mathfrak{p}_{K})$-module with highest-weight $\lambda$, and $V_{\lambda}$ is finite-dimensional.

Then we know that $M_{\Delta_{\lambda}}(\lambda)$ is isomorphic as a $\un$-module to $\un \otimes_{K} V_{\lambda}$, while $\widehat{M}_{\Delta_{\lambda}}(\lambda)$ is isomorphic as a $\Un$-module to $\Un \otimes_{K} V_{\lambda}$.

We also know that we can choose a filtration $\Gamma$ on $M_{\Delta_{\lambda}}(\lambda)$ with $(M_{\Delta_{\lambda}}(\lambda) , \Gamma) \in \mathcal{M}_{n}(\mathfrak{g})$ (as in \S \ref{gvm}), by first choosing the filtration $\Gamma$ on $V_{\lambda}$ and then extending to $M_{\Delta_{\lambda}}(\lambda)$. Then $\Gamma$ makes $\widehat{M_{\Delta_{\lambda}}(\lambda)}$ into a filtered $\Ug$-module, which also gives $\widehat{M_{\Delta_{\lambda}}(\lambda)}$ a corresponding topology. We will assume throughout that such a topology on $\widehat{M_{\Delta_{\lambda}}(\lambda)}$ is fixed.

\begin{mylem}
\label{densitylem}
Suppose $M$ is a highest-weight $\ug$-module with a highest-weight $\lambda$ such that $\lambda(p^{n} \mathfrak{h}_{R}) \subseteq R$, and suppose $\Gamma_{i}M$ is a good filtration on $M$ such that $(M, \Gamma) \in \mathcal{M}_{n}(\mathfrak{g})$. Let $v$ be the highest-weight vector generating $M$.

Then $KGv$ is a dense subset of $\hat{M}$ in the topology given by $\Gamma$.
\end{mylem}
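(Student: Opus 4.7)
The plan is to reduce the desired density to the essentially tautological density of $M$ in $\hat{M}$, by observing that the subalgebra $KG$ of $\Ug$ already contains all of $\ug$, and therefore $KGv \supseteq \ug v = M$.

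First, since $(M,\Gamma) \in \mathcal{M}_{n}(\mathfrak{g})$, the completion $\hat{M} = \Ug \otimes_{\ug} M$ coincides with the topological completion of $M$ with respect to $\Gamma$ (by \cite[Proposition 2.2.12]{me}), so $M$ is a dense subset of $\hat{M}$. Because $v$ is a highest-weight generator, $M = \ug \cdot v$. Hence it is enough to prove the algebraic inclusion $\ug \subseteq KG$ inside $\Ug$, for then $M = \ug v \subseteq KGv \subseteq \hat{M}$ and density of $M$ forces density of $KGv$.

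To establish $\ug \subseteq KG$: since $G$ is $F$-uniform with $L_G = p^{n+1}\mathfrak{g}_R$, the exponential $\exp \colon L_G \to G$ is a bijection, and for every $g \in G$ the series $\log g = \sum_{k \geq 1} (-1)^{k-1}(g-1)^{k}/k$ converges in $KG$ (this is the standard $p$-adic convergence of the logarithm on a uniform pro-$p$ group, since the $\pi$-adic valuation of $(g-1)^{k}$ grows linearly in $k$ while that of $k$ grows only like $\log k$). Under the embedding $KG \hookrightarrow \Ug$ from \cite[Corollary 2.5.5]{me} this logarithm is sent to the corresponding element of $L_{G}$. Consequently $p^{n+1}\mathfrak{g}_R \subseteq KG$, and because $KG$ is a $K$-algebra and $p^{-(n+1)} \in K$, multiplying through shows $\mathfrak{g}_K \subseteq KG$. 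Since $\ug = U(\mathfrak{g}_K)$ is generated by $\mathfrak{g}_K$ over $K$ and $KG$ is a $K$-subalgebra of $\Ug$, we get $\ug \subseteq KG$ as required.

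The only step that requires real work is the inclusion $\ug \subseteq KG$; everything else is formal. That inclusion ultimately rests on the convergence of the $p$-adic logarithm in $KG$ together with the explicit identification of $KG$ as a subalgebra of $\Ug$, both of which are Lazard-style facts developed in \cite{me}. Given these inputs the density statement is immediate and does not require any explicit manipulation of the filtration $\Gamma$ on $\hat{M}$.
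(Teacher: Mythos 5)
Your argument has a genuine gap: the inclusion $\ug \subseteq KG$, which carries the entire proof, is false. The logarithm series $\log g = \sum_{k \geq 1} (-1)^{k-1}(g-1)^k/k$ does \emph{not} converge in $KG$, and this can already be seen in the simplest case $\mathfrak{g} = \Of$, $G = p^{n+1}\mathbb{Z}_p$. Here $RG \cong R\llbracket T\rrbracket$ with $T = \gamma - 1$, and $KG = K \otimes_R R\llbracket T\rrbracket$ consists precisely of power series $\sum c_k T^k$ with \emph{bounded} coefficients $c_k \in K$ (equivalently, all $c_k$ lie in $\pi^{-m}R$ for a single $m$). The coefficients of $\log(1+T)$ are $(-1)^{k-1}/k$, and $|1/k|_p \to \infty$ along $k = p^j$, so $\log(1+T) \notin KG$. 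The same calculation works in general: under the description of $KG$ in $\Ug$ from \cite[Corollary 2.5.5]{me} as bounded convergent sums $\sum c_t (\exp(p^{n+1}x_1)-1)^{t_1}\cdots(\exp(p^{n+1}x_r)-1)^{t_r}$, the partial sums of $\log g$ have unbounded coefficients and so are not Cauchy in the Banach norm of $KG$, even though they do converge in the $\Gamma$-topology of $\Ug$. You appear to be conflating these two topologies: the growth of the $\Gamma$-degree of $(g-1)^k$ in $\Ug$ gives convergence in $\Ug$, not in $KG$ itself. Consequently $p^{n+1}\mathfrak{g}_R \not\subseteq KG$, and the conclusion $\ug \subseteq KG$ fails.

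The paper avoids this entirely by using the weaker and correct statement that $KG$ is merely \emph{dense} in $\Ug$ (\cite[Lemma 2.5.6]{me}). Combined with $\hat{M} = \Ug\, v$, this immediately gives density of $KG\, v$ in $\hat{M}$. Density of $KG$ is compatible with the fact that the limits of truncated logarithm series (i.e.\ the elements of $\mathfrak{g}_K$) escape $KG$; one only needs $K[G]$, which is in $KG$, to approximate them arbitrarily well in the $\Gamma$-topology. If you wish to repair your argument without invoking \cite[Lemma 2.5.6]{me} directly, you would need to replace the claimed algebraic inclusion by an approximation argument: for each $x \in \mathfrak{g}$ and each $i$, a \emph{finite} truncation of $\log g$ lies in $K[G] \subseteq KG$ and agrees with $p^{n+1}x$ modulo $\Gamma_i\Ug$; this then shows $KG\, v$ contains elements arbitrarily close to any element of $\ug\, v = M$, which is dense. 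But at that point you have essentially re-proved the density lemma the paper cites.
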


\begin{proof}

Using our description $\hat{M} = \Ug \otimes_{\ug} M$ we can see that $\hat{M}$ is generated by $v$ over $\Ug$. Also by \cite[Lemma 2.5.6]{me}, $KG$ is dense in $\Ug$. Together, this implies that $KGv$ is dense in $\Ug v = \hat{M}$.
\end{proof}

Now note that by \cite[Lemma 2.2.17]{me}, we can consider the finite-dimensional $\up$-module $V_{\lambda}$ as a $\Up$-module and hence a $KP$-module. We then want to consider the $KG$-module $KG \otimes_{KP} V_{\lambda}$.

\begin{mylem}
$KG \otimes_{KP} V_{\lambda}$ embeds as a dense $KG$-submodule of $\widehat{M_{\Delta_{\lambda}}(\lambda)}$. Moreover $KG \otimes_{KP} V_{\lambda}$ can be identified with $KN V_{\lambda}$.
\end{mylem}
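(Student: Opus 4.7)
The plan is to establish the identification $KG \otimes_{KP} V_{\lambda} \cong KN \otimes_{K} V_{\lambda}$ via a group-level factorisation $G = NP$, then use the $\Un$-linear isomorphism $\widehat{M_{\Delta_{\lambda}}(\lambda)} \cong \Un \otimes_{K} V_{\lambda}$ from \S \ref{gvm} to obtain both the embedding in $\widehat{M_{\Delta_{\lambda}}(\lambda)}$ and the description of the image as $KN V_{\lambda}$; density then follows from Lemma \ref{densitylem}.

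First, I would exploit the triangular decomposition at the group level. Since $L_{G} = L_{N} \oplus L_{P}$ as $\Of$-modules and $p$ is odd, Campbell--Hausdorff (which converges for $F$-uniform groups) gives a set-theoretic bijection $N \times P \to G$, $(n,p) \mapsto np$. Passing through the definition of the Iwasawa algebra, this bijection upgrades to an identification of $KG$ as a topological right $KP$-module with the completed tensor product of $KN$ and $KP$ over $K$. Since $V_{\lambda}$ is finite-dimensional over $K$, tensoring over $KP$ collapses the completion, yielding $KG \otimes_{KP} V_{\lambda} \cong KN \otimes_{K} V_{\lambda}$ as left $KN$-modules.

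Second, the natural multiplication map $KN \otimes_{K} V_{\lambda} \to \widehat{M_{\Delta_{\lambda}}(\lambda)}$, $n \otimes v \mapsto nv$, factors through the $\Un$-linear isomorphism $\widehat{M_{\Delta_{\lambda}}(\lambda)} \cong \Un \otimes_{K} V_{\lambda}$ recorded in \S \ref{gvm}. Since $KN$ embeds in $\Un$ by \cite[Corollary 2.5.5]{me} and $V_{\lambda}$ is $K$-flat, the induced map $KN \otimes_{K} V_{\lambda} \to \Un \otimes_{K} V_{\lambda}$ is injective, so the original map embeds $KG \otimes_{KP} V_{\lambda}$ into $\widehat{M_{\Delta_{\lambda}}(\lambda)}$. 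Its image is by construction the $K$-span of elements $n \cdot v$ for $n \in KN$ and $v \in V_{\lambda}$, which is $KN V_{\lambda}$.

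Finally, I apply Lemma \ref{densitylem} to $M = M_{\Delta_{\lambda}}(\lambda)$ with its fixed good filtration in $\mathcal{M}_{n}(\mathfrak{g})$: the highest-weight generator $w_{\lambda}$ has $KG w_{\lambda}$ dense in $\widehat{M_{\Delta_{\lambda}}(\lambda)}$. Since $w_{\lambda} \in V_{\lambda}$ and since $V_{\lambda}$ is $KP$-stable, the factorisation $G = NP$ gives $KG \cdot V_{\lambda} = KN \cdot V_{\lambda}$, so $KG w_{\lambda} \subseteq KN V_{\lambda}$ and the latter is dense. The main technical obstacle is the first step: promoting the bijection $N \times P \to G$ to the claimed factorisation of $KG$ as a topological right $KP$-module, which requires invoking the analysis of coordinates of the second kind for $F$-uniform groups from \cite{me}.
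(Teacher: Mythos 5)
Your proposal is correct, and it takes a route that is closely related to the paper's but more explicit in two respects. The paper's proof works directly with $\widehat{M_{\Delta_{\lambda}}(\lambda)} = \Ug \otimes_{\Up} V_{\lambda}$, defines the map $KG \otimes_{KP} V_{\lambda} \to \Ug \otimes_{\Up} V_{\lambda}$ via the universal property, and then simply asserts its injectivity; the identification with $KN V_{\lambda}$ is stated in the lemma but not spelled out in the proof. You instead first establish $KG \otimes_{KP} V_{\lambda} \cong KN \otimes_K V_{\lambda}$ from the group-level factorisation $G = NP$ (which is the underlying structural reason the paper's map is injective, via freeness of $KG$ over $KP$), and then embed via the $\Un$-module description $\widehat{M_{\Delta_{\lambda}}(\lambda)} \cong \Un \otimes_K V_{\lambda}$ and the containment $KN \hookrightarrow \Un$. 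This makes the injectivity transparent (flatness of $V_{\lambda}$ over $K$) and yields the $KN V_{\lambda}$ identification for free. Both arguments ultimately rest on the same facts — the decomposition $\mathfrak{g} = \mathfrak{n} \oplus \mathfrak{p}$ and the PBW-type description of $\widehat{M_{\Delta_{\lambda}}(\lambda)}$ — so your approach is a slightly more elementary unwinding rather than a genuinely independent idea, but it supplies detail the paper leaves implicit. The one place to be careful, as you note, is upgrading the set-theoretic bijection $N \times P \to G$ to the statement $KG \cong KN \widehat{\otimes}_K KP$ as a right $KP$-module and then checking that the completion drops out upon tensoring with the finite-dimensional $V_{\lambda}$; this is standard but needs the coordinate description from \cite[Corollary 2.5.5]{me}, and it is worth saying explicitly that finite generation of $V_{\lambda}$ over $KP$ is what collapses the completed tensor to an ordinary one.
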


\begin{proof}
From our definition of completions in terms of a tensor product with $\Ug$, we can describe $\widehat{M_{\Delta_{\lambda}}(\lambda)}$ as $\Ug \otimes_{\up} V_{\lambda}$. Then since $V_{\lambda}$ is a $\Up$-module, we can say $\widehat{M_{\Delta_{\lambda}}(\lambda)} = \Ug \otimes_{\Up} V_{\lambda}$.

Now we know $KG \subseteq \Ug$ and $KP = KG \cap \Up$. Using the universal property of tensor products, we can define a map $KG \otimes_{KP} V_{\lambda} \rightarrow \Ug \otimes_{\Up} V_{\lambda}$ by $x \otimes v \mapsto x \otimes v$ for $x \in KG$, $v \in V_{\lambda}$, and this map is injective.

Moreover, if $w_\lambda$ is the highest-weight vector generating $M_{\Delta_{\lambda}}(\lambda)$, then the image of $KG \otimes_{KP} V_{\lambda}$ in $\widehat{M}_{\Delta_{\lambda}}(\lambda)$ contains $KG w_{\lambda}$, and so is dense by Lemma \ref{densitylem}.
\end{proof}

\noindent The following lemma implies that there is an embedding of $\up$-modules of $V_{\lambda}$ in $L(\lambda)$.

\begin{mylem}
\label{Vembedding}
The projection map $M_{\Delta_{\lambda}}(\lambda) \rightarrow L(\lambda)$ is injective on $V_{\lambda}$. In particular, $V_{\lambda}$ embeds in $L(\lambda)$ as the $\up$-submodule generated by the highest-weight vector $v_{\lambda}$.
\end{mylem}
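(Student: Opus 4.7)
The plan is to exploit the fact that $V_{\lambda}$ is already the irreducible highest-weight $U(\mathfrak{g}_{I,K})$-module (with $I = \Delta_{\lambda}$), promoted to a \up-module. First, the projection $\pi : M_{\Delta_{\lambda}}(\lambda) \twoheadrightarrow L(\lambda)$ is a surjective \ug-module homomorphism, since $L(\lambda)$ is the unique irreducible quotient of any highest-weight \ug-module of highest weight $\lambda$ and $M_{\Delta_{\lambda}}(\lambda)$ is such a module. Restricting $\pi$ to the subspace $1 \otimes V_{\lambda} \subseteq M_{\Delta_{\lambda}}(\lambda)$ yields a \up-linear map $\varphi : V_{\lambda} \to L(\lambda)$, and by construction $\varphi(w_{\lambda}) = v_{\lambda}$, so $\varphi$ is non-zero.

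The heart of the argument is then to check that $V_{\lambda}$ is irreducible as a \up-module. By definition $V_{\lambda}$ is irreducible over $U(\mathfrak{g}_{I,K})$, and its \up-structure extends the $\mathfrak{g}_{I,K}$-action via $\mathfrak{g}_{I} \subseteq \mathfrak{l}_{I} \subseteq \mathfrak{p}$: the nilradical $\{e_{\alpha} \mid \alpha \in \Phi^{+} \setminus \Phi_{I}^{+}\}$ of $\mathfrak{p}$ acts as zero, and the part of $\mathfrak{h}$ complementary to $\mathfrak{h}_{I}$ acts by scalars read off from $\lambda$. Any \up-submodule of $V_{\lambda}$ is in particular a $U(\mathfrak{g}_{I,K})$-submodule, hence is either $0$ or all of $V_{\lambda}$.

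Combining these two observations, $\ker \varphi$ is a proper \up-submodule of $V_{\lambda}$ (it does not contain $w_{\lambda}$), so irreducibility forces $\ker \varphi = 0$. The image $\varphi(V_{\lambda})$ coincides with $\up \cdot v_{\lambda}$ because $V_{\lambda} = \up \cdot w_{\lambda}$ (indeed $V_{\lambda} = U(\mathfrak{g}_{I,K}) \cdot w_{\lambda}$ already), giving the claimed identification.

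I do not expect any real obstacle here; the only point needing a sentence of justification is the promotion of irreducibility from $U(\mathfrak{g}_{I,K})$ to \up, which is immediate from the explicit description of the \up-action on $V_{\lambda}$.
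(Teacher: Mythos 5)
Your proof is correct and follows essentially the same route as the paper's: both hinge on the fact that $V_{\lambda}$ is irreducible as a $\up$-module, so the kernel of the $\up$-linear restriction of the projection must be $0$ (the paper phrases this as ``$w_{\lambda} \in \up v$ for any nonzero $v$,'' which is the same observation). You add a sentence justifying why $U(\mathfrak{g}_{I,K})$-irreducibility promotes to $\up$-irreducibility, which the paper treats as implicit; that is a harmless extra detail, not a different argument.
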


\begin{proof}
Write $w_{\lambda}$ for the highest-weight vector generating $M_{\Delta_{\lambda}}(\lambda)$. Suppose $0 \neq v \in V_{\lambda} \subseteq M_{\Delta_{\lambda}}(\lambda)$. Then since $V_{\lambda}$ is an irreducible $\up$-module, $w_{\lambda} \in \up v$ Thus the image of $v$ in $L(\lambda)$ is non-zero.

We can describe $V_{\lambda}$ as the $\up$-submodule $\up w_{\lambda}$ of $M_{\Delta_{\lambda}}(\lambda)$, which maps to $\up v_{\lambda}$ in $L(\lambda)$.
\end{proof}

\noindent Now consider the composition map:

\begin{equation*}
KG \otimes_{KP} V_{\lambda} \hookrightarrow \widehat{M_{\Delta_{\lambda}}(\lambda)} \twoheadrightarrow \widehat{L(\lambda)}.
\end{equation*}

\begin{mythm}
\label{condition1application}
If the composition map $KG \otimes_{KP} V_{\lambda} \rightarrow \widehat{L(\lambda)}$ is injective and $p$ does not divide the determinant of the Cartan matrix of any root subsystem of $\Phi$, then $\widehat{L(\lambda)}$ is a faithful $KG$-module.
\end{mythm}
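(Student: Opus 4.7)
The plan is to transport faithfulness along the composition $KG \otimes_{KP} V_{\lambda} \hookrightarrow \widehat{M_{\Delta_{\lambda}}(\lambda)} \twoheadrightarrow \widehat{L(\lambda)}$, working from left to right. The starting point is the generalised Verma side, where faithfulness is already known from earlier work. The Cartan-matrix hypothesis on $p$ is exactly the numerical condition appearing in \cite[Theorem 1]{me}, so I would begin by invoking that result to conclude $\widehat{M_{\Delta_{\lambda}}(\lambda)}$ is a faithful $KG$-module.

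Next I would push faithfulness onto the dense submodule $KG \otimes_{KP} V_{\lambda}$. Suppose $x \in KG$ annihilates $KG \otimes_{KP} V_{\lambda}$. The point is that left multiplication by $x$ is continuous in the topology on $\widehat{M_{\Delta_{\lambda}}(\lambda)}$ coming from the filtration $\Gamma$ (any element of $\Ug$ shifts $\Gamma$ by a bounded amount, because $KG \subseteq \Ug$ is filtration-preserving up to a uniform shift). Hence the annihilator ideal of any $KG$-submodule is closed, so $x$ annihilates the closure, which by the lemma preceding the theorem is all of $\widehat{M_{\Delta_{\lambda}}(\lambda)}$. By the first step, $x = 0$; thus $KG \otimes_{KP} V_{\lambda}$ is itself a faithful $KG$-module.

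Finally I would use the injectivity hypothesis on the composition $\phi: KG \otimes_{KP} V_{\lambda} \to \widehat{L(\lambda)}$. Given $x \in KG$ annihilating $\widehat{L(\lambda)}$, for every $y \in KG \otimes_{KP} V_{\lambda}$ we have
\begin{equation*}
\phi(xy) = x \cdot \phi(y) = 0,
\end{equation*}
because $\phi$ is a $KG$-module homomorphism. Since $\phi$ is injective, $xy = 0$ for all such $y$, so $x$ annihilates $KG \otimes_{KP} V_{\lambda}$; by the second step $x=0$, and $\widehat{L(\lambda)}$ is faithful.

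There is no real obstacle here — the content of the theorem is packaged into the Cartan-determinant hypothesis (handled by citing \cite[Theorem 1]{me}) and into the injectivity hypothesis (whose verification is the serious work done elsewhere in the paper). The only point requiring care is the continuity/closure argument in the middle step, but this is immediate from the fact that $KG$ acts on $\widehat{M_{\Delta_{\lambda}}(\lambda)}$ by continuous operators together with Lemma \ref{densitylem}.
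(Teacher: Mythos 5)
Your proof is correct and takes essentially the same route as the paper: invoke \cite[Theorem 1]{me} for faithfulness of $\widehat{M_{\Delta_{\lambda}}(\lambda)}$, pass faithfulness to the dense $KG$-submodule $KG \otimes_{KP} V_{\lambda}$ via continuity, and then transfer along the injection into $\widehat{L(\lambda)}$. The paper's proof is terser (it leaves the density-to-faithfulness and embedding-to-faithfulness implications implicit), but the substance is identical; your expanded continuity/closure justification is the intended one.
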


\begin{proof}
By \cite[Theorem 1]{me}, $\widehat{M_{\Delta_{\lambda}}(\lambda)}$ is faithful as a $KG$-module. Since $KG \otimes_{KP} V_{\lambda}$ is dense as a $KG$-submodule of $\widehat{M_{\Delta_{\lambda}}(\lambda)}$, this implies $KG \otimes_{KP} V_{\lambda}$ is faithful over $KG$.

So then the existence of an embedding $KG \otimes_{KP} V_{\lambda} \hookrightarrow \widehat{L(\lambda)}$ of $KG$-modules implies that $\widehat{L(\lambda)}$ is faithful over $KG$.
\end{proof}

\noindent We know that $KG \otimes_{KP} V_{\lambda}$ is  isomorphic as a $KN$-module to $KN \otimes_{K} V_{\lambda}$. We also know that the map $\widehat{M_{\Delta_{\lambda}}(\lambda)} \rightarrow \widehat{L(\lambda)}$ gives an embedding of $V_{\lambda}$ in $\widehat{L(\lambda)}$. So the statement that the composition $KG \otimes_{KP} V_{\lambda} \rightarrow \widehat{L(\lambda)}$ is injective is equivalent to the following.

\bigskip

\noindent \textbf{Condition (1)}: the multiplication map $KN_{\lambda} \otimes_{K} V_{\lambda} \rightarrow \widehat{L(\lambda)}$ is injective.

\bigskip

\noindent So given that $p$ satisfies the condition that $p$ does not divide the determinant of the Cartan matrix of any root subsystem of $\Phi$, we can see that condition (1) for $\lambda$ implies $\widehat{M_{\Delta_{\lambda}}(\lambda)}$ is faithful over $KG$. We also state another condition which is stronger than condition (1).

\bigskip

\noindent \textbf{Condition (2)}: the multiplication map $KN_{\lambda} \otimes_{K} L(\lambda) \rightarrow \widehat{L(\lambda)}$ is injective.

\bigskip

\noindent So now we want to prove faithfulness of $\widehat{L(\lambda)}$ by proving either condition (1) or condition (2). The approach ends up being very much on a case-by-case basis, but the rough approach is as follows.

\begin{itemize}
\item In \S \ref{simplerefsection}, we will see that if $\alpha \in \Delta$ is a simple root such that $\lambda(h_{\alpha}) \notin \mathbb{N}_{0}$, then $\Ann_{\Ug} \widehat{L(\lambda)} \subseteq \Ann_{\Ug} \widehat{L(s_{\alpha} \cdot \lambda)}$. This will be a helpful tool for reducing the number of cases we have to prove.

\item In \S \ref{transfunctorssection}, we will use translation functors (which involve taking the tensor product with finite-dimensional $\ug$-modules) to show that under certain circumstances, we can deduce condition (1) or (2) for $\lambda$ by proving them for $\lambda - \nu$ or $\lambda + \nu$, with $\nu$ dominant integral. This allows us to further reduce the number of cases to prove.

\item In \S \ref{inductionsection} and \S \ref{inductionsectiontypeD}, we will establish an inductive approach for proving condition (1). The outline is that given a simple root $\alpha \in \Delta$, we consider the subalgebra $ \mathfrak{g}_{\Delta \setminus \alpha}$ of $\mathfrak{g}$. We then take the restriction $\lambda'$ of $\lambda$ to $\mathfrak{h}_{\Delta \setminus \alpha}$. Under appropriate conditions, we can show that condition (1) for $\lambda'$ will imply condition (1) for $\lambda$. Then \S \ref{hwvectors} is dedicated to examining when the conditions for this inductive approach are met.

\item Section \ref{abeliansection} deals with a special case, where $\Delta_{\lambda} = \Delta \setminus \alpha$ for some $\alpha \in \Delta$. This is a particularly difficult case: speaking informally, $L(\lambda)$ will be "smaller" the closer $\lambda$ is to being dominant integral, which makes the faithfulness of $\widehat{L(\lambda)}$ over $KG$ harder to prove. Therefore some different tools are required to deal with this case.
\end{itemize}

\subsection{Use of simple reflections}
\label{simplerefsection}

For the moment, assume $\Phi$ is any indecomposable root system with a fixed set of simple roots $\Delta$. We will write $W$ for the Weyl group of $\Phi$. Recall that given a weight $\lambda \in \mathfrak{h}_{K}^{*}$, we denote $\Phi_{\lambda} = \{ \alpha \in \Phi \mid \lambda( h_{\alpha}) \in \mathbb{Z} \}$.

We can directly calculate that given $w \in W$, $\Phi_{w \cdot \lambda} = w^{-1}(\Phi_{\lambda})$.

\begin{myprop}
\label{simplerefsprop}
Suppose $\lambda$ is a weight such that $\lambda(p^{n}\mathfrak{h}_{R}) \subseteq R$, and $\alpha \in \Delta$ such that $\lambda(h_{\alpha}) \notin \mathbb{N}_{0}$. Then $\Ann_{\Ug} \widehat{L(\lambda)} \subseteq \Ann_{\Ug} \widehat{L(s_{\alpha} \cdot \lambda)}$.
\end{myprop}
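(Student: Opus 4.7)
The plan is to reduce the statement to the analogous inclusion in the ordinary enveloping algebra $\ug$ and then transfer it to $\Ug$ via a continuity/closure argument.

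First I would establish the ordinary-algebra inclusion $\Ann_{\ug} L(\lambda) \subseteq \Ann_{\ug} L(s_\alpha \cdot \lambda)$. The case $\lambda(h_\alpha) = -1$ is trivial, since $s_\alpha \cdot \lambda = \lambda$. When $\lambda(h_\alpha) = -k$ for some integer $k \geq 2$, the assignment $v_\lambda \mapsto f_\alpha^{k-1} v_{s_\alpha \cdot \lambda}$ defines a Verma module embedding $M(\lambda) \hookrightarrow M(s_\alpha \cdot \lambda)$ (one checks directly that $f_\alpha^{k-1} v_{s_\alpha \cdot \lambda}$ is annihilated by $\mathfrak{n}^+$). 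The annihilator inclusion then follows from standard BGG category theory, for instance via Arkhipov's twisting functor $T_\alpha$, which realises $T_\alpha L(\lambda) \cong L(s_\alpha \cdot \lambda)$ under the hypothesis $\lambda(h_\alpha) \notin \mathbb{N}_0$ and is defined by tensoring with a $(\ug,\ug)$-bimodule, hence enlarges the annihilator. The non-integral case $\lambda(h_\alpha) \notin \mathbb{Z}$ follows by a parallel twisting-functor or Joseph/Duflo argument, using that $\lambda$ and $s_\alpha \cdot \lambda$ share a central character.

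Next I would transfer this inclusion to $\Ug$. The key observations are that $\Ann_{\Ug} \widehat{L(\mu)}$ is a closed two-sided ideal of $\Ug$, and that $\Ann_{\ug} L(\mu) \subseteq \Ann_{\Ug} \widehat{L(\mu)}$ for every $\mu$ with $\mu(p^n \mathfrak{h}_R) \subseteq R$: indeed, if $u \in \ug$ kills $L(\mu)$, then by density of $L(\mu)$ in $\widehat{L(\mu)}$ (as in Lemma~\ref{densitylem}) and continuity of the action of $u$ on $\widehat{L(\mu)}$, $u$ kills all of $\widehat{L(\mu)}$. Using the density of $\ug$ in $\Ug$, one then argues that $\Ann_{\Ug} \widehat{L(\lambda)}$ coincides with the closure in $\Ug$ of the two-sided ideal generated by $\Ann_{\ug} L(\lambda)$. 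Combined with the first step, this ideal lies inside $\Ug \cdot \Ann_{\ug} L(s_\alpha \cdot \lambda) \cdot \Ug$, whose closure sits in $\Ann_{\Ug} \widehat{L(s_\alpha \cdot \lambda)}$, giving the required containment.

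The main obstacle is the density step, namely establishing that every element of $\Ann_{\Ug} \widehat{L(\lambda)}$ lies in the closure of the two-sided ideal generated by $\Ann_{\ug} L(\lambda)$. This requires a careful interaction between the good filtration on $L(\lambda)$ and the $\pi$-adic filtration on $\Ug$ described in \S \ref{completions}, likely by a graded-ring argument identifying the annihilator with its topological generators coming from $\ug$. An alternative avenue, which would sidestep the density argument, is to construct directly a $\Ug$-module map exhibiting $\widehat{L(s_\alpha \cdot \lambda)}$ (or a suitable quotient of it) as a subquotient of $\widehat{L(\lambda)}$; producing an explicit highest-weight vector of weight $s_\alpha \cdot \lambda$ inside an appropriate completion built from $\widehat{L(\lambda)}$ is the technical heart of such an approach.
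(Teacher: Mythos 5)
Your reduction to the classical inclusion $\Ann_{\ug}L(\lambda)\subseteq\Ann_{\ug}L(s_\alpha\cdot\lambda)$ is reasonable in outline (twisting functors, or Joseph's theory of primitive ideals, do give this), and the easy containment $\overline{\Ug\cdot\Ann_{\ug}L(\lambda)\cdot\Ug}\subseteq\Ann_{\Ug}\widehat{L(\lambda)}$ follows correctly from density of $L(\lambda)$ in $\widehat{L(\lambda)}$ together with continuity of the $\Ug$-action. The genuine gap is the reverse containment. Your claim that $\Ann_{\Ug}\widehat{L(\lambda)}$ is the closure of the two-sided ideal generated by $\Ann_{\ug}L(\lambda)$ is a \emph{controller} statement --- that annihilator ideals of $\Ug$ are controlled by $\ug$ --- and it does not follow from density of $\ug$ in $\Ug$ or from passing to associated graded rings. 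The problem is that an element $u\in\Ann_{\Ug}\widehat{L(\lambda)}$ is a limit of elements $u_n\in\ug$, but nothing forces the approximants $u_n$ to annihilate $L(\lambda)$, so density gives you no handle on the hard inclusion. A controller theorem of the required strength is a deep structural fact about the affinoid enveloping algebra, in the spirit of the Ardakov controller-subgroup results cited in the bibliography; it would need to be invoked in a precise form with its hypotheses, and as stated it is at least as hard as the proposition you are trying to prove. You correctly flag it as ``the main obstacle,'' but framing it as something ``likely'' to be pushed through by comparing filtrations underestimates it badly.

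Your ``alternative avenue'' is the route that avoids the controller theorem and is the one to pursue as the primary plan, and it is essentially what the cited reference \cite[Proposition 7.0.1]{gvmcase} does. Because $\lambda(h_\alpha)\notin\mathbb{N}_0$, $f_\alpha$ acts injectively on $L(\lambda)$ and hence on $\widehat{L(\lambda)}$; one can therefore tensor $\widehat{L(\lambda)}$ with a completed analogue of Arkhipov's twisting bimodule (built from the Ore localisation of the enveloping algebra at $f_\alpha$), twist by the automorphism induced by $s_\alpha$, and exhibit inside the result a highest-weight vector of weight $s_\alpha\cdot\lambda$ generating a nonzero module --- the hypothesis $\lambda(p^n\mathfrak{h}_R)\subseteq R$ is what guarantees nonvanishing after completion. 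Since this exhibits $\widehat{L(s_\alpha\cdot\lambda)}$ as a subquotient of $\widehat{L(\lambda)}$ tensored with a $(\Ug,\Ug)$-bimodule, every element of $\Ann_{\Ug}\widehat{L(\lambda)}$ automatically annihilates it and the desired inclusion is immediate, with no controller input required.
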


\begin{proof}
\cite[Proposition 7.0.1]{gvmcase}
\end{proof}

\noindent This result means that in order to prove faithfulness for $\widehat{L(\lambda)}$ over $KG$, it is sufficient to prove for any $\widehat{L(s_{\alpha} \cdot \lambda)}$ with $\lambda(h_{\alpha}) \notin \mathbb{N}_{0}$. We will use this to restrict our attention to a certain subset of weights for which we can use an inductive approach.

\begin{mydef}
\label{connected}
 We say a subset $I \subseteq \Delta$ is \emph{connected} if the corresponding subgraph of the Dynkin diagram of $\Phi$ is a connected graph.

Moreover, given $\alpha \in I$, the \emph{connected component} of $\alpha$ in $I$ is the largest connected subset of $I$ containing $\alpha$.
\end{mydef}

\noindent We will want to restrict ourselves to proving faithfulness of $\widehat{L(\lambda)}$ only for weights $\lambda$ such that $\Delta \setminus \Phi_{\lambda} = \{ \alpha \in \Delta \mid \lambda(h_{\alpha}) \notin \mathbb{Z} \}$ is a connected subset of $\Delta$.

\begin{mylem}
\label{orbitlem}
Suppose $\Phi$ is simply laced , and $\alpha,\beta \in \Delta$ are adjacent in the Dynkin diagram of $\Phi$. Then $s_{\alpha}s_{\beta}(\alpha) = \beta$. In particular, all simple roots lie in the same orbit of $W$.
\end{mylem}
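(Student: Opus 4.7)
The plan is a direct computation for the first assertion, followed by a short path-chasing argument in the Dynkin diagram for the ``in particular'' part.

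First I would recall the standard formula for a simple reflection: for any $\gamma \in \mathfrak{h}_{K}^{*}$ and any simple root $\delta$, one has $s_{\delta}(\gamma) = \gamma - \langle \gamma, \delta^{\lor}\rangle \delta$. Since $\Phi$ is simply-laced, all roots have the same length, so for $\alpha, \beta \in \Delta$ adjacent in the Dynkin diagram we have $\langle \alpha, \beta^{\lor}\rangle = \langle \beta, \alpha^{\lor}\rangle = -1$. Substituting, $s_{\beta}(\alpha) = \alpha - (-1)\beta = \alpha + \beta$, and then
\begin{equation*}
s_{\alpha}(\alpha + \beta) = -\alpha + \bigl(\beta - \langle \beta, \alpha^{\lor}\rangle \alpha\bigr) = -\alpha + \beta + \alpha = \beta.
\end{equation*}
Composing gives $s_{\alpha}s_{\beta}(\alpha) = \beta$, proving the first claim.

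For the ``in particular'' statement, since $\Phi$ is assumed indecomposable, its Dynkin diagram is connected. Given any two simple roots $\alpha, \beta \in \Delta$, there is a path $\alpha = \gamma_{0}, \gamma_{1}, \ldots, \gamma_{k} = \beta$ in the Dynkin diagram with each consecutive pair adjacent. The first part provides, for each $i$, an element $w_{i} = s_{\gamma_{i}}s_{\gamma_{i-1}} \in W$ with $w_{i}(\gamma_{i-1}) = \gamma_{i}$. Composing, $w_{k}\cdots w_{1}(\alpha) = \beta$, so $\alpha$ and $\beta$ lie in the same $W$-orbit.

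There is no real obstacle here; the content is elementary once one has the Cartan integers pinned down. The only thing worth being careful about is that the ``simply-laced'' hypothesis is what forces $\langle \alpha, \beta^{\lor}\rangle = -1$ (rather than $-2$ or $-3$), since adjacency in the Dynkin diagram otherwise only tells us that the Cartan integers are non-zero. In the simply-laced case, adjacency together with equal root lengths forces the product $\langle \alpha, \beta^{\lor}\rangle \langle \beta, \alpha^{\lor}\rangle = 1$ with both factors equal, hence both equal to $-1$.
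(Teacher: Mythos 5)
Your proof is correct and follows essentially the same route as the paper's: compute the Cartan integers in the simply-laced case, apply the reflection formula twice, and then invoke connectedness of the Dynkin diagram for the ``in particular'' clause. The only difference is cosmetic — you spell out the path-chasing step and the justification that adjacency forces $\langle\alpha,\beta^{\lor}\rangle=-1$, both of which the paper leaves implicit.
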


\begin{proof}
Since $\alpha, \beta$ are adjacent in the Dynkin diagram and $\Phi$ is simply laced, we can see that $\langle \alpha, \beta^{\lor} \rangle = \langle \beta, \alpha^{\lor} \rangle = -1$.

Therefore $s_{\beta}(\alpha) = \alpha+\beta = s_{\alpha}(\beta)$. So $s_{\alpha} s_{\beta}(\alpha) = s_{\alpha}(\alpha+\beta) = \beta$.

The fact that all simple roots lie in the same orbit of $W$ now follows from the fact that $\Phi$ is indecomposable, which means that the Dynkin diagram is a connected graph.
\end{proof}

\begin{mylem}
\label{connectedcompslem}
Suppose $\Phi' \subseteq \Phi$ is a root subsystem and $\Phi$ is simply laced, and suppose $C$ is a connected component of $\Delta \setminus \Phi'$. If $\alpha \in \Delta \setminus C$, then $C \subseteq \Delta \setminus s_{\alpha}(\Phi')$.
\end{mylem}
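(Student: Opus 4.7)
The plan is to fix an arbitrary $\beta \in C$ and show that $s_\alpha(\beta) \notin \Phi'$, which is equivalent to $\beta \notin s_\alpha(\Phi')$; since this holds for each $\beta \in C$, we obtain $C \subseteq \Delta \setminus s_\alpha(\Phi')$. Because $\beta$ is a simple root, I would split into cases according to whether $\alpha$ and $\beta$ are adjacent in the Dynkin diagram of $\Phi$.

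In the non-adjacent case (which also covers $\alpha = \beta$, though this cannot occur as $\alpha \notin C \ni \beta$), we have $\langle \beta, \alpha^\vee \rangle = 0$, hence $s_\alpha(\beta) = \beta$, and since $\beta \in C \subseteq \Delta \setminus \Phi'$, we are done immediately.

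In the adjacent case, the key preliminary step is to deduce that $\alpha$ must lie in $\Phi'$. Indeed, connected components of $\Delta \setminus \Phi'$ are defined via the induced subgraph of the Dynkin diagram; so if $\alpha$ were in $\Delta \setminus \Phi'$, then being adjacent to $\beta$ it would sit in the same connected component as $\beta$, namely $C$, contradicting $\alpha \in \Delta \setminus C$. Hence $\alpha \in \Phi'$. Since $\Phi$ is simply laced and $\alpha, \beta$ are adjacent, $\langle \beta, \alpha^\vee \rangle = -1$, so $s_\alpha(\beta) = \alpha + \beta$. To conclude, I would use the standard fact that a root subsystem $\Phi'$ is stable under the reflections $s_\gamma$ for $\gamma \in \Phi'$: if $\alpha + \beta$ were in $\Phi'$, then applying $s_\alpha$ (which preserves $\Phi'$ because $\alpha \in \Phi'$) would give $s_\alpha(\alpha + \beta) = \beta \in \Phi'$, contradicting $\beta \in C \subseteq \Delta \setminus \Phi'$. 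Therefore $s_\alpha(\beta) = \alpha + \beta \notin \Phi'$.

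There is no serious obstacle here; the only subtlety is the organisation of the adjacency case, and the crucial combinatorial observation is that an element of $\Delta \setminus \Phi'$ adjacent to something in $C$ is forced into $C$, so conversely $\alpha \in \Delta \setminus C$ adjacent to $\beta \in C$ compels $\alpha \in \Phi'$, which is precisely what unlocks the reflection-stability argument.
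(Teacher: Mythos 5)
Your proof is correct and follows essentially the same route as the paper: the same case split on whether $(\alpha,\beta)=0$, the same deduction that $\alpha\in\Phi'$ from maximality of $C$ as a connected component, the computation $s_\alpha(\beta)=\alpha+\beta$ via the simply-laced hypothesis, and the same reflection-stability argument showing $\alpha+\beta\notin\Phi'$. The only cosmetic difference is that you make explicit the use of $s_\alpha$-stability of $\Phi'$ (since $\alpha\in\Phi'$), which the paper leaves implicit.
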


\begin{proof}
Suppose $\beta \in C \subseteq \Delta \setminus \Phi'$. We claim $\beta \in \Delta \setminus s_{\alpha}(\Phi')$.

\noindent \emph{Case 1}: $(\beta, \alpha) = 0$. Then $\beta = s_{\alpha}(\beta) \in s_{\alpha}(\Phi \setminus \Phi') = \Phi \setminus s_{\alpha}(\Phi')$.

\noindent \emph{Case 2}: $(\beta, \alpha) \neq 0$. Then we can see that $\alpha \in \Phi'$: if not, $C \cup \{\alpha \}$ would be a connected subset of $\Delta \setminus \Phi'$ strictly containing $C$, contradicting the fact that $C$ is a connected component.

Now since $\Phi$ is simply laced, we must have $\langle \alpha, \beta^{\lor} \rangle = -1 = \langle \beta, \alpha^{\lor} \rangle$. So we can calculate $\alpha + \beta = s_{\alpha}(\beta) \in \Phi$. We must have $\alpha + \beta \notin \Phi'$, as otherwise we would have $s_{\alpha}(\alpha + \beta) = \beta \in \Phi'$, a contradiction. 

Then $\beta = s_{\alpha}(\alpha + \beta) \in s_{\alpha}(\Phi \setminus \Phi') = \Phi \setminus s_{\alpha}(\Phi')$.
\end{proof}

\begin{myprop}
\label{subsystemprop}
Suppose $\Phi$ is a simply laced root system, and $\Psi$ is a proper root subsystem of $\Phi$. Suppose $\alpha_{1} \in \Delta$ is fixed. Then there exists $w \in W$ such that $\Delta \setminus w(\Psi)$ is a connected subset of $\Delta$ containing $\alpha_{1}$.
\end{myprop}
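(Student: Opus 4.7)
I plan to choose $w \in W$ to maximise $|C(w)|$, where $C(w)$ denotes the connected component of $\alpha_1$ in $\Delta \setminus w(\Psi)$ and the maximum ranges over the set $\mathcal{S} = \{w \in W : \alpha_1 \notin w(\Psi)\}$, and then show the maximiser satisfies $\Delta \setminus w(\Psi) = C(w)$, which is the desired conclusion. The set $\mathcal{S}$ is non-empty because $\Psi$ is proper and $\Phi$ is simply laced indecomposable: the latter implies $W$ acts transitively on $\Phi$ (by Lemma \ref{orbitlem} together with the standard fact that every root is $W$-conjugate to a simple root), so choosing any $\gamma \in \Phi \setminus \Psi$ and $w_0 \in W$ with $w_0^{-1}(\alpha_1) = \gamma$ gives $w_0 \in \mathcal{S}$.

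Assume for contradiction that the maximiser $w$ has $C(w) \subsetneq \Delta \setminus w(\Psi)$; then a second connected component $D$ of $\Delta \setminus w(\Psi)$ exists. Because the Dynkin diagram is a tree, there is a unique shortest path in $\Delta$ from $C(w)$ to $D$, of the form $c - b_1 - \cdots - b_k - d$ with $c \in C(w)$, $d \in D$, and each $b_i$ necessarily in $\Delta \cap w(\Psi)$ (else $b_i$ would lie in a component of $\Delta \setminus w(\Psi)$ and shorten the path). Set $\eta = b_2 + b_3 + \cdots + b_k + d$, with the convention $\eta = d$ when $k = 1$: a direct summation of pairwise inner products along the chain gives $(\eta,\eta) = 2$, so $\eta \in \Phi$ and $s_\eta \in W$. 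By the shortest-path property, no element of $C(w)$ is adjacent in $\Delta$ to any of $b_2, \ldots, b_k, d$, so $(\delta, \eta) = 0$ for every $\delta \in C(w)$; hence $s_\eta$ fixes every such $\delta$, giving $C(w) \subseteq \Delta \setminus s_\eta w(\Psi)$ and in particular $s_\eta w \in \mathcal{S}$.

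The contradiction then comes from $b_1$: since $b_1$ is adjacent to $c \in C(w)$, it suffices to show $b_1 \in \Delta \setminus s_\eta w(\Psi)$, for then $b_1 \in C(s_\eta w)$ and $C(s_\eta w) \supsetneq C(w)$ contradicts maximality. Computing $s_\eta(b_1) = b_1 + \eta = b_1 + b_2 + \cdots + b_k + d$, I need to verify this sum is not in $w(\Psi)$. Repeated use of closure of the simply laced subsystem $w(\Psi)$ under reflections $s_{b_i}$ (using $s_{b_i}(b_{i+1}) = b_i + b_{i+1}$ in the simply laced case) gives $b_1 + \cdots + b_k \in w(\Psi)$. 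If $b_1 + \cdots + b_k + d$ also lay in $w(\Psi)$, then $s_{b_1+\cdots+b_k+d}$ would preserve $w(\Psi)$; a direct computation gives $\langle b_1+\cdots+b_k, (b_1+\cdots+b_k+d)^{\lor}\rangle = 1$, hence $s_{b_1+\cdots+b_k+d}(b_1+\cdots+b_k) = -d$, forcing $d \in w(\Psi)$ and contradicting $d \in D$. This closure/reflection trick is the technical heart of the argument and the step that uses the simply laced hypothesis essentially; I expect it to be the main obstacle, since one needs the same style of reasoning to justify that the various sub-sums $b_i + \cdots + b_k$ and $b_i + \cdots + b_k + d$ behave correctly under the relevant reflections.
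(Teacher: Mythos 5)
Your proof is correct, and it takes a genuinely different route from the paper's. Both proofs set up the same extremal argument (maximise the connected component $C(w)$ of $\alpha_1$ over $w$ with $\alpha_1 \notin w(\Psi)$, establishing non-emptiness via $W$-transitivity), but they derive the contradiction differently. The paper applies a \emph{sequence of simple} reflections to move a stray $\alpha \in (\Delta\setminus w_0(\Psi))\setminus C$ adjacent to $C$, using Lemma \ref{connectedcompslem} together with the maximality of $C$ at \emph{each} intermediate step to show the connected component never changes. You instead apply a \emph{single non-simple} reflection $s_\eta$ with $\eta$ read off from the unique Dynkin-tree path $c-b_1-\cdots-b_k-d$ between $C(w)$ and another component $D$: since $\eta$ is orthogonal to $C(w)$, it fixes $C(w)$ pointwise, and a direct closure computation (the $s_{b_1+\cdots+b_k+d}(b_1+\cdots+b_k)=-d$ trick) shows $b_1$ lands outside $s_\eta w(\Psi)$, enlarging the component in one step. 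Your version invokes maximality only once at the end, at the price of using the tree structure of the Dynkin diagram and the standard fact that in a simply laced system a chain of simple roots sums to a root; the paper's version stays within simple reflections but needs the iterated claim with its own induction. One small point of exposition: to justify ``each $b_i \in \Delta \cap w(\Psi)$'' you should pick $D$ to be a component \emph{closest} to $C(w)$ (or re-choose $D$ if an intermediate $b_i$ falls in a third component); as written, with $D$ arbitrary, a stray $b_i$ in a third component does not literally shorten the path to $D$ — it shortens the path to that third component, which is what you should have chosen. This is a one-line fix and does not affect the substance.
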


\begin{proof}
First, the fact that $\Psi$ is a proper root subsystem implies that $\Psi$ cannot contain all $\alpha \in \Delta$. If we choose $\alpha \in \Delta \setminus \Psi$, then by Lemma \ref{orbitlem} we can choose $w \in W$ such that $w(\alpha) = \alpha_{1}$. Then $\alpha_{1} \in \Delta \setminus w(\Psi)$. 

Choose $w_{0} \in W$ with $\alpha_{1} \in \Delta \setminus w_{0}(\Psi)$ to maximise the size of the connected component   of $\alpha_{1}$ in $\Delta \setminus w_{0}(\Psi)$. Write $C$ for the connected component of $\alpha_{1}$ in $\Delta \setminus w_{0}(\Psi)$.

We want to show that $\Delta \setminus w_{0}(\Psi) = C$.  Suppose for contradiction that $\alpha \in \Delta \setminus w_{0}(\Psi)$ with $\alpha \notin C$. Since Lemma \ref{orbitlem} implies all elements of $\Delta$ are in the same orbit under $W$, we can choose $w \in W$ such that $w(\alpha) \in \Delta$ is adjacent to $C$ in the Dynkin diagram of $\Phi$. Any such $w$ can be written as a product of simple reflections.

So let $\beta_{1},\dots,\beta_{r} \in \Delta$ be a sequence of minimal length such that \\ $(\gamma,(s_{\beta_{r}}  \dots s_{\beta_{1}})(\alpha)) \neq 0$ for some $\gamma \in C$. We must have $r \geq 1$ by choice of $\alpha$.

Fix $1 \leq i \leq r$. Suppose $\beta_{i} \in C$. By minimality of $r$, $(\gamma, s_{\beta_{i-1}} \dots s_{\beta_{1}}(\alpha)) = 0$ for all $\gamma \in C$. So $s_{\beta_{i}}( s_{\beta_{i-1}} \dots s_{\beta_{1}} (\alpha) ) = s_{\beta_{i-1}} \dots s_{\beta_{1}} (\alpha)$, but this contradicts minimality of $r$ because we could simply remove $\beta_{i}$ from the sequence. So we must have all $\beta_{i} \notin C$. 

\bigskip

\noindent \emph{Claim:} for each $0 \leq i \leq r$, the connected component of $\Delta \setminus s_{\beta_{i}} \dots s_{\beta_{1}}w_{0}(\Psi)$ containing $\alpha_{1}$ is $C$.
\bigskip

\noindent \emph{Proof of claim:} by induction on $i$. The $i = 0$ case is the base case. Now for $i > 0$, assume $C$ is the connected component of  $\Delta \setminus s_{\beta_{i-1}} \dots s_{\beta_{1}}w_{0}(\Psi)$ containing $\alpha_{1}$. Since $\beta_{i} \notin C$, Lemma \ref{connectedcompslem} implies that $C \subseteq \Delta \setminus s_{\beta_{i}} \dots s_{\beta_{1}}w_{0}(\Psi)$. Then the claim follows since $w_{0}$ was chosen to maximise the size of $C$.

\bigskip

Write $w = s_{\beta_{r}} \dots s_{\beta_{1}}$. Now the claim above shows that $C$ is the connected component of $\Delta \setminus w w_{0}(\Psi)$ containing $\alpha_{1}$. But $\alpha \in \Delta \setminus w_{0} (\Psi)$ implies $w(\alpha) \notin ww_{0}(\Psi)$, and $w$ was chosen such that $w(\alpha) \in \Delta \setminus C$  is adjacent to $C$ in the Dynkin diagram. So we have a contradiction.

We now see that $\Delta \setminus w_{0}(\Psi)$ is a connected subset of $\Delta$ containing $\alpha_{1}$.

\end{proof}

\begin{mycor}
\label{simplerefsapplication}
Suppose $\Phi$ is simply laced, and $\lambda \in \mathfrak{h}_{K}^{*}$ is a non-integral weight. Suppose $\alpha_{1} \in \Delta_{\lambda}$ is fixed.

 Then there exist $\beta_{1},\dots,\beta_{r} \in \Delta$ such that $(s_{\beta_{i-1}} \dots s_{\beta_{1}} \cdot \lambda)(h_{\beta_{i}}) \notin \mathbb{N}_{0}$ for all $i$, and $\lambda' = s_{\beta_{r}} \dots s_{\beta_{1}} \cdot \lambda$ satisfies the condition that $\Delta \setminus \Phi_{\lambda'}$ is a connected subset of $\Delta$ containing $\alpha_{1}$.

\end{mycor}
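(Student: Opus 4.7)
The plan is to combine Proposition \ref{simplerefsprop} with Proposition \ref{subsystemprop}. Since $\lambda$ is non-integral, $\Phi_\lambda$ is a proper root subsystem of $\Phi$, so applying Proposition \ref{subsystemprop} with $\Psi = \Phi_\lambda$ produces an element $w \in W$ such that $\Delta \setminus w(\Phi_\lambda)$ is connected and contains $\alpha_1$. Using the identity $\Phi_{w' \cdot \mu} = w'^{-1}(\Phi_\mu)$ noted at the start of the section, the weight $\lambda' := w^{-1} \cdot \lambda$ satisfies $\Phi_{\lambda'} = w(\Phi_\lambda)$, so $\Delta \setminus \Phi_{\lambda'}$ has the required connectedness property.

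The remaining task is to realize $w^{-1}$ as a product of simple reflections $s_{\beta_r} \cdots s_{\beta_1}$ such that the condition $\lambda_{i-1}(h_{\beta_i}) \notin \mathbb{N}_0$ holds at each step, where $\lambda_i := s_{\beta_i} \cdots s_{\beta_1} \cdot \lambda$. The key observation is that non-integrality of a weight is preserved under the dot action of $W$, so each intermediate $\lambda_i$ is non-integral and $\Phi_{\lambda_i}$ is a proper subsystem of $\Phi$. Consequently there always exists some $\beta \in \Delta \setminus \Phi_{\lambda_i}$, and for any such $\beta$ we have $\lambda_i(h_\beta) \notin \mathbb{Z}$, hence $\notin \mathbb{N}_0$. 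Thus the $\mathbb{N}_0$ condition is automatic at any step in which we reflect by a simple root outside the current integral subsystem.

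To construct the desired sequence I would retrace the argument used in the proof of Proposition \ref{subsystemprop}, which first uses Lemma \ref{orbitlem} to move some root $\alpha \in \Delta \setminus \Phi_\lambda$ to $\alpha_1$ via a sequence of pairs $s_{\gamma_j} s_{\gamma_{j+1}}$ along a Dynkin path, and then iteratively enlarges the connected component of $\alpha_1$ by further such pairs; Lemma \ref{connectedcompslem} ensures that during the enlargement stage the simple roots being reflected by remain outside the evolving subsystem. The sequence produced need not be a reduced expression for $w^{-1}$, so there is flexibility to insert auxiliary reflections by simple roots outside the current integral subsystem whenever convenient. The main obstacle is to verify in detail that every simple reflection used in both stages does pick a simple root outside the current $\Phi_{\lambda_i}$; this requires careful inductive tracking of how $\Phi_{\lambda_i}$ evolves through each reflection, and possibly modifying the construction (by inserting auxiliary reflections or choosing different Dynkin paths) in order to maintain the invariant while still reaching a final weight $\lambda'$ with $\Delta \setminus \Phi_{\lambda'}$ connected containing $\alpha_1$.
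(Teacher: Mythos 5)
Your approach starts in the same place as the paper -- apply Proposition~\ref{subsystemprop} to get $w$ with $\Delta \setminus w(\Phi_\lambda)$ connected containing $\alpha_1$, and use $\Phi_{w^{-1}\cdot\lambda} = w(\Phi_\lambda)$ -- and you correctly observe that if $\beta_i \in \Delta \setminus \Phi_{\lambda_{i-1}}$ then $\lambda_{i-1}(h_{\beta_i}) \notin \mathbb{Z}$, which would give the required $\notin \mathbb{N}_0$ condition for free. But from there you do not actually close the argument: you propose to construct the sequence by ``retracing'' the proof of Proposition~\ref{subsystemprop} while ensuring each reflection is by a root outside the current integral subsystem, and you explicitly flag that verifying this invariant is ``the main obstacle'' and would require ``possibly modifying the construction.'' That acknowledged gap is a real one -- there is no argument given that a sequence meeting your stronger invariant exists, and the construction in Proposition~\ref{subsystemprop} (e.g.\ the Dynkin-path reflections used via Lemma~\ref{orbitlem}) does not obviously stay outside the moving subsystem.

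The paper's proof avoids the construction entirely with a minimality argument, which is the idea your proposal is missing. One takes \emph{any} sequence $\beta_1,\dots,\beta_r$ of minimal length such that $\Delta \setminus \Phi_{s_{\beta_r}\cdots s_{\beta_1}\cdot\lambda}$ is connected and contains $\alpha_1$ (existence is supplied by Proposition~\ref{subsystemprop} together with writing $w^{-1}$ as a product of simple reflections). If at some step $(s_{\beta_{i-1}}\cdots s_{\beta_1}\cdot\lambda)(h_{\beta_i}) \in \mathbb{N}_0$, then $\beta_i$ lies in $\Phi_{s_{\beta_{i-1}}\cdots s_{\beta_1}\cdot\lambda}$, so $s_{\beta_i}$ fixes that subsystem, and deleting $\beta_i$ from the sequence yields the same final $\Phi_{\lambda'}$ -- contradicting minimality. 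This sidesteps the need to impose or verify your stronger invariant (that every reflection be by a root \emph{outside} the subsystem), which is also strictly stronger than what the statement demands: the statement allows $\beta_i \in \Phi_{\lambda_{i-1}}$ as long as $\lambda_{i-1}(h_{\beta_i})$ is negative. You should replace the constructive sketch with the minimality argument.
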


\begin{proof}
By Proposition \ref{subsystemprop}, we can choose $w \in W$ such that $\Delta \setminus w( \Phi_{\lambda})$ is a connected subset of $\Delta$ containing $\alpha_{1}$. We know $w(\Phi_{\lambda}) = \Phi_{w^{-1} \cdot \lambda}$, and we can write $w^{-1}$ as a product of simple reflections.

So choose $\beta_{1},\dots,\beta_{r} \in \Delta$ to be a sequence of minimal length such that $\Delta \setminus \Phi_{s_{\beta_{r}} \dots s_{\beta_{1}} \cdot \lambda}$ is a connected subset of $\Delta$ containing $\alpha_{1}$.

Fix $1 \leq i \leq r$. If $(s_{\beta_{i-1}} \dots s_{\beta_{1}} \cdot \lambda)(h_{\beta_{i}}) \in \mathbb{N}_{0}$, then $\beta_{i} \in \Phi_{s_{\beta_{i-1}} \dots s_{\beta_{1}} \cdot \lambda}$. Thus $\Phi_{s_{\beta_{i}} \dots s_{\beta_{1}} \cdot \lambda} = s_{\beta_{i}}(\Phi_{s_{\beta_{i-1}} \dots s_{\beta_{1}} \cdot \lambda}) = \Phi_{s_{\beta_{i-1}} \dots s_{\beta_{1}} \cdot \lambda}$. We then see that $\Phi_{s_{\beta_{r}} \dots s_{\beta_{1}} \cdot \lambda} = \Phi_{s_{\beta_{r}} \dots s_{\beta_{i+1}} s_{\beta_{i-1}} \dots s_{\beta_{1}} \cdot \lambda}$, which contradicts minimality of $r$.

So $(s_{\beta_{i-1}} \dots s_{\beta_{1}} \cdot \lambda)(h_{\beta_{i}}) \notin \mathbb{N}_{0}$ for all $i$, and the result follows.
\end{proof}

\subsection{Translation functors}
\label{transfunctorssection}

To motivate this section, suppose $\lambda, \mu \in \mathfrak{h}_{K}^{*}$ with $\lambda(p^{n}\mathfrak{h}_{R}) \subseteq R$, $\mu(p^{n}\mathfrak{h}_{R}) \subseteq R$ such that $\lambda - \mu$ is a dominant integral weight (that is, $(\lambda- \mu)(h_{\alpha}) \in \mathbb{N}_{0}$ for all $\alpha \in \Delta$). Then under certain circumstances, we wish to show that proving condition (1) (or (2)) for $\mu$ will imply that it also holds for $\lambda$, or alternatively that proving one of these conditions for $\lambda$ will also imply it holds for $\mu$.

 One particular use of this is that it will often be easier to prove condition (1) in the case where $M_{\Delta_{\lambda}}(\lambda)$ is a scalar generalised Verma module, that is, $\lambda(h_{\alpha}) = 0$ whenever $\lambda(h_{\alpha}) \in \mathbb{N}_{0}$. In this setting, the representation $V_{\lambda}$ is one-dimensional, and condition (1) is equivalent to the statement that $\Ann_{KN}v_{\lambda} = 0$.
 
The tool that we use is that of translation functors, which involve taking tensor products by finite-dimensional modules. Our main source for this section is \cite[\S 7]{Humphreys}.

We define subcategories $\mathcal{O}_{\lambda}$ of $\mathcal{O}$ for each weight $\lambda$ by $\mathcal{O}_{\lambda} = \mathcal{O}_{\chi_{\lambda}}$ in the sense of \cite[\S 1.12]{Humphreys}, which in particular contain all $\lambda$-highest-weight modules. Moreover, the distinct $\mathcal{O}_{\lambda}$ are the blocks of $\mathcal{O}$.

\begin{mydef}
Suppose $\lambda, \mu \in \mathfrak{h}_{K}^{*}$ such that $\lambda - \mu$ is a dominant integral weight.

The translation functor $T^{\lambda}_{\mu} : \mathcal{O}_{\mu} \rightarrow \mathcal{O}_{\lambda}$ is defined by taking $T_{\mu}^{\lambda}M$ to be the projection of $M \otimes_{K} L(\lambda - \mu)$ onto $\mathcal{O}_{\lambda}$.
\end{mydef}

\begin{mydef}
Given $\lambda \in \mathfrak{h}_{K}^{*}$, define:

\begin{itemize}
\item $\Phi_{\lambda}^{+} = \{ \alpha \in \Phi^{+} \mid \langle \lambda + \rho, \alpha^{\lor} \rangle \in \mathbb{N} \}$,

\item $\Phi_{\lambda}^{0} = \{ \alpha \in \Phi^{+} \mid \langle \lambda + \rho, \alpha^{\lor} \rangle = 0 \}$,

\item $\Phi_{\lambda}^{-} = \{ \alpha \in \Phi^{+} \mid \langle \lambda + \rho, \alpha^{\lor} \rangle \in  \mathbb{Z}_{<0} \}$.

\end{itemize}

\noindent Then a \emph{facet} is a set of weights $\mathcal{F} = \{ \lambda \in \mathfrak{h}_{K}^{*} \mid \Phi_{\lambda}^{+} = \Phi^{+}_{\mathcal{F}}, \Phi_{\lambda}^{0} = \Phi^{0}_{\mathcal{F}}, \Phi_{\lambda}^{-} = \Phi^{-}_{\mathcal{F}} \}$ for some choice of subsets $\Phi^{+}_{\mathcal{F}},\Phi^{0}_{\mathcal{F}},\Phi^{-}_{\mathcal{F}} \subseteq \Phi$. So any weight $\lambda$ is in a unique facet.

The \emph{upper closure} $\hat{\mathcal{F}}$ of a facet $\mathcal{F}$ is the set of weights $\hat{\mathcal{F}} = \{ \lambda \in \mathfrak{h}_{K}^{*} \mid \Phi_{\lambda}^{+} = \Phi_{\mathcal{F}}^{+},  \Phi_{\lambda}^{-} \subseteq \Phi_{\mathcal{F}}^{-} \}$.
\end{mydef}

\noindent The following result can be deduced by looking at the effect of translation functors on simple modules.

\begin{mylem}
\label{translationfunctionlem}
Suppose $\lambda, \mu \in \mathfrak{h}_{K}^{*}$ such that $\lambda - \mu$ is a dominant integral weight, and $\lambda$ is in the upper closure of the facet of $\mu$. 

Then $L(\lambda)$ can be considered as the $\ug$-submodule of $\widehat{L(\mu)}\otimes_{K} L(\lambda - \mu)$ generated by $v_{\mu} \otimes v_{\lambda - \mu}$.
\end{mylem}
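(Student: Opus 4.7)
The plan is to reduce the lemma to the standard translation-functor result (see \cite[Theorem 7.6]{Humphreys} or the equivalent statement there) which says that when $\lambda$ lies in the upper closure of the facet of $\mu$, $T_{\mu}^{\lambda} L(\mu) = L(\lambda)$. First I would note that since $v_{\mu} \otimes v_{\lambda - \mu}$ already lies in $L(\mu) \otimes_{K} L(\lambda - \mu)$, which is a $\ug$-submodule of $\widehat{L(\mu)} \otimes_{K} L(\lambda - \mu)$, the $\ug$-submodule $M$ generated by $v_{\mu} \otimes v_{\lambda - \mu}$ is automatically contained in the uncompleted tensor product. Thus the completion in the statement plays no essential role; it is only there because downstream we will want to feed this submodule back into the framework of affinoid highest-weight modules.

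Next I would verify that $v_{\mu} \otimes v_{\lambda - \mu}$ is a weight vector of weight $\lambda$ annihilated by every $e_{\alpha}$, since both tensor factors are: the Leibniz rule gives $e_{\alpha}(v_{\mu} \otimes v_{\lambda - \mu}) = 0$. Hence $M$ is a highest-weight module of highest weight $\lambda$, so a quotient of $M(\lambda)$, and in particular $M \in \mathcal{O}_{\lambda}$.

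Finally, I would consider the block projection $\pi \colon L(\mu) \otimes_{K} L(\lambda - \mu) \twoheadrightarrow T_{\mu}^{\lambda} L(\mu)$ onto the $\mathcal{O}_{\lambda}$-summand. Since $M$ lies entirely in $\mathcal{O}_{\lambda}$, $\pi$ is injective on $M$; by the translation theorem the target is $L(\lambda)$, so $\pi(M)$ is a nonzero submodule of the irreducible module $L(\lambda)$, forcing $\pi(M) = L(\lambda)$ and therefore $M \cong L(\lambda)$. The only genuine content here is the appeal to the translation theorem, and this is precisely where the upper-closure hypothesis is crucial: without it the translation functor could either annihilate $L(\mu)$ or produce a different simple $L(w \cdot \lambda)$, and the identification of $M$ with $L(\lambda)$ would break down.
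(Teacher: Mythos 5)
Your proof is correct, and it fills in what the paper leaves implicit: the paper's ``proof'' is a bare citation to Corollary~4.2.24 of the author's thesis, so there is no argument in the paper to compare against, but the translation-functor argument you give is the standard one and is almost certainly what the cited source does. The key observations — that $v_{\mu}\otimes v_{\lambda-\mu}$ already lies in the uncompleted $L(\mu)\otimes_K L(\lambda-\mu)$ so the completion is irrelevant to the generated submodule; that it is a highest-weight vector of weight $\lambda$, so the submodule $M$ it generates is a quotient of $M(\lambda)$ and therefore has central character $\chi_{\lambda}$, hence lies inside the $\mathcal{O}_{\lambda}$-block summand $T_{\mu}^{\lambda}L(\mu)$ of the tensor product; and that $T_{\mu}^{\lambda}L(\mu)\cong L(\lambda)$ by the translation theorem, so the nonzero submodule $M$ must be all of it — are all in order. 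Two small cosmetic points: your step ``$M$ lies entirely in $\mathcal{O}_{\lambda}$, so $\pi$ is injective on $M$'' is slightly weaker than what you actually use; you want (and can easily get) the stronger fact that $M$ is \emph{contained in} the $\mathcal{O}_{\lambda}$-summand, since any submodule with a single central character must sit inside the corresponding block component of a block decomposition. And the Humphreys reference you want is the translation-of-simples theorem (Theorem~7.9 together with the upper-closure refinement of \S7.14--7.16 in his book), not Theorem~7.6 which handles the case where $\lambda$ and $\mu$ lie in the same facet; you hedge this appropriately, but it is worth fixing.
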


\begin{proof}
\cite[Corollary 4.2.24]{me}
\end{proof}

\begin{myprop}
\label{translationfunctorsprop}
Suppose $\lambda, \mu \in \mathfrak{h}_{K}^{*}$ with $\lambda(p^{n}\mathfrak{h}_{R}) \subseteq R$, $\mu(p^{n}\mathfrak{h}_{R}) \subseteq R$ such that $\lambda - \mu$ is a dominant integral weight. Suppose $\lambda, \mu$ are in the same facet. Then: 
\begin{itemize}
\item If condition (1) holds for $\mu$, then condition (1) holds for $\lambda$.

\item If condition (2) holds for $\mu$, then condition (2) holds for $\lambda$.
\end{itemize}
\end{myprop}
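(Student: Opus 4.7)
The plan is to use the translation-functor embedding of Lemma \ref{translationfunctionlem} together with a Hopf-algebraic untwisting of the diagonal $KN$-action on a tensor product, in order to transfer injectivity of the relevant multiplication maps from $\mu$ to $\lambda$. First, since $\lambda$ and $\mu$ lie in the same facet, $\Delta_\lambda = \Delta_\mu$ (immediate from $\Phi_\lambda^+ = \Phi_\mu^+$ together with $\rho(h_\alpha) = 1$ for simple $\alpha$), so $\mathfrak{p} := \mathfrak{p}_\lambda = \mathfrak{p}_\mu$ and $N := N_\lambda = N_\mu$. Lemma \ref{translationfunctionlem} places $L(\lambda)$ inside $\widehat{L(\mu)} \otimes_K L(\lambda - \mu)$ as the $\ug$-submodule generated by $v_\mu \otimes v_{\lambda - \mu}$; since this generator lies in the $\ug$-stable subspace $L(\mu) \otimes L(\lambda - \mu)$, the embedding in fact factors as a $\ug$-embedding $L(\lambda) \hookrightarrow L(\mu) \otimes L(\lambda - \mu)$. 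Restricting to the $\mathfrak{p}$-submodule $\up v_\lambda$ gives a $\mathfrak{p}$-embedding $V_\lambda \hookrightarrow V_\mu \otimes L(\lambda - \mu)$. Applying the flat completion functor and the identification $\widehat{L(\mu) \otimes L(\lambda - \mu)} \cong \widehat{L(\mu)} \otimes L(\lambda - \mu)$ (valid since $L(\lambda - \mu)$ is finite-dimensional) yields a $\Ug$-embedding $\widehat{L(\lambda)} \hookrightarrow \widehat{L(\mu)} \otimes L(\lambda - \mu)$.

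Through these embeddings, condition (1) for $\lambda$ reduces to injectivity of the diagonal-action map
\[
\psi_{\mathrm{diag}} : KN \otimes V_\mu \otimes L(\lambda - \mu) \to \widehat{L(\mu)} \otimes L(\lambda - \mu), \qquad g \otimes v \otimes w \mapsto gv \otimes gw
\]
(for $g \in N$, extended $K$-linearly and continuously to $KN$). On the other hand, condition (1) for $\mu$ tensored over $K$ with the finite-dimensional $L(\lambda - \mu)$ gives injectivity of the first-factor map $\psi_{\mathrm{first}} : u \otimes v \otimes w \mapsto uv \otimes w$. I would then compare these via a $K$-linear automorphism $\eta$ of $KN \otimes V_\mu \otimes L(\lambda - \mu)$, characterised on group elements by $\eta(g \otimes v \otimes w) = g \otimes v \otimes gw$ (with inverse $g \otimes v \otimes g^{-1}w$) and extended continuously using the Hopf-algebra structure of $KN$ (in Sweedler notation, $\eta(u \otimes v \otimes w) = \sum u^{(1)} \otimes v \otimes u^{(2)} w$). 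A direct check gives $\psi_{\mathrm{first}} \circ \eta = \psi_{\mathrm{diag}}$ on group elements, and both sides are continuous, so the identity extends to all of $KN$; hence $\psi_{\mathrm{diag}}$ and $\psi_{\mathrm{first}}$ have the same kernel, and $\psi_{\mathrm{diag}}$ is injective. Condition (2) is then handled identically, with $V_\lambda, V_\mu$ replaced by $L(\lambda), L(\mu)$ throughout, using the $\ug$-embedding $L(\lambda) \hookrightarrow L(\mu) \otimes L(\lambda-\mu)$.

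The principal technical obstacle will be the rigorous extension of $\eta$ from $K[N]$ to $KN$: the coproduct on $KN$ takes values in a completed tensor product, so the Sweedler-style formula for $\eta$ lives a priori in that completion rather than in the algebraic tensor product $KN \otimes V_\mu \otimes L(\lambda - \mu)$. Because $L(\lambda - \mu)$ is finite-dimensional and integrable, however, the action $KN \to \mathrm{End}_K L(\lambda - \mu)$ is a continuous algebra homomorphism with finite-dimensional image; choosing a $K$-basis of $L(\lambda - \mu)$ then reduces the definition of $\eta$ to finitely many continuous operations on $KN$, producing a genuine element of the algebraic tensor product and a well-defined continuous $K$-linear bijection.
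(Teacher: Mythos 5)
The paper's proof of this proposition is simply a citation of the author's thesis (\cite[Proposition 4.2.21]{me}), so there is no argument in this paper to compare against line-by-line. That said, your proposal reads as a correct self-contained proof, and the translation-functor embedding you start from (Lemma \ref{translationfunctionlem}) is exactly the ingredient the paper places right before the cited proposition, so the framework is certainly the intended one.

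Checking the details: the deduction $\Delta_\lambda = \Delta_\mu$ from "same facet" is right, since for simple $\alpha$ we have $\langle\lambda+\rho,\alpha^\vee\rangle = \lambda(h_\alpha)+1$, so $\alpha\in\Phi_\lambda^+ \iff \alpha\in\Delta_\lambda$; hence $\mathfrak{p}$, $\mathfrak{n}$, $N$, $P$ agree for $\lambda$ and $\mu$. The factorisation of the Lemma \ref{translationfunctionlem} embedding through $L(\mu)\otimes L(\lambda-\mu)$ and the restriction to $\up$-submodules to obtain $V_\lambda\hookrightarrow V_\mu\otimes L(\lambda-\mu)$ are both sound. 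Your factorisation $\psi_{\mathrm{first}}\circ\eta = \psi_{\mathrm{diag}}$ is a correct Hopf-algebra identity; on group elements it is immediate, and you correctly note that both sides are continuous and $K[N]$ is dense (using that $V_\mu$, $L(\lambda-\mu)$ are finite-dimensional), so the identity extends. Your proposed resolution of the technical obstacle — that the $KN$-action on a fixed $K$-basis of the finite-dimensional $L(\lambda-\mu)$ factors through finitely many continuous linear functionals $KN\to K$, so that $(1\otimes\rho)\circ\Delta$ lands in the algebraic $KN\otimes\End_K L(\lambda-\mu)$ rather than the completed tensor product — is exactly the right point and does work; the inverse formula with the antipode makes $\eta$ bijective by the standard Hopf axioms.

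Two small remarks. First, you say "condition (1) for $\lambda$ reduces to injectivity of $\psi_{\mathrm{diag}}$"; strictly speaking it reduces to injectivity of $\psi_{\mathrm{diag}}$ restricted to the image of $KN\otimes V_\lambda$ inside $KN\otimes V_\mu\otimes L(\lambda-\mu)$, but since you prove the stronger statement (injectivity on the whole space) this is harmless. Second, the identification $\widehat{L(\mu)\otimes L(\lambda-\mu)} \cong \widehat{L(\mu)}\otimes L(\lambda-\mu)$ as $\Ug$-modules deserves a sentence of justification (choose a good filtration $\Gamma_i L(\mu)\otimes\Gamma_0 L(\lambda-\mu)$ and apply \cite[Proposition 2.2.12]{me}), but this is routine. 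Overall your argument is correct and, as far as one can tell from the paper alone, consistent with the intended approach.
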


\begin{proof}
\cite[Proposition 4.2.21]{me}
\end{proof}

One of the most difficult cases in which to prove Theorem \ref{maintheoremvariant} is that of regular integral weights. We will later see, using Proposition \ref{simplerefsprop}, that it is sufficient to consider only the regular integral weights of the form $s_{\alpha_{1}} \cdot \mu$ where $\mu$ is dominant integral and $\alpha_{1} \in \Delta$. So we want to consider integral weights $\lambda$ where $\Phi_{\lambda}^{+} = \Phi^{+} \setminus \alpha_{1}$ for some $\alpha_{1} \in \Delta$.

We will want to apply a further restriction on our simple root $\alpha_{1}$ that any $\alpha \in \Phi^{+}$ has $\alpha_{1}$-coefficient at most $1$. This implies that the corresponding nilpotent subalgebra $\mathfrak{n}_{ \Delta \setminus \alpha_{1} } = \Of \{ f_{\alpha} \mid \alpha_{1} \in \alpha \}$  is abelian (recall the notation $\alpha_{1} \in \alpha$ means $\alpha$ has a non-zero $\alpha_{1}$-coefficient). If $\alpha, \beta \in \mathfrak{n}_{ \Delta \setminus \alpha_{1}}$, then both $\alpha, \beta$ have $\alpha_{1}$-coefficient $1$. This means $\alpha + \beta \notin \Phi^{+}$ since it has $\alpha_{1}$-coefficient $2$, and hence $[f_{\alpha},f_{\beta}] = 0$. 

\begin{myprop}
\label{abeliantranslationfunctorprop}
Suppose $\alpha_{1} \in \Delta$ such that all $\alpha \in \Phi^{+}$ have $\alpha_{1}$-coefficient $0$ or $1$. 

Suppose $\lambda \in \mathfrak{h}_{K}^{*}$ is an integral weight such that $\Phi_{\lambda}^{+} = \Phi^{+} \setminus \alpha_{1}$. Suppose $\omega_{1} \in \mathfrak{h}_{K}^{*}$ is the fundamental weight corresponding to $\alpha_{1}$, so $\omega_{1}(h_{\alpha_{1}}) = 1$, $\omega_{1}(h_{\alpha}) = 0$ for $\alpha \in \Delta \setminus \alpha_{1}$.

Then if condition (2) holds for $- \omega_{1}$, condition (2) also holds for $\lambda$.
\end{myprop}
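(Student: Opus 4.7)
The plan is to transfer condition (2) from $-\omega_{1}$ to $\lambda$ by combining Proposition \ref{translationfunctorsprop} and Lemma \ref{translationfunctionlem}. The first preparatory step is to identify the facet containing $-\omega_{1}$. Since $\mathfrak{g}$ is simply laced, for any $\alpha = \sum c_{i}\alpha_{i} \in \Phi^{+}$ we have $h_{\alpha} = \sum c_{i} h_{\alpha_{i}}$ and hence $\omega_{1}(h_{\alpha}) = c_{1} \in \{0,1\}$ by hypothesis. Combined with $\langle \rho, \alpha^{\vee} \rangle = \sum c_{i} \geq 1$, with equality iff $\alpha$ is simple, this gives $\langle -\omega_{1} + \rho, \alpha^{\vee} \rangle \geq 1$ for every $\alpha \in \Phi^{+} \setminus \alpha_{1}$ and $= 0$ for $\alpha = \alpha_{1}$. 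Thus $\Phi_{-\omega_{1}}^{+} = \Phi^{+} \setminus \alpha_{1}$, $\Phi_{-\omega_{1}}^{0} = \{\alpha_{1}\}$, and $\Phi_{-\omega_{1}}^{-} = \emptyset$.

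If $\lambda(h_{\alpha_{1}}) = -1$, then $\lambda$ lies in the same facet as $-\omega_{1}$, and $\lambda + \omega_{1}$ is dominant integral, since it evaluates to $0$ on $h_{\alpha_{1}}$ and to $\lambda(h_{\alpha}) \geq 0$ on $h_{\alpha}$ for $\alpha \in \Delta \setminus \alpha_{1}$ (the nonnegativity coming from $\alpha \in \Phi_{\lambda}^{+}$). Proposition \ref{translationfunctorsprop} then immediately gives condition (2) for $\lambda$ from condition (2) for $-\omega_{1}$.

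If instead $\lambda(h_{\alpha_{1}}) \leq -2$, set $k = -\lambda(h_{\alpha_{1}}) - 1 \geq 1$ and $\mu = \lambda + k\omega_{1}$. Then $\mu(h_{\alpha_{1}}) = -1$, so $\mu$ falls into the previous case and condition (2) holds for $\mu$. Moreover $\mu$ lies in the upper closure of $\lambda$'s facet: $\Phi_{\mu}^{+} = \Phi^{+} \setminus \alpha_{1} = \Phi_{\lambda}^{+}$ and $\Phi_{\mu}^{-} = \emptyset \subseteq \Phi_{\lambda}^{-}$. Lemma \ref{translationfunctionlem} therefore supplies an embedding
\[
L(\mu) \hookrightarrow \widehat{L(\lambda)} \otimes_{K} L(k\omega_{1})
\]
identifying $v_{\mu}$ with $v_{\lambda} \otimes v_{k\omega_{1}}$; since $L(k\omega_{1})$ is finite-dimensional, this extends to $\widehat{L(\mu)} \hookrightarrow \widehat{L(\lambda)} \otimes_{K} L(k\omega_{1})$.

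The main obstacle is the descent of condition (2) from $\mu$ to $\lambda$ through this embedding, since the $KN$-action on $\widehat{L(\lambda)} \otimes L(k\omega_{1})$ is by the coproduct rather than the pure left action on the first factor. My intended approach exploits that for $u \in KN$ and $x \in \widehat{L(\lambda)}$ the coproduct action has the form $u \cdot (x \otimes v_{k\omega_{1}}) = (u \cdot x) \otimes v_{k\omega_{1}} + y$, where $y$ lies in $\widehat{L(\lambda)}$ tensored with the span of weight vectors of $L(k\omega_{1})$ strictly below $v_{k\omega_{1}}$; projecting onto the $v_{k\omega_{1}}$-component therefore recovers the pure action $u \cdot x$. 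Given a hypothetical nonzero $z = \sum u_{i} \otimes w_{i}$ in the kernel of $KN \otimes L(\lambda) \to \widehat{L(\lambda)}$, the plan is to lift $z$ to a corresponding element of $KN \otimes \widehat{L(\mu)}$ through the embedding, then use commutativity of $\mathfrak{n}$ together with a weight-filtration argument on the lower-weight corrections to extract a nonzero kernel element of $KN \otimes L(\mu) \to \widehat{L(\mu)}$, contradicting condition (2) for $\mu$. Arranging this projection and lift compatibly with the completion topology is the technical crux.
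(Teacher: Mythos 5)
Your opening analysis (identifying the facet of $-\omega_{1}$ and handling the case $\lambda(h_{\alpha_{1}}) = -1$ via Proposition \ref{translationfunctorsprop}) is correct and matches the paper. The choice of $\mu = \lambda + k\omega_{1}$ and the identification of Lemma \ref{translationfunctionlem} as the right tool are also on the right track. However, the step you defer as "the technical crux" is not a technical detail to be tidied up — it is the entire content of the proposition, and the approach you sketch will not work as stated. The problem is what the kernel elements actually look like. By \cite[Proposition 4.2.13]{me} a failure of condition (2) for $\lambda$ yields a relation $(x_{0} + \sum x_{i}a_{i})v_{\lambda} = 0$ with $x_{i} \in KN$ but $a_{i} \in U(\mathfrak{n}^{-}_{K})$, \emph{not} in $\un$. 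Your "projection onto the $v_{k\omega_1}$-component" idea, and more generally Lemma \ref{abeliantranslationlem}, only controls the behaviour of elements of $\Un$ (or $\un$) under the coproduct action; it gives you nothing for the $a_{i}$, which live in the full $U(\mathfrak{n}^{-}_{K})$ and do \emph{not} commute with $\mathfrak{n}$. The "commutativity of $\mathfrak{n}$" you invoke is real, but it is not applicable to the $a_{i}$, and this is precisely where the difficulty lies.

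The paper resolves this by first reducing to the scalar case: since $s_{\alpha_{1}}\cdot a\omega_{1}$ (with $a = (s_{\alpha_{1}}\cdot\lambda)(h_{\alpha_{1}})$) lies in the same facet as $\lambda$ with $\lambda - s_{\alpha_{1}}\cdot a\omega_{1}$ dominant integral, Proposition \ref{translationfunctorsprop} lets one assume $\lambda = s_{\alpha_{1}}\cdot a\omega_{1}$, i.e.\ $\lambda(h_{\alpha}) = 0$ for $\alpha \neq \alpha_{1}$. This is exactly the hypothesis required for Corollary \ref{makestuffcommutativecor}, which says that multiplying by a large power of $f_{\alpha_{1}}$ pushes the problematic $a_{i}v_{\lambda}$ into $\un v_{\lambda}$: one gets $(x_{0}f_{\alpha_{1}}^{r} + \sum x_{i}b_{i})v_{\lambda} = 0$ with everything now inside $\Un$. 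Only then does Lemma \ref{abeliantranslationlem} apply, producing $(x_{0}f_{\alpha_{1}}^{r} + \sum x_{i}b_{i})^{l}v_{\lambda'} = 0$; expanding (using that $\un$ is commutative) and applying condition (2) for $\lambda'$ forces $x_{0}^{l} = 0$, hence $x_{0} = 0$ since $KN$ is a domain. Your plan omits both the scalar reduction and Corollary \ref{makestuffcommutativecor}, so the "lift and project" step has no mechanism to handle the non-$\un$ parts of the kernel element; as written it is a genuine gap, not a deferred routine verification.
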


\noindent This is very helpful for proving that condition (2) holds in the setting $\Phi^{+}_{\lambda} = \Phi^{+} \setminus \alpha_{1}$, because it means we only need to prove it with the further restriction that $M_{\Delta_{\lambda}}(\lambda)$ is a scalar generalised Verma module. We will see later that this case is easier to work with.

\bigskip

\noindent
\textbf{For the rest of this section, fix a simple root $\alpha_{1} \in \Delta$, and make the assumption that all $\alpha \in \Phi^{+}$ have $\alpha_{1}$-coefficient $0$ or $1$. }

Also write $\mathfrak{n}$ for the corresponding nilpotent subalgebra (which is spanned by all $f_{\alpha}$ with $\alpha_{1} \in \alpha$). So $\mathfrak{n}$ is abelian. Also write $N$ for the corresponding subgroup of $G$.

Furthermore, write $\omega_{1}$ for the fundamental weight corresponding to $\alpha_{1}$ (that is, $\omega_{1} \in \mathfrak{h}_{K}^{*}$ with $\omega_{1}(h_{\alpha_{1}}) = 1$ and $\omega_{1}(h_{\alpha}) = 0$ for $\alpha \in \Delta \setminus \alpha_{1}$.

Our basic approach will be first to show that condition (2) for $- \omega_{1}$ implies condition (2) for weights of the form $s_{\alpha_{1}} \cdot a \omega_{1}$ for $a \in \mathbb{N}_{0}$, and then to show that this implies condition (2) in the general setting $\Phi^{+}_{\lambda} = \Phi^{+} \setminus \alpha_{1}$.

\begin{mylem}
\label{abeliantranslationlem}
Suppose $\Phi$ is any indecomposable root system, and $\lambda,\mu \in \mathfrak{h}_{K}^{*}$ such that $\lambda - \mu$ is a dominant integral weight, and $\lambda$ is in the upper closure of the facet of $\Phi$. Suppose $\mathfrak{n} \subseteq \mathfrak{n}^{-}$ is any abelian subalgebra. Then there exists $l \in \mathbb{N}$ such that if $x_{1},\dots,x_{l} \in \un$ with $x_{i} v_{\mu} =0$ for all $i$, then $x_{1} \dots x_{l} v_{\lambda} = 0$.

Moreover, if $\lambda(p^{n}\mathfrak{h}_{R}) \subseteq R$, $\mu(p^{n}\mathfrak{h}_{R}) \subseteq R$ and $x \in \Un$ such that $x v_{\mu} = 0$ in $\widehat{L(\mu)}$, then $x^{l} v_{\lambda} = 0$ in $\widehat{L(\lambda)}$.
\end{mylem}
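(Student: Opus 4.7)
The plan is to use Lemma \ref{translationfunctionlem} to embed $L(\lambda)$ into $\widehat{L(\mu)} \otimes_K L(\lambda - \mu)$ via $v_\lambda \mapsto v_\mu \otimes v_{\lambda - \mu}$, and then exploit the fact that $\mathfrak{n}$ being abelian makes $\un$ a commutative Hopf algebra. Since $\mathfrak{n}$ consists of weight-lowering operators, each $y \in \mathfrak{n}$ acts nilpotently on the finite-dimensional $L(\lambda - \mu)$; combined with commutativity of $\un$, this yields some $N \in \mathbb{N}$ such that $(\mathfrak{n} \un)^N$ annihilates $L(\lambda - \mu)$. I will take $l = N$.

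For the first part, let $I = \Ann_{\un}(v_\mu)$. Since $v_\mu$ is a highest-weight vector, $I \subseteq \mathfrak{n} \un = \ker \epsilon$, and $I$ is an ideal because $\un$ is commutative. For $x_1, \dots, x_l \in I$, I compute $(x_1 \cdots x_l)(v_\mu \otimes v_{\lambda - \mu})$ via the coproduct. Writing each $\Delta(x_i) = x_i \otimes 1 + 1 \otimes x_i + r_i$ with $r_i \in \ker\epsilon \otimes \ker\epsilon$ and expanding the product $\prod_i \Delta(x_i)$ in the commutative algebra $\un \otimes \un$, every summand has the form $U \otimes V$. A counting argument then shows that either at least $N$ of the $l$ factors contribute a $\ker\epsilon$-term to the right side, so that $V \in (\mathfrak{n} \un)^N$ annihilates $v_{\lambda - \mu}$, or else some factor is of the form $x_i \otimes 1$, so that $U$ contains $x_i \in I$ as a factor and hence $U \in I$ annihilates $v_\mu$. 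Either way the summand vanishes, giving $x_1 \cdots x_l v_\lambda = 0$ in $L(\lambda)$.

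For the moreover, the coproduct extends to a continuous $\widehat{\Delta}\colon \Un \to \Un \,\widehat{\otimes}\, \Un$ because the binomial coefficients appearing in $\Delta((p^n y)^I)$ are $p$-adic integers. If $x \in \Un$ with $x v_\mu = 0$, then $x$ lies in the augmentation ideal $\widehat{I}_0$ of $\Un$, and $\widehat{I}_0^N$ still annihilates $L(\lambda - \mu)$: the action of $\Un$ on the finite-dimensional $L(\lambda - \mu)$ factors through a quotient in which $\widehat{I}_0$ is nilpotent, since terms $c_I (p^n y)^I$ with $|I| \geq N$ already act as zero. Running the same expansion argument for $\widehat{\Delta}(x)^l = \widehat{\Delta}(x^l)$ yields $x^l(v_\mu \otimes v_{\lambda - \mu}) = 0$, and identifying $\widehat{L(\lambda)}$ inside $\widehat{L(\mu)} \otimes L(\lambda - \mu)$ (the latter is complete as a finite direct sum of copies of $\widehat{L(\mu)}$) gives $x^l v_\lambda = 0$ in $\widehat{L(\lambda)}$.

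The main obstacle is rigorously handling the topological details in the moreover: establishing continuity of $\widehat{\Delta}$, identifying $\widehat{L(\lambda)}$ as a topological submodule of $\widehat{L(\mu)} \otimes L(\lambda - \mu)$, and matching the $\Un$-action on the tensor product with the diagonal action through $\widehat{\Delta}$. The algebraic heart of the argument---commutativity of $\mathfrak{n}$ making $\Ann(v_\mu)$ an ideal, combined with nilpotence of the augmentation ideal on $L(\lambda - \mu)$---carries over unchanged between the two parts.
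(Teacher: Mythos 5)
Your first paragraph correctly identifies the key embedding $L(\lambda) \hookrightarrow L(\mu) \otimes_K L(\lambda-\mu)$ via Lemma \ref{translationfunctionlem}, and your argument for the first claim is sound and essentially equivalent to the paper's. The paper works with the filtration $L(\lambda-\mu)_{i}$ by $\mathfrak{n}^{-}$-degree and shows directly that if $x v_{\mu} = 0$ then acting by $x$ pushes $\un v_{\mu} \otimes L(\lambda-\mu)_{i}$ into $\un v_{\mu} \otimes L(\lambda-\mu)_{i+1}$, using commutativity of $\un$ to kill the leading term $a x v_{\mu} \otimes v$; your coproduct expansion with the ideal $I = \Ann_{\un}(v_{\mu}) \subseteq \ker\epsilon$ and the counting on $\ker\epsilon$-factors lands at exactly the same place by different bookkeeping. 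Both take $l$ to be the nilpotency degree of the augmentation ideal acting on the finite-dimensional module $L(\lambda-\mu)$.

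For the ``moreover'' you over-engineer and, as you yourself flag, leave genuine gaps: setting up a completed coproduct $\widehat{\Delta}\colon \Un \to \Un\,\widehat{\otimes}\,\Un$, proving its continuity, identifying $\widehat{L(\lambda)}$ topologically inside $\widehat{L(\mu)} \otimes L(\lambda-\mu)$, and matching the $\Un$-action with the diagonal action through $\widehat{\Delta}$ all require nontrivial work that you have not carried out. The paper avoids all of this with a one-line device: decompose $x = \sum_{\nu} x_{\nu}$ into its weight components in $\Un$. Since $v_{\mu}$ is a weight vector and $x v_{\mu} = 0$, each weight component separately satisfies $x_{\nu} v_{\mu} = 0$, and each $x_{\nu}$ lies in $\un$. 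Then $x^{l}$ is a convergent sum of monomials $x_{\nu_{1}} \cdots x_{\nu_{l}}$, each of which annihilates $v_{\lambda}$ by the first part, so $x^{l} v_{\lambda} = 0$ by continuity of the action on $\widehat{L(\lambda)}$. You should replace your completed-Hopf-algebra machinery with this weight-component reduction; it is both shorter and dodges every topological subtlety you identified.
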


\begin{proof}
By Lemma \ref{translationfunctionlem}, $L(\lambda)$ can be considered as the submodule of $L(\mu) \otimes_{K} L(\lambda - \mu)$ generated by $v_{\mu} \otimes v_{\lambda - \mu}$. We identify $v_{\lambda}$ with $v_{\mu} \otimes v_{\lambda - \mu}$.

Now define a descending $K$-linear filtration of $L(\lambda - \mu)$ by $L(\lambda - \mu)_{i} =  \{y_{1} \dots y_{j} v_{\lambda - \mu} \mid y_{1},\dots,y_{j} \in \mathfrak{n}^{-}, j \geq i \}$.

Since $\lambda - \mu$ is dominant integral, $L(\lambda - \mu)$ is finite-dimensional. So choose $l$ large enough that $L(\lambda - \mu)_{l} = 0$.

Now we will show that if $x \in \un$ with $x v_{\mu} = 0$, then $x \cdot ( \un v_{\mu} \otimes L(\lambda - \mu)_{i}) \subseteq \un v_{\mu} \otimes L(\lambda - \mu)_{i+1}$ for each $i$. 

Note that if $f \in \mathfrak{n}$, $a \in \un$ and $v \in L(\lambda - \mu)_{i}$, then $f \cdot (a v_{\mu} \otimes v) = (f \cdot a v_{\lambda}) \otimes v + a v_{\mu} \otimes f v \in a f v_{\mu} \otimes v + \un v_{\lambda} \otimes L(\lambda - \mu)_{i+1}$, using the definition of the action of $\ug$ on $L(\mu) \otimes L(\lambda - \mu)$ together with the fact that $\mathfrak{n}$ is abelian.

Applying this, we can see that if $a \in \un, v \in L(\lambda - \mu)$ and $x \in \un$ with $x v_{\mu} = 0$, then $x \cdot (a v_{\mu} \otimes v) \in a x v_{\mu} \otimes v + \un v_{\mu} \otimes L(\lambda - \mu)_{i+1} = \un v_{mu} \otimes L(\lambda - \mu)_{i+1}$.

Thus if $x_{1},\dots, x_{l} \in \un$ with $x_{i} v_{\mu} = 0$, then $x_{1} \dots x_{l} v_{\mu} \otimes v_{\lambda - \mu} \in \un v_{\mu} \otimes L(\lambda - \mu)_{l} = 0$.

Now suppose $x \in \Un$ such that $x v_{\mu} = 0$. Say $x = \sum_{\nu} x_{\nu}$ as a sum of weight components. So $x_{\nu} v_{\mu} = 0$ for each $\nu$. Then for any weights $\nu_{1},\dots,\nu_{l}$, we have $x_{\nu_{1}} \dots x_{\nu_{l}} v_{\lambda} = 0$. Since $x^{l}$ is a convergent sum of terms of the form $x_{\nu_{1}} \dots x_{\nu_{l}}$, it follows that $x^{l} v_{\lambda} = 0$.
\end{proof}

\noindent The following technical lemma motivates our choice to work with weights of the form $s_{\alpha_{1}} \cdot a \omega_{1}$, $a \in \mathbb{N}_{0}$.

\begin{mylem}
\label{makestuffcommutativelemma}
Suppose $\lambda = s_{\alpha_{1}} \cdot a \omega_{1}$ with $a \in \mathbb{N}_{0}$. Then for any $\beta \in \Phi^{+}$, $f_{\alpha_{1}} f_{\beta} \un v_{\lambda} \subseteq \un v_{\lambda}$ in $L(\lambda)$.
\end{mylem}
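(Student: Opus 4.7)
The plan is to first reduce, via the abelianness of $\un$ (so $f_{\alpha_1}$ commutes with all $u \in \un$) and the bracket relation $[\mathfrak{p}, \mathfrak{n}] \subseteq \mathfrak{n}$, to the single statement that $f_{\alpha_1} f_\beta v_\lambda \in \un v_\lambda$ for each $\beta \in \Phi^+$. Indeed, for any $u \in \un$ one has $[f_\beta, u] \in \un$, so $f_{\alpha_1} f_\beta u v_\lambda = u f_{\alpha_1} f_\beta v_\lambda + f_{\alpha_1}[f_\beta, u] v_\lambda$ lies in $\un v_\lambda$ as soon as $f_{\alpha_1} f_\beta v_\lambda$ does. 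If $\alpha_1 \in \beta$ this is immediate since $f_\beta \in \un$, so I am left with the case $\alpha_1 \notin \beta$, which I split according to whether $\alpha_1 + \beta$ is a root.

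Set $J = \{\alpha \in \Delta : \alpha$ is not adjacent to $\alpha_1\}$; a direct calculation from $\lambda = s_{\alpha_1} \cdot a\omega_1$ shows $\lambda(h_\alpha) = 0$ for every $\alpha \in J$, so $f_\alpha v_\lambda = 0$ in $L(\lambda)$. When $\alpha_1 + \beta \notin \Phi$ the root $\beta$ lies inside the subsystem $\Phi_J$, and because each simple root in $J$ commutes with $f_{\alpha_1}$ at the Lie-bracket level and acts trivially on $\mathfrak{h}$ against $\alpha_1$, both $v_\lambda$ and $f_{\alpha_1} v_\lambda$ are trivial one-dimensional $\mathfrak{g}_J$-modules. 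Hence $f_\beta f_{\alpha_1} v_\lambda$ is a scalar multiple of $f_{\alpha_1} v_\lambda$, and by weight considerations it must vanish; combined with $[f_{\alpha_1}, f_\beta] = 0$ this gives $f_{\alpha_1} f_\beta v_\lambda = 0 \in \un v_\lambda$.

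The remaining case, $\alpha_1 \notin \beta$ with $\alpha_1 + \beta \in \Phi^+$, I would prove by induction on the height of $\beta$. For the base case $\beta$ is a simple root adjacent to $\alpha_1$; computing $\langle \lambda + \rho, (\alpha_1 + \beta)^\vee\rangle = 1$, Verma's theorem provides a singular vector in $M(\lambda)$ at weight $\lambda - \alpha_1 - \beta$, and the weight space there is two-dimensional, spanned by $f_\beta f_{\alpha_1} v_\lambda$ and $f_{\alpha_1 + \beta} v_\lambda$. An explicit $\mathfrak{sl}_3$-computation (imposing that $e_{\alpha_1}$ and $e_\beta$ annihilate the vector) pins down the linear combination, whose vanishing in $L(\lambda)$ forces $f_\beta f_{\alpha_1} v_\lambda \in K \cdot f_{\alpha_1 + \beta} v_\lambda \subseteq \un v_\lambda$. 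For the inductive step with $\beta$ non-simple I decompose $\beta = \beta' + \alpha_i$ with $\beta' \in \Phi^+$ and $\alpha_i \in J$; writing $f_\beta = c[f_{\beta'}, f_{\alpha_i}]$ via Chevalley, the term $f_{\alpha_1} f_{\beta'} f_{\alpha_i} v_\lambda$ vanishes because $f_{\alpha_i} v_\lambda = 0$, and the term $f_{\alpha_1} f_{\alpha_i} f_{\beta'} v_\lambda = f_{\alpha_i}(f_{\alpha_1} f_{\beta'} v_\lambda)$ is controlled by the inductive hypothesis applied to $\beta'$ together with the commutator $[f_{\alpha_i}, f_{\alpha_1 + \beta'}] = \pm f_{\alpha_1 + \beta}$. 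The main obstacle is exactly this inductive step: one must verify that for every non-simple $\beta$ arising here a decomposition $\beta = \beta' + \alpha_i$ with $\alpha_i \in J$ actually exists, which amounts to a combinatorial check on the positive roots of $D_m$ of the shape $e_2 \pm e_j$, and one must track Chevalley signs carefully enough that the final answer sits in the one-dimensional weight space $K \cdot f_{\alpha_1 + \beta} v_\lambda$.
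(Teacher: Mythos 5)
Your proof takes a genuinely different route from the paper's, and it is both longer and less general. The paper avoids all case analysis by working in the scalar generalised Verma module $M_{\Delta\setminus\alpha_1}(a\omega_1)$: there, $w_\lambda := f_{\alpha_1}^{a+1}w_{a\omega_1}$ is a singular vector of weight $\lambda$, and since $f_\beta w_{a\omega_1}=0$ whenever $\alpha_1\notin\beta$ and $\un$ is abelian, a single Leibniz expansion of $f_\beta f_{\alpha_1}^{a+1}$ gives the identity
\[
f_{\alpha_1}f_\beta w_\lambda=(a+1)[f_\beta,f_{\alpha_1}]w_\lambda,
\]
uniformly in $\beta$, which then descends to $L(\lambda)$. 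No dichotomy on whether $\alpha_1+\beta$ is a root, no $\mathfrak{sl}_3$/Verma singular-vector computation, no induction on height, and no dependence on $\Phi$ being of type $D$ — the argument works verbatim for any indecomposable $\Phi$ with the cominuscule hypothesis on $\alpha_1$.

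Your outline is nevertheless sound. The reduction to $f_{\alpha_1}f_\beta v_\lambda\in\un v_\lambda$ is the same as the paper's final step; the $\Phi_J$-triviality argument is correct (using that $(\alpha_1,\beta)=0$ together with $\alpha_1\notin\beta$ rules out $\alpha_1+\beta\in\Phi$); and the $\mathfrak{sl}_3$ base case does give a singular vector $(a+1)f_\beta f_{\alpha_1}v_\lambda-(a+2)f_{\alpha_1}f_\beta v_\lambda$ with both coefficients nonzero, yielding $f_{\alpha_1}f_\beta v_\lambda\in K\cdot f_{\alpha_1+\beta}v_\lambda$. The step you flag as the main obstacle is, however, a real gap in what you have written. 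The decomposition $\beta=\beta'+\alpha_i$ with $\alpha_i\in J$ and $\beta'\in\Phi^+$ does exist for every relevant $\beta$ in $D_m$: the roots in question are $\beta=e_2-e_j$ $(j\geq 4)$, for which subtract $\alpha_{j-1}$; $\beta=e_2+e_j$ $(3\leq j\leq m-1)$, subtract $\alpha_j$; and $\beta=e_2+e_m$, subtract $\alpha_m$. In each case $\alpha_i$ is orthogonal to $\alpha_1$ and the difference is a root. But you asserted this needs checking without doing the check, so the proposal is incomplete as it stands. Note also that the paper's closed-form answer $(a+1)[f_\beta,f_{\alpha_1}]$ already packages the scalar your induction would have to propagate through Chevalley constants, which neatly sidesteps the sign-tracking concern you raise at the end.
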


\begin{mycor}
\label{makestuffcommutativecor}
Suppose $\lambda = s_{\alpha_{1}} \cdot a \omega_{1}$ with $a \in \mathbb{N}_{0}$. Then for any $\beta_{1},\dots,\beta_{r} \in \Phi^{+}$, $f_{\alpha_{1}}^{r} f_{\beta_{1}} \dots f_{\beta_{r}} \un v_{\lambda} \subseteq \un v_{\lambda}$ in $L(\lambda)$.
\end{mycor}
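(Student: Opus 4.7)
\noindent The plan is to prove, by induction on $r$, the strengthened assertion that $f_{\alpha_{1}}^{r} f_{\beta_{1}} \dots f_{\beta_{s}} \un v_{\lambda} \subseteq \un v_{\lambda}$ for all $\beta_{1}, \dots, \beta_{s} \in \Phi^{+}$ with $s \leq r$; the corollary is the special case $s = r$. The base case $r = 0$ is vacuous, and whenever $s = 0$ the assertion is immediate because $f_{\alpha_{1}} \in \mathfrak{n}$ forces $f_{\alpha_{1}}^{r} \un \subseteq \un$. So I may assume $r, s \geq 1$.

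\noindent For the inductive step, I would move one copy of $f_{\alpha_{1}}$ rightward past $f_{\beta_{1}} \dots f_{\beta_{s-1}}$ so that it lands next to $f_{\beta_{s}}$, where Lemma \ref{makestuffcommutativelemma} can be applied. The Chevalley relations give each $[f_{\alpha_{1}}, f_{\beta_{j}}]$ as either $0$ or a scalar multiple of $f_{\alpha_{1} + \beta_{j}}$; in the latter case $\alpha_{1} \in \alpha_{1} + \beta_{j}$, so $f_{\alpha_{1} + \beta_{j}} \in \mathfrak{n}$. The rewriting yields
\begin{equation*}
f_{\alpha_{1}}^{r} f_{\beta_{1}} \dots f_{\beta_{s}} = f_{\alpha_{1}}^{r-1} f_{\beta_{1}} \dots f_{\beta_{s-1}} (f_{\alpha_{1}} f_{\beta_{s}}) + \sum_{j=1}^{s-1} f_{\alpha_{1}}^{r-1} f_{\beta_{1}} \dots f_{\beta_{j-1}} z_{j} f_{\beta_{j+1}} \dots f_{\beta_{s-1}} f_{\beta_{s}},
\end{equation*}
with each $z_{j} \in \mathfrak{n}$. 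Applying Lemma \ref{makestuffcommutativelemma} to the pair $f_{\alpha_{1}} f_{\beta_{s}}$ reduces the leading term to $f_{\alpha_{1}}^{r-1} f_{\beta_{1}} \dots f_{\beta_{s-1}} \un v_{\lambda}$, which lies inside $\un v_{\lambda}$ by the induction hypothesis (with $r-1$ and $s-1 \leq r-1$).

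\noindent For each commutator term I would migrate $z_{j} \in \mathfrak{n}$ rightward past the remaining factors $f_{\beta_{j+1}}, \dots, f_{\beta_{s}}$ and any element of $\un$ at the end. The crucial observation is that for $z \in \mathfrak{n}$ and $\beta \in \Phi^{+}$, the bracket $[z, f_{\beta}]$ again lies in $\mathfrak{n} \cup \{0\}$: if $\alpha_{1} \in \beta$ then the candidate resulting root has $\alpha_{1}$-coefficient $2$ and so the bracket vanishes under the standing hypothesis, while if $\alpha_{1} \notin \beta$ the bracket is either zero or a root vector of $\alpha_{1}$-coefficient $1$, hence in $\mathfrak{n}$. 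Since $\un$ is abelian, $z_{j}$ and its descendants commute freely past any $u \in \un$. After the migration each commutator term becomes a finite sum of expressions of the form $f_{\alpha_{1}}^{r-1} f_{\gamma_{1}} \dots f_{\gamma_{l}} u' v_{\lambda}$ with $u' \in \un$ and $l \leq s - 1 \leq r - 1$, to which the induction hypothesis again applies.

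\noindent The main obstacle is bookkeeping the cascade of commutator terms generated when migrating $z_{j}$ through the tail of $f_{\beta}$'s; what makes the induction close is precisely the stability $[\mathfrak{n}, f_{\beta}] \subseteq \mathfrak{n} \cup \{0\}$ together with the abelianness of $\un$, which together ensure that every commutator strictly decreases the count of free $f_{\beta}$-factors while remaining in a position where the induction hypothesis can be reapplied.
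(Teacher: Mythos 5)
Your proof is correct. The overall strategy matches the paper's — induction on $r$, the $r=1$ input being Lemma \ref{makestuffcommutativelemma}, and the two structural facts that $[\mathfrak{n}, f_{\beta}] \subseteq \mathfrak{n} \cup \{0\}$ under the standing hypothesis and that $\un$ is abelian — but the direction of the rewriting is opposite to the paper's, and this is what makes your bookkeeping heavier. The paper writes $f_{\alpha_{1}}^{r} f_{\beta_{1}} \cdots f_{\beta_{r}} x v_{\lambda} = f_{\alpha_{1}} \cdot (f_{\alpha_{1}}^{r-1} f_{\beta_{1}}) \cdot f_{\beta_{2}} \cdots f_{\beta_{r}} x v_{\lambda}$ and pushes $f_{\beta_{1}}$ leftward through the block $f_{\alpha_{1}}^{r-1}$. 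Since the commutator $[f_{\alpha_{1}}, f_{\beta_{1}}]$ lies in $\mathfrak{n}$ and therefore commutes with $f_{\alpha_{1}}$, all $r-1$ cross terms collapse into the single term $(r-1)[f_{\alpha_{1}},f_{\beta_{1}}] f_{\alpha_{1}}^{r-1} f_{\beta_{2}} \cdots f_{\beta_{r}} x v_{\lambda}$, which is handled immediately by the inductive hypothesis (the number of $f_{\beta}$-factors also drops to $r-1$, so no strengthening of the statement is needed); the surviving swapped term $f_{\alpha_{1}} f_{\beta_{1}} \cdot (f_{\alpha_{1}}^{r-1} f_{\beta_{2}} \cdots f_{\beta_{r}} x v_{\lambda})$ is then finished off by the inductive hypothesis followed by the $r=1$ case. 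You instead push one $f_{\alpha_{1}}$ rightward through $f_{\beta_{1}} \cdots f_{\beta_{s-1}}$, which produces $s-1$ distinct commutators $z_{j} = [f_{\alpha_{1}}, f_{\beta_{j}}]$; because these do not in general commute with the remaining $f_{\beta_{k}}$ factors, each must be migrated further, generating a cascade, and the resulting terms have strictly fewer $f_{\beta}$-factors than powers of $f_{\alpha_{1}}$. This is why you need to strengthen the induction to all $s \leq r$. Both arguments are valid; the paper's choice of commutation direction is the one that makes the cross terms collapse and keeps $s = r$ throughout.
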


\begin{proof}[Proof of Lemma \ref{makestuffcommutativelemma}]

Fix $\beta \in \Phi^{+}$. First, note that for any $f_{\alpha} \in \mathfrak{n}$ we have $[f_{\beta},f_{\alpha}] \in \mathfrak{n}$ : because $\alpha$ has a non-zero $\alpha_{1}$-component, and hence so does $\alpha + \beta$. So if $\alpha + \beta \in \Phi$ then $f_{\alpha + \beta} \in \mathfrak{n}$, and otherwise $[f_{\alpha},f_{\beta}] = 0 \in \mathfrak{n}$. This means $[f_{\beta},\--]$ defines a derivation $\un \rightarrow \un$. Also recall that our assumptions on $\alpha_{1}$ imply $\mathfrak{n}$ is abelian.

The case $f_{\beta} \in \mathfrak{n}$ is trivial, so assume not. Thus $\alpha_{1} \notin \beta$.

Now consider the scalar generalised Verma module $M_{\Delta \setminus \alpha_{1}}(a \omega_{1})$, and write $w_{a \omega_{1}}$ for the generating highest-weight vector. So $f_{\gamma} w_{a \omega_{1}} = 0$ whenever $\alpha_{1} \notin \gamma$. In particular, we have $f_{\beta} w_{a \omega_{1}} = 0$.

Now, $f_{\alpha_{1}}^{a+1} w_{a \omega_{1}}$ is a highest-weight vector of weight $a \omega_{1} - (a+1) \alpha_{1} = \lambda$. We can see this by looking at the Verma  module of highest weight $a \omega_{1}$ (see \cite[Proposition 1.4]{Humphreys}). Write $w_{\lambda} = f_{\alpha_{1}}^{a+1} v_{\lambda}$. So $w_{\lambda}$ generates a (not necessarily irreducible) highest-weight module with highest weight $\lambda$.

Now now show that $f_{\alpha_{1}} f_{\beta} w_{\lambda} \in \un w_{\lambda}$. Using the fact that $[f_{\beta},\--]$ is a derivation of $\un$ and $\un$ is commutative, we have:

\begin{equation*}
\begin{split}
f_{\alpha_{1}} f_{\beta} w_{\lambda} & = f_{\alpha_{1}} f_{\beta} f_{\alpha_{1}}^{a+1} w_{a \omega_{1}} \\
& = \overset{a}{\underset{i=0}{\sum}} f_{\alpha_{1}} \cdot f_{\alpha_{1}}^{i}[f_{\beta},f_{\alpha_{1}}] f_{\alpha_{1}}^{a-i} w_{a \omega_{1}} + f_{\alpha_{1}}^{a+2} f_{\beta} w_{a \omega_{1}} \\
& = (a+1)[f_{\beta},f_{\alpha_{1}}]f_{\alpha_{1}}^{a+1} w_{a \omega_{1}} \\
& = (a+1) [ f_{\beta},f_{\alpha_{1}}] w_{\lambda}.
\end{split}
\end{equation*}

So $(f_{\alpha_{1}} f_{\beta} - (a+1) [f_{\beta},f_{\alpha_{1}}]) w_{\lambda} = 0$, and hence $(f_{\alpha_{1}} f_{\beta} - (a+1) [f_{\beta},f_{\alpha_{1}}]) v_{\lambda} = 0$.

Now if $x \in \un$, then $f_{\alpha_{1}} f_{\beta} x v_{\lambda} = f_{\alpha_{1}} [ f_{\beta},x] v_{\lambda} + f_{\alpha_{1}} x f_{\beta} v_{\lambda}$. We have $[f_{\beta},x] \in \un$, and also $f_{\alpha_{1}} x f_{\beta} v_{\lambda} = x f_{\alpha_{1}} f_{\beta} v_{\lambda} \in \un v_{\lambda}$ by the previous result. So $f_{\alpha_{1}} f_{\beta} x v_{\lambda} \in \un v_{\lambda}$.

\end{proof}

\begin{proof}[Proof of Corollary \ref{makestuffcommutativecor}]
By induction on $r$. The case $r = 1$ is Lemma \ref{makestuffcommutativelemma}. So now let $r > 1$, and assume true for $r-1$. Let $x \in \un$. Then:

\begin{equation*}
\begin{split}
& f_{\alpha_{1}}^{r} f_{\beta_{1}} \dots f_{\beta_{r}} x v_{\lambda} = f_{\alpha_{1}} \cdot f_{\alpha_{1}}^{r-1} f_{\beta_{1}} \cdot f_{\beta_{2}} \dots f_{\beta_{r}} x v_{\lambda} \\
& = \overset{r-2}{\underset{i=0}{\sum}} f_{\alpha_{1}} \cdot f_{\alpha_{1}}^{i} [f_{\alpha_{1}},f_{\beta_{1}}] f_{\alpha_{1}}^{r-2-i} f_{\beta_{2}} \dots f_{\beta_{r}}x v_{\lambda} + f_{\alpha_{1}} f_{\beta_{1}} f_{\alpha_{1}}^{r-1} f_{\beta_{2}} \dots f_{\beta_{r}} x v_{\lambda}\\
& = (r-1) [f_{\alpha_{1}},f_{\beta_{1}}] f_{\alpha_{1}}^{r-1} f_{\beta_{2}} \dots f_{\beta_{r}} x v_{\lambda} + f_{\alpha_{1}} f_{\beta_{1}} f_{\alpha_{1}}^{r-1} f_{\beta_{2}} \dots f_{\beta_{r}} x v_{\lambda}.
\end{split}
\end{equation*}

\noindent Now, by induction we have $ f_{\alpha_{1}}^{r-1} f_{\beta_{2}} \dots f_{\beta_{r}} x v_{\lambda} \in \un v_{\lambda}$, and so also \\ $(r-1) [f_{\alpha_{1}},f_{\beta_{1}}] f_{\alpha_{1}}^{r-1} f_{\beta_{2}} \dots f_{\beta_{r}} x v_{\lambda} \in \un v_{\lambda}$. The $r = 1$ case then implies that $f_{\alpha_{1}} f_{\beta_{1}} f_{\alpha_{1}}^{r-1} f_{\beta_{2}} \dots f_{\beta_{r}} x v_{\lambda} \in \un v_{\lambda}$, and the corollary follows.

\end{proof}

\begin{proof}[Proof of Proposition \ref{abeliantranslationfunctorprop}]
Assume condition (2) holds for $ - \omega_{1}$.

First, consider the case where $\lambda(h_{\alpha_{1}}) = -1$. In this case, $\lambda -(- \omega_{1})$ is a dominant integral weight (since $\alpha \in \Phi_{\lambda}^{+}$ for all $\alpha \in \Delta \setminus \alpha_{1}$), and also $\lambda, - \omega_{1}$ are in the same facet. So by Proposition \ref{translationfunctorsprop}, condition (2) for $ - \omega_{1}$ implies condition (2) for $\lambda$.

So now assume $\lambda(h_{\alpha_{1}}) < -1$: that is, $\Phi^{+}_{\lambda} = \Phi^{+} \setminus \alpha_{1}$ and $\Phi^{-}_{\lambda} = \{ \alpha_{1} \}$.

We can now see that $s_{\alpha_{1}} \cdot \lambda$ is a dominant integral weight: for any $\alpha \in \Phi^{+}$, we have $\langle s_{\alpha_{1}} \cdot \lambda + \rho, \alpha^{\lor} \rangle = \langle \lambda + \rho, s_{\alpha_{1}}(\alpha)^{\lor} \rangle$. If $\alpha \in \Phi^{+} \setminus \alpha_{1}$ then $s_{\alpha_{1}}(\alpha) \in \Phi^{+} \setminus \alpha_{1}$ and so $\langle s_{\alpha_{1}} \cdot \lambda + \rho, \alpha^{\lor} \rangle > 0$. Meanwhile $\langle s_{\alpha_{1}} \cdot \lambda + \rho, \alpha_{1}^{\lor} \rangle = \langle \lambda + \rho, -\alpha_{1}^{\lor} \rangle = - \lambda(h_{\alpha_{1}}) - 1 > 0$.

So say $\mu = s_{\alpha_{1}} \cdot \lambda$, a dominant integral weight.

\bigskip

\noindent \emph{Case 1}: $\mu = a \omega_{1}$ for $a \in \mathbb{N}_{0}$.

\bigskip

\noindent Proof of case 1: observe that $\lambda(h_{\alpha_{1}}) = -a - 2$ using $\lambda = s_{\alpha_{1}} \cdot \mu$. Define $\lambda' = \lambda + (a+1) \omega_{1}$, so $\lambda'(h_{\alpha_{1}}) = -1$ and $\lambda'(h_{\alpha}) \geq 0$ for all $\alpha \in \Delta \setminus \alpha_{1}$. As we have already seen, condition (2) for $- \omega_{1}$ implies condition (2) for $\lambda'$.

Now, $\lambda' - \lambda$ is a dominant integral weight, and $\lambda'$ is in the upper closure of the facet of $\lambda$. Suppose condition (2) does not hold for $\lambda$.

Then by \cite[Proposition 4.2.13]{me}, we can find an expression $(x_{0} + \sum x_{i} a_{i})v_{\lambda} = 0$, where $x_{i} \in KN$, $a_{i} \in U(\mathfrak{n}^{-}_{K})$ are weight vectors with non-zero weight, the sum is finite, and $x_{0} \neq 0$.

By Corollary \ref{makestuffcommutativecor}, we can choose $r$ large enough that $f_{\alpha_{1}}^{r} a_{i} v_{\lambda} \in \un v_{\lambda}$ for each $i$. Say $f_{\alpha_{1}}^{r} a_{i} v_{\lambda} = b_{i} v_{\lambda}$, where $b_{i} \in \un$ is a weight vector with weight strictly less than $- r \alpha_{1}$ in the usual partial ordering on weights.

So $(x_{0}f_{\alpha_{1}}^{r}+ \sum x_{i} b_{i}) v_{\lambda} = 0$, and $x_{0} f_{\alpha_{1}}^{r} + \sum x_{i} b_{i} \in \Un$. By Lemma \ref{abeliantranslationlem}, we can choose $l \in \mathbb{N}$ such that $(x_{0}f_{\alpha_{1}}^{r}+ \sum x_{i} b_{i})^{l} v_{\lambda'} = 0$. This expression has the form $f_{\alpha_{1}}^{rl}x_{0}^{l} v_{\lambda'} + \sum y_{i} c_{i} v_{\lambda'} = 0$ for some $y_{i}\in KN$ and some $c_{i} \in \un$ weight vectors with weight strictly less than $- rl \alpha_{1}$ in the partial ordering on weights. In particular, the $c_{i} v_{\lambda'}$ are all linearly independent from $f_{\alpha_{1}}^{rl} v_{\lambda'}$ (which is non-zero since $\lambda'(h_{\alpha_{1}}) \notin \mathbb{N}_{0})$.

Condition (2) for $\lambda'$ then implies that $x_{0}^{l} = 0$. But $KN$ is a domain, so this means $x_{0} = 0$, contradicting our choice of $x_{0}$. This concludes the proof in this case.

\bigskip

\noindent Now consider the general case $\lambda = s_{\alpha_{1}} \cdot \mu$ for $\mu$ dominant integral. Then let $a = \mu(h_{\alpha_{1}})$. We then have $\lambda - s_{\alpha_{1}} \cdot a \omega_{1} = s_{\alpha_{1}} \cdot  \mu - s_{\alpha_{1}}\cdot a \omega_{1}) = s_{\alpha_{1}}(\mu - a \omega_{1}) = \mu - a \omega_{1}$ since $(\mu - a \omega_{1})(h_{\alpha_{1}}) = 0$. So $\lambda - s_{\alpha_{1}} \cdot a \omega_{1}$ is dominant integral.

Moreover, $s_{\alpha_{1}} \cdot a \omega_{1}$ and $\lambda = s_{\alpha_{1}} \cdot \mu$ lie in the same facet. So by Proposition \ref{translationfunctorsprop}, condition (2) for $s_{\alpha_{1}} \cdot a \omega_{1}$ implies condition (2) for $\lambda$.

\end{proof}

\subsection{Inductive approach}
\label{inductionsection}

For this section, allow $\Phi$ to be any indecomposable root system. Also fix a weight $\lambda \in \mathfrak{h}_{K}^{*}$ with $\lambda(p^{n}\mathfrak{h}_{R}) \subseteq R$, and let $\mathfrak{n} = \mathfrak{n}_{\Delta_{\lambda}}$, $\mathfrak{p} = \mathfrak{p}_{\Delta_{\lambda}}$ be the corresponding nilpotent and parabolic subalgebras. Let $N, P$ be the subgroups of $G$ with $L_{N} = p^{n+1} \mathfrak{n}$ and $L_{P} = p^{n+1} \mathfrak{p}$.

Suppose we fix a simple root $\alpha_{1} \in \Delta$, making the assumption that $\lambda |_{\mathfrak{h}_{\Delta \setminus \alpha_{1},K}}$ is not dominant integral. We have a decomposition $\mathfrak{n} = \mathfrak{n}_{1} \oplus \mathfrak{n}_{2}$ where:

\begin{itemize}
\item $\mathfrak{n}_{1} = \Of \{ f_{\alpha} \in \mathfrak{n} \mid \alpha \text{ has a non-zero } \alpha_{1} \text{-component} \}$,

\item $\mathfrak{n}_{2} = \Of \{ f_{\alpha} \in \mathfrak{n} \mid \alpha \text{ has no } \alpha_{1} \text{-component} \}$.
\end{itemize}

\noindent Write $N_{2}$ for the $F$-uniform group with $L_{N_{2}} = p^{n+1} \mathfrak{n}_{2}$.

\begin{mydef}
Suppose $M$ is a highest-weight module (or affinoid highest-weight module) with highest weight $\lambda$.

Suppose $v \in M$. Then the \emph{$\alpha_{1}$-free component} of $v$ is the sum of all weight components of $v$ with weight $\lambda - \underset{\alpha \in \Delta}{\sum} c_{\alpha} \alpha$ such that $c_{\alpha_{1}} = 0$. We say $v$ is \emph{$\alpha_{1}$-free} if it equals its $\alpha_{1}$-free component.
\end{mydef}

\noindent Define a vector subspace:

\begin{equation*}
 W_{\alpha_{1}} = \{ x \in \widehat{M_{\Delta_{\lambda}}(\lambda)} \mid e_{\alpha_{1}}^{r} x \text{ has zero } \alpha_{1} \text{-free component for all } r \in \mathbb{N}_{0} \}.
 \end{equation*}

\noindent Recall condition (1) from \S \ref{outline}. The following theorem is a crucial tool for proving Theorem \ref{maintheoremvariant}, since it allows an inductive approach for proving that weights satisfy condition (1).

\begin{mythm}
\label{inductionprop}
Suppose the following conditions are met:

\begin{itemize}
\item there is no non-zero highest-weight vector in the subset \\ $U(\mathfrak{n}_{2,K}) \left( W_{\alpha_{1}} \cap (U(\mathfrak{n}_{1,K})V_{\lambda}) \right) \subseteq M_{\Delta_{\lambda}}(\lambda)$,

\item the weight $\lambda |_{\mathfrak{h}_{\Delta \setminus \alpha_{1},K}}$ for $\mathfrak{g}_{\Delta \setminus \alpha_{1},K}$ satisfies condition (1).
\end{itemize}

Then $\lambda$ satisfies condition (1).
\end{mythm}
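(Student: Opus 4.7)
\bigskip

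\noindent \textbf{Proof plan.} I would argue by contraposition: assume condition (1) fails for $\lambda$ and produce from this failure a non-zero highest-weight vector inside $U(\mathfrak{n}_{2,K})\bigl(W_{\alpha_{1}} \cap U(\mathfrak{n}_{1,K}) V_{\lambda}\bigr)$, contradicting the first hypothesis. All the work takes place inside $\widehat{M_{\Delta_{\lambda}}(\lambda)}$; by flatness of completion, the kernel of $\widehat{M_{\Delta_{\lambda}}(\lambda)} \twoheadrightarrow \widehat{L(\lambda)}$ is the closure $\overline{J}$ of the maximal proper submodule $J$ of $M_{\Delta_{\lambda}}(\lambda)$, and Lemma \ref{closurelem} ensures that weight components of elements of $\overline{J}$ still lie in $J$.

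A failure of condition (1) furnishes a non-zero $z \in KN \otimes_{K} V_{\lambda}$ whose image in $\widehat{M_{\Delta_{\lambda}}(\lambda)}$ lands in $\overline{J}$; after extracting a non-zero weight component I may assume $z$ is a weight vector. Since $\mathfrak{n}_{1}$ is an ideal of $\mathfrak{n}$ (brackets of root vectors add their $\alpha_{1}$-coefficients, so anything with positive $\alpha_{1}$-coefficient stays so), there is a PBW-style topological factorization $KN \cong KN_{2} \,\widehat{\otimes}\, KN_{1}$, and $z$ expands convergently as $\sum_{j} y_{j} u_{j}$ with $y_{j} \in KN_{2}$ and $\{u_{j}\}$ a weight-vector basis of $KN_{1} \cdot V_{\lambda}$. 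The restricted weight $\lambda' = \lambda|_{\mathfrak{h}_{\Delta \setminus \alpha_{1},K}}$ for $\mathfrak{g}_{\Delta \setminus \alpha_{1},K}$ satisfies $\mathfrak{n}_{\Delta_{\lambda'}} = \mathfrak{n}_{2}$, and $KN_{2}$ is its associated Iwasawa algebra.

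Next I would bring in the two hypotheses. Because the $\mathfrak{n}_{2}$-action preserves the $\alpha_{1}$-grading, each $\alpha_{1}$-slice of the relation $z \in \overline{J}$ is a relation inside $KN_{2}$ tensored with the finite-dimensional span of the $u_{j}$; I would match this with an instance of condition (1) for $\lambda'$ after identifying the relevant $V_{\lambda'}$-structure inside $\widehat{L(\lambda')}$. This collapses the $KN_{2}$-coefficients and reduces the problem to producing a relation among the $u_{j}$ themselves together with a highest-weight vector over $\mathfrak{g}_{\Delta \setminus \alpha_{1},K}$. Separately, since $e_{\alpha_{1}}^{r}\phi(z) = 0$ in $\widehat{L(\lambda)}$ for all $r$, reading off $\alpha_{1}$-free components after applying $e_{\alpha_{1}}^{r}$ forces each surviving $u_{j}$ into $W_{\alpha_{1}} \cap U(\mathfrak{n}_{1,K}) V_{\lambda}$. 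Assembling the two pieces yields a non-zero highest-weight vector in $U(\mathfrak{n}_{2,K})\bigl(W_{\alpha_{1}} \cap U(\mathfrak{n}_{1,K}) V_{\lambda}\bigr)$, contradicting the first hypothesis.

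The main obstacle is bridging the restriction from $\mathfrak{g}$ down to $\mathfrak{g}_{\Delta \setminus \alpha_{1},K}$: precisely formulating the $\alpha_{1}$-slice of the vanishing relation as an instance of condition (1) for $\lambda'$, and converting the resulting equality into a genuine highest-weight vector statement over the smaller algebra rather than an abstract one. The convergence bookkeeping—taking weight components, applying $e_{\alpha_{1}}^{r}$, and justifying termwise manipulation of $\sum_{j} y_{j} u_{j}$—should follow from the finite-dimensionality of weight spaces together with Lemma \ref{closurelem}, but ensuring compatibility with the $\mathfrak{g}_{\Delta \setminus \alpha_{1},K}$-structure along the way is the delicate point.
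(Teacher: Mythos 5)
Your proposal is in the same general spirit as the paper's proof, and you correctly identify the key ingredients: applying condition (1) for $\lambda' = \lambda|_{\mathfrak{h}_{\Delta \setminus \alpha_{1},K}}$ to control the $\alpha_{1}$-free part of a kernel element, the factorization of $KN$ through $KN_{2}$ and $KN_{1}$, iterated application of $e_{\alpha_{1}}$, and the goal of exhibiting a highest-weight vector inside $U(\mathfrak{n}_{2,K})\bigl(W_{\alpha_{1}} \cap U(\mathfrak{n}_{1,K}) V_{\lambda}\bigr)$. However, there are two genuine gaps.

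First, the step ``after extracting a non-zero weight component I may assume $z$ is a weight vector'' does not work. Although weight components of any element of the kernel of $\widehat{M_{\Delta_{\lambda}}(\lambda)} \twoheadrightarrow \widehat{L(\lambda)}$ do lie in the maximal submodule $J$ by Lemma \ref{closurelem}, the subspace $KN \otimes_{K} V_{\lambda} \cong KG \otimes_{KP} V_{\lambda}$ is \emph{not} closed under taking weight components: a generator of $KN$ such as $\exp(p^{n+1} f_{\alpha}) - 1$ is an infinite sum of weight vectors, and its individual weight pieces land in $\un$, not in $KN$. So once you pass to a weight component of $z$, you lose the right to write $z = \sum_{j} y_{j} u_{j}$ with $y_{j} \in KN_{2}$, and the rest of your reduction is no longer available. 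Second, and more fundamentally, you give no mechanism for actually producing a \emph{highest-weight} vector; exhibiting a nonzero element of $U(\mathfrak{n}_{2,K})\bigl(W_{\alpha_{1}} \cap U(\mathfrak{n}_{1,K}) V_{\lambda}\bigr)$ is not the same as exhibiting one killed by all of $\mathfrak{n}^{+}$. The paper resolves both difficulties simultaneously: instead of taking a weight component of $z$, it shows that the whole $KG$-orbit $KGz$ of the kernel element lies in the closure $\bar{W}$ of $W := U(\mathfrak{n}_{2,K})\bigl(W_{\alpha_{1}} \cap U(\mathfrak{n}_{1,K}) V_{\lambda}\bigr)$ (this is where Lemmas \ref{Wcontainmentlemma} and \ref{reduceton1lemma} do the work you are gesturing at), then uses density of $KG$ in $\Ug$ to get $\Ug z \subseteq \bar{W}$, and finally selects an element of $\Ug z$ with a nonzero weight component of minimal depth below $\lambda$: that component is annihilated by each $e_{\alpha}$ by minimality, and lies in $W$ by Lemma \ref{closurelem}. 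This closure-plus-density-plus-minimality argument is the missing idea in your sketch, and it is also precisely what lets you postpone the passage to a single weight component until after you have a $\Ug$-stable subset of $\bar{W}$ to work inside.
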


\noindent The technical condition that there is no highest-weight vector in the subset $U(\mathfrak{n}_{2,K}) \left( W_{\alpha_{1}} \cap (U(\mathfrak{n}_{1,K})V_{\lambda}) \right)$ of $M_{\Delta_{\lambda}}(\lambda)$ is likely to be difficult to prove in general, since there is no general classification of highest-weight vectors in generalised Verma modules. However, we can prove that this condition holds in certain specific cases, which is done in \S \ref{hwvectors}.

We now work towards proving this proposition. Write $\Delta' = \Delta \setminus \alpha_{1}$, $\mathfrak{h}' = \mathfrak{h}_{\Delta'}$, $\lambda' = \lambda |_{\mathfrak{h}'_{K}}$. Then write $\mathfrak{p}'$ and $\mathfrak{n}'$ for the parabolic and nilpotent subalgebras of $\mathfrak{g}'$ corresponding to $\lambda'$, and write $G',P',N'$ for the subgroups of $G$ corresponding to $\mathfrak{g}',\mathfrak{p}',\mathfrak{n}'$ respectively.

\begin{mylem}
The set of all $\alpha_{1}$-free elements of $L(\lambda)$ is isomorphic as a $\ugg$-module to $L(\lambda |_{\mathfrak{h}'_{K}})$.
\end{mylem}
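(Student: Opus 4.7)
The plan is to exhibit the $\alpha_{1}$-free subspace $W \subseteq L(\lambda)$ as the cyclic $\ugg$-submodule $\ugg v_{\lambda}$, and then prove that $\ugg v_{\lambda}$ is irreducible as a $\ugg$-module, which identifies it with $L(\lambda|_{\mathfrak{h}'_{K}})$.

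First, $W$ is automatically a $\ugg$-submodule: the Chevalley generators $e_{\alpha}, f_{\alpha}, h_{\alpha}$ for $\alpha \in \Delta \setminus \alpha_{1}$ shift $\mathfrak{h}$-weights by elements of $\mathbb{Z}(\Delta \setminus \alpha_{1})$, preserving $\alpha_{1}$-coefficients. Since $v_{\lambda}$ is $\alpha_{1}$-free, $\ugg v_{\lambda} \subseteq W$, and $\ugg v_{\lambda}$ is a highest-weight $\ugg$-module of highest weight $\lambda|_{\mathfrak{h}'_{K}}$.

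Next, I would prove $W \subseteq \ugg v_{\lambda}$ via PBW. Let $\mathfrak{m}$ be spanned by $\{f_{\beta} : \alpha_{1} \in \beta\}$ and $\mathfrak{q} = \mathfrak{n}^{-} \cap \mathfrak{g}'$ be spanned by $\{f_{\beta} : \alpha_{1} \notin \beta\}$, so $\mathfrak{n}^{-} = \mathfrak{m} \oplus \mathfrak{q}$ as vector spaces. The PBW theorem gives a weight-vector basis of $M(\lambda)$ by ordered monomials in the $f_{\beta}$; such a monomial has $\alpha_{1}$-free weight precisely when no factor from $\mathfrak{m}$ is used, so the $\alpha_{1}$-free part of $M(\lambda)$ is $U(\mathfrak{q}_{K}) v_{\lambda}$. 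Under the surjection $M(\lambda) \twoheadrightarrow L(\lambda)$, every $\alpha_{1}$-free element of $L(\lambda)$ lifts via the $\alpha_{1}$-free component of any preimage (the surjection respects weight-space decompositions), so $W$ is the image of $U(\mathfrak{q}_{K})v_{\lambda}$, namely $\ugg v_{\lambda}$.

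Finally, I would prove $\ugg v_{\lambda}$ is irreducible over $\ugg$. Suppose $W' \subsetneq \ugg v_{\lambda}$ is a non-zero proper $\ugg$-submodule. Since $\ugg v_{\lambda}$ is a quotient of the $\mathfrak{g}'_{K}$-Verma module of highest weight $\lambda|_{\mathfrak{h}'_{K}}$, it has finite-dimensional $\mathfrak{h}$-weight spaces and weights bounded above by $\lambda$, so $W'$ contains a $\mathfrak{g}'$-highest-weight vector $w$ of $\mathfrak{h}$-weight $\lambda - \nu'$ for some $\nu' \in \mathbb{N}_{0}(\Delta \setminus \alpha_{1})$. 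The case $\nu' = 0$ would force $w \in K v_{\lambda}$ and $W' = \ugg v_{\lambda}$, a contradiction, so $\nu' \neq 0$. The crux is to upgrade $w$ to a full $\mathfrak{g}$-highest-weight vector: $e_{\alpha_{1}} w$ would have weight $\lambda - (\nu' - \alpha_{1})$, but $\nu' - \alpha_{1}$ has $\alpha_{1}$-coefficient $-1$ and hence is not in $\mathbb{N}_{0} \Delta$, so this is not a weight of $L(\lambda)$ and $e_{\alpha_{1}} w = 0$. Then $w$ is a $\mathfrak{g}$-highest-weight vector of the irreducible $L(\lambda)$, so proportional to $v_{\lambda}$, contradicting $\nu' \neq 0$.

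The main obstacle is the irreducibility step in the final paragraph: the argument rests entirely on the observation that the weight set of $L(\lambda)$ leaves no room for $e_{\alpha_{1}}$ to act non-trivially on any $\mathfrak{g}'$-highest-weight vector sitting in the $\alpha_{1}$-free subspace, which is the conceptual kernel of the lemma. The PBW descent from $M(\lambda)$ to $L(\lambda)$ is routine once one notes that $\alpha_{1}$-freeness is a weight-space property preserved by the surjection.
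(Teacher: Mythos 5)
Your proof is correct and follows essentially the same route as the paper's: both identify the $\alpha_1$-free subspace as the cyclic $\ugg$-module generated by $v_\lambda$ (the paper does this a bit more tersely by noting $L(\lambda)$ is spanned by monomials $f_{\beta_1}\dots f_{\beta_r}v_\lambda$), and both establish irreducibility via the key observation that a $\mathfrak{g}'$-highest-weight vector in the $\alpha_1$-free part must also be killed by $e_{\alpha_1}$ for weight reasons and hence is a full $\mathfrak{g}$-highest-weight vector in the irreducible $L(\lambda)$. You have merely spelled out the PBW descent and the proper-submodule framing more explicitly.
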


\begin{proof}
Write $L$ for the set of all $\alpha_{1}$-free elements of $L(\lambda)$. It is clear that $L$ is then a $\ugg$-module. We know that $L(\lambda)$ is spanned by elements of the form $f_{\beta_{1}} \dots f_{\beta_{r}} v_{\lambda}$ with $\beta_{i} \in \Phi^{+}$. 

So $L$ is spanned by the elements $f_{\beta_{1}} \dots f_{\beta_{r}} v_{\lambda}$ with all $\beta_{i} \in \Phi^{+}_{\Delta \setminus \alpha_{1}}$, that is, with $f_{\beta_{i}} \in \mathfrak{g}'$. Thus $L$ is generated over $\ugg$ by $v_{\lambda}$, and it is clear that $v_{\lambda}$ is a highest-weight vector for $\ugg$ with weight $\lambda |_{\mathfrak{h}'_{K}}$. So it remains to show that $L$ is irreducible over $\ugg$.

Suppose $v \in L$ is a highest-weight vector for $\ugg$. Then $v$ is in fact a highest-weight vector for $\ug$ in $L(\lambda)$: because the fact that $v$ is $\alpha_{1}$-free implies $h_{\alpha_{1}} v = \lambda(h_{\alpha_{1}}) v$ and $e_{\alpha_{1}} v = 0$. So since $L(\lambda)$ is irreducible, this means $v$ is a scalar multiple of $v_{\lambda}$.

This now implies that $L$ is indeed irreducible over $\ugg$ and the lemma follows.
\end{proof}

\noindent Since the completion functor is flat (\cite[Proposition 2.2.12]{me}), we now see that $\widehat{L(\lambda')}$ embeds as a $\Ugg$-module in $\widehat{L(\lambda)}$, as the set of all $\alpha_{1}$-free elements.

Recall from Lemma \ref{Vembedding} that $V_{\lambda}$ embeds in $L(\lambda)$ as the $\up$-submodule generated by $v_{\lambda}$. Following our previous notation, we write $V_{\lambda'}$ for the irreducible highest-weight $U(\mathfrak{p}'_{K})$-module with highest weight $\lambda'$.

\begin{mylem}
The embedding $L(\lambda') \rightarrow L(\lambda)$ restricts to an inclusion $V_{\lambda'} \rightarrow V_{\lambda}$.
\end{mylem}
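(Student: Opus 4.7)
The plan is to use the previous lemma to identify $L(\lambda')$ with the $\ugg$-submodule of $L(\lambda)$ consisting of all $\alpha_1$-free elements, and then trace through what happens to the generating highest-weight vectors. Under this embedding, $v_{\lambda'}$ is sent to $v_{\lambda}$ (or a non-zero scalar multiple, which we can normalise away): indeed, in the previous proof $v_\lambda$ was exhibited as a highest-weight vector for $\ugg$ of weight $\lambda|_{\mathfrak{h}'_{K}}$ generating the $\alpha_{1}$-free elements of $L(\lambda)$, and by the irreducibility of $L(\lambda')$ the map is determined up to scalar by where it sends $v_{\lambda'}$.

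Next I would use Lemma \ref{Vembedding}, applied to $L(\lambda')$ and to $L(\lambda)$ respectively, to identify $V_{\lambda'}$ with $U(\mathfrak{p}'_{K}) v_{\lambda'}$ and $V_{\lambda}$ with $U(\mathfrak{p}_{K}) v_{\lambda}$. Under the embedding $L(\lambda') \hookrightarrow L(\lambda)$, the image of $V_{\lambda'}$ is therefore $U(\mathfrak{p}'_{K}) v_{\lambda}$, so it suffices to prove the containment of subalgebras $\mathfrak{p}' \subseteq \mathfrak{p}$.

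For that final containment I would read off the Chevalley generators of both parabolics from Definition \ref{subalgebras}. The parabolic $\mathfrak{p}' = \mathfrak{p}_{\Delta_{\lambda'}}$ inside $\mathfrak{g}'$ is spanned by $\{h_{\alpha} \mid \alpha \in \Delta'\} \cup \{e_{\alpha} \mid \alpha \in \Phi^{+}_{\Delta'}\} \cup \{f_{\alpha} \mid \alpha \in \Phi^{+}_{\Delta_{\lambda'}}\}$. The first two families sit inside the corresponding spanning families for $\mathfrak{p}$ automatically, because $\Delta' \subseteq \Delta$ and $\Phi^{+}_{\Delta'} \subseteq \Phi^{+}$. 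For the third family, note that $h_{\alpha} \in \mathfrak{h}'$ for each $\alpha \in \Delta'$ gives $\lambda'(h_{\alpha}) = \lambda(h_{\alpha})$, so $\Delta_{\lambda'} = \Delta_{\lambda} \cap \Delta'$, and consequently $\Phi^{+}_{\Delta_{\lambda'}} \subseteq \Phi^{+}_{\Delta_{\lambda}}$. Hence each of these $f_{\alpha}$ also lies in $\mathfrak{p}$, and we obtain $\mathfrak{p}' \subseteq \mathfrak{p}$, completing the argument.

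There is no substantive obstacle here; the only point that requires a moment's care is the compatibility $\Delta_{\lambda'} = \Delta_{\lambda} \cap \Delta'$, which ensures that the parabolic structures match up under the inclusion $\mathfrak{g}' \hookrightarrow \mathfrak{g}$.
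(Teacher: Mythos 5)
Your proof is correct and follows essentially the same route as the paper: identify $V_{\lambda'}$ with $\upp v_{\lambda'}$ via Lemma \ref{Vembedding}, note that the embedding sends $v_{\lambda'}$ to $v_\lambda$, and conclude via the containment $\mathfrak{p}' \subseteq \mathfrak{p}$. The paper treats that last containment as immediate, while you spell it out by checking the Chevalley generators and the compatibility $\Delta_{\lambda'} = \Delta_\lambda \cap \Delta'$; this is a slightly more explicit presentation of the same argument.
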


\begin{proof}
We know that $V_{\lambda'}$ embeds in $L(\lambda')$ as the $\upp$-submodule generated by $v_{\lambda'}$. So the image of $V_{\lambda'}$ in $L(\lambda)$ is precisely $\upp v_{\lambda}$, which is clearly contained in $\up v_{\lambda} = V_{\lambda}$.
\end{proof}

\noindent Recall that (by using the PBW theorem) $\widehat{M_{\Delta_{\lambda}}(\lambda)}$ is isomorphic as a $\Un$-module to $\Un \otimes_{K} V_{\lambda}$, and hence $KG \otimes_{KN} V_{\lambda}$ is isomorphic as a $KN$-module to $KN \otimes_{K} V_{\lambda}$.

\begin{mylem}
\label{Wcontainmentlemma}
Suppose the weight $\lambda |_{\mathfrak{h}'_{K}}$ for $\mathfrak{g}'_{K}$ satisfies condition (1). Then $\ker (KG \otimes_{KP} V_{\lambda} \rightarrow \widehat{L(\lambda)}) \subseteq W_{\alpha_{1}}$.
\end{mylem}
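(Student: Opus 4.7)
The plan is to take any $x \in \ker(KG \otimes_{KP} V_{\lambda} \to \widehat{L(\lambda)})$ and show, for each $r \geq 0$, that the $\alpha_{1}$-free weight component $y_{r,0}$ of $y_{r} := e_{\alpha_{1}}^{r} x \in \widehat{M_{\Delta_{\lambda}}(\lambda)}$ vanishes; this is precisely the condition $x \in W_{\alpha_{1}}$. Since the surjection $\widehat{M_{\Delta_{\lambda}}(\lambda)} \twoheadrightarrow \widehat{L(\lambda)}$ is $\Ug$-equivariant, $y_{r}$ also maps to $0$ in $\widehat{L(\lambda)}$; the surjection respects weight decompositions, so $y_{r,0}$ maps to $0$ in the $\alpha_{1}$-free submodule of $\widehat{L(\lambda)}$, which by the identification recorded just above is $\widehat{L(\lambda')}$. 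Thus $y_{r,0}$ maps to $0$ in $\widehat{L(\lambda')}$.

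The crux is then to show that $y_{r,0}$ lies in $KN' \otimes_{K} V_{\lambda'}$, viewed as a subspace of $\widehat{M_{\Delta_{\lambda'}}(\lambda')}$ (itself identified with the $\alpha_{1}$-free part of $\widehat{M_{\Delta_{\lambda}}(\lambda)}$). Granted this, the assumption that $\lambda'$ satisfies condition (1) gives injectivity of $KN' \otimes V_{\lambda'} \to \widehat{L(\lambda')}$, forcing $y_{r,0} = 0$. To establish the containment I would rely on three facts: (a) $e_{\alpha_{1}}$ commutes with the action of $\Unn$ on $\widehat{M_{\Delta_{\lambda}}(\lambda)}$, because $[e_{\alpha_{1}}, f_{\beta}] = 0$ for every $\beta \in \Phi^{+}$ with $\alpha_{1} \notin \beta$; (b) $\mathfrak{n}_{1}$ is an ideal of $\mathfrak{n}$, yielding a semidirect decomposition $N \cong N_{1} \rtimes N'$ and a topological $K$-vector space isomorphism $KN \cong KN' \widehat{\otimes}_{K} KN_{1}$; (c) $\mathfrak{n}'_{K} \subseteq KN'$ via the $\log$ map on the $F$-uniform group $N'$, giving $\unn \subseteq KN'$ and hence $M_{\Delta_{\lambda'}}(\lambda') = \unn V_{\lambda'} \subseteq KN' \otimes V_{\lambda'}$.

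Concretely, I would write $x = \sum_{j} \xi_{j} v_{j}$ as a finite sum with $v_{j} \in V_{\lambda}$ weight vectors, and expand each $\xi_{j}$ via (b) so that its $KN'$-factor sits to the left of its $KN_{1}$-factor. Using (a) to commute $e_{\alpha_{1}}^{r}$ past the $KN'$-factors and then projecting onto $\alpha_{1}$-free weight components, the surviving contributions from the $KN_{1}$-part come from PBW monomials $(p^{n} f)^{u}$ in $U(\mathfrak{n}_{1,K})$ whose multi-index $u$ satisfies a single linear equation $\sum_{\beta} u_{\beta} c_{\alpha_{1}}(\beta) = c$, with $c$ a non-negative integer depending on $v_{j}$ and $r$. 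Since every $\beta$ occurring in $\mathfrak{n}_{1}$ has $c_{\alpha_{1}}(\beta) \geq 1$, only finitely many $u$ satisfy this equation, so $y_{r,0}$ collapses to a finite $KN'$-linear combination of $\alpha_{1}$-free weight vectors $e_{\alpha_{1}}^{r} (p^{n} f)^{u} v_{j} \in M_{\Delta_{\lambda'}}(\lambda')$; by (c) this places $y_{r,0}$ in $KN' \otimes V_{\lambda'}$ as required.

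The main obstacle will be making (b) fully precise and tracking coefficients through the inevitable change of basis between a natural group-theoretic topological basis of $KN_{1}$ and the PBW basis $\{(p^{n} f)^{u}\}$, so that the effective $KN'$-coefficients of the surviving $u$-monomials genuinely converge in $KN'$ rather than only in a larger completion. Modulo that technicality, the proof reduces to the clean combinatorial observation that each root vector in $\mathfrak{n}_{1}$ contributes at least $1$ to the $\alpha_{1}$-coefficient of the weight, so the $\alpha_{1}$-free projection forces the otherwise infinite expansion of $\xi_{j}$ to collapse to a finite sum.
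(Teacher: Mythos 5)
Your first paragraph correctly identifies the reduction: $y_{r,0}$, the $\alpha_{1}$-free component of $e_{\alpha_{1}}^{r}x$, maps to zero in $\widehat{L(\lambda')}$, so if it lands in $KN' \otimes_{K} V_{\lambda'}$ then condition (1) for $\lambda'$ forces $y_{r,0}=0$. For $r=0$ this containment is immediate from the explicit form of elements of $KG \otimes_{KP} V_{\lambda}$, and this is precisely the paper's opening step. But for $r>0$ the element $e_{\alpha_{1}}^{r}x$ is no longer in $KG \otimes_{KP} V_{\lambda}$, and the containment is not established: you flag the semidirect decomposition of $KN$, the change between the group-theoretic topological basis of $KN_{1}$ and the PBW basis, and the convergence of the resulting $KN'$-coefficients as the obstacle, and this is a genuine gap that would require substantial work to close.

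The paper avoids all of this with a cleaner device: instead of applying $e_{\alpha_{1}}^{r}$, apply $E_{\alpha_{1}}^{r}$ where $E_{\alpha_{1}} = \exp(p^{n+1}e_{\alpha_{1}}) - 1 \in KG$. Since the kernel is a $KG$-submodule of $KG \otimes_{KP} V_{\lambda}$, each $E_{\alpha_{1}}^{r}x$ again lies in the kernel, and the $r=0$ argument applies verbatim to show its $\alpha_{1}$-free component vanishes. Consequently $y x$ has zero $\alpha_{1}$-free component for every $y \in K\otimes_{R} R\llbracket E_{\alpha_{1}}\rrbracket$; this subalgebra is dense in $\widehat{U(\Of e_{\alpha_{1}})}_{n,K}$, and by continuity of the $\alpha_{1}$-free projection (as in Lemma \ref{closurelem}) the same conclusion transfers to each $e_{\alpha_{1}}^{r}$, which is exactly the condition $x \in W_{\alpha_{1}}$. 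The lesson is to stay inside the Iwasawa algebra as long as possible and pass to the Lie-algebra element only at the very end via density, so that the $KG$-module structure of the kernel does the work that your combinatorial analysis would otherwise have to do.
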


\begin{proof}

First, we claim that if $v \in \ker (KG \otimes_{KP} V_{\lambda} \rightarrow \widehat{L(\lambda)})$ then the $\alpha_{1}$-free component of $v$ is zero.

Say $V_{\lambda}$ has a $K$-basis of weight vectors $v_{1},\dots,v_{l}$, where $v_{1},\dots,v_{l'}$ are precisely the $v_{i}$ that are $\alpha_{1}$-free. Then $v_{1},\dots,v_{l'} \in V_{\lambda'} \subseteq V_{\lambda}$ by the previous lemma.

Suppose $v \in \ker (KG \otimes_{KP} V_{\lambda} \rightarrow \widehat{L(\lambda)})$. Say $v = \sum x_{i} \otimes v_{i}$, where $x_{i} \in KN$. Using the description of the embedding of $KG$ in $\Ug$ in \cite[Corollary 2.5.5]{me}, we can write $x_{i} = y_{i} + \text{(terms of weight with non-zero } \alpha_{1} \text{-component)}$, where $y_{i} \in KN'$.

Now the $\alpha_{1}$-free component of $v$ is $\underset{i \leq l'}{\sum} y_{i} \otimes v_{i}$, with $y_{i} \in KN'$ and each  $v_{i} \in V_{\lambda'}$ with $i \leq l'$. Since $v \in \ker (KG \otimes_{KP} V_{\lambda} \rightarrow \widehat{L(\lambda)})$, we must have $\underset{i \leq l'}{\sum} y_{i} v_{i} = 0$ in $\widehat{L(\lambda')} \subseteq \widehat{L(\lambda)}$. By condition (1) for $\lambda'$, each $y_{i} = 0$ with $i < l'$, and the $\alpha_{1}$-free component of $v$ is zero.

\bigskip

\noindent Now fix $v \in \ker (KG \otimes_{KP} V_{\lambda} \rightarrow \widehat{L(\lambda)})$, and we claim $v \in W_{\alpha_{1}}$.

Write $E_{\alpha_{1}} = \text{exp}(p^{n+1}e_{\alpha_{1}}-1) \in KN \subseteq \Un$. By using \cite[Lemma 2.5.6]{me}, we can see that $K \otimes_{R} R \llbracket E_{\alpha_{1}} \rrbracket \subseteq KN$ is dense in $\widehat{U(\Of e_{\alpha_{1}})}_{n,K} \subseteq \Un$.

For $r \in \mathbb{N}_{0}$, we have $E_{\alpha_{1}}^{r} v \in \ker (KG \otimes_{KN} V_{\lambda} \rightarrow \widehat{L(\lambda)})$, so the $\alpha_{1}$-free component of $E_{\alpha_{1}}^{r}v$ is $0$. 

Therefore for any $x \in K \otimes_{R} R \llbracket E_{\alpha_{1}} \rrbracket$, $x v$ has $\alpha_{1}$-free component $0$. Using the density of $K \otimes_{R} R \llbracket E_{\alpha_{1}} \rrbracket$ in $\widehat{U(\Of e_{\alpha_{1}})}_{n,K}$, we see that for any $r \in \mathbb{N}_{0}$, $e_{\alpha_{1}}^{r} v$ has $\alpha_{1}$-free component $0$.

Thus $v \in W_{\alpha_{1}}$.

\end{proof}

\noindent For $l \in \mathbb{N}_{0}$, define a $K$-vector space:

\begin{equation*}
\begin{split}
\Omega_{l}^{\alpha_{1}} \widehat{M_{\Delta_{\lambda}}(\lambda) } = & \{ v \in \widehat{M_{\Delta_{\lambda}}(\lambda)} \mid \text{ all non-zero weight components of } v \text{ have weight} \\ & \lambda - \underset{\alpha \in \Delta}{\sum} c_{\alpha} \alpha \text{ with } c_{\alpha_{1}} = l \}. 
\end{split}
\end{equation*}

\noindent So any $v \in \widehat{M_{\Delta_{\lambda}}(\lambda)}$ can be uniquely expressed as a convergent sum $v = \sum v_{l}$ with $v_{l} \in \Omega^{\alpha_{1}}_{l}$. 

For subspaces $V \subseteq \widehat{M}_{\Delta_{\lambda}}(\lambda)$ we will also write $\Omega_{l}^{\alpha_{1}} V = V \cap \Omega_{l}^{\alpha_{1}} \widehat{M_{\Delta_{\lambda}}(\lambda)}$.

\begin{mylem}
\label{reduceton1lemma}
Suppose $v \in KG \otimes_{KN}V_{\lambda} \subseteq \widehat{M_{\Delta_{\lambda}}(\lambda)}$, such that $v \in W_{\alpha_{1}}$. Then for any $l$, the $\Omega_{l}^{\alpha_{1}}$-component of $v$ lies in $KN_{2} \cdot \left(W_{\alpha_{1}} \cap \left( U(\mathfrak{n}_{1,K})V_{\lambda} \right) \right)$.
\end{mylem}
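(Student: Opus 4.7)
The proof rests on two structural observations. First, the Chevalley relations force $[e_{\alpha_1}, \mathfrak{n}_2] = 0$: for $f_\alpha \in \mathfrak{n}_2$, the vector $\alpha_1 - \alpha$ has $\alpha_1$-coefficient $+1$ and all other simple-root coefficients non-positive, so it cannot be a root. Hence $e_{\alpha_1}$ commutes with $U(\mathfrak{n}_{2,K})$ and, by continuity, with $KN_2 \subseteq \Un$; in particular, left multiplication by elements of $KN_2$ preserves the $\Omega_\bullet^{\alpha_1}$-grading on $\widehat{M_{\Delta_\lambda}(\lambda)}$. Second, $\mathfrak{n}_1$ is an ideal of $\mathfrak{n}$ with complement the subalgebra $\mathfrak{n}_2$, so $N = N_2 \ltimes N_1$, and the multiplication map induces a topological isomorphism $KN_2 \mathbin{\widehat{\otimes}}_K KN_1 \xrightarrow{\sim} KN$.

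Fix a topological $K$-basis $\{X_\sigma\}$ of $KN_2$ from exponential coordinates on $N_2$. Writing $v = \sum_\sigma X_\sigma \, w_\sigma$ with $w_\sigma \in KN_1 \otimes_K V_\lambda$, the grading-preservation by $KN_2$ yields $v_l = \sum_\sigma X_\sigma \, w_\sigma^{(l)}$, where $w_\sigma^{(l)}$ denotes the $\Omega_l^{\alpha_1}$-piece of $w_\sigma$. A finiteness argument — using boundedness of $\alpha_1$-weights in the finite-dimensional $V_\lambda$ together with the fact that each $f_\alpha \in \mathfrak{n}_1$ lowers $\alpha_1$-weight by at least one — places $w_\sigma^{(l)}$ in the uncompleted subspace $U(\mathfrak{n}_{1,K}) V_\lambda$.

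It remains to verify $w_\sigma^{(l)} \in W_{\alpha_1}$, i.e.\ that $e_{\alpha_1}^r w_\sigma^{(l)}$ has zero $\alpha_1$-free component for every $r \geq 0$. For $r < l$ this is automatic from weights, and for $r \geq l$ the problem reduces to the single identity $e_{\alpha_1}^l w_\sigma^{(l)} = 0$. Applying $e_{\alpha_1}^l$ to $v \in W_{\alpha_1}$ and using $[e_{\alpha_1}, X_\sigma] = 0$ yields
\begin{equation*}
0 \;=\; \bigl(\alpha_1\text{-free part of } e_{\alpha_1}^l v\bigr) \;=\; \sum_\sigma X_\sigma \cdot e_{\alpha_1}^l w_\sigma^{(l)},
\end{equation*}
since only the $\Omega_l^{\alpha_1}$-piece of each $w_\sigma$ contributes to the $\alpha_1$-free part. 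The $\alpha_1$-free subspace of $\widehat{M_{\Delta_\lambda}(\lambda)}$ identifies via PBW with the free $\widehat{U(\mathfrak{n}_2)_{n,K}}$-module on a finite basis of $\alpha_1$-free weight vectors in $V_\lambda$, and one deduces $e_{\alpha_1}^l w_\sigma^{(l)} = 0$ for every $\sigma$ by extracting each coordinate and invoking linear independence of the $X_\sigma$. Collecting the $w_\sigma^{(l)}$ across a basis of the finite-dimensional space $W_{\alpha_1} \cap U(\mathfrak{n}_{1,K})V_\lambda \cap \Omega_l^{\alpha_1}$ then expresses $v_l$ as a \emph{finite} $KN_2$-linear combination of elements of $W_{\alpha_1} \cap U(\mathfrak{n}_{1,K})V_\lambda$, as required.

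The main obstacle will be this last linear-independence step: because $KN_2$ embeds as a proper dense subring of $\widehat{U(\mathfrak{n}_2)_{n,K}}$ rather than as an algebraic direct summand, extracting the vanishing of individual $e_{\alpha_1}^l w_\sigma^{(l)}$ from the convergent relation $\sum_\sigma X_\sigma \cdot e_{\alpha_1}^l w_\sigma^{(l)} = 0$ requires pinning down how the topological basis $\{X_\sigma\}$ of $KN_2$ interacts with a PBW basis of $\widehat{U(\mathfrak{n}_2)_{n,K}}$, and invoking a density argument to propagate vanishing between the two.
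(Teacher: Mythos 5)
Your approach is correct in outline and relies on the same two key facts that drive the paper's proof: that $e_{\alpha_1}$ commutes with $KN_2$, and that the multiplication map from (a tensor product involving) $KN_2$ into the completion is injective. Where you diverge is in presentation: you decompose $v$ concretely over a topological basis $\{X_\sigma\}$ of $KN_2$ and then try to extract coefficients from the convergent relation $\sum_\sigma X_\sigma \cdot e_{\alpha_1}^l w_\sigma^{(l)} = 0$, whereas the paper sidesteps coefficient extraction entirely by first proving a clean abstract lemma: for any $K$-subalgebra $A \subseteq KN$, any $A$-module endomorphism $\theta$ of $\widehat{M_{\Delta_\lambda}(\lambda)}$, and any subspace $V \subseteq M_{\Delta_\lambda}(\lambda)$ with $\theta(V) \subseteq M_{\Delta_\lambda}(\lambda)$, one has $\ker\theta \cap (A\cdot V) = A \cdot (\ker\theta \cap V)$. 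Applying this with $A = KN_2$, $\theta = e_{\alpha_1}^l$ and $V = \Omega_l^{\alpha_1}(U(\mathfrak{n}_{1,K})V_\lambda)$ gives the conclusion directly. That lemma is proved by a short commutative-diagram argument that reduces everything to the injectivity of $KN \otimes_K M_{\Delta_\lambda}(\lambda) \rightarrow \widehat{M_{\Delta_\lambda}(\lambda)}$, which is a citable consequence of the proof of \cite[Theorem 5.3]{verma}.

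The ``main obstacle'' you flag at the end is therefore real but also exactly the thing the paper's abstraction is designed to dissolve. You do not actually need to pin down how $\{X_\sigma\}$ interacts with a PBW basis of $\widehat{U(\mathfrak{n}_2)_{n,K}}$, nor a separate density argument: since $\Omega_0^{\alpha_1} V_\lambda$ is finite-dimensional, the relation $\sum_\sigma X_\sigma u_\sigma = 0$ lives in the image of $KN_2 \otimes_K \Omega_0^{\alpha_1} V_\lambda$, and injectivity of that multiplication map (again, from the Verma-faithfulness argument) forces all $u_\sigma = 0$ after expanding in a fixed basis of $\Omega_0^{\alpha_1} V_\lambda$ and using the Banach-basis property of the $X_\sigma$ inside $KN_2$. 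So if you want to push your version through, cite the injectivity result rather than reconstructing it; better still, factor it into a lemma in the style of the paper's Lemma \ref{reduceton1lemmageneral}, which is reusable and keeps the topological bookkeeping out of the main argument. Your finiteness observation (that the $\Omega_l^{\alpha_1}$-piece of $w_\sigma$ lands in the uncompleted $U(\mathfrak{n}_{1,K})V_\lambda$ because every generator of $\mathfrak{n}_1$ raises $\alpha_1$-degree by exactly one) is correct and is also what the paper uses, phrased as the observation that every element of $KG \otimes_{KP} V_\lambda$ is a convergent sum of $xyw$ with $x \in KN_2$, $y \in KN_1$, $w \in V_\lambda$.
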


\noindent To prove this, we will first prove the following slightly more general lemma.

\begin{mylem}
\label{reduceton1lemmageneral}
Suppose $A$ is a $K$-subalgebra of $KN$, $\theta : \widehat{M_{\Delta_{\lambda}}(\lambda)} \rightarrow \widehat{M_{\Delta_{\lambda}}(\lambda)}$ is an $A$-module homomorphism, and $V \subseteq M_{\Delta_{\lambda}}(\lambda)$ is a $K$-subspace such that $\theta(V) \subseteq M_{\Delta_{\lambda}}(\lambda)$.

Then $\ker \theta \cap (A \cdot V) = A \cdot (\ker \theta \cap V)$.
\end{mylem}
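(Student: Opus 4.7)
\textit{Plan.} The containment $A \cdot (\ker \theta \cap V) \subseteq \ker \theta \cap (A \cdot V)$ is immediate: the $A$-linearity of $\theta$ makes $\ker \theta$ an $A$-submodule of $\widehat{M_{\Delta_{\lambda}}(\lambda)}$, so $A \cdot (\ker \theta \cap V) \subseteq A \cdot \ker \theta \subseteq \ker \theta$, and the containment in $A \cdot V$ is trivial.

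For the reverse inclusion, I would begin with a purely $K$-linear decomposition $V = V_{0} \oplus V_{1}$, where $V_{0} := V \cap \ker \theta$ and $V_{1}$ is an arbitrary $K$-linear complement. By construction, the restriction $\theta|_{V_{1}} : V_{1} \to M_{\Delta_{\lambda}}(\lambda)$ is injective, so for any $K$-basis $\{w_{i}\}$ of $V_{1}$, the images $\{\theta(w_{i})\}$ form a $K$-linearly independent subset of $M_{\Delta_{\lambda}}(\lambda)$. Given $v \in \ker \theta \cap (A \cdot V)$, write $v = v_{0} + v_{1}$ with $v_{i} \in A \cdot V_{i}$. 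Since $A \cdot V_{0} \subseteq A \cdot (\ker \theta \cap V) \subseteq \ker \theta$, the element $v_{0}$ already lies in $\ker \theta$, so $v_{1} = v - v_{0} \in \ker \theta \cap (A \cdot V_{1})$. Thus the lemma reduces to showing $\ker \theta \cap (A \cdot V_{1}) = 0$, i.e.\ that $\theta$ is injective on $A \cdot V_{1}$.

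\textit{The main obstacle} is precisely this injectivity. Expanding an arbitrary element as $v_{1} = \sum_{i} a_{i} w_{i} \in A \cdot V_{1}$ with $a_{i} \in A$, the $A$-linearity of $\theta$ yields $\theta(v_{1}) = \sum_{i} a_{i} \theta(w_{i})$ inside $\widehat{M_{\Delta_{\lambda}}(\lambda)}$, and one must argue that vanishing of this sum forces $\sum_{i} a_{i} w_{i} = 0$. The approach is to exploit the PBW-type isomorphism $\widehat{M_{\Delta_{\lambda}}(\lambda)} \cong \widehat{U(\mathfrak{n})_{n,K}} \otimes_{K} V_{\lambda}$ recorded in \S\ref{gvm}: fix a $K$-basis $\{e_{j}\}$ of $V_{\lambda}$, and expand $w_{i} = \sum_{j} u_{ij} e_{j}$, $\theta(w_{i}) = \sum_{j} u'_{ij} e_{j}$ with $u_{ij}, u'_{ij} \in U(\mathfrak{n}_{K})$. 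The vanishing $\sum_{i} a_{i} \theta(w_{i}) = 0$ becomes the system $\sum_{i} a_{i} u'_{ij} = 0$ in $\widehat{U(\mathfrak{n})_{n,K}}$ for each $j$.

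The concluding step is to use the $K$-linear independence of $\{\theta(w_{i})\}$, together with the density of $KN$ in $\widehat{U(\mathfrak{n})_{n,K}}$ (\cite[Lemma 2.5.6]{me}) and the freeness of $\widehat{M_{\Delta_{\lambda}}(\lambda)}$ over $\widehat{U(\mathfrak{n})_{n,K}}$, to propagate these identities to the corresponding system $\sum_{i} a_{i} u_{ij} = 0$, and hence recover $v_{1} = \sum_{i} a_{i} w_{i} = 0$ in $\widehat{M_{\Delta_{\lambda}}(\lambda)}$. This matching of the $(u_{ij})$ and $(u'_{ij})$ systems, via the interaction of $A \subseteq KN$ with the ambient completed enveloping algebra, is the delicate point of the argument.
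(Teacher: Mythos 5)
Your reduction is sound: splitting $V = V_0 \oplus V_1$ with $V_0 = V \cap \ker\theta$ and reducing the claim to $\ker\theta \cap (A\cdot V_1) = 0$ is a legitimate alternative to the paper's route. (The paper instead forms the commutative square with $1\otimes\theta : A\otimes_K V \to A\otimes_K M_{\Delta_\lambda}(\lambda)$ on top and $\theta : A\cdot V \to A\cdot M_{\Delta_\lambda}(\lambda)$ on the bottom, identifies the vertical multiplication maps as isomorphisms, and computes $\ker(1\otimes\theta) = A\otimes_K(\ker\theta\cap V)$ using flatness of $A$ over $K$. Both approaches come down to the same crucial fact.)

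Your concluding step, however, has a genuine gap. Once you reach $\sum_i a_i\theta(w_i) = 0$ with $\{\theta(w_i)\}$ $K$-linearly independent and $a_i \in A\subseteq KN$, the conclusion you need is that every $a_i$ vanishes --- not that the system $\sum_i a_i u'_{ij} = 0$ can be ``propagated'' to $\sum_i a_i u_{ij} = 0$. There is no structural link between the coefficient arrays $(u_{ij})$ and $(u'_{ij})$ that would support such a transfer; $\theta$ is just some $A$-module homomorphism (in the application it is multiplication by $e_{\alpha_1}^{l}$), and density of $KN$ in $\Un$ together with freeness of $\widehat{M_{\Delta_\lambda}(\lambda)}$ over $\Un$ does not supply such a transfer. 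The decisive ingredient, which you never explicitly invoke, is that the multiplication map $\mu : KN\otimes_K M_{\Delta_\lambda}(\lambda) \to \widehat{M_{\Delta_\lambda}(\lambda)}$ is injective; equivalently, $KN\otimes_K\un \to \Un$ is injective, extracted in the paper from the proof of \cite[Theorem 5.3]{verma}. Granting this, $\mu\bigl(\sum_i a_i\otimes\theta(w_i)\bigr)=0$ with the $\theta(w_i)$ $K$-independent inside $M_{\Delta_\lambda}(\lambda)$ forces $\sum_i a_i\otimes\theta(w_i)=0$ in $KN\otimes_K M_{\Delta_\lambda}(\lambda)$ and hence $a_i=0$ for all $i$, from which $v_1=\sum_i a_i w_i=0$ is immediate. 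Without naming this injectivity and applying it in that direct way, the argument does not close.
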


\begin{proof}
First, we claim that the multiplication map $\mu : KN \otimes_{K} M_{\Delta_{\lambda}}(\lambda) \rightarrow \widehat{M_{\Delta_{\lambda}}(\lambda)}$ is injective. This holds because $\widehat{M_{\Delta_{\lambda}}(\lambda)}$ is isomorphic as a $\Un$-module to $\Un \otimes_{K} V_{\lambda}$, and the multiplication map $KN \otimes_{K} \un \rightarrow \Un$ is injective by the proof of \cite[Theorem 5.3]{verma}.

Now consider the following diagram of $A$-module homomorphisms:
\[\begin{tikzcd}
	{A \otimes_{K}V} & {A \otimes_{K}M_{\Delta_{\lambda}}(\lambda)} \\
	{A \cdot V} & {A \cdot M_{\Delta_{\lambda}}(\lambda)}
	\arrow["{1 \otimes \theta}", from=1-1, to=1-2]
	\arrow["\mu", from=1-1, to=2-1]
	\arrow["\mu", from=1-2, to=2-2]
	\arrow["\theta"', from=2-1, to=2-2]
\end{tikzcd}\]

Here the two vertical maps are given by the restriction of the multiplication map $\mu : KN \otimes_{K} M_{\Delta_{\lambda}}(\lambda) \rightarrow \widehat{M_{\Delta_{\lambda}}(\lambda)}$, and in particular are injective. They are clearly also surjective, so are in fact $A$-isomorphisms.

We claim this diagram is commutative. If $x \in A$ and $v \in V$, then $\mu \circ (1 \otimes \theta )(x \otimes v) = x \theta(v) = \theta(xv) = \theta \circ \mu (x \otimes v)$.

Therefore the kernels of $1 \otimes \theta: A \otimes_{K} V \rightarrow A \otimes_{K} M_{\Delta_{\lambda}}(\lambda)$ and $\theta : A \cdot V \rightarrow \theta A \cdot M_{\Delta_{\lambda}}(\lambda)$ are identified by $\mu$. We can see that the kernel of $1 \otimes \theta: A \otimes_{K} V \rightarrow A \otimes_{K} M_{\Delta_{\lambda}}(\lambda)$ is $A \otimes_{K} \ker \theta \cap V$, so this shows us that $A \cdot (\ker \theta \cap V) = \ker \theta \cap (A \cdot V)$.

\end{proof}

\begin{proof}[Proof of Lemma \ref{reduceton1lemma}]
In the statement of the previous lemma, take $A = KN_{2}$, $\theta$ the action of $e_{\alpha_{1}}^{l}$ on $\widehat{M_{\Delta_{\lambda}}(\lambda)}$ (which is an $A$-homomorphism because $e_{\alpha_{1}}$ commutes with $KN_{2}$ in $\Ug$), and $V = \Omega_{l}^{\alpha_{1}} ( U(\mathfrak{n}_{1,K})V_{\lambda})$. Note that $W_{\alpha_{1}} \cap \Omega_{l}^{\alpha_{1}} \widehat{M_{\Delta_{\lambda}}(\lambda)} = \ker \theta \cap \Omega_{l}^{\alpha_{1}} \widehat{M_{\Delta_{\lambda}}(\lambda)}$ by definition of $W_{\alpha_{1}}$.

We know that $KG \otimes_{KP} V_{\lambda}$ is generated as a $KN$-module by $V_{\lambda}$. By the description of Iwasawa algebras in \cite[Corollary 2.5.5]{me}, we can see that every element of $KG \otimes_{KP} V_{\lambda}$ is a convergent sum of elements of the form $x y w$ with $x \in KN_{2}, y \in KN_{1}, w \in V_{\lambda}$.

So we can see that the $\Omega_{l}^{\alpha_{1}}$-component of $v$ lies in $KN_{2} \Omega_{l}^{\alpha_{1}}\left( U(\mathfrak{n}_{1,K})V_{\lambda} \right)$.

Then Lemma \ref{reduceton1lemmageneral} implies that $W_{\alpha_{1}} \cap \left(KN_{2} \cdot \Omega_{l}^{\alpha_{1}}(U(\mathfrak{n}_{1,K})V_{\lambda})\right) = KN_{2} \cdot W_{\alpha_{1}} \cap \left(\Omega_{l}^{\alpha_{1}}(U(\mathfrak{n}_{1,K})V_{\lambda}) \right) $. The result follows.

\end{proof}

\noindent Now for convenience write $W = U(\mathfrak{n}_{2,K}) (W_{\alpha_{1}} \cap U(\mathfrak{n}_{1,K})V_{\lambda}) \subseteq M_{\Delta_{\lambda}}(\lambda)$. We can see from the above lemma that if $v \in KG \otimes_{KP} V_{\lambda}$ with $v \in W_{\alpha_{1}}$, then all the $\Omega_{l}^{\alpha_{1}}$-components of $v$ lie in the closure $\bar{W}$ of $W$ in $\widehat{M}_{\Delta_{\lambda}}(\lambda)$.  Therefore $v \in \bar{W}$.

We can see that $W$ is spanned by weight vectors: first, if $x \in W_{\alpha_{1}} \subseteq \widehat{M_{\Delta_{\lambda}}(\lambda)}$, then by definition of $W_{\alpha_{1}}$ all weight components of $x$ lie in $W_{\alpha_{1}}$. So if $x \in W_{\alpha_{1}} \cap ( U(\mathfrak{n}_{1,K})V_{\lambda})$ then all weight components of $x$ lie in $ W_{\alpha_{1}} \cap ( U(\mathfrak{n}_{1,K})V_{\lambda})$. Thus $W = U(\mathfrak{n}_{2,K}) (W_{\alpha_{1}} \cap U(\mathfrak{n}_{1,K})V_{\lambda})$ is spanned by weight vectors.

\begin{mylem}
\label{hwvectorexistencelem}
Suppose there exists $0 \neq w \in \bar{W}$ such that $KG w \subseteq \bar{W}$. Then there is a highest-weight vector in $W$.
\end{mylem}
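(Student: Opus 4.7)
The plan is to upgrade the $KG$-stability of $\bar{W}$ to $\Ug$-stability, and then to extract a highest-weight vector inside $\bar{W}$ by a maximal-weight argument applied to $U(\mathfrak{n}^{+}_{K}) w$.

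For the first step, I would observe that $\widehat{M_{\Delta_{\lambda}}(\lambda)}$ is a filtered $\Ug$-module via the good filtration $\Gamma$, so for each fixed $w$ the orbit map $x \mapsto xw$ from $\Ug$ to $\widehat{M_{\Delta_{\lambda}}(\lambda)}$ is continuous. By \cite[Lemma 2.5.6]{me}, $KG$ is dense in $\Ug$, and $\bar{W}$ is closed by construction. Combined with the hypothesis $KGw \subseteq \bar{W}$, this yields $\Ug w \subseteq \bar{W}$; in particular $U(\mathfrak{n}^{+}_{K}) w \subseteq \bar{W}$.

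Next, let $\mathcal{W}$ denote the set of weights $\mu$ such that some $u \in U(\mathfrak{n}^{+}_{K}) w$ has a non-zero $\mu$-weight component. Since $w \neq 0$ its weight decomposition in $\bar{W}$ is non-trivial, so $\mathcal{W} \neq \emptyset$, and every $\mu \in \mathcal{W}$ satisfies $\mu \leq \lambda$ because $\widehat{M_{\Delta_{\lambda}}(\lambda)}$ is a highest-weight module. As $\{\mu : \mu_{1} \leq \mu \leq \lambda \}$ is finite for any fixed $\mu_{1}$, I can pick $\mu_{0} \in \mathcal{W}$ maximal in this partial order. Choosing $u \in U(\mathfrak{n}^{+}_{K}) w$ with $u_{\mu_{0}} \neq 0$, I then note that for any $\alpha \in \Phi^{+}$ the $(\mu_{0}+\alpha)$-weight component of $e_{\alpha} u \in U(\mathfrak{n}^{+}_{K}) w$ equals $e_{\alpha} u_{\mu_{0}}$; maximality of $\mu_{0}$ forces this to vanish, so $u_{\mu_{0}}$ is a highest-weight vector. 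Finally, since $u \in \bar{W}$ and $W$ is spanned by weight vectors (as verified in the paragraph preceding the statement), Lemma \ref{closurelem} gives $u_{\mu_{0}} \in W$, completing the argument.

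The main obstacle I anticipate is the first step: justifying that the $\Ug$-action is continuous enough to propagate $KG w \subseteq \bar{W}$ to $\Ug w \subseteq \bar{W}$. This is essentially a bookkeeping check using the compatibility of the filtrations on $\Ug$ and $\widehat{M_{\Delta_{\lambda}}(\lambda)}$, and once established, the subsequent maximal-weight construction is a routine highest-weight-module argument.
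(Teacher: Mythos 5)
Your argument is correct and follows essentially the same route as the paper's proof: both first promote $KG w \subseteq \bar{W}$ to $\Ug w \subseteq \bar{W}$ via density of $KG$ and closedness of $\bar{W}$, then extract a highest-weight vector by an extremal-weight argument (the paper minimises the depth $|t|$; you equivalently maximise the weight in the partial order), and finally invoke Lemma \ref{closurelem} to land the weight component inside $W$. Restricting to $U(\mathfrak{n}^{+}_{K})w$ rather than all of $\Ug w$ is a harmless economy, and your justification that a maximal weight in $\mathcal{W}$ exists is sound.
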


\begin{proof}

Since $KG$ is dense in $\Ug$ and $\bar{W}$ is closed, we have $\Ug w \subseteq \bar{W}$.

Let $j \geq 0$ be minimal such that there exists $w' \in \Ug w$ with a non-zero component of weight $\mu = \lambda - \underset{\alpha \in \Delta}{\sum} t_{\alpha} \alpha$ with $|t| = j$. Then choose such $w'$ and $\mu$.

Now consider the weight component $w'_{\mu}$ of weight $\mu$. If $\alpha \in \Phi^{+}$, then $e_{\alpha} w'$ has $(\mu + \alpha)$-weight component $e_{\alpha} w'_{\mu}$. By the minimality of $j$, this means that $e_{\alpha} w'_{\mu} = 0$. Therefore $w'_{\mu}$ is a highest-weight vector. We know $w' \in \bar{W}$, and by Lemma \ref{closurelem} this implies that $w'_{\mu} \in W$.
\end{proof}

\noindent We are now ready to prove Theorem \ref{inductionprop} by combining the previous results.

\begin{proof}[Proof of Theorem \ref{inductionprop}]
We know condition (1) is equivalent to the statement that the map $KG \otimes_{KN} V_{\lambda} \rightarrow \widehat{L(\lambda)}$ is injective.

Suppose for contradiction that $0 \neq w \in \ker (KG \otimes_{KN} V_{\lambda} \rightarrow \widehat{L(\lambda)})$. Then $KG w \subseteq \ker (KG \otimes_{KN} V_{\lambda} \rightarrow \widehat{L(\lambda)})$.

By Lemma \ref{Wcontainmentlemma}, $KG w \subseteq W_{\alpha_{1}} \cap KG \otimes_{KN}V_{\lambda}$.

By Lemma \ref{reduceton1lemma}, this implies that $KG w \subseteq \bar{W}$. Finally, by Lemma \ref{hwvectorexistencelem}, this implies that there is a highest-weight vector in $W = U(\mathfrak{n}_{2,K}) (W_{\alpha_{1}} \cap U(\mathfrak{n}_{1,K})V_{\lambda}) \subseteq M_{\Delta_{\lambda}}(\lambda)$, a contradiction.

\end{proof}

\noindent Having established our general result Theorem \ref{inductionprop}, we now want to look at some specific cases in which we can find a useful description of the subspace $U(\mathfrak{n}_{2,K}) \left( W_{\alpha_{1}} \cap (U(\mathfrak{n}_{1,K})V_{\lambda}) \right)$.

\begin{myprop}
\label{Walphadescription}
Suppose $\lambda$ and $\alpha_{1} \in \Delta$ satisfy the following conditions.

\begin{itemize}
\item For all $\alpha \in \Phi^{+}$, the $\alpha_{1}$-coefficient of $\alpha$ is either $0$ or $1$.

\item $W_{\alpha_{1}} \cap \underset{i}{\sum}f_{\alpha_{1}}^{i}V_{\lambda} = 0$.

\item If $\alpha \in \Phi^{+} \setminus \Phi_{\Delta_{\lambda}}^{+}$ such that $\alpha - \alpha_{1} \in \Phi^{+}_{\Delta_{\lambda}}$, then $f_{\alpha - \alpha_{1}} \cdot V_{\lambda} = 0$.

\end{itemize}

\noindent Then $W_{\alpha_{1}} \cap (U(\mathfrak{n}_{1,K})V_{\lambda}) = I V_{\lambda}$, where $I$ is the left ideal of $U(\mathfrak{n}_{1,K})$ generated by all $f_{\alpha}$ with $\alpha \in \Phi^{+} \setminus \Phi_{\Delta_{\lambda}}^{+}$ such that $\alpha_{1} \in \alpha$, $\alpha \neq \alpha_{1}$, and $\alpha - \alpha_{1} \notin \Phi^{+} \setminus \Phi_{\Delta_{\lambda}}^{+}$.

\end{myprop}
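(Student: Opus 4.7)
The plan is to exploit the hypothesis that every $\alpha \in \Phi^{+}$ has $\alpha_{1}$-coefficient $0$ or $1$, which forces $\mathfrak{n}_{1}$ to be abelian. Hence $U(\mathfrak{n}_{1,K})$ is the polynomial ring $K[f_{\alpha} : \alpha \in S]$, where $S = \{\alpha \in \Phi^{+} \setminus \Phi^{+}_{\Delta_{\lambda}} : \alpha_{1} \in \alpha\}$. Partition $S = S_{\mathrm{gen}} \sqcup S_{\mathrm{rest}}$, where $S_{\mathrm{gen}}$ is the generating set of $I$ given in the statement. Then $K[S] = K[S_{\mathrm{gen}}] \otimes_{K} K[S_{\mathrm{rest}}]$ and $I = K[S_{\mathrm{gen}}]_{+} \otimes K[S_{\mathrm{rest}}]$, so PBW for $M_{\Delta_{\lambda}}(\lambda)$ yields a $K$-vector space decomposition $U(\mathfrak{n}_{1,K}) V_{\lambda} = I V_{\lambda} \oplus K[S_{\mathrm{rest}}] V_{\lambda}$. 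The proposition thus reduces to (a) $I V_{\lambda} \subseteq W_{\alpha_{1}}$ and (b) $K[S_{\mathrm{rest}}] V_{\lambda} \cap W_{\alpha_{1}} = 0$.

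For (a), observe first that $W_{\alpha_{1}}$ is stable under $U(\mathfrak{n}_{1,K})$-multiplication. Every $w \in W_{\alpha_{1}}$ has $c_{\alpha_{1}} \geq 1$ in each non-zero weight component (apply the defining condition with $r = 0$), and the identity
\[
e_{\alpha_{1}}^{r} f_{\beta} = f_{\beta} e_{\alpha_{1}}^{r} + r c_{\beta} f_{\beta - \alpha_{1}} e_{\alpha_{1}}^{r-1}
\]
(valid for any $\beta \in S$ because $\beta - 2\alpha_{1}$ is not a root, with the understanding that $f_{\beta - \alpha_{1}}$ is replaced by $h_{\alpha_{1}}$ when $\beta = \alpha_{1}$) then shows $f_{\beta} W_{\alpha_{1}} \subseteq W_{\alpha_{1}}$: the first summand has $c_{\alpha_{1}} \geq 1$, and the second involves operators ($f_{\beta - \alpha_{1}} \in \mathfrak{n}_{2}$ or $h_{\alpha_{1}}$) that preserve $c_{\alpha_{1}}$, applied to $e_{\alpha_{1}}^{r-1} w$ whose $\alpha_{1}$-free part vanishes by $w \in W_{\alpha_{1}}$. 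Next, $f_{\alpha} V_{\lambda} \subseteq W_{\alpha_{1}}$ for each $\alpha \in S_{\mathrm{gen}}$: applying the same identity to $v \in V_{\lambda}$ yields $f_{\alpha} e_{\alpha_{1}}^{r} v$ (with $c_{\alpha_{1}} \geq 1$, so no $\alpha_{1}$-free part) plus $r c_{\alpha} f_{\alpha - \alpha_{1}} e_{\alpha_{1}}^{r-1} v$, and condition (3) forces $f_{\alpha - \alpha_{1}} V_{\lambda} = 0$ in the only non-trivial case $\alpha - \alpha_{1} \in \Phi^{+}_{\Delta_{\lambda}}$ (noting $e_{\alpha_{1}}^{r-1} v \in V_{\lambda}$ as $V_{\lambda}$ is $\mathfrak{p}$-stable). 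Since $I V_{\lambda} = U(\mathfrak{n}_{1,K}) \cdot \bigcup_{\alpha \in S_{\mathrm{gen}}} f_{\alpha} V_{\lambda}$, (a) follows.

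For (b), use the $c_{\alpha_{1}}$-grading to reduce to homogeneous $w \in K[S_{\mathrm{rest}}]_{c} V_{\lambda}$, for which membership in $W_{\alpha_{1}}$ is equivalent to the single vanishing $e_{\alpha_{1}}^{c} w = 0$ (as $e_{\alpha_{1}}^{r} w$ lies in the $(c - r)$-component and is $\alpha_{1}$-free only for $r = c$). Analyse $e_{\alpha_{1}}^{c}$ on a monomial $f_{\alpha_{1}}^{t_{0}} \prod_{\alpha \in S_{\mathrm{rest}} \setminus \{\alpha_{1}\}} f_{\alpha}^{t_{\alpha}} \cdot v$ with $|t| = c$: iterated commutation replaces each $f_{\alpha_{1}}$ factor by an $h_{\alpha_{1}}$-contribution (via the standard $\mathfrak{sl}_{2}$-relation) and each other $f_{\alpha}$ by $c_{\alpha} f_{\alpha - \alpha_{1}} \in \mathfrak{n}_{2}$. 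The resulting expression lies in $U(\mathfrak{n}_{2,K}) \cdot \sum_{i \geq 0} f_{\alpha_{1}}^{i} V_{\lambda}$; since $\mathfrak{n}_{2}$ commutes with $e_{\alpha_{1}}$, the $\sum_{i} f_{\alpha_{1}}^{i} V_{\lambda}$-factor inherits membership in $W_{\alpha_{1}}$ and thus vanishes by condition (2), and PBW-linear-independence of $U(\mathfrak{n}_{2,K})$-monomials then forces $w = 0$. The main obstacle is the combinatorial bookkeeping in (b): one must identify $e_{\alpha_{1}}^{c}$ sufficiently explicitly on $K[S_{\mathrm{rest}}]_{c} V_{\lambda}$ to extract a witness of injectivity inside $U(\mathfrak{n}_{2,K}) \cdot \sum_{i} f_{\alpha_{1}}^{i} V_{\lambda}$, controlling the non-trivial interactions between the scalar $h_{\alpha_{1}}$-contributions and the operator $f_{\alpha - \alpha_{1}}$-contributions.
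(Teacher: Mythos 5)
Your decomposition $U(\mathfrak{n}_{1,K})V_{\lambda} = IV_{\lambda} \oplus K[S_{\mathrm{rest}}]V_{\lambda}$, with the reduction to (a) $IV_{\lambda} \subseteq W_{\alpha_{1}}$ and (b) $K[S_{\mathrm{rest}}]V_{\lambda} \cap W_{\alpha_{1}} = 0$, matches the paper's strategy, and your argument for (a) is sound. The gap is precisely where you flag it: step (b) is a sketch, and the sketch as stated does not obviously close. The obstruction is that you put $f_{\alpha_{1}}$ alongside the other generators of $K[S_{\mathrm{rest}}]$ and then try to push $e_{\alpha_{1}}^{c}$ through the whole monomial at once. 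Because $[e_{\alpha_{1}},[e_{\alpha_{1}},f_{\alpha_{1}}]] = -2 e_{\alpha_{1}} \neq 0$, the Leibniz expansion of $e_{\alpha_{1}}^{c}$ does not collapse to a single term on such a monomial: each $f_{\alpha_{1}}$ factor can absorb more than one $e_{\alpha_{1}}$, producing a sum of terms with $h_{\alpha_{1}}$-dependent scalars interleaved with the $f_{\alpha-\alpha_{1}}$-contributions, and these scalars vary across the PBW monomials in a way that is not controlled merely by ``lies in $U(\mathfrak{n}_{2,K})\sum_{i}f_{\alpha_{1}}^{i}V_{\lambda}$.'' You would still have to show the resulting linear map is injective, which is exactly the computation you describe as ``the main obstacle'' and do not carry out.

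The paper sidesteps this by choosing a different PBW basis: it sets $V'_{\lambda} = \sum_{i} f_{\alpha_{1}}^{i}V_{\lambda}$ and takes a basis $v_{i,j}$ of $\Omega_{i}^{\alpha_{1}}V'_{\lambda}$, so the basis of $U(\mathfrak{n}_{1,K})V_{\lambda}$ is $\prod_{S_{2}}f_{\alpha}^{t_{\alpha}}\prod_{S_{1}}f_{\alpha}^{u_{\alpha}}\, v_{i,j}$ with no explicit $f_{\alpha_{1}}$-factor. Since every $\alpha \in S_{1}\cup S_{2}$ satisfies $[e_{\alpha_{1}},f_{\alpha-\alpha_{1}}]=0$ (as $\alpha - 2\alpha_{1} \notin \Phi$), the derivation $\mathrm{ad}(e_{\alpha_{1}})$ acts on each such $f_{\alpha}$ exactly once, and weight/grading considerations force only one surviving term in $e_{\alpha_{1}}^{i+|t|+|u|}$ applied to the basis element, namely $c_{t,u,i,j}\prod f_{\alpha-\alpha_{1}}^{t_{\alpha}}\prod f_{\alpha-\alpha_{1}}^{u_{\alpha}}e_{\alpha_{1}}^{i}v_{i,j}$. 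Hypothesis (2) (your ``condition (2)'') is used precisely in the form ``$e_{\alpha_{1}}^{i}$ is injective on $\Omega_{i}^{\alpha_{1}}V'_{\lambda}$,'' which, combined with PBW-linear-independence of the $\prod f_{\alpha-\alpha_{1}}^{t_{\alpha}}$-factors, gives (b) with no further bookkeeping. To complete your version of (b), you would effectively have to re-derive this single-term formula, so folding the $f_{\alpha_{1}}$-powers into the ``coefficient'' space $V'_{\lambda}$ from the outset is the missing idea.
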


\begin{proof}

Write $V'_{\lambda} = \sum f_{\alpha_{1}}^{i} V_{\lambda}$ for convenience. Now for each $l$, $\Omega_{l}^{\alpha_{1}}V'_{\lambda}$ is finite-dimensional, and our assumption that $W_{\alpha_{1}} \cap V'_{\lambda} = 0$ implies that $e_{\alpha_{1}}^{l} : \Omega_{l}^{\alpha_{1}} V'_{\lambda} \rightarrow \Omega_{0}^{\alpha_{1}} V'_{\lambda}$ is injective.

Recall that $\mathfrak{n}$ is spanned by all $f_{\alpha}$ with $\alpha \in \Phi^{+} \setminus \Phi_{\Delta_{\lambda}}^{+}$, and $\mathfrak{n}_{1}$ is spanned by all $f_{\alpha}$ with $\alpha \in \Phi^{+} \setminus \Phi_{\Delta_{\lambda}}^{+}$ and $\alpha_{1} \in \alpha$.

The condition that all $\alpha \in \Phi^{+}$ have $\alpha_{1}$-component $0$ or $1$ implies that $\mathfrak{n}_{1}$ is abelian, because it means if $\alpha, \beta \in \Phi^{+}$ with $\alpha_{1} \in \alpha$ and $\alpha_{1} \in \beta$ then we cannot have $\alpha + \beta \in \Phi^{+}$. So $[f_{\alpha},f_{\beta}] = 0$.

\begin{itemize}
\item Let $S_{1}$ be the set of all $\alpha \in \Phi^{+} \setminus \Phi_{\Delta_{\lambda}}^{+}$ with $\alpha \neq \alpha_{1}$, such that $\alpha - \alpha_{1} \notin \Phi^{+} \setminus \Phi_{\Delta_{\lambda}}^{+}$. So $I = U(\mathfrak{n}_{1,K}) \{f_{\alpha} \mid \alpha \in S_{1} \}$.

\item Let $S_{2}$ be the set of all $\alpha \in \Phi^{+} \setminus \Phi_{\Delta_{\lambda}}^{+}$ with $\alpha \neq \alpha_{1}$, such that $\alpha - \alpha_{1} \in \Phi^{+} \setminus \Phi_{\Delta_{\lambda}}^{+}$.
\end{itemize}

We fix orderings of $S_{1},S_{2}$ and assume products indexed by $S_{1},S_{2}$ are taken using these orders.

Now for each $l \geq 0$, write $v_{l,1},\dots, v_{l,r_{l}}$ for a $K$-basis of $\Omega_{l}^{\alpha_{1}} V'_{\lambda}$. Note that we could have $r_{l} = 0$.

Then $U(\mathfrak{n}_{1,K}) V_{\lambda}$ has a basis as follows:

\begin{equation*}
x_{t,u,i,j} = \underset{\alpha \in S_{2}}{\prod} f_{\alpha}^{t_{\alpha}} \underset{\alpha \in S_{1}}{\prod} f_{\alpha}^{u_{\alpha}} v_{i,j}
\end{equation*}

\noindent for $t \in \mathbb{N}_{0}^{S_{2}}, u \in \mathbb{N}_{0}^{S_{1}}, i \in \mathbb{N}_{0}, 1 \leq  j \leq r_{i}$. So since each $\alpha \in S_{1} \cup S_{2}$ has $\alpha_{1}$-coefficient $1$, we can see that $x_{t,u,i,j} \in \Omega_{i+|t|+|u|}^{\alpha_{1}} M_{\Delta_{\lambda}}(\lambda)$.

Now $[e_{\alpha_{1}},\--]$ defines a derivation on $\ug$. For $\alpha \in S_{1} \cup S_{2}$, we know from the definition of a Chevalley basis (Definition \ref{chevalley}) that $[e_{\alpha_{1}}, f_{\alpha}] = a_{\alpha} f_{\alpha - \alpha_{1}}$, where we say $f_{\alpha - \alpha_{1}} = 0$ if $\alpha - \alpha_{1} \notin \Phi^{+}$, and $a_{\alpha}$ is a scalar which is non-zero if $f_{\alpha - \alpha_{1}} \neq 0$. For convenience we will also take $a_{\alpha}$ to be some non-zero scalar when $f_{\alpha - \alpha_{1}} = 0$. We can see that $[e_{\alpha_{1}},f_{\alpha - \alpha_{1}}] = 0$ whenever $\alpha \in S_{1} \cup S_{2}$, since $\alpha$ must have $\alpha_{1}$-coefficient $1$. We can then calculate the following using the Leibniz identity:

\begin{equation}
e_{\alpha_{1}}^{i+|t|+|u|} x_{t,u,i,j} = c_{t,u,i,j} \underset{S_{2}}{\prod} f_{\alpha -\alpha_{1}}^{t_{\alpha}} \underset{S_{1}}{\prod} f_{\alpha - \alpha_{1}}^{u_{\alpha}} e_{\alpha_{1}}^{i}v_{i,j},
\end{equation}

\noindent where $c_{t,u,i,j} = \binom{i+|t|+|u|}{i,t,u} \underset{S_{2}}{\prod} a_{\alpha}^{t_{\alpha}} \underset{S_{1}}{\prod} a_{\alpha}^{u_{\alpha}}$ is a non-zero scalar.

\bigskip

\noindent Now consider the map $e_{\alpha_{1}}^{l} : \Omega_{l}^{\alpha_{1}} U(\mathfrak{n}_{1,K})V_{\lambda} \rightarrow \Omega_{0}^{\alpha_{1}} M_{\Delta_{\lambda}}(\lambda)$, which has kernel $W_{\alpha_{1}} \cap \Omega_{l}^{\alpha_{1}} U(\mathfrak{n}_{1,K})V_{\lambda}$ by definition of $W_{\alpha_{1}}$.

$\Omega_{l}^{\alpha_{1}} U(\mathfrak{n}_{1,K})V_{\lambda}$ has a $K$-basis consisting of all the $x_{t,u,i,j}$ with $i+|t|+|u| = l$.

\bigskip

\noindent  Suppose we choose $t,u,i,j$ such that $i+|t|+|u| = l$ and $u \neq 0$. Then note that $e_{\alpha_{1}}^{i}v_{i,j} \in \Omega_{0}^{\alpha_{1}} V'_{\lambda} \subseteq V_{\lambda}$. Now $f_{\alpha - \alpha_{1}} V_{\lambda} = 0$ if $\alpha \in S_{1}$: because either $\alpha - \alpha_{1} \notin \Phi^{+}$, in which case $f_{\alpha - \alpha_{1}} = 0$, or $\alpha - \alpha_{1} \in \Phi^{+}_{\Delta_{\lambda}}$, in which case this holds by assumption. So since $u \neq 0$, we can see that $e_{\alpha_{1}}^{i+|t|+|u|} x_{t,u,i,j} = 0$ using Equation (1).

\bigskip

\noindent Now instead consider the basis elements with $u = 0$, and $i + |t| = l$. We have $e_{\alpha_{1}}^{i+|t|} x_{t,0,i,j} = c_{t,0,i,j} \underset{S_{2}}{\prod} f_{\alpha - \alpha_{1}}^{t_{\alpha}} e_{\alpha_{1}}^{i}v_{i,j}$, with the scalar $c_{t,0,i,j} \neq 0$. Now the $\underset{S_{2}}{\prod} f_{\alpha - \alpha_{1}}^{t_{\alpha}}$  are elements of $\un$ by definition of $S_{2}$, and are linearly independent by the PBW theorem. Also if $t$ is fixed, and so $i = l - |t|$ is fixed, then we know the $e_{\alpha_{1}}^{i}v_{i,j}$ are all linearly independent since $W_{\alpha_{1}} \cap V'_{\lambda} = 0$.

It follows that the $e_{\alpha_{1}}^{l}x_{t,0,i,j}$ with $i+|t| = l$ are all linearly independent.

So $W_{\alpha_{1}} \cap \Omega_{l}^{\alpha_{1}} U(\mathfrak{n}_{1,K})V_{\lambda}$ is precisely the span of the $x_{t,u,i,j}$ with $i+|t|+|u| = l$, and $u \neq 0$. Therefore $W_{\alpha_{1}} \cap U(\mathfrak{n}_{1,K})V_{\lambda}$ is the span of all the $x_{t,u,i,j}$ with $u \neq 0$. 

From the definition of the basis $x_{t,u,i,j}$, we can now see that $W_{\alpha_{1}} \cap U(\mathfrak{n}_{1,K})V_{\lambda} = U(\mathfrak{n}_{1,K}) \{ f_{\alpha} \mid \alpha \in S_{1} \} V'_{\lambda} = I \cdot \underset{i \geq 0}{\sum} f_{\alpha_{1}}^{i}V_{\lambda}$.

\bigskip

\noindent If $\alpha_{1} \in \Delta_{\lambda}$, then $\sum f_{\alpha_{1}}^{i} V_{\lambda} = V_{\lambda}$, so $I \cdot \sum f_{\alpha_{1}}^{i}V_{\lambda} = I V_{\lambda}$ and we are done.

If $\alpha_{1} \notin \Delta_{\lambda}$, then $f_{\alpha_{1}} \in \mathfrak{n}_{1}$. Using the fact that $\mathfrak{n}_{1}$ is abelian, we can see that $I \cdot \sum f_{\alpha_{1}}^{i}V_{\lambda} = I V_{\lambda}$ and we are done.

\end{proof}

\subsection{Inductive approach in type D}
\label{inductionsectiontypeD}

For this section, assume $\Phi$ has type $D_{m}$, where $m \geq 4$. Recall the labelling of positive roots from \S \ref{Liealgsec} as follows:

\begin{itemize}
\item $\alpha_{i,j}:=\alpha_{i}+\dots+\alpha_{j}$ for $1 \leq i \leq j \leq m$,

\item $\beta_{i}:=\alpha_{i}+\dots+\alpha_{m-2}+\alpha_{m}$ for $1 \leq i \leq m-2$,

\item $\gamma_{i,j}:=\alpha_{i}+\dots+\alpha_{j-1}+2\alpha_{j}+\dots+2\alpha_{m-2}+\alpha_{m-1}+\alpha_{m}$ for $1 \leq i < j \leq m-2$,
\end{itemize}

\noindent where $\alpha_{1},\dots, \alpha_{m}$ are the simple roots.

 Note that $\alpha_{1}$ in our labelling satisfies the condition that all $\alpha \in \Phi^{+}$ have $\alpha_{1}$-coefficient $0$ or $1$ (by inspection of the roots), which is one of the conditions for Proposition \ref{Walphadescription}.

Also note by inspection of the roots that if $\alpha \in \Phi^{+} \setminus \alpha_{1}$ such that $\alpha_{1} \in \alpha$ and $\alpha - \alpha_{1} \notin \Phi^{+}$, then $\alpha = \gamma_{1,2}$.

\begin{mycor}
\label{inductioncor1}
Suppose $\lambda$ satisfies the following conditions:
\begin{itemize}
\item $M_{\Delta_{\lambda}}(\lambda)$ is scalar (that is, $\lambda(h_{\alpha}) = 0$ for all $\alpha \in \Delta_{\lambda}$),

\item $\alpha_{1} \notin \Delta_{\lambda}$, 

\item for some $1 < k \leq m$ such that $k \neq m-1$, we have $\alpha_{k} \notin \Delta_{\lambda}$.

\end{itemize}

Then $W_{\alpha_{1}} \cap (U(\mathfrak{n}_{1,K})V_{\lambda}) = U(\mathfrak{n}_{1,K})(\{f_{\gamma_{1,2}}\}\cup \{f_{\alpha_{1,j}} \mid 2 \leq j < k \}) V_{\lambda}$, where $k \in \{2,\dots,m \}$ is minimal such that $\alpha_{k} \notin \Delta_{\lambda}$.

\end{mycor}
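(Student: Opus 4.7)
The plan is to apply Proposition \ref{Walphadescription} with the simple root $\alpha_{1}$ fixed by our labelling of $D_{m}$, so the main work splits into (a) verifying the three hypotheses of that proposition under the assumptions of the corollary, and (b) identifying which positive roots satisfy the conditions characterising the generators of the left ideal $I$ produced by the proposition.

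For the hypotheses: condition (i), that every $\alpha \in \Phi^{+}$ has $\alpha_{1}$-coefficient $0$ or $1$, is immediate from the explicit enumeration of positive roots of $D_{m}$ given in \S \ref{Liealgsec}. For condition (ii), I would use that $V_{\lambda} = K v_{\lambda}$ is one-dimensional (as $M_{\Delta_{\lambda}}(\lambda)$ is scalar), so a typical element of $\sum_{i} f_{\alpha_{1}}^{i} V_{\lambda}$ has the form $x = \sum_{i} c_{i} f_{\alpha_{1}}^{i} v_{\lambda}$. The standard $\mathfrak{sl}_{2}$-triple computation gives $e_{\alpha_{1}}^{r} f_{\alpha_{1}}^{i} v_{\lambda} = \frac{i!}{(i-r)!}\lambda(h_{\alpha_{1}})(\lambda(h_{\alpha_{1}})-1)\cdots(\lambda(h_{\alpha_{1}})-r+1) f_{\alpha_{1}}^{i-r} v_{\lambda}$ when $r \leq i$, and $0$ otherwise; picking out the $\alpha_{1}$-free component of $e_{\alpha_{1}}^{r} x$ (the $i = r$ term) and using $\alpha_{1} \notin \Delta_{\lambda}$, i.e.\ $\lambda(h_{\alpha_{1}}) \notin \mathbb{N}_{0}$, forces $c_{r} = 0$ inductively in $r$. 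For condition (iii), any $\alpha - \alpha_{1} \in \Phi^{+}_{\Delta_{\lambda}}$ gives $f_{\alpha - \alpha_{1}} \in \mathfrak{p}$, so $f_{\alpha - \alpha_{1}} v_{\lambda}$ lies in $V_{\lambda}$ with weight $\lambda - (\alpha - \alpha_{1}) \neq \lambda$, and must vanish since $V_{\lambda}$ is one-dimensional of weight $\lambda$.

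With the proposition applicable, I would then enumerate positive roots $\alpha$ with $\alpha_{1} \in \alpha$ and $\alpha \neq \alpha_{1}$, and check the condition $\alpha - \alpha_{1} \notin \Phi^{+} \setminus \Phi^{+}_{\Delta_{\lambda}}$. Such roots are $\alpha_{1,j}$ for $2 \leq j \leq m$, $\beta_{1}$, and $\gamma_{1,j}$ for $2 \leq j \leq m-2$. Using the observation before the corollary (that $\gamma_{1,2}$ is the only such $\alpha$ with $\alpha - \alpha_{1} \notin \Phi^{+}$), $\gamma_{1,2}$ is always a generator. For the remaining $\alpha$ the difference $\alpha - \alpha_{1}$ is a positive root, and the question becomes whether it lies in $\Phi^{+}_{\Delta_{\lambda}}$: for $\alpha = \alpha_{1,j}$ with $j \geq 2$ we get $\alpha_{2,j}$, which by minimality of $k$ is in $\Phi^{+}_{\Delta_{\lambda}}$ exactly when $j < k$; for $\alpha = \beta_{1}$ we get $\beta_{2}$, which contains $\alpha_{k}$ (since $k \in \{2, \ldots, m-2\} \cup \{m\}$ by $k \neq m-1$) and so is excluded; and for $\alpha = \gamma_{1,j}$ with $j \geq 3$ we get $\gamma_{2,j}$, whose support again meets $\alpha_{k}$ and so is excluded. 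Combining, the generators of $I$ are precisely $\{\gamma_{1,2}\} \cup \{\alpha_{1,j} \mid 2 \leq j < k\}$.

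I do not anticipate any substantive obstacle: the only piece requiring genuine algebraic input is the $\mathfrak{sl}_{2}$-calculation underpinning hypothesis (ii), while the rest is bookkeeping in the $D_{m}$ root system. The most error-prone step is the case analysis for $\gamma_{1,j}$ and $\beta_{1}$, where the exclusion $k \neq m-1$ in the hypothesis is precisely what guarantees that the $\alpha_{m-1}$- and $\alpha_{m}$-legs of $\gamma_{2,j}$ and $\beta_{2}$ always meet $\Delta \setminus \Delta_{\lambda}$.
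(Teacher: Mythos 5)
Your overall strategy is the same as the paper's: apply Proposition~\ref{Walphadescription} after verifying its three hypotheses (using the one-dimensionality of $V_{\lambda}$ from scalarity and the $\mathfrak{sl}_{2}$-calculation from Lemma~\ref{calculationlem}), then enumerate the roots $\alpha$ with $\alpha_{1} \in \alpha$, $\alpha \neq \alpha_{1}$ and $\alpha - \alpha_{1} \notin \Phi^{+} \setminus \Phi^{+}_{\Delta_{\lambda}}$. Most of the bookkeeping is right, but there is a genuine slip in the $\beta_{1}$ case. You assert that $k \neq m-1$, where $k$ is the \emph{minimal} index with $\alpha_{k} \notin \Delta_{\lambda}$, whereas the hypothesis only guarantees that \emph{some} $l \in \{2,\dots,m\}\setminus\{m-1\}$ has $\alpha_{l} \notin \Delta_{\lambda}$. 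These are not the same: if $\Delta_{\lambda} = \{\alpha_{2},\dots,\alpha_{m-2}\}$ then the minimal index is $k = m-1$ (the hypothesis is still met by $l = m$), and in that configuration $\beta_{2} = \alpha_{2}+\dots+\alpha_{m-2}+\alpha_{m}$ does \emph{not} contain $\alpha_{k} = \alpha_{m-1}$, so the reason you give for excluding $\beta_{1}$ fails. The repair, which is what the paper does, is to argue directly from the hypothesis: the support of $\beta_{2}$ is exactly $\{\alpha_{l} \mid 1 < l \leq m,\ l \neq m-1\}$, so the existence of some $l \neq m-1$ with $\alpha_{l} \notin \Delta_{\lambda}$ forces $\beta_{2} \notin \Phi^{+}_{\Delta_{\lambda}}$.

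Two smaller points. First, the restriction $k \neq m-1$ plays no role in your $\gamma_{1,j}$ case: the support of $\gamma_{2,j}$ is all of $\{\alpha_{2},\dots,\alpha_{m}\}$, so it automatically meets $\Delta \setminus \Delta_{\lambda}$ regardless of which index is minimal; your closing summary misattributes the need for $k \neq m-1$ to this case as well. Second, the displayed formula you give for $e_{\alpha_{1}}^{r} f_{\alpha_{1}}^{i} v_{\lambda}$ is incorrect when $r < i$ (the factors should be $(\lambda(h_{\alpha_{1}})-i+1)\cdots(\lambda(h_{\alpha_{1}})-i+r)$, not $\lambda(h_{\alpha_{1}})\cdots(\lambda(h_{\alpha_{1}})-r+1)$); but since only the $r = i$ coefficient is needed to conclude $W_{\alpha_{1}}\cap\sum f_{\alpha_{1}}^{i}V_{\lambda} = 0$, and at $r=i$ your expression agrees with the correct one, this does not affect the argument.
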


\noindent The condition that $k \neq m-1$ is not particularly necessary and mostly for notational convenience. To prove this corollary, it is helpful to be able to carry out some explicit calculations, and so we use the following lemma.

\begin{mylem}
\label{calculationlem}
Suppose $\alpha \in \Delta$, $M$ is a $\ug$-module, and $v \in M$ is a weight vector of weight $\lambda$ such that $e_{\alpha}v = 0$. Then $e_{\alpha} f_{\alpha}^{i}v = i(1+\lambda(h_{\alpha}-i) f_{\alpha}^{i-1}v$ for each $i \in \mathbb{N}_{0}$.

Moreover, $e_{\alpha}^{i} f_{\alpha}^{i} v$ is a non-zero scalar multiple of $v$ if $\lambda(h_{\alpha}) \notin \{0,\dots,i-1 \}$, and zero otherwise.
\end{mylem}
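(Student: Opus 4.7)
The plan is to prove the first formula by induction on $i$, using the $\mathfrak{sl}_2$-triple relations $[e_\alpha, f_\alpha] = h_\alpha$ and $[h_\alpha, f_\alpha] = -2 f_\alpha$ (so that $h_\alpha$ acts on the weight vector $f_\alpha^{i-1} v$ by $\lambda(h_\alpha) - 2(i-1)$), and then to iterate this identity to get the second statement. This is a classical $\mathfrak{sl}_2$-computation, so no serious obstacle is expected; care just needs to be taken to handle the base case and to check when the product of the scalar coefficients vanishes.

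For the first part, I would set up induction on $i$, with the base case $i=0$ trivial (both sides equal $0$, interpreting $f_\alpha^{-1} v$ as irrelevant since the scalar $i$ is zero). For the inductive step, write
\begin{equation*}
e_\alpha f_\alpha^{i} v = [e_\alpha, f_\alpha] f_\alpha^{i-1} v + f_\alpha e_\alpha f_\alpha^{i-1} v = h_\alpha f_\alpha^{i-1} v + f_\alpha e_\alpha f_\alpha^{i-1} v.
\end{equation*}
Since $f_\alpha^{i-1} v$ is a weight vector of weight $\lambda - (i-1)\alpha$, $h_\alpha$ acts on it by $\lambda(h_\alpha) - 2(i-1)$. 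Applying the inductive hypothesis to $e_\alpha f_\alpha^{i-1} v$ and collecting terms yields the coefficient
\begin{equation*}
(\lambda(h_\alpha) - 2(i-1)) + (i-1)(1 + \lambda(h_\alpha) - (i-1)) = i(1 + \lambda(h_\alpha) - i),
\end{equation*}
which is the desired formula.

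For the second part, iterate: applying $e_\alpha$ a total of $i$ times and using the first formula at each step gives
\begin{equation*}
e_\alpha^{i} f_\alpha^{i} v = \prod_{k=1}^{i} k\bigl(1 + \lambda(h_\alpha) - k\bigr) \, v = i! \prod_{k=1}^{i} \bigl(\lambda(h_\alpha) + 1 - k\bigr) \, v.
\end{equation*}
The product on the right vanishes precisely when $\lambda(h_\alpha) + 1 - k = 0$ for some $k \in \{1, \dots, i\}$, i.e.\ when $\lambda(h_\alpha) \in \{0, 1, \dots, i-1\}$. Otherwise, every factor is non-zero, so $e_\alpha^{i} f_\alpha^{i} v$ is a non-zero scalar multiple of $v$, completing the proof.
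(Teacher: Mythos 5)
Your proof is correct and follows the same standard $\mathfrak{sl}_2$-style calculation as the paper; the only difference is that the paper cites an external lemma for the identity $e_\alpha f_\alpha^i v = i(1 + \lambda(h_\alpha) - i) f_\alpha^{i-1} v$, whereas you prove it directly by induction, and the iterated product and the analysis of when it vanishes match the paper's argument.
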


\begin{proof}
The first part is \cite[Lemma 4.2.1]{me}. Given this, we can calculate $e_{\alpha}^{i} f_{\alpha}^{i} v = \overset{i}{\underset{j=1}{\prod}} \left( j(1+ \lambda(h_{\alpha})-j) \right) v$ and the second part follows.
\end{proof}

\begin{proof}[Proof of Corollary \ref{inductioncor1}]
Fix $2 \leq k \leq m$ to be minimal such that $\alpha_{k} \notin \Delta_{\lambda}$.

We apply Proposition \ref{Walphadescription}. Firstly, we need to check that $W_{\alpha_{1}}\cap \sum f_{\alpha_{1}}^{i}V_{\lambda} = 0$. Since $M_{\Delta_{\lambda}}(\lambda)$ is scalar, $V_{\alpha_{1}}$ is one-dimensional. So $\sum f_{\alpha_{1}}^{i} V_{\lambda}$ has $K$-basis $f_{\alpha_{1}}^{K} w_{\lambda}, i \in \mathbb{N}_{0}$, where $w_{\lambda}$ denotes the highest-weight vector generating $M_{\Delta_{\lambda}}(\lambda)$.

Using \ref{calculationlem}, $e_{\alpha_{1}}^{i} \cdot f_{\alpha_{1}}^{i} w_{\lambda}$ is a non-zero scalar multiple of $w_{\lambda}$ since $\alpha_{1} \notin \Delta_{\lambda}$ (so $\lambda(h_{\alpha_{1}}) \notin \mathbb{N}_{0}$). So indeed we see that $W_{\alpha_{1}} \cap \sum f_{\alpha_{1}}^{i}V_{\lambda} = 0$.

Moreover, $f_{\alpha}V_{\lambda} = 0$ for all $\alpha \in \Phi^{+}_{\Delta_{\lambda}}$ by the assumption that $M_{\Delta_{\lambda}}(\lambda)$ is scalar. So the third condition for Proposition \ref{Walphadescription} is met. 

\bigskip

\noindent Now suppose $\alpha \in \Phi^{+} \setminus \Phi_{\Delta_{\lambda}}^{+}$ such that $\alpha \neq \alpha_{1}$, $\alpha_{1} \in \alpha$, and $\alpha - \alpha_{1} \notin \Phi^{+} \setminus \Phi_{\Delta_{\lambda}}^{+}$.

If $\alpha - \alpha_{1} \notin \Phi^{+}$, then $\alpha = \gamma_{1,2}$. Now suppose instead that $\alpha - \alpha_{1} \in \Phi_{\Delta_{\lambda}}^{+}$. Then $\alpha_{k} \notin \alpha$, so $\alpha \neq \alpha_{1,i}$ for $i \geq k$, and also $\alpha \neq \gamma_{1,j}$ for all $j \geq 2$. Since some $\alpha_{l} \notin \Delta_{\lambda}$ with $l \neq m-1$, we must have $\alpha \neq \beta_{1}$.

So we can see that $\alpha$ must have the form $\alpha_{1,i}$ for some $1 < i < k$, and the result follows from Proposition \ref{Walphadescription}.
\end{proof}

\begin{mycor}
\label{inductioncor2}
Suppose $\alpha_{1},\alpha_{2} \notin \Delta_{\lambda}$. Then we have: \\ $W_{\alpha_{1}} \cap (U(\mathfrak{n}_{1,K})V_{\lambda}) = U(\mathfrak{n}_{1,K})f_{\gamma_{1,2}}V_{\lambda}$.
\end{mycor}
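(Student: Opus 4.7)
The plan is to apply Proposition \ref{Walphadescription} directly, with the simple root $\alpha_1$ of type $D_m$ and the given weight $\lambda$. The first hypothesis of that proposition, that every $\alpha \in \Phi^+$ has $\alpha_1$-coefficient $0$ or $1$, was already recorded at the start of \S\ref{inductionsectiontypeD}, so only the second and third hypotheses require work.

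I handle the third hypothesis by enumeration. Using the explicit list of positive roots of $D_m$, the positive roots with $\alpha_1$-coefficient $1$ are $\alpha_{1,j}$ ($1 \leq j \leq m$), $\beta_1$, and $\gamma_{1,j}$ ($2 \leq j \leq m-2$). For each such $\alpha \neq \alpha_1$, the difference $\alpha - \alpha_1$ is either not a root (the case $\alpha = \gamma_{1,2}$, since $\gamma_{1,2} - \alpha_1 = 2 \alpha_2 + 2\alpha_3 + \dots + 2\alpha_{m-2} + \alpha_{m-1} + \alpha_m$ is not in $\Phi$) or else a positive root containing $\alpha_2$ with nonzero coefficient. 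Since $\alpha_2 \notin \Delta_\lambda$ by hypothesis, in the latter case $\alpha - \alpha_1 \notin \Phi^+_{\Delta_\lambda}$. Thus the third hypothesis holds vacuously, and simultaneously this enumeration will show at the end that the ideal $I$ of Proposition \ref{Walphadescription} is generated by $f_{\gamma_{1,2}}$ alone.

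The main obstacle is the second hypothesis, $W_{\alpha_1} \cap \sum_i f_{\alpha_1}^i V_\lambda = 0$, because $V_\lambda$ is no longer one-dimensional. Here I exploit two facts. First, because $\alpha_1 \notin \Delta_\lambda$, the vector $e_{\alpha_1}$ lies in the nilpotent radical of $\mathfrak{p}_{\Delta_\lambda}$, and so by construction of $V_\lambda$ as a $\mathfrak{p}_{\Delta_\lambda}$-module (see \cite[\S 9.4]{Humphreys}) it acts as zero on $V_\lambda$. This lets Lemma \ref{calculationlem} apply to every weight vector $v \in V_\lambda$: if $v$ has weight $\mu$, then
\[
e_{\alpha_1}^i f_{\alpha_1}^i v \;=\; \prod_{j=1}^i j\bigl(1 + \mu(h_{\alpha_1}) - j\bigr) \cdot v.
\]
Second, the weights of $V_\lambda$ are of the form $\mu = \lambda - \sum_{\alpha \in \Delta_\lambda} c_\alpha \alpha$ with $c_\alpha \in \mathbb{N}_0$; in type $D_m$ one has $\langle \alpha, \alpha_1^\vee \rangle = 0$ for every $\alpha \in \Delta \setminus \{\alpha_1,\alpha_2\}$, and $\alpha_1, \alpha_2 \notin \Delta_\lambda$ by hypothesis, so $\mu(h_{\alpha_1}) = \lambda(h_{\alpha_1})$ for every weight $\mu$ of $V_\lambda$. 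Because $\lambda(h_{\alpha_1}) \notin \mathbb{N}_0$, the scalar above is never zero, so $e_{\alpha_1}^i$ is injective on $f_{\alpha_1}^i V_\lambda$ for every $i$. Using the characterisation $W_{\alpha_1} \cap \Omega_l^{\alpha_1} = \ker(e_{\alpha_1}^l|_{\Omega_l^{\alpha_1}})$ and the fact that the subspaces $f_{\alpha_1}^i V_\lambda$ lie in distinct $\Omega_i^{\alpha_1}$, this yields $W_{\alpha_1} \cap \sum_i f_{\alpha_1}^i V_\lambda = 0$.

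Finally, applying Proposition \ref{Walphadescription} gives $W_{\alpha_1} \cap (U(\mathfrak{n}_{1,K}) V_\lambda) = I V_\lambda$, where $I$ is generated by those $f_\alpha$ with $\alpha$ a positive root of $\alpha_1$-coefficient $1$, $\alpha \neq \alpha_1$, such that $\alpha - \alpha_1 \notin \Phi^+ \setminus \Phi^+_{\Delta_\lambda}$. Since we already ruled out $\alpha - \alpha_1 \in \Phi^+_{\Delta_\lambda}$ for every relevant $\alpha$, the only remaining possibility is $\alpha - \alpha_1 \notin \Phi^+$, and the enumeration above shows that this forces $\alpha = \gamma_{1,2}$. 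Therefore $I = U(\mathfrak{n}_{1,K}) f_{\gamma_{1,2}}$, giving the claimed description of $W_{\alpha_1} \cap (U(\mathfrak{n}_{1,K}) V_\lambda)$.
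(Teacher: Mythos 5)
Your proof is correct and follows essentially the same route as the paper's: verify the second hypothesis of Proposition \ref{Walphadescription} by observing that every weight $\mu$ of $V_\lambda$ satisfies $\mu(h_{\alpha_1}) = \lambda(h_{\alpha_1}) \notin \mathbb{N}_0$ (since $\Delta_\lambda \subseteq \Delta \setminus \{\alpha_1,\alpha_2\}$ and those simple roots are orthogonal to $\alpha_1$) and apply Lemma \ref{calculationlem}, then verify the third hypothesis vacuously by noting that $\alpha - \alpha_1$ always contains $\alpha_2 \notin \Delta_\lambda$ whenever it is a root. You make explicit a couple of points the paper leaves implicit — that $e_{\alpha_1}$ annihilates $V_\lambda$ so Lemma \ref{calculationlem} applies, and the $\Omega$-graded characterisation of $W_{\alpha_1}$ — but these are the same underlying observations.
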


\begin{proof}
First, we show that $W_{\alpha_{1}} \cap \sum f_{\alpha_{1}}^{i}V_{\lambda} = 0$. Any weight vector in $V_{\lambda}$ must have a weight of the form $\mu = \lambda - \sum_{j \geq 3} c_{j} \alpha_{j}$, using the fact that $\alpha_{1},\alpha_{2} \notin \Delta_{\lambda}$. In particular, this weight satisfies $\mu(h_{\alpha_{1}}) = \lambda(h_{\alpha_{1}}) \notin \mathbb{N}_{0}$, since $(\alpha_{1},\alpha_{j}) = 0$ for $j \geq 3$.

So suppose $w \in \sum f_{\alpha_{1}}^{i} V_{\lambda}$ is a non-zero weight vector, then $w$ has the form $w = f_{\alpha_{1}}^{i} v$, where $v \in V_{\lambda}$ is a weight vector of weight $\mu$ satisfying $\mu(h_{\alpha_{1}}) = \lambda(h_{\alpha_{1}}) \notin \mathbb{N}_{0}$. By Lemma \ref{calculationlem}, $e_{\alpha_{1}}^{i} w$ is a non-zero multiple of $v$, which is $\alpha_{1}$-free. So $w \notin W_{\alpha_{1}}$.

Now if $\alpha \in \Phi^{+} \setminus \Phi_{\Delta_{\lambda}}^{+}$ such that $\alpha_{1} \neq \alpha$, $\alpha_{1} \in \alpha$ and $\alpha - \alpha_{1} \notin \Phi^{+} \setminus \Phi_{\Delta_{\lambda}}^{+}$, then $\alpha_{2} \in \alpha$ (by inspection of the roots in $\Phi^{+}$). This means that $\alpha - \alpha_{1} \notin \Phi_{\Delta_{\lambda}}^{+}$, so $\alpha = \gamma_{1,2}$. This also means that the third condition for Proposition \ref{Walphadescription} holds trivially.

The result now follows from Proposition \ref{Walphadescription}.
\end{proof}

\begin{mycor}
\label{inductioncorscalar}
Suppose $\alpha_{1} \in \Delta_{\lambda}$, and $M_{\Delta_{\lambda}}(\lambda)$ is a scalar generalised Verma module. Then $W_{\alpha_{1}} \cap (U(\mathfrak{n}_{1,K})V_{\lambda}) = U(\mathfrak{n}_{1,K})f_{\gamma_{1,2}}V_{\lambda}$.
\end{mycor}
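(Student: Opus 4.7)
The plan is to verify the three hypotheses of Proposition \ref{Walphadescription} in this setting and then identify precisely which positive roots index the generators of the ideal $I$ appearing in its conclusion.

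The first hypothesis (that every $\alpha\in\Phi^+$ has $\alpha_1$-coefficient $0$ or $1$) is already noted at the start of \S\ref{inductionsectiontypeD} by inspection of the root system $D_m$ with the chosen labelling. For the second hypothesis, the scalar assumption gives $V_\lambda = K w_\lambda$, and $\alpha_1\in\Delta_\lambda$ combined with scalarity forces $\lambda(h_{\alpha_1})=0$, so $f_{\alpha_1} w_\lambda = 0$ and hence $\sum_i f_{\alpha_1}^i V_\lambda = K w_\lambda$. Since $w_\lambda$ is itself $\alpha_1$-free and non-zero, taking $r=0$ in the definition of $W_{\alpha_1}$ shows $w_\lambda\notin W_{\alpha_1}$, so $W_{\alpha_1}\cap \sum_i f_{\alpha_1}^i V_\lambda = 0$. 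The third hypothesis holds trivially: since $M_{\Delta_\lambda}(\lambda)$ is scalar, $f_\beta V_\lambda = 0$ for every $\beta\in\Phi^+_{\Delta_\lambda}$.

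Applying Proposition \ref{Walphadescription} gives $W_{\alpha_1}\cap U(\mathfrak{n}_{1,K}) V_\lambda = I V_\lambda$, where $I$ is the left ideal of $U(\mathfrak{n}_{1,K})$ generated by the $f_\alpha$ with $\alpha\in\Phi^+\setminus\Phi^+_{\Delta_\lambda}$, $\alpha_1\in\alpha$, $\alpha\neq\alpha_1$, and $\alpha-\alpha_1\notin\Phi^+\setminus\Phi^+_{\Delta_\lambda}$. The main task is then to show that the only such $\alpha$ giving a non-redundant generator is $\gamma_{1,2}$. I would run through the finite list of positive roots with $\alpha_1$-coefficient $1$, namely $\alpha_{1,j}$ ($j\geq 2$), $\beta_1$, and $\gamma_{1,j}$ ($2\leq j\leq m-2$), and for each compute $\alpha-\alpha_1$. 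A direct calculation shows $\alpha_{1,j}-\alpha_1=\alpha_{2,j}$, $\beta_1-\alpha_1=\beta_2$, $\gamma_{1,j}-\alpha_1=\gamma_{2,j}$ for $j\geq 3$, and $\gamma_{1,2}-\alpha_1\notin\Phi^+$. In every case except $\gamma_{1,2}$ we have $\alpha-\alpha_1\in\Phi^+$, so the condition $\alpha-\alpha_1\notin\Phi^+\setminus\Phi^+_{\Delta_\lambda}$ forces $\alpha-\alpha_1\in\Phi^+_{\Delta_\lambda}$; since $\alpha_1\in\Delta_\lambda$, this immediately implies $\alpha=\alpha_1+(\alpha-\alpha_1)\in\Phi^+_{\Delta_\lambda}$, contradicting $\alpha\in\Phi^+\setminus\Phi^+_{\Delta_\lambda}$.

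The only remaining case is $\alpha=\gamma_{1,2}$, and the conditions are satisfied automatically whenever $\gamma_{1,2}\in\Phi^+\setminus\Phi^+_{\Delta_\lambda}$; if instead $\gamma_{1,2}\in\Phi^+_{\Delta_\lambda}$ then $\Delta_\lambda=\Delta$, $\mathfrak{n}_1=0$, and both sides of the asserted equality are zero. Hence $I=U(\mathfrak{n}_{1,K}) f_{\gamma_{1,2}}$, and the corollary follows. I expect no genuine obstacle here: the whole argument is a case check, and the key structural point that makes the verification collapse to the single root $\gamma_{1,2}$ is the observation that $\alpha_1\in\Delta_\lambda$ forces membership in $\Phi^+_{\Delta_\lambda}$ to be stable under adding $\alpha_1$ whenever $\alpha-\alpha_1$ is already a positive root.
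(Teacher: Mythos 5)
Your proof is correct and follows essentially the same route as the paper: verify the three hypotheses of Proposition \ref{Walphadescription} (the scalarity of $M_{\Delta_\lambda}(\lambda)$ makes the second and third immediate), then identify the set $S_1$ of generating roots as $\{\gamma_{1,2}\}$. The only stylistic difference is that you enumerate the positive roots with $\alpha_1$-coefficient $1$ and compute $\alpha-\alpha_1$ case by case, whereas the paper argues once abstractly that since $\alpha_1\in\Delta_\lambda$, the coefficient of any $\beta\in\Delta\setminus\Delta_\lambda$ is unchanged by subtracting $\alpha_1$, so $\alpha-\alpha_1\notin\Phi^+_{\Delta_\lambda}$ and hence membership in $S_1$ forces $\alpha-\alpha_1\notin\Phi^+$; both arguments hinge on exactly the same structural fact you flag at the end, and either is fine.
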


\begin{proof}
Firstly, since $\alpha_{1} \in \Delta_{\lambda}$, we have $\sum f_{\alpha_{1}}^{i} V_{\lambda} = V_{\lambda}$, and $V_{\lambda}$ is one-dimensional since $M_{\Delta_{\lambda}}(\lambda)$ is scalar. Thus any element of $V_{\lambda}$ is $\alpha_{1}$-free, so $W_{\alpha_{1}} \cap V_{\lambda} = 0$.

If $\alpha \in \Phi^{+} \setminus \Phi_{\Delta_{\lambda}}^{+}$ with $\alpha_{1} \in \alpha$, then $\alpha - \alpha_{1}$ has a non-zero coefficient of some element of $\Delta \setminus\Delta_{\lambda}$ since $\alpha$ does. So $\alpha - \alpha_{1} \notin \Phi^{+} \setminus \Phi^{+}_{\Delta_{\lambda}}$. If $\alpha - \alpha_{1} \notin  \Phi^{+}$, then $\alpha = \gamma_{1,2}$ as seen previously.

The result follows from Proposition \ref{Walphadescription}.
\end{proof}

\noindent In most cases, if $\alpha_{1} \in \Delta_{\lambda}$ and $M_{\Delta_{\lambda}}(\lambda)$ is not scalar, the condition $W_{\alpha_{1}} \cap V_{\lambda} = 0$ will in fact not be met. However, we will need to consider a specific situation where this condition is satisfied.

\begin{mycor}
\label{inductioncor3}
Suppose $1 \leq k < m-2$ such that $\lambda(h_{\alpha_{i}}) = 0$ for $i < k$, $\lambda(h_{\alpha_{k}}) \in \mathbb{N}_{0}$, and $\lambda(h_{\alpha_{k+1}}) \notin \mathbb{N}_{0}$. Then $W_{\alpha_{1}} \cap U(\mathfrak{n}_{1,K})V_{\lambda} = U(\mathfrak{n}_{1,K}) f_{\gamma_{1,2}}V_{\lambda}$.
\end{mycor}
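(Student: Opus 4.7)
The plan is to apply Proposition \ref{Walphadescription} with the simple root $\alpha_1$. Two of its three hypotheses are immediate: by the enumeration of $\Phi^+$ in \S \ref{Liealgsec}, every positive root of $D_m$ has $\alpha_1$-coefficient in $\{0,1\}$; and the condition on $f_{\alpha - \alpha_1} V_\lambda$ is vacuous, since $\alpha_1 \in \Delta_\lambda$ together with $\alpha - \alpha_1 \in \Phi^+_{\Delta_\lambda}$ would force $\alpha \in \Phi^+ \cap \mathbb{Z}\Delta_\lambda = \Phi^+_{\Delta_\lambda}$, contradicting $\alpha \notin \Phi^+_{\Delta_\lambda}$.

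The main obstacle is the remaining hypothesis, $W_{\alpha_1} \cap \sum_i f_{\alpha_1}^i V_\lambda = 0$. Because $\alpha_1 \in \Delta_\lambda$, the subalgebra $\mathfrak{l}_{\Delta_\lambda}$ contains $f_{\alpha_1}$, so $V_\lambda$ is already $f_{\alpha_1}$-stable and the sum collapses to $V_\lambda$; we need $W_{\alpha_1} \cap V_\lambda = 0$. My approach is to split $\Delta_\lambda$ into its Dynkin components as $\Delta_\lambda = A \cup J'$ with $A = \{\alpha_1,\ldots,\alpha_k\}$ and $J' \subseteq \{\alpha_{k+2},\ldots,\alpha_m\}$; since $k < m-2$, no element of $J'$ is adjacent to an element of $A$ in the $D_m$ diagram, so there is a tensor factorisation $V_\lambda \cong V_\lambda^A \otimes V_\lambda^{J'}$ as modules over the semisimple Levi, with the $\mathfrak{sl}_2$ generated by $e_{\alpha_1}, f_{\alpha_1}, h_{\alpha_1}$ acting only on the first factor. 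The hypothesis $\lambda(h_{\alpha_i}) = 0$ for $i < k$ identifies $V_\lambda^A$ as the irreducible $\mathfrak{sl}_{k+1}$-module of highest weight $N\omega_k$ with $N = \lambda(h_{\alpha_k})$. I would realise this as $\operatorname{Sym}^N((K^{k+1})^*)$ and check by direct calculation that each weight space is one-dimensional and that the $\mathfrak{sl}_2$-highest-weight vectors are exactly the monomials in $e_2^*,\ldots,e_{k+1}^*$ (no $e_1^*$-factor); each such monomial has weight with $\alpha_1$-coefficient zero relative to $\lambda$, so every $\mathfrak{sl}_2$-top in $V_\lambda$ is $\alpha_1$-free.

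A short string argument then finishes it: decompose $0 \neq v \in V_\lambda$ along $\mathfrak{sl}_2$-strings as $v = \sum_s \sum_i c_{s,i} f_{\alpha_1}^i w_s$, and let $r$ be minimal with some $c_{s,r} \neq 0$; by Lemma \ref{calculationlem} the $\alpha_1$-free component of $e_{\alpha_1}^r v$ is a non-zero linear combination of the $\alpha_1$-free tops $w_s$, so $v \notin W_{\alpha_1}$. With the hypotheses verified, Proposition \ref{Walphadescription} gives $W_{\alpha_1} \cap U(\mathfrak{n}_{1,K}) V_\lambda = I V_\lambda$, where $I$ is generated by $f_\alpha$ for $\alpha \in \Phi^+ \setminus \Phi^+_{\Delta_\lambda}$ with $\alpha_1 \in \alpha$, $\alpha \neq \alpha_1$, and $\alpha - \alpha_1 \notin \Phi^+ \setminus \Phi^+_{\Delta_\lambda}$. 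Since $\alpha_1 \in \Delta_\lambda$ reduces the last condition to $\alpha - \alpha_1 \notin \Phi^+$, inspection of the positive roots of $D_m$ (using $k < m-2$ to see that $\alpha_{k+1}$ lies in the support of every candidate root other than $\gamma_{1,2}$) identifies $\gamma_{1,2}$ as the unique such root, giving $I = U(\mathfrak{n}_{1,K}) f_{\gamma_{1,2}}$ as required.
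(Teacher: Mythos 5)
Your proposal is correct and follows essentially the same route as the paper: verify the hypotheses of Proposition \ref{Walphadescription}, with the real content being $W_{\alpha_1}\cap V_\lambda = 0$, proved by splitting $\Delta_\lambda$ into the connected component $\{\alpha_1,\dots,\alpha_k\}$ of $\alpha_1$ and the remaining components, and doing an explicit calculation in the $\mathfrak{sl}_{k+1}$-factor of $V_\lambda$. The paper relies on the preceding lemma giving the explicit monomial basis $\prod_{i=1}^k f_{\alpha_{i,k}}^{t_i}v_{\lambda_1}$ of $L(N\omega_k)$ and computes $e_{\alpha_1}^{t_1}$ on such a basis element directly, splitting into the cases $k>1$ and $k=1$; you instead identify $L(N\omega_k)\cong \operatorname{Sym}^N((K^{k+1})^*)$ and run an $\mathfrak{sl}_2$-string argument, which handles both cases uniformly and avoids invoking that preceding lemma. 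The two calculations are equivalent (your monomials $(e_1^*)^{t_1}\cdots(e_{k+1}^*)^{t_{k+1}}$ correspond to the paper's $\prod f_{\alpha_{i,k}}^{t_i}v_{\lambda_1}$), so this is a repackaging rather than a different method. Your verification that the third hypothesis of Proposition \ref{Walphadescription} is vacuous, and your reduction of the ideal $I$ to $U(\mathfrak{n}_{1,K})f_{\gamma_{1,2}}$ by inspecting $\Phi^+$, match the paper; only the parenthetical about $\alpha_{k+1}$ being in the support of the other candidates is unnecessary, since $\gamma_{1,2}$ is the unique root with $\alpha_1\in\alpha$, $\alpha\neq\alpha_1$, $\alpha-\alpha_1\notin\Phi^+$ independently of $k$.
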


\noindent The main difficulty in this case is proving that $W_{\alpha_{1}} \cap V_{\lambda} = 0$. To this end, we want to establish an explicit description of $V_{\lambda}$.

\begin{mylem}
Let $1 \leq k \leq m-2$. Write $\Delta_{k} = \{ \alpha_{1},\dots,\alpha_{k} \}$, so the root subsystem $\Phi_{\Delta_{k}}$ spanned by $\Delta_{k}$ has type $A_{k}$. Let $\mu : \mathfrak{h}_{\Delta_{k},K} \rightarrow K $ be defined by $\mu(h_{\alpha_{i}}) = 0$ for $i < k$, and $\mu( h_{\alpha_{k}}) = a$ where $a \in \mathbb{N}_{0}$.

Then the $U(\mathfrak{g}_{\Delta_{k},K})$-module $L(\mu)$ has $K$-basis $\{ \overset{k}{\underset{i=1}{\prod}}f_{\alpha_{i,k}}^{t_{i}}v_{\mu} \mid t \in \mathbb{N}_{0}^{k}, |t| \leq a \}$, where $v_{\mu}$ is the highest-weight vector.
\end{mylem}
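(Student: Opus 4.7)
The plan is to combine Weyl's dimension formula with a PBW analysis of a scalar generalised Verma module and a weight-multiplicity argument. In the $A_{k}$ root system spanned by $\Delta_{k}$, the hypothesis forces $\mu = a\omega_{k}$, the $a$-th multiple of the fundamental weight dual to $\alpha_{k}$, so $\mu$ is dominant integral and $L(\mu)$ is finite-dimensional. Weyl's dimension formula yields $\dim L(\mu) = \binom{a+k}{k}$: the only positive coroots on which $\mu$ evaluates non-trivially are the $\alpha_{i,k}^{\lor}$ for $1 \leq i \leq k$, so most factors in the formula cancel and the remaining product $\prod_{i=1}^{k}(a+k-i+1)/(k-i+1)$ telescopes to the binomial coefficient.

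Next, because $\mu(h_{\alpha_{l}}) = 0$ for each $l < k$, each $f_{\alpha_{l}}w_{\mu}$ is a highest-weight vector of weight $\mu - \alpha_{l} \neq \mu$ in the ordinary Verma module $M(\mu)$ and therefore vanishes in $L(\mu)$. Hence $L(\mu)$ is a quotient of the scalar generalised Verma module $M_{\Delta_{k-1}}(\mu)$. The nilpotent subalgebra $\mathfrak{n}_{\Delta_{k-1}}$ inside $\mathfrak{g}_{\Delta_{k}}$ has $K$-basis $\{f_{\alpha_{i,k}}\}_{i=1}^{k}$ and is abelian, since $\alpha_{i,k}+\alpha_{j,k}$ has $\alpha_{k}$-coefficient $2$ and so is not a root of $A_{k}$. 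By PBW, $M_{\Delta_{k-1}}(\mu)$ has $K$-basis $\{\prod_{i=1}^{k} f_{\alpha_{i,k}}^{t_{i}} w_{\mu} : t \in \mathbb{N}_{0}^{k}\}$, whose image spans $L(\mu)$. Each image is a weight vector of weight $\mu - \sum_{i} t_{i}\alpha_{i,k}$, and since $\alpha_{1,k},\dots,\alpha_{k,k}$ are linearly independent, distinct $t$ give distinct weights, so non-zero images are automatically linearly independent. Because $|\{t \in \mathbb{N}_{0}^{k} : |t| \leq a\}| = \binom{a+k}{k} = \dim L(\mu)$, it will suffice to show that $\prod_{i=1}^{k} f_{\alpha_{i,k}}^{t_{i}} v_{\mu}$ is non-zero precisely when $|t| \leq a$.

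The main obstacle is this vanishing criterion. I would resolve it by identifying $\mathfrak{g}_{\Delta_{k}}$ with $\mathfrak{sl}_{k+1}$ under standard coordinates $\alpha_{i}=\epsilon_{i}-\epsilon_{i+1}$, so that $\alpha_{i,k}=\epsilon_{i}-\epsilon_{k+1}$, and lifting $\mu$ to the $\mathfrak{gl}_{k+1}$-weight $-a\epsilon_{k+1}$. Then $\prod_{i} f_{\alpha_{i,k}}^{t_{i}} v_{\mu}$ has weight $-\sum_{i=1}^{k} t_{i}\epsilon_{i} + (|t|-a)\epsilon_{k+1}$, while $L(-a\epsilon_{k+1})$ is the $a$-th symmetric power of the dual standard $\mathfrak{gl}_{k+1}$-representation, whose weights are exactly $-\sum_{j=1}^{k+1} s_{j}\epsilon_{j}$ with $s_{j} \in \mathbb{N}_{0}$ and $\sum_{j} s_{j} = a$, each of multiplicity one. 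Matching coordinates forces $s_{i} = t_{i}$ for $i \leq k$ and $s_{k+1} = a-|t| \geq 0$; so for $|t| > a$ the vector vanishes, while for $|t| \leq a$ the image spans the one-dimensional weight space of that weight in $L(\mu)$, being the image of a non-zero PBW basis element under a surjection between one-dimensional weight spaces. This completes the basis identification.
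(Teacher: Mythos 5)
Your proof is correct, but it takes a genuinely different route from the paper's. The paper's own argument is entirely computational inside its own toolkit: using Lemma \ref{calculationlem}, it shows by explicit raising-operator calculations that $e_{\alpha_{k-1,k-1}}^{t_{k-1}} \dots e_{\alpha_{1,k-1}}^{t_{1}} \cdot \prod_{i} f_{\alpha_{i,k}}^{t_{i}}v_{\mu}$ is a non-zero multiple of $f_{\alpha_{k}}^{|t|}v_{\mu}$, and conversely that $f_{\alpha_{k-1,k-1}}^{t_{k-1}} \dots f_{\alpha_{1,k-1}}^{t_{1}} \cdot f_{\alpha_{k}}^{|t|}v_{\mu}$ is a non-zero multiple of $\prod_{i} f_{\alpha_{i,k}}^{t_{i}}v_{\mu}$; non-vanishing for $|t|\leq a$ and vanishing for $|t|>a$ are then both read off from the single $\mathfrak{sl}_{2}$ fact $f_{\alpha_{k}}^{a+1}v_{\mu}=0$, and spanning is by PBW exactly as you also do. You instead import the classical identification of $L(a\omega_{k})$ with the $a$-th symmetric power of the dual standard $\mathfrak{sl}_{k+1}$-module (via the $\mathfrak{gl}_{k+1}$ lift), match the weights of the PBW images against the multiplicity-one weight set of $\mathrm{Sym}^{a}V^{*}$, and then use that a $\mathfrak{g}$-module surjection $M_{\Delta_{k-1}}(\mu)\twoheadrightarrow L(\mu)$ restricts to a surjection, hence an isomorphism, between the corresponding one-dimensional weight spaces. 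Both routes are sound; the paper's is elementary and self-contained, while yours is quicker once the symmetric-power identification is granted and gives the multiplicity-one information essentially for free. One small remark: the Weyl dimension count $\binom{a+k}{k}$ is a nice consistency check but is not actually needed in your argument — once you know (i) the PBW images span, (ii) non-zero images have distinct weights and hence are independent, and (iii) an image is non-zero precisely when $|t|\leq a$, the basis conclusion is immediate.
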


\begin{proof}
This is a standard result in the representation theory of $\mathfrak{sl}_{2}$ if $k = 1$, so assume $k > 1$.

The weights $\alpha_{i,k} = \alpha_{i} + \dots + \alpha_{k}$ for $1 \leq i \leq k$ are linearly independent, so the $\prod f_{\alpha_{i,k}}^{t_{i}}v_{\mu}$ have distinct weights. Therefore as long as they are non-zero, they must be linearly independent.

By explicit calculation, $e_{\alpha_{k-1,k-1}}^{t_{k}} \dots e_{\alpha_{1,k-1}}^{t_{1}} \cdot \prod f_{\alpha_{i,k}}^{t_{i}}v_{\mu}$ is a non-zero scalar multiple of $f_{\alpha_{k}}^{|t|}v_{\mu}$, hence $\prod f_{\alpha_{i,k}}^{t_{i}}v_{\mu}$ is non-zero as long as $|t| \leq a$ (by applying Lemma \ref{calculationlem}).

Also by explicit calculation, for any $t$, $f_{\alpha_{k-1,k-1}}^{t_{k-1}} \dots f_{\alpha_{1,k-1}}^{t_{1}} \cdot f_{\alpha_{k}}^{|t|} v_{\mu}$ is a non-zero scalar multiple of $\prod f_{\alpha_{i,k}}^{t_{i}}v_{\mu}$. 

Now $f_{\alpha_{k}}^{a+1}v_{\mu} = 0$, because using Lemma \ref{calculationlem} implies that it is a highest-weight vector. Therefore for $|t| > a$, $f_{\alpha_{k}}^{|t|}v_{\mu} = 0$ in $L(\mu)$, which implies that $\prod f_{\alpha_{i,k}}^{t_{i}}v_{\mu} = 0$ if $|t| > a$.

Now we have $f_{\alpha_{i,j}}v_{\mu} = 0$ whenever $1 \leq i \leq j < k$. By applying the PBW Theorem, $L(\mu)$ is spanned by elements of the form $\prod f_{\alpha_{i,k}}^{t_{i}} \underset{1 \leq i \leq j < k}{\prod} f_{\alpha_{i,j}}^{s_{i,j}}v_{\mu}$ with $s_{i,j} \in \mathbb{N}_{0}$. The only non-zero elements of this form are those with $s_{i,j} = 0$ and $|t| \leq a$, which we have established are linearly independent. The result follows.

\end{proof}

\begin{proof}[Proof of Corollary \ref{inductioncor3}]
In this case, we have $\sum f_{\alpha_{1}}^{i}V_{\lambda} = V_{\lambda}$ since $\alpha_{1} \in \Delta_{\lambda}$. 

If $\alpha \in \Phi^{+} \setminus \Phi_{\Delta_{\lambda}}^{+}$ and $\alpha - \alpha_{1} \notin \Phi^{+} \setminus \Phi^{+}_{\Delta_{\lambda}}$, then $\alpha - \alpha_{1} \notin \Phi^{+}$ (since $\alpha$ has a non-zero component of some $\beta \in \Delta \setminus \Delta_{\lambda}$, which cannot be $\alpha_{1}$). Therefore $\alpha = \gamma_{1,2}$. So it is sufficient to prove $W_{\alpha_{1}} \cap V_{\lambda} = 0$, then the result follows from Proposition \ref{Walphadescription}.

Say $\Delta_{\lambda} = \Delta_{\lambda,1} \sqcup \Delta_{\lambda,2}$, where $\Delta_{\lambda,1}$ is the connected component of $\Delta_{\lambda}$ containing $\alpha_{1}$ (in the sense of Definition \ref{connected}). Then $\mathfrak{g}_{\Delta_{\lambda}} = \mathfrak{g}_{\Delta_{\lambda,1}} \oplus \mathfrak{g}_{\Delta_{\lambda,2}}$.

 Write $\lambda_{i} = \lambda |_{\mathfrak{h}_{\Delta_{\lambda,i}}}$ for $i=1,2$. We can then describe  $V_{\lambda}$ as a $\mathfrak{g}_{\Delta_{\lambda},K}$-module via $V_{\lambda} = L(\lambda_{1}) \otimes L(\lambda_{2})$, where each  $L(\lambda_{i})$ is a $U(\mathfrak{g}_{\Delta_{\lambda,i},K})$-module.
 
\bigskip

\noindent \emph{Case 1}: $k > 1$.

\noindent In this case, $L(\lambda_{1})$ has a $K$-basis $\overset{k}{\underset{i=1}{\prod}} f_{\alpha_{i,k}}^{t_{i}} v_{\lambda_{1}}$ for $|t| \leq \lambda(h_{\alpha_{k}})$, which all have distinct weights.

Now suppose $v \in V_{\lambda}$ is a non-zero weight vector. Then $v$ has the form $v = \prod f_{\alpha_{i,k}}^{t_{i}} v_{\lambda_{1}} \otimes w$ for some non-zero $w \in L(\lambda_{2})$. We calculate that $e_{\alpha_{1}}^{|t|} v = C f_{\alpha_{2,k}}^{t_{1}+t_{2}} \overset{k}{\underset{i=3}{\prod}}f_{\alpha_{i,k}}^{t_{i}} v_{\lambda_{1}} \otimes w$ with $C$ a non-zero scalar. This element is non-zero, so $v \notin W_{\alpha_{1}}$.

\bigskip

\noindent \emph{Case 2}: $k = 1$.

\noindent In this case, $L(\lambda_{1})$ has a $K$-basis $\prod f_{\alpha_{1}}^{i} v_{\lambda_{1}}$ for $i \leq \lambda(h_{\alpha_{1}})$. If $v \in V_{\lambda}$ is a non-zero weight vector, $v$ has the form $v = f_{\alpha_{1}}^{i}v_{\lambda_{1}} \otimes w$ for some non-zero $w \in L(\lambda_{2})$. Then $e_{\alpha_{1}}^{i} v$ is a non-zero scalar multiple of $v_{\lambda_{1}} \otimes w$, which is non-zero. So $w \notin W_{\alpha_{1}}$.

\end{proof}

\subsection{Highest-weight vectors in generalised Verma modules}
\label{hwvectors}
Motivated by the conditions of Theorem \ref{inductionprop} (specifically that $M_{\Delta_{\lambda}}(\lambda)$ does not have a highest-weight vector in the set $U(\mathfrak{n}_{2,K}) (W_{\alpha_{1}} \cap U(\mathfrak{n}_{1,K})V_{\lambda})$, we want to study highest-weight vectors in a generalised Verma module $M_{\Delta_{\lambda}}(\lambda)$. We will first work in a general setting, before restricting to the particular cases in type D that we want to deal with.

So for now we allow $\Phi$ to be any root system, and we fix a weight $\lambda$. Write $\mathfrak{n} = \mathfrak{n}_{\lambda}$ for the corresponding nilpotent subalgebra, so $\mathfrak{n} = \Of \{ f_{\alpha} \mid \alpha \in \Phi^{+} \setminus \Phi_{\Delta_{\lambda}}^{+} \}$. Write $\mathfrak{p} = \mathfrak{p}_{\lambda}$ for the corresponding parabolic subalgebra, so $\mathfrak{g} = \mathfrak{p} \oplus \mathfrak{n}$. 
So the generalised Verma module $M_{\Delta_{\lambda}}(\lambda)$ has the form $\ug \otimes_{\up}V_{\lambda}$ as before.

\bigskip

\noindent For convenience, write $M = M_{\Delta_{\lambda}}(\lambda)$.

\bigskip

Let $\Omega_{i} \un$ be the PBW filtration on $\un$, defined by $\Omega_{i} \un = K \{x_{1} \dots x_{j} \mid 0 \leq j \leq i, x_{1},\dots,x_{j} \in \mathfrak{n} \}$. Then by the PBW theorem, the associated graded algebra $gr \un = \bigoplus \Omega_{i+1} \un / \Omega_{i} \un$ is a polynomial ring generated by the images of the $f_{\alpha} \in \mathfrak{n}$ in $\Omega_{1} \un / \Omega_{0} \un$.

Let $\sigma_{i} : \Omega_{i} \un \rightarrow \Omega_{i} \un / \Omega_{i-1} \un $ be the projection map. Then write $x_{\alpha} = \sigma_{1}(f_{\alpha})$ for each $f_{\alpha} \in \mathfrak{n}$. So we can write $gr \un = K[\{x_{\alpha} \}]$.

Now consider the filtration $\Omega_{i} M = (\Omega_{i} \un) \cdot V_{\lambda}$, which makes $M$ a filtered $\un$-module. We can then describe the corresponding associated graded module of $M$ as $gr M = K[\{ x_{\alpha}  \}] \otimes V_{\lambda}$, a $K[ \{ x_{\alpha} \}]$-module. We can extend $\sigma_{i}$ to a map $\sigma_{i}: \Omega_{i} M \rightarrow gr M$ via $\sigma_{i}(xv) = \sigma_{i}(x)v$ for $x \in \Omega_{i} \un$, $v \in V_{\lambda}$.

\begin{mylem}
For any $i$, $\up \cdot \Omega_{i}M \subseteq \Omega_{i}M$.
\end{mylem}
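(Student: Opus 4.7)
The plan is to prove the inclusion by induction on $i$. Since $\up$ is generated as a $K$-algebra by $\mathfrak{p}$ (together with the scalars $K$), and since the claim is preserved under composition, it suffices to verify that $\mathfrak{p} \cdot \Omega_{i} M \subseteq \Omega_{i} M$ for every $i \geq 0$.

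The base case $i = 0$ is immediate: $\Omega_{0} M = \Omega_{0}\un \cdot V_{\lambda} = V_{\lambda}$, which is stable under $\up$ by the construction of $V_{\lambda}$ as a $\up$-module.

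For the inductive step, assume the result for $i - 1$. Every element of $\Omega_{i} M$ can be written as a sum of elements of $\Omega_{i-1} M$ (handled by induction) and elements of the form $x \cdot w$ with $x \in \mathfrak{n}$ and $w \in \Omega_{i-1} M$. For $p \in \mathfrak{p}$, expand
\[
p \cdot (x w) \;=\; x \cdot (p w) \;+\; [p, x] \cdot w.
\]
By the inductive hypothesis applied to $w$, we have $p w \in \Omega_{i-1} M$, so $x \cdot (p w) \in \mathfrak{n} \cdot \Omega_{i-1} M \subseteq \Omega_{i} M$. For the commutator term, use the direct sum decomposition $\mathfrak{g} = \mathfrak{p} \oplus \mathfrak{n}$ to write $[p, x] = p' + n'$ with $p' \in \mathfrak{p}$ and $n' \in \mathfrak{n}$. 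Then $p' w \in \Omega_{i-1} M \subseteq \Omega_{i} M$ by induction, and $n' w \in \mathfrak{n} \cdot \Omega_{i-1} M \subseteq \Omega_{i} M$ directly from the definition of the PBW filtration. Summing, $p \cdot (x w) \in \Omega_{i} M$, which completes the induction.

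There is no real obstacle here: the only ingredient beyond elementary filtration bookkeeping is the vector-space decomposition $\mathfrak{g} = \mathfrak{p} \oplus \mathfrak{n}$, which is exactly what allows the commutator $[p, x]$ to be split into a piece that can be absorbed by the inductive hypothesis and a piece of filtration degree one that pairs with the loss of a factor of $x$.
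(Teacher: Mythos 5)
Your proof is correct and follows essentially the same approach as the paper: induction on $i$, commuting $\mathfrak{p}$ past a single factor of $\mathfrak{n}$, and splitting the resulting commutator via the vector-space decomposition $\mathfrak{g} = \mathfrak{p} \oplus \mathfrak{n}$. The only cosmetic difference is that the paper works with explicit monomials $x_1 \cdots x_j v$ while you abstract to $x \cdot w$ with $w \in \Omega_{i-1} M$.
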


\begin{proof}
By induction on $i$. Firstly, $\Omega_{0}M = V_{\lambda}$, so the result is true for $i=0$.

Now let $i>0$, and assume true for $i-1$. Let $x \in \mathfrak{p}_{K}$, and we aim to show $x \Omega_{i}M \subseteq \Omega_{i}M$.

Let $x_{1},\dots,x_{j} \in \mathfrak{n}_{K}, v \in V_{\lambda}$, where $j \leq i$. Then we have $x x_{1}\dots x_{j} v = [x,x_{1}] x_{2} \dots x_{j}v + x_{1}x x_{2} \dots x_{j} v$. Say $[x,x_{1}] = y_{1}+y_{2}$, where $y_{1} \in \mathfrak{p}_{K},y_{2} \in \mathfrak{n}_{K}$.

So $x x_{1}\dots x_{j} v = y_{1} x_{2} \dots x_{j}v + y_{2} x_{2} \dots x_{j}v+ x_{1}x x_{2} \dots x_{j} v$. By the induction hypothesis,  $y_{1} x_{2} \dots x_{j} v \in \Omega_{i-1}M \subseteq \Omega_{i}M $. Also $y_{2} x_{2} \dots x_{j}v \in \Omega_{i}M$ by definition of $\Omega_{i}M$. Finally, $x x_{2} \dots x_{j}v \in \Omega_{i-1}M$ by the induction hypothesis, so $x_{1} x x_{2} \dots x_{j} b \in \Omega_{i}M$ and the result follows.
\end{proof}

\noindent This means that for any $x \in \up$, $x$ induces a $K$-linear map $\Omega_{i}M / \Omega_{i-1}M \rightarrow \Omega_{i}M / \Omega_{i-1}M$ for each $i$, and hence induces a $K$-linear map $gr M \rightarrow gr M$. Write $\bar{x}$ for the map $gr M \rightarrow gr M$ induced by $x$.

Also write $\phi : \up \rightarrow \End_{K}V_{\lambda}$ for the map corresponding to the action of $\up$ on $V_{\lambda}$.

Let $A \subset \End_{K} K[ \{x_{\alpha} \}]$ be the Weyl algebra of $K[ \{x_{\alpha} \}]$, generated by the multiplication maps $\bar{x}_{\alpha}$ (given by multiplication by $x_{\alpha}$) and the partial derivatives $\partial_{\alpha} = \partial_{x_{\alpha}}$. Then the following result will show that $\bar{e}_{\beta} \in A \otimes \End_{K} V$ for each $\beta$.

\begin{mylem}
\label{eactionlem}
For any $\beta \in \Phi^{+}$, $\bar{e}_{\beta} = \sum c_{\alpha} \bar{x}_{\alpha-\beta} \partial_{\alpha} \otimes 1 + 1 \otimes \phi(e_{\beta})$, where the sum runs over all $\alpha \in \Phi^{+} \setminus \Phi_{\Delta_{\lambda}}^{+}$ such that $\alpha - \beta \in \Phi^{+} \setminus \Phi_{\Delta_{\lambda}}^{+}$, and the $c_{\alpha}$ are non-zero scalars.
\end{mylem}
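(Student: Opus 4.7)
The plan is to compute $\bar{e}_{\beta}$ by evaluating it on a spanning set of $gr M$ and using the Chevalley relations to identify which commutators preserve PBW degree. By the PBW theorem, $gr M$ is spanned as a $K$-vector space by elements of the form $x_{\alpha_{1}} \cdots x_{\alpha_{i}} \otimes v$, where $f_{\alpha_{1}}, \dots, f_{\alpha_{i}} \in \mathfrak{n}$ and $v \in V_{\lambda}$. Such an element is represented in $\Omega_{i} M$ by $f_{\alpha_{1}} \cdots f_{\alpha_{i}} v$, so it suffices to show that the image of $e_{\beta} \cdot f_{\alpha_{1}} \cdots f_{\alpha_{i}} v$ in $\Omega_{i} M / \Omega_{i-1} M$ matches the claimed formula when applied to $x_{\alpha_{1}} \cdots x_{\alpha_{i}} \otimes v$.

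First I would expand using the derivation identity
\begin{equation*}
e_{\beta} f_{\alpha_{1}} \cdots f_{\alpha_{i}} v = \sum_{j=1}^{i} f_{\alpha_{1}} \cdots [e_{\beta}, f_{\alpha_{j}}] \cdots f_{\alpha_{i}} v + f_{\alpha_{1}} \cdots f_{\alpha_{i}} e_{\beta} v.
\end{equation*}
The final summand already lies in $\Omega_{i} M$, and since $e_{\beta} \in \mathfrak{p}$, its image in $gr_{i} M$ is $x_{\alpha_{1}} \cdots x_{\alpha_{i}} \otimes \phi(e_{\beta}) v$, which accounts for the $1 \otimes \phi(e_{\beta})$ term. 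For the commutator terms, I would use the Chevalley relations to classify $[e_{\beta}, f_{\alpha_{j}}]$: it is either $h_{\beta}$ (when $\alpha_{j} = \beta$), a scalar multiple of $e_{\beta - \alpha_{j}} \in \mathfrak{n}^{+}$ (when $\beta - \alpha_{j} \in \Phi^{+}$), a scalar multiple of $f_{\alpha_{j} - \beta}$ (when $\alpha_{j} - \beta \in \Phi^{+}$), or zero. In the first two cases and in the case $\alpha_{j} - \beta \in \Phi^{+}_{\Delta_{\lambda}}$, the commutator lies in $\mathfrak{p}$; in the remaining case $\alpha_{j} - \beta \in \Phi^{+} \setminus \Phi^{+}_{\Delta_{\lambda}}$, it lies in $\mathfrak{n}$.

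Next I would argue that every commutator landing in $\mathfrak{p}$ contributes zero to $gr_{i} M$. Indeed, if $y \in \mathfrak{p}$, then by the preceding lemma $y \cdot \Omega_{i-1} M \subseteq \Omega_{i-1} M$, so $f_{\alpha_{1}} \cdots y \cdots f_{\alpha_{i}} v$ can be rewritten (by moving $y$ to the right through the remaining $f_{\alpha_{k}}$'s, each swap introducing either further $\mathfrak{p}$-commutators or $\mathfrak{n}$-commutators, and using the fact that $y v \in V_{\lambda} = \Omega_{0} M$) as an element of $\Omega_{i-1} M$. Thus only the commutators with $\alpha_{j} - \beta \in \Phi^{+} \setminus \Phi^{+}_{\Delta_{\lambda}}$ survive in $gr_{i} M$, each contributing $C_{\beta, -\alpha_{j}} \, x_{\alpha_{1}} \cdots x_{\alpha_{j} - \beta} \cdots x_{\alpha_{i}} \otimes v$, with $C_{\beta, -\alpha_{j}} \neq 0$ by Definition \ref{chevalley}.

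Finally, summing over $j$ and reindexing by $\alpha = \alpha_{j}$, the contribution is precisely
\begin{equation*}
\sum_{\alpha} c_{\alpha} \bar{x}_{\alpha - \beta} \partial_{\alpha} \bigl( x_{\alpha_{1}} \cdots x_{\alpha_{i}} \bigr) \otimes v,
\end{equation*}
the sum being taken over $\alpha \in \Phi^{+} \setminus \Phi^{+}_{\Delta_{\lambda}}$ with $\alpha - \beta \in \Phi^{+} \setminus \Phi^{+}_{\Delta_{\lambda}}$, since the derivation $\partial_{\alpha}$ detects precisely the indices $j$ with $\alpha_{j} = \alpha$. The main bookkeeping obstacle is the case analysis for $[e_{\beta}, f_{\alpha_{j}}]$ and, in particular, verifying carefully that the $\mathfrak{p}$-valued commutators drop the PBW degree (which relies on the fact that reducing to standard PBW order using $\mathfrak{p}$-elements never produces a leading term of full degree $i$); the rest is direct expansion.
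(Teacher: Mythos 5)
Your proof is correct and essentially the same computation as the paper's, just with the Leibniz expansion unrolled rather than organized as an induction on the length of the monomial. The key steps coincide: evaluate on spanning monomials $x_{\alpha_1}\cdots x_{\alpha_i}\otimes v$ via the representative $f_{\alpha_1}\cdots f_{\alpha_i}v$, classify $[e_\beta, f_{\alpha_j}]$ using the Chevalley relations into those landing in $\mathfrak{p}$ (which drop PBW degree, by the lemma that $\up\cdot\Omega_{i-1}M\subseteq\Omega_{i-1}M$) and those landing in $\mathfrak{n}$ (which contribute to the derivation), and match the surviving terms with $\partial_\alpha$. One small stylistic note: your argument that the $\mathfrak{p}$-valued commutators vanish in $gr_i M$ (by ``moving $y$ to the right through the remaining $f_{\alpha_k}$'s'') can be shortened — since $f_{\alpha_1}\cdots f_{\alpha_{j-1}}\in\Omega_{j-1}\un$ and $y\,f_{\alpha_{j+1}}\cdots f_{\alpha_i}v\in y\cdot\Omega_{i-j}M\subseteq\Omega_{i-j}M$ by the preceding lemma, the product lies in $\Omega_{j-1}\un\cdot\Omega_{i-j}M\subseteq\Omega_{i-1}M$ directly, without any further reordering. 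The paper avoids this complication altogether by putting $f_{\alpha_1}$ first and inducting, so that the $\mathfrak{p}$-commutator only ever appears in position one; your version needs the slightly more careful (but still correct) statement because the Leibniz expansion places it in an arbitrary slot.
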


\begin{proof}
Fix $\beta \in \Phi^{+}$. By the definition of a Chevalley basis, for each $\alpha$ such that $\alpha - \beta \in \Phi^{+} \setminus \Phi_{\Delta_{\lambda}}^{+}$, say $[e_{\beta},f_{\alpha}] = c_{\alpha} f_{\alpha - \beta}$ where $c_{\alpha}$ is a non-zero scalar. Also note that if $\alpha - \beta \notin \Phi^{+} \setminus \Phi_{\Delta_{\lambda}}^{+}$, then $[e_{\beta},f_{\alpha}] \in \mathfrak{p}$ (potentially equal to $0$), since all weight vectors in $\mathfrak{g}$ either lie in $\mathfrak{n}$ or $\mathfrak{p}$.

\noindent Define a derivation $D \in A$ by $D = \underset{\alpha}{\sum} c_{\alpha} \bar{x}_{\alpha-\beta} \partial_{\alpha}$.

\bigskip

\noindent \emph{Claim:} for any $\alpha_{1},\dots,\alpha_{r} \in \Phi^{+} \setminus \Phi_{\Delta_{\lambda}}^{+}$ and $v \in V_{\lambda}$, we have: \\ $\bar{e}_{\beta}(x_{\alpha_{1}} \dots x_{\alpha_{r}}\otimes v) = D(x_{\alpha_{1}}\dots x_{\alpha_{r}}) \otimes v + x_{\alpha_{1}}\dots x_{\alpha_{r}} \otimes e_{\beta}v$.

\bigskip

\noindent \emph{Proof of claim:} induction on $r$. The case $r = 0$ is trivial. So now let $r > 0$, and assume true for $r-1$. 

Then:
\begin{equation*}
\begin{split}
 \bar{e_{\beta}}(x_{\alpha_{1}} \dots x_{\alpha_{r}} \otimes v) &= \sigma_{r}(e_{\beta} f_{\alpha_{1}} \dots f_{\alpha_{r}} v) \\
 &= \sigma_{r}([e_{\beta},f_{\alpha_{1}}] f_{\alpha_{2}} \dots f_{\alpha_{r}} v + f_{\alpha_{1}} e_{\beta} f_{\alpha_{2}} \dots f_{\alpha_{r}} v) \\
 &= \sigma_{r}([e_{\beta},f_{\alpha_{1}}] f_{\alpha_{2}} \dots f_{\alpha_{r}} v) + x_{\alpha_{1}} \bar{e}_{\beta}(x_{\alpha_{2}} \dots x_{\alpha_{r}} \otimes v).
 \end{split}
 \end{equation*}
 
 By induction, $x_{\alpha_{1}} \bar{e}_{\beta}(x_{\alpha_{2}} \dots x_{\alpha_{r}} \otimes v) = x_{\alpha_{1}}D(x_{\alpha_{2}} \dots x_{\alpha_{r}}) \otimes v + x_{\alpha_{1}} \dots x_{\alpha_{r}} \otimes e_{\beta} v$.
 
If $\beta - \alpha_{1} \notin \Phi^{+} \setminus \Phi_{\Delta_{\lambda}}^{+}$, then $[e_{\beta},f_{\alpha_{1}}] \in \mathfrak{p}_{K}$. So $[e_{\beta},f_{\alpha_{1}}] f_{\alpha_{2}} \dots f_{\alpha_{r}} v \in \Omega_{i-1}M$, and thus $\sigma_{r}([e_{\beta},f_{\alpha_{1}}] f_{\alpha_{2}} \dots f_{\alpha_{r}} v) = 0$. But also in this case $D(x_{\alpha_{1}}) = 0$, so $\sigma_{r}([e_{\beta},f_{\alpha_{1}}] f_{\alpha_{2}} \dots f_{\alpha_{r}} v) = D(x_{\alpha_{1}}) x_{\alpha_{2}} \dots x_{\alpha_{r}} \otimes v$.

If $\beta - \alpha_{1} \in \Phi^{+} \setminus \Phi_{\Delta_{\lambda}}^{+}$, then $[e_{\beta},f_{\alpha_{1}}] = c_{\alpha_{1}}f_{\alpha_{1}-\beta}$, so $\sigma_{r}([e_{\beta},f_{\alpha_{1}}] f_{\alpha_{2}} \dots f_{\alpha_{r}} v) = c_{\alpha_{1}} x_{\alpha_{1} - \beta} x_{\alpha_{2}} \dots x_{\alpha_{r}} \otimes v = D(x_{\alpha_{1}}) x_{\alpha_{2}} \dots x_{\alpha_{r}} v$.

Therefore $\bar{e_{\beta}}(x_{\alpha_{1}} \dots x_{\alpha_{r}} \otimes v) = D(x_{\alpha_{1}}) x_{\alpha_{2}} \dots x_{\alpha_{r}} \otimes v + x_{\alpha_{1}} D(x_{\alpha_{2}} \dots x_{\alpha_{r}}) \otimes v + x_{\alpha_{1}} \dots x_{\alpha_{r}} \otimes e_{\beta} v = D(x_{\alpha_{1}} \dots x_{\alpha_{r}} \otimes v) + x_{\alpha_{1}} \dots x_{\alpha_{r}} \otimes v$, using the fact that $D$ is a derivation.

The claim now follows, and so does the lemma.

\end{proof}

\noindent We are now able to prove the following general result, which we will then apply to certain cases in type D.

\begin{myprop}
\label{hwvectorprop}
Suppose $\gamma_{1},\dots,\gamma_{r} \in \Phi^{+}\setminus \Phi_{\Delta_{\lambda}}^{+}$ and $\beta_{1},\dots,\beta_{r} \in \Phi^{+}$ satisfy the following conditions:

\begin{itemize}
\item $\gamma_{i} - \beta_{i} \in \Phi^{+} \setminus \Phi_{\Delta_{\lambda}}^{+}$ for all $i$, and $\gamma_{i} - \beta_{i} \notin \{ \gamma_{1},\dots,\gamma_{r} \}$,

\item $\gamma_{i} - \beta_{j} \notin \Phi^{+} \setminus \Phi_{\Delta_{\lambda}}^{+}$ whenever $i>j$.
\end{itemize}

\noindent Then $M$ has no non-zero highest-weight vector in $\un \{f_{\gamma_{1}},\dots,f_{\gamma_{r}} \} V_{\lambda} \subseteq M$.

\end{myprop}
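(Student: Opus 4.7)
The plan is to pass to the associated graded module $gr\, M = K[\{x_\alpha\}] \otimes_K V_\lambda$ with respect to the PBW filtration on $M = M_{\Delta_\lambda}(\lambda)$, and argue by contradiction. Suppose $v \in \un\{f_{\gamma_1}, \ldots, f_{\gamma_r}\} V_\lambda$ is a non-zero highest-weight vector; pick $d$ minimal with $v \in \Omega_d M$ and set $\bar v = \sigma_d(v)$, a non-zero homogeneous element of degree $d$ in $gr\, M$. Two observations drive the argument: first, writing $v$ as a finite sum of elements $y_k f_{\gamma_{i_k}} w_k$ with $y_k \in \un$, $w_k \in V_\lambda$, and taking leading terms shows $\bar v \in J \cdot gr\, M$, where $J = (x_{\gamma_1}, \ldots, x_{\gamma_r}) \subseteq K[\{x_\alpha\}]$. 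Second, since $e_\beta v = 0$ for every $\beta \in \Phi^+$ and each $e_\beta \in \mathfrak{p}$ preserves the filtration, the induced operators satisfy $\bar e_\beta (\bar v) = 0$ in $gr\, M$, with $\bar e_\beta$ given explicitly by Lemma \ref{eactionlem}.

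The plan is then to run an outer induction on $j \in \{0, 1, \ldots, r\}$, proving that $\bar v \in (x_{\gamma_{j+1}}, \ldots, x_{\gamma_r}) \cdot gr\, M$: the base case $j = 0$ is the first observation above, and the case $j = r$ (empty ideal, interpreted as $0$) forces $\bar v = 0$, contradicting the choice of $d$. For the inductive step, assuming $\bar v \in (x_{\gamma_j}, \ldots, x_{\gamma_r}) \cdot gr\, M$, one runs a nested induction on $k \geq 1$ to show $\bar v \in (x_{\gamma_j}^k, x_{\gamma_{j+1}}, \ldots, x_{\gamma_r}) \cdot gr\, M$; taking $k > d$, homogeneity of $\bar v$ then forces $\bar v \in (x_{\gamma_{j+1}}, \ldots, x_{\gamma_r}) \cdot gr\, M$, closing the outer step.

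For the nested induction, write $\bar v = x_{\gamma_j}^k u + \sum_{i > j} x_{\gamma_i} u_i$. The second hypothesis $\gamma_i - \beta_j \notin \Phi^+ \setminus \Phi^+_{\Delta_\lambda}$ for $i > j$ ensures the derivation part $D_{\beta_j}$ of $\bar e_{\beta_j}$ kills each $x_{\gamma_i}$ with $i > j$, while $D_{\beta_j}(x_{\gamma_j}) = c_{\gamma_j} x_{\gamma_j - \beta_j}$ is a non-zero scalar multiple of the variable $x_{\gamma_j - \beta_j}$. The equation $\bar e_{\beta_j}(\bar v) = 0$ therefore reduces, modulo $(x_{\gamma_j}^k, x_{\gamma_{j+1}}, \ldots, x_{\gamma_r}) \cdot gr\, M$, to
\[ k c_{\gamma_j} x_{\gamma_j}^{k-1} x_{\gamma_j - \beta_j} u \equiv 0. \]
By the first hypothesis $\gamma_j - \beta_j \notin \{\gamma_1, \ldots, \gamma_r\}$, so $x_{\gamma_j - \beta_j}$ is a free polynomial variable in the quotient $R = K[\{x_\alpha\}]/(x_{\gamma_j}^k, x_{\gamma_{j+1}}, \ldots, x_{\gamma_r})$, hence a non-zero-divisor; combined with $k c_{\gamma_j} \neq 0$ in characteristic zero, this yields $x_{\gamma_j}^{k-1} u = 0$ in $R \otimes V_\lambda$. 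A further non-zero-divisor argument, this time using that $x_{\gamma_j}$ is a non-zero-divisor modulo $(x_{\gamma_{j+1}}, \ldots, x_{\gamma_r})$, upgrades this to $u \in (x_{\gamma_j}, x_{\gamma_{j+1}}, \ldots, x_{\gamma_r}) \cdot gr\, M$, whence $\bar v \in (x_{\gamma_j}^{k+1}, x_{\gamma_{j+1}}, \ldots, x_{\gamma_r}) \cdot gr\, M$. The main obstacle is organising the commutative-algebra bookkeeping cleanly through these successive quotient rings; the combinatorial hypotheses on the $\gamma_i$ and $\beta_j$ are tailored precisely so that at the moment each variable is invoked, it behaves as a non-zero-divisor.
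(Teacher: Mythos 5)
Your proof is correct, and while it starts from the same foundations as the paper's argument (pass to the PBW-associated graded $gr\,M$, pick the symbol $\bar v=\sigma_k(v)$ of minimal degree, observe $\bar v\in J\cdot gr\,M$ with $J=(x_{\gamma_1},\dots,x_{\gamma_r})$ and $\bar e_\beta\bar v=0$, and use Lemma \ref{eactionlem}), the inductive engine is genuinely different. The paper puts a second grading on the Weyl algebra and on $gr\,M$ by $\gamma$-degree (total exponent in the $x_{\gamma_i}$), extracts the lowest $\gamma$-homogeneous slice $\sigma_k(v)_d$, and shows inductively that $\partial_{\gamma_j}(\sigma_k(v)_d)=0$ for each $j$, whence $d=0$, contradiction. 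You dispense with the auxiliary $\gamma$-grading entirely and argue about $\bar v$ itself: a nested power-and-non-zero-divisor induction places $\bar v$ in $(x_{\gamma_j}^k,x_{\gamma_{j+1}},\dots,x_{\gamma_r})\cdot gr\,M$ for all $k$, and PBW-homogeneity of $\bar v$ then kills the $x_{\gamma_j}^k$ generator. The two hypotheses on the $\gamma_i,\beta_j$ do the same job in both: the condition $\gamma_i-\beta_j\notin\Phi^+\setminus\Phi^+_{\Delta_\lambda}$ for $i>j$ makes $\bar e_{\beta_j}$ upper-triangular in the ordering of the $\gamma$'s, and $\gamma_j-\beta_j\notin\{\gamma_1,\dots,\gamma_r\}$ makes the resulting multiplier $x_{\gamma_j-\beta_j}$ a fresh variable — a degree-$0$ multiplier for the paper, a non-zero-divisor modulo the relevant ideal for you. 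Your version needs only the PBW grading, at the cost of the two-layer induction and the bookkeeping of successive quotient rings; the paper's is more local, analysing only the lowest $\gamma$-homogeneous component. One point both treatments pass over tersely: the initial claim $\bar v\in J\cdot gr\,M$ amounts to $gr\bigl(\un\{f_{\gamma_1},\dots,f_{\gamma_r}\}\bigr)\subseteq J$, which holds when the $f_{\gamma_i}$ pairwise commute (as they do in every application here, all lying in the abelian $\mathfrak{n}_1$) but is not automatic in general; you justify it by "taking leading terms", the paper by "by choice of $v$", and both elide the same subtlety.
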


\begin{proof}
Define a grading $A_{j}$ of the Weyl algebra $A$ by $A_{j} = K \{ x^{t} \partial^{t'} \mid \sum (t_{\gamma_{i}} -  t'_{\gamma_{i}}) = j \}$ (i.e. we take all $x_{\gamma_{i}}$ to have degree $1$ and all $\partial_{\gamma_{i}}$ to have degree $-1$). If $D \in A_{j}$, we will say that $D$ has degree $j$ in the $\gamma_{i}$. 

Then $K[\{x_{\alpha} \}]$ is a graded $A$-module via $K[\{x_{\alpha} \}]_{d} = K \{x^{t} \mid \sum t_{\gamma_{i}} = d \}$.

Now extend  the grading of $A$ to a grading of $A \otimes \End_{K}V_{\lambda}$ via $(A \otimes \End_{K}V_{\lambda})_{d} = A_{d} \otimes \End_{K}V_{\lambda}$. Thus $\text{gr}M = K[\{ x_{\alpha} \}] \otimes  V_{\lambda}$ is a graded $A \otimes \End_{K}V_{\lambda}$-module via $(K[\{ x_{\alpha} \}] \otimes  V_{\lambda})_{d} = K[\{ x_{\alpha} \}]_{d} \otimes V_{\lambda}$.

Also write $(A \otimes \End_{K}V_{\lambda})_{\geq d} = \underset{d' \geq d}{\bigoplus}(A \otimes \End_{K}V_{\lambda})_{d'}$ and similarly $(K[\{ x_{\alpha} \}] \otimes  V_{\lambda})_{\geq d} = \underset{d' \geq d}{\bigoplus} (K[\{ x_{\alpha} \}] \otimes  V_{\lambda})_{d'}$ for each $d'$.

Now fix some $1 \leq j \leq r$, and consider the map $\bar{e}_{\beta_{j}} : K[\{x_{\alpha} \}] \otimes V_{\lambda} \rightarrow K[ \{ x_{\alpha} \}] \otimes V_{\lambda}$ given by the action of $e_{\beta_{j}}$ on $M$. By Lemma \ref{eactionlem}, we can describe $\bar{e}_{\beta_{j}}$ as follows:

\begin{equation*}
\bar{e}_{\beta_{j}} = \sum c_{\alpha} \bar{x}_{\alpha-\beta_{j}} \partial_{\alpha} \otimes 1 + 1 \otimes \phi(e_{\beta_{j}}),
\end{equation*}
where the sum runs over all $\alpha \in \Phi^{+} \setminus \Phi_{\Delta_{\lambda}}^{+}$ such that $\alpha - \beta_{j} \in \Phi^{+} \setminus \Phi_{\Delta_{\lambda}}^{+}$.

Now for any $\alpha \notin \{ \gamma_{1},\dots,\gamma_{r} \}$, we can see that $\bar{x}_{\alpha - \beta_{j}} \partial_{\alpha} \in (A \otimes \End_{K}V_{\lambda})_{\geq 0}$. Moreover, if $i > j$ we have $\gamma_{i} - \beta_{j} \notin \Phi^{+} \setminus \Phi_{\Delta_{\lambda}}^{+}$ by assumption, so there is no $\gamma_{i}$-term in the sum. Therefore we can write:

\begin{equation}
\bar{e}_{\beta_{j}} \equiv \overset{j}{\underset{i=1}{\sum}} c_{j,i} \bar{x}_{\gamma_{i} - \beta_{j}} \partial_{\gamma_{i}} \otimes 1 \: \left( \text{mod }(A \otimes \End_{K}V_{\lambda})_{\geq 0} \right)
\end{equation}

\noindent where $c_{j,i} = 0$ if and only if $\gamma_{i} - \beta_{j} \notin \Phi^{+} \setminus \Phi_{\Delta_{\lambda}}^{+}$. By assumption $\gamma_{j} - \beta_{j} \in \Phi^{+} \setminus \Phi_{\Delta_{\lambda}}^{+}$ with $\gamma_{j} - \beta_{j} \notin \{ \gamma_{1}, \dots, \gamma_{r} \}$, so in fact $c_{j,j} x_{\gamma_{j} - \beta_{j}} \partial_{\gamma_{j}} \otimes 1$ is a non-zero term of degree $-1$ in the $\gamma_{i}$.

\bigskip

\noindent Now suppose for contradiction that $v \in M$ is a non-zero highest-weight vector in the set $\un \{f_{\gamma_{1}},\dots,f_{\gamma_{r}} \}V_{\lambda}$. Choose $k$ to be minimal such that $v \in \Omega_{k}M$, then $0 \neq \sigma_{k}(v) \in K[ \{x_{\alpha} \}] \otimes V_{\lambda} $ with $\bar{e_{\beta}}(v) = \sigma_{k}(e_{\beta}v) = 0$ for all $\beta \in \Phi^{+}$.

Moreover, $\sigma_{k}(v) \in K[\{x_{\alpha} \}]\{x_{\gamma_{1}},\dots,x_{\gamma_{r}} \} \otimes V_{\lambda}$ by choice of $v$. Thus $\sigma_{k}(v) \in (K[\{x_{\alpha} \}]\otimes V_{\lambda})_{\geq 1}$.

Let $d$ be minimal such that $\sigma_{k}(v)$ has a non-zero component of degree $d$ in the $\gamma_{i}$. Then $d \geq 1$. Write $\sigma_{k}(v)_{d}$ for the  component of degree $d$ in the $\gamma_{i}$. Then for each $1 \leq j \leq r$, using (2), we have:
\begin{equation}
0 = \bar{e}_{\beta_{j}}(\sigma_{k}(v)) \equiv \underset{i \leq j}{\sum} c_{j,i} (\bar{x}_{\gamma_{i}-\beta_{j}} \partial_{\gamma_{i}} \otimes 1)(\sigma_{k}(v)_{d}) \: \left( \text{mod } (K[\{x_{\alpha} \}]\otimes V_{\lambda})_{\geq d} \right)
\end{equation}

\noindent \emph{Claim:} for each $1 \leq j \leq r$, $(\partial_{\gamma_{j}} \otimes 1)(\sigma_{k}(v)_{d}) = 0$.

\noindent \emph{Proof of claim:} induction on $j$. Let $1 \leq j \leq r$, and assume true for all $i < j$. Then (3) shows that:

\begin{equation*}
0 = \bar{e}_{\beta_{j}}(\sigma_{k}(v)) \equiv  c_{j,j} (\bar{x}_{\gamma_{j}-\beta_{j}} \partial_{\gamma_{j}} \otimes 1)(\sigma_{k}(v)_{d}) \: \left( \text{mod } (K[\{x_{\alpha} \}]\otimes V_{\lambda})_{\geq d} \right)
\end{equation*}

\noindent As seen previously $c_{j,j} (x_{\gamma_{j}-\beta_{j}} \partial_{\gamma_{j}} \otimes 1)$ is non-zero of degree $-1$ in the $\gamma_{i}$. So by looking at degree $d-1$ terms, we see that $c_{j,j} (\bar{x}_{\gamma_{j}-\beta_{j}} \partial_{\gamma_{j}} \otimes 1)(\sigma_{k}(v)_{d}) = 0$, and hence $(\partial_{\gamma_{j}} \otimes 1)(\sigma_{k}(v_{d})) = 0$. The claim is now proved.

\bigskip

\noindent Finally, say $\sigma_{k}(v)_{d} = \sum f_{i} \otimes v_{i}$, where the $f_{i} \in K[\{x_{\alpha} \}]$ and the $v_{i} \in V_{\lambda}$ are linearly independent. From the claim, for each $j$ we have $\sum \partial_{\gamma_{j}}(f_{i}) \otimes v_{i} = 0$, and hence $\partial_{\gamma_{j}}(f_{i}) = 0$ for each $i$. Therefore the polynomials $f_{i}$ are constant in the $x_{\gamma_{j}}$, implying that the degree $d$ is zero, a contradiction.

\end{proof}

\noindent \textbf{We will now apply this in type D. Assume for the rest of the section that $\Phi$ has type $D_{m}$ where $m \geq 4$, and use the labelling of positive roots from \S \ref{Liealgsec}.}

\begin{mycor}
\label{hwcor1}
Suppose $\Phi$ has type $D_{m}$ with $m \geq 4$, and suppose $\lambda$ is not a weight that is not dominant integral. Then there is no highest-weight vector in $M_{\Delta_{\lambda}}(\lambda)$ in the set $\un f_{\gamma_{1,2}}V_{\lambda}$.

\end{mycor}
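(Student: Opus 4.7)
The plan is to apply Proposition \ref{hwvectorprop} in the simplest possible way, namely with $r = 1$, $\gamma_1 = \gamma_{1,2}$, and a single carefully chosen $\beta_1 \in \Phi^+$. Since $r=1$, the second hypothesis of the proposition (the condition on indices $i > j$) is vacuous, and the first hypothesis reduces to exhibiting one positive root $\beta_1$ such that $\gamma_{1,2} - \beta_1 \in \Phi^+ \setminus \Phi^+_{\Delta_\lambda}$ and $\gamma_{1,2} - \beta_1 \ne \gamma_{1,2}$.

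First I would verify that $\gamma_{1,2}$ itself lies in $\Phi^+ \setminus \Phi^+_{\Delta_\lambda}$, which is the ambient hypothesis of the proposition. From the description in \S \ref{Liealgsec},
\[
\gamma_{1,2} \;=\; \alpha_1 + 2\alpha_2 + 2\alpha_3 + \dots + 2\alpha_{m-2} + \alpha_{m-1} + \alpha_m,
\]
so every simple root appears in $\gamma_{1,2}$ with non-zero coefficient. Since $\lambda$ is not dominant integral we have $\Delta \setminus \Delta_\lambda \ne \emptyset$, so some simple root component of $\gamma_{1,2}$ lies outside $\Delta_\lambda$, giving $\gamma_{1,2} \notin \Phi^+_{\Delta_\lambda}$.

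The natural choice is $\beta_1 = \alpha_2$. A direct computation gives $\gamma_{1,2} - \alpha_2 = \alpha_1 + \alpha_2 + \alpha_3 + \alpha_4 = \alpha_{1,4}$ when $m = 4$, and $\gamma_{1,2} - \alpha_2 = \alpha_1 + \alpha_2 + 2\alpha_3 + \dots + 2\alpha_{m-2} + \alpha_{m-1} + \alpha_m = \gamma_{1,3}$ when $m \ge 5$. In either case this is a positive root by the classification in \S \ref{Liealgsec}, and again every simple root appears with non-zero coefficient. The same argument as above therefore shows $\gamma_{1,2} - \alpha_2 \in \Phi^+ \setminus \Phi^+_{\Delta_\lambda}$, and clearly $\gamma_{1,2} - \alpha_2 \ne \gamma_{1,2}$ (the $\alpha_2$-coefficients differ).

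All hypotheses of Proposition \ref{hwvectorprop} are thus met for $r = 1$, $\gamma_1 = \gamma_{1,2}$, $\beta_1 = \alpha_2$, and the proposition directly yields the desired conclusion that $M_{\Delta_\lambda}(\lambda)$ has no non-zero highest-weight vector in $\un f_{\gamma_{1,2}} V_\lambda$. There is no real obstacle to this argument: the whole proof is just the observation that $\alpha_2$ is a simple root whose subtraction from $\gamma_{1,2}$ yields another positive root (in fact a root with full support), together with one application of the general result.
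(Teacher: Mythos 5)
Your proof is correct and takes essentially the same approach as the paper: both apply Proposition \ref{hwvectorprop} with $r=1$, $\gamma_1 = \gamma_{1,2}$, $\beta_1 = \alpha_2$, compute $\gamma_{1,2} - \alpha_2 = \alpha_{1,4}$ (for $m=4$) or $\gamma_{1,3}$ (for $m>4$), and observe that this root has full support so lies outside $\Phi^+_{\Delta_\lambda}$ because $\lambda$ is not dominant integral. You are slightly more explicit than the paper in verifying the ambient hypothesis $\gamma_{1,2} \in \Phi^+\setminus\Phi^+_{\Delta_\lambda}$ and the condition $\gamma_{1,2} - \alpha_2 \neq \gamma_{1,2}$, but the argument is the same.
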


\begin{proof}
We apply Proposition \ref{hwvectorprop} with the roots $\gamma_{1} = \gamma_{1,2} $ and $\beta_{1} = \alpha_{2}$. 

If $m = 4$, then $\gamma_{1,2} - \alpha_{2} = \alpha_{1} + \dots + \alpha_{4} = \alpha_{1,4}$, and if $m > 4$ then $\gamma_{1,2} - \alpha_{2} = \gamma_{1,3}$.

In either case, $\alpha_{k} \in (\gamma_{1,2} - \alpha_{2})$ for all $1 \leq k \leq m$. Since $\lambda$ is not dominant integral, $\Delta_{\lambda} \neq \Delta$, so this means $(\gamma_{1,2} - \alpha_{2}) \in \Phi^{+} \setminus \Phi_{\Delta_{\lambda}}^{+}$.

The conditions of Proposition \ref{hwvectorprop} are met, so the result follows.
\end{proof}

\noindent We now return to the case of a scalar generalised Verma module.

\begin{mycor}
\label{hwcor2}
Suppose $\Phi$ has type $D_{m}$. Suppose $M_{\Delta_{\lambda}}(\lambda)$ is a scalar generalised Verma module, such that $\alpha_{1} \notin \Phi^{+}$ and also $\alpha_{k} \notin \Delta_{\lambda}$ for some $k > 1$.

Now fix $k>1$ to be minimal such that $\alpha_{k} \notin \Delta_{\lambda}$

Then $M_{\Delta_{\lambda}}(\lambda)$ has no highest-weight vector in the set $\un (\{f_{\alpha_{1,i}} \mid 2 \leq i \leq k-1 \} \cup \{f_{\gamma_{1,2}} \}) V_{\lambda}$.
\end{mycor}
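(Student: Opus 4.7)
The plan is to apply Proposition \ref{hwvectorprop} with $r = k - 1$. I will order the given roots as $\gamma_1 = \gamma_{1,2}$ and $\gamma_i = \alpha_{1,i}$ for $2 \leq i \leq k-1$. For the verification it is convenient to switch to the standard $e_i$-description of $D_m$ recalled in \S\ref{Liealgsec}, under which $\alpha_{1,j} = e_1 - e_{j+1}$ and $\gamma_{1,2} = e_1 + e_2$.

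The choice of $\beta_i$ I propose is $\beta_i = \alpha_{2,i} = e_2 - e_{i+1}$ for $2 \leq i \leq k-1$. For $\beta_1$, the natural choice is the positive root $e_1 + e_{k+1}$ when $k \leq m - 1$; but since that root does not exist in $D_m$ when $k = m$, the boundary case $k = m$ must be treated separately by taking $\beta_1 = e_1 - e_m = \alpha_{1, m-1}$ instead.

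The hypotheses of Proposition \ref{hwvectorprop} then follow by direct computation in the $e_i$-basis. For $i \geq 2$ we get $\gamma_i - \beta_i = \alpha_1$, which lies in $\Phi^+ \setminus \Phi^+_{\Delta_\lambda}$ since $\alpha_1 \notin \Delta_\lambda$, and which is not itself among the $\gamma_j$. The difference $\gamma_1 - \beta_1$ evaluates to $\alpha_{2,k} = e_2 - e_{k+1}$ when $k \leq m-1$ and to $e_2 + e_m$ (which is the root $\beta_2$ in the labelling of \S\ref{Liealgsec}) when $k = m$; in each case the result contains $\alpha_k$ but no $\alpha_1$, so lies in $\Phi^+ \setminus \Phi^+_{\Delta_\lambda}$ and is not among the $\gamma_j$. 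For the cross conditions, the terms $\gamma_i - \beta_j$ with $i > j \geq 2$ expand to $e_1 - e_2 + e_{j+1} - e_{i+1}$, which involves four distinct indices and is therefore not a root; and the remaining differences $\gamma_i - \beta_1$ with $i \geq 2$ are manifestly non-positive. The only real difficulty is pinning down the right $\beta_1$: it must be arranged so that $\gamma_{1,2} - \beta_1$ avoids the $\alpha_{1,j}$ family (which forces it to have no $\alpha_1$ coefficient) while each $\alpha_{1,i} - \beta_1$ fails to be a positive root, and balancing these constraints is what forces the case split across the $k = m$ boundary.
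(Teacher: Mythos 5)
Your proof is correct and follows the same plan as the paper's: apply Proposition \ref{hwvectorprop} with $\gamma_{1} = \gamma_{1,2}$, $\gamma_{i} = \alpha_{1,i}$, and $\beta_{i} = \alpha_{2,i}$ for $2 \leq i \leq k-1$. The only divergence is the choice of $\beta_{1}$. You take $\beta_{1} = e_{1} + e_{k+1}$, falling back to $\beta_{1} = e_{1} - e_{m}$ when $k = m$; the paper takes the fixed $\beta_{1} = \alpha_{1,2} = e_{1} - e_{3}$. With the paper's choice, $\gamma_{1} - \beta_{1} = e_{2} + e_{3}$ (namely $\gamma_{2,3}$ if $m > 4$, or $\alpha_{2,m}$ if $m = 4$), which has nonzero coefficient of every $\alpha_{j}$ for $2 \leq j \leq m$ and hence contains $\alpha_{k}$ regardless of which $k$ occurs, while $\gamma_{2} - \beta_{1} = 0$ and $\gamma_{i} - \beta_{1} = \alpha_{3,i} \in \Phi^{+}_{\Delta_{\lambda}}$ for $3 \leq i \leq k-1$. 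So the paper's $\beta_{1}$ avoids the $k = m$ case split that your choice requires, whereas your choice has the mild advantage of making the appearance of $\alpha_{k}$ in $\gamma_{1} - \beta_{1} = \alpha_{2,k}$ completely transparent. Both verifications check out. (The hypothesis ``$\alpha_{1} \notin \Phi^{+}$'' in the corollary statement is evidently a typo for ``$\alpha_{1} \notin \Delta_{\lambda}$'', which is how you read it, correctly.)
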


\begin{proof}

Apply Proposition \ref{hwvectorprop} with the choice of roots $\gamma_{1} = \gamma_{1,2},\beta_{1} = \alpha_{1,2}$ and $\gamma_{i} = \alpha_{1,i}, \beta_{i} = \alpha_{2,i}$ for $2 \leq i \leq k$. We now check the conditions are met.

\noindent Firstly, $\gamma_{1}-\beta_{1} = 
\begin{cases}
\gamma_{2,3}, & m>4,\\
\alpha_{2,m}, & m=4.
\end{cases}
$

\noindent So $\gamma_{1} - \beta_{1}$ has a non-zero $\alpha_{k}$-component, and hence lies in $\Phi^{+} \setminus \Phi_{\Delta_{\lambda}}^{+}$. Meanwhile $\gamma_{i} - \beta_{i} = \alpha_{1}$ for $2 \leq i \leq k$, which lies in $\Phi^{+} \setminus \Phi_{\Delta_{\lambda}}^{+}$ also.

Now $\gamma_{2} - \beta_{1} = 0$, while $\gamma_{i} - \beta_{1} = \alpha_{3,i}\in \Phi_{\Delta_{\lambda}}^{+}$ for $3 \leq i \leq k$. So $\gamma_{i} - \beta_{1} \notin \Phi^{+} \setminus \Phi_{\Delta_{\lambda}}^{+}$ for each $i > 1$.

Finally, if $k \geq i > j \geq 2$, we have $\gamma_{i} - \beta_{j} = \alpha_{1}+\alpha_{j+1}+\alpha_{j+2}+\dots+\alpha_{i} \notin \Phi^{+}$. So the conditions for Proposition \ref{hwvectorprop} are met and the result follows.

\end{proof}

\noindent We now combine these results with our results from \S \ref{inductionsectiontypeD}.

\begin{mycor}
\label{inductioncase1}
Suppose $\lambda$ satisfies the following conditions:
\begin{itemize}
\item $M_{\Delta_{\lambda}}(\lambda)$ is scalar,

\item $\alpha_{k} \notin \Delta_{\lambda}$ for some $k > 1$.

\end{itemize}

\noindent If the weight $\lambda |_{\mathfrak{h}_{\Delta \setminus \alpha_{1},K}}$ for $\mathfrak{g}_{\Delta \setminus \alpha_{1}}$ satisfies condition (1), then $\lambda$ satisfies condition (1).

\end{mycor}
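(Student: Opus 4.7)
The plan is to verify the hypotheses of Theorem \ref{inductionprop} applied with the simple root $\alpha_1$. The second hypothesis, that $\lambda|_{\mathfrak{h}_{\Delta\setminus\alpha_1,K}}$ satisfies condition (1), is given to us directly, so the whole content is to show that $M_{\Delta_\lambda}(\lambda)$ has no non-zero highest-weight vector in $U(\mathfrak{n}_{2,K})(W_{\alpha_1}\cap U(\mathfrak{n}_{1,K})V_\lambda)$. The strategy is to split on whether or not $\alpha_1\in\Delta_\lambda$, use the appropriate corollary from \S\ref{inductionsectiontypeD} to describe $W_{\alpha_1}\cap U(\mathfrak{n}_{1,K})V_\lambda$ explicitly, and then apply the matching non-existence statement from \S\ref{hwvectors}.

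Suppose first $\alpha_1\in\Delta_\lambda$. Since $M_{\Delta_\lambda}(\lambda)$ is scalar, Corollary \ref{inductioncorscalar} gives $W_{\alpha_1}\cap U(\mathfrak{n}_{1,K})V_\lambda = U(\mathfrak{n}_{1,K})f_{\gamma_{1,2}}V_\lambda$. By the PBW theorem applied to the $\Of$-module decomposition $\mathfrak{n}=\mathfrak{n}_1\oplus\mathfrak{n}_2$, we have $\un = U(\mathfrak{n}_{2,K})U(\mathfrak{n}_{1,K})$, so the subspace of interest is exactly $\un f_{\gamma_{1,2}}V_\lambda$. The assumption that some $\alpha_k\notin\Delta_\lambda$ with $k>1$ forces $\lambda$ not to be dominant integral, so Corollary \ref{hwcor1} yields the desired non-existence.

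Suppose next $\alpha_1\notin\Delta_\lambda$, and let $k>1$ be minimal with $\alpha_k\notin\Delta_\lambda$. Corollary \ref{inductioncor1} identifies
\[
W_{\alpha_1}\cap U(\mathfrak{n}_{1,K})V_\lambda \;=\; U(\mathfrak{n}_{1,K})\bigl(\{f_{\gamma_{1,2}}\}\cup\{f_{\alpha_{1,j}}\mid 2\leq j<k\}\bigr)V_\lambda,
\]
and, using $\un = U(\mathfrak{n}_{2,K})U(\mathfrak{n}_{1,K})$ as before, the subspace of interest becomes $\un\bigl(\{f_{\gamma_{1,2}}\}\cup\{f_{\alpha_{1,j}}\mid 2\leq j<k\}\bigr)V_\lambda$. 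This is exactly the subspace ruled out by Corollary \ref{hwcor2}, finishing the case.

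The one genuine obstacle is the borderline value $k=m-1$, which is excluded from the hypotheses of Corollary \ref{inductioncor1}: if $\alpha_1\notin\Delta_\lambda$ and the minimal $k>1$ with $\alpha_k\notin\Delta_\lambda$ happens to equal $m-1$, the description of $W_{\alpha_1}\cap U(\mathfrak{n}_{1,K})V_\lambda$ used above is not directly available. The expected remedy is to exploit the outer Dynkin diagram automorphism of $D_m$ swapping $\alpha_{m-1}$ and $\alpha_m$ (which fixes $\alpha_1$) to reduce to a configuration in which the minimal bad index is $m$ rather than $m-1$; alternatively, one reruns the argument of Proposition \ref{Walphadescription} to check that its conclusion remains valid verbatim in this edge case, after which Corollary \ref{hwcor2} again suffices.
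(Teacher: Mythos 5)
Your proof is correct and takes essentially the same route as the paper: split on whether $\alpha_1 \in \Delta_\lambda$, use Corollary \ref{inductioncorscalar} or Corollary \ref{inductioncor1} to describe $W_{\alpha_1}\cap U(\mathfrak{n}_{1,K})V_\lambda$, rule out highest-weight vectors via Corollary \ref{hwcor1} or Corollary \ref{hwcor2}, and invoke Theorem \ref{inductionprop}, handling the $\alpha_{m-1}$ edge case via the Dynkin diagram automorphism swapping $\alpha_{m-1}$ and $\alpha_m$. One small imprecision in your last paragraph: the hypothesis of Corollary \ref{inductioncor1} fails not merely when the \emph{minimal} bad index $k$ equals $m-1$, but only when $m-1$ is the \emph{unique} $k>1$ with $\alpha_k\notin\Delta_\lambda$ (i.e.\ $\Delta\setminus\Delta_\lambda=\{\alpha_1,\alpha_{m-1}\}$) --- if the minimal $k$ is $m-1$ but also $\alpha_m\notin\Delta_\lambda$, the hypothesis is satisfied via $k'=m$ and the corollary applies directly with the minimal $k=m-1$ in its conclusion; nevertheless your proposed remedy by the $\alpha_{m-1}\leftrightarrow\alpha_m$ symmetry covers the genuine exceptional case and is exactly what the paper does.
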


\begin{proof}
We consider separately the cases $\alpha_{1} \in \Delta_{\lambda}$ and $\alpha_{1} \notin \Delta_{\lambda}$. First, assume $\alpha_{1} \notin \Delta_{\lambda}$.

We wish to apply Corollary \ref{inductioncor1}. We can assume $\Delta_{\lambda} \neq \{ \alpha_{1}, \alpha_{m-1} \}$, as this case will follow from the $\Delta_{\lambda} = \{ \alpha_{1}, \alpha_{m} \}$ case by symmetry between $\alpha_{m-1},\alpha_{m}$.

Let $k > 1$ be minimal such that $\alpha_{k} \notin \Delta_{\lambda}$. Corollary \ref{inductioncor1} implies that $W_{\alpha_{1}} \cap (U(\mathfrak{n}_{1,K})V_{\lambda}) = U(\mathfrak{n}_{1,K})(\{f_{\gamma_{1,2}}\}\cup \{f_{\alpha_{1,j}} \mid 2 \leq j < k \}) V_{\lambda}$. Then by Corollary \ref{hwcor2}, there is no highest-weight vector in the set $\un \left( W_{\alpha_{1}} \cap (U(\mathfrak{n}_{1,K})V_{\lambda}) \right)$.

The result then follows from Theorem \ref{inductionprop}.

\bigskip

\noindent Now instead assume $\alpha_{1} \in \Delta_{\lambda}$. Then Corollary \ref{inductioncorscalar} implies that $W_{\alpha_{1}} \cap (U(\mathfrak{n}_{1,K})V_{\lambda}) = U(\mathfrak{n}_{1,K})f_{\gamma_{1,2}}V_{\lambda}$. Corollary \ref{hwcor1} implies that there is no highest-weight vector in the set $\un \left( W_{\alpha_{1}} \cap (U(\mathfrak{n}_{1,K})V_{\lambda}) \right)$.

The result then follows from Theorem \ref{inductionprop}.
\end{proof}

\begin{mycor}
\label{inductioncase2}
Suppose $\alpha_{1},\alpha_{2} \notin \Delta_{\lambda}$. If the weight $\lambda |_{\mathfrak{h}_{\Delta \setminus \alpha_{1},K}}$ for $\mathfrak{g}_{\Delta \setminus \alpha_{1}}$ satisfies condition (1), then $\lambda$ satisfies condition (1).

\end{mycor}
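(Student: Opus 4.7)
The plan is to apply Theorem \ref{inductionprop} directly, exactly as in the proof of Corollary \ref{inductioncase1}. The hypothesis already gives condition (1) for $\lambda|_{\mathfrak{h}_{\Delta \setminus \alpha_1,K}}$, so the only remaining task is to verify the first bullet of Theorem \ref{inductionprop}: that there is no non-zero highest-weight vector in $U(\mathfrak{n}_{2,K})\bigl(W_{\alpha_1} \cap U(\mathfrak{n}_{1,K}) V_\lambda\bigr)$.

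First I would use Corollary \ref{inductioncor2}, whose hypothesis $\alpha_1, \alpha_2 \notin \Delta_\lambda$ matches ours exactly, to obtain the identification
\begin{equation*}
W_{\alpha_1} \cap U(\mathfrak{n}_{1,K}) V_\lambda = U(\mathfrak{n}_{1,K}) f_{\gamma_{1,2}} V_\lambda.
\end{equation*}
Multiplying on the left by $U(\mathfrak{n}_{2,K})$ and using that $\mathfrak{n} = \mathfrak{n}_1 \oplus \mathfrak{n}_2$, together with the PBW theorem applied to $\un$, we can rewrite
\begin{equation*}
U(\mathfrak{n}_{2,K})\bigl(W_{\alpha_1} \cap U(\mathfrak{n}_{1,K}) V_\lambda\bigr) \subseteq \un\, f_{\gamma_{1,2}} V_\lambda,
\end{equation*}
so it suffices to rule out non-zero highest-weight vectors in the larger set $\un f_{\gamma_{1,2}} V_\lambda$.

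Next I would apply Corollary \ref{hwcor1}. Its hypothesis is that $\lambda$ is not dominant integral, which holds here because $\alpha_1 \notin \Delta_\lambda$ forces $\lambda(h_{\alpha_1}) \notin \mathbb{N}_0$. Corollary \ref{hwcor1} then says that $M_{\Delta_\lambda}(\lambda)$ contains no non-zero highest-weight vector in $\un f_{\gamma_{1,2}} V_\lambda$, and hence in particular none in $U(\mathfrak{n}_{2,K})\bigl(W_{\alpha_1} \cap U(\mathfrak{n}_{1,K}) V_\lambda\bigr)$.

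With both hypotheses of Theorem \ref{inductionprop} satisfied, we conclude that $\lambda$ satisfies condition (1). There is no real obstacle here; the work was done in Corollary \ref{inductioncor2} and Corollary \ref{hwcor1}, and the present statement just assembles them with the inductive theorem, in direct parallel to Corollary \ref{inductioncase1}.
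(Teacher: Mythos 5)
Your proof is correct and follows exactly the same route as the paper: apply Corollary \ref{inductioncor2} to identify $W_{\alpha_1} \cap U(\mathfrak{n}_{1,K}) V_\lambda = U(\mathfrak{n}_{1,K}) f_{\gamma_{1,2}} V_\lambda$, enlarge to $\un f_{\gamma_{1,2}} V_\lambda$, invoke Corollary \ref{hwcor1} to rule out highest-weight vectors there, and conclude via Theorem \ref{inductionprop}. The only marginal difference is that you spell out the containment $U(\mathfrak{n}_{2,K})U(\mathfrak{n}_{1,K}) f_{\gamma_{1,2}} V_\lambda \subseteq \un f_{\gamma_{1,2}} V_\lambda$ explicitly, which the paper leaves implicit.
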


\begin{proof}
By Corollary \ref{inductioncor2}, $W_{\alpha_{1}} \cap (U(\mathfrak{n}_{1,K})V_{\lambda}) = U(\mathfrak{n}_{1,K})f_{\gamma_{1,2}}V_{\lambda}$. Then Corollary \ref{hwcor1} implies that there is no highest-weight vector in the subset \\ $\un \left( W_{\alpha_{1}} \cap (U(\mathfrak{n}_{1,K})V_{\lambda}) \right) \subseteq M_{\Delta_{\lambda}}(\lambda)$.

The result then follows from Theorem \ref{inductionprop}.
\end{proof}

\begin{mycor}
\label{inductioncase3}
Suppose $1 \leq k < m-2$ such that $\lambda(h_{\alpha_{i}}) = 0$ for $i < k$, $\lambda(h_{\alpha_{k}}) \in \mathbb{N}_{0}$, and $\lambda(h_{\alpha_{k+1}}) \notin \mathbb{N}_{0}$. 

If the weight $\lambda |_{\mathfrak{h}_{\Delta \setminus \alpha_{1},K}}$ for $\mathfrak{g}_{\Delta \setminus \alpha_{1}}$ satisfies condition (1), then $\lambda$ satisfies condition (1).
\end{mycor}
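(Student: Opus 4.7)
The plan is to mirror the proofs of Corollary \ref{inductioncase1} and Corollary \ref{inductioncase2}: establish a clean description of $W_{\alpha_{1}} \cap (U(\mathfrak{n}_{1,K})V_{\lambda})$, verify the no-highest-weight-vector hypothesis of Theorem \ref{inductionprop}, and then invoke that theorem.

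First I would apply Corollary \ref{inductioncor3}, whose hypotheses match those given here verbatim ($1 \leq k < m-2$, $\lambda(h_{\alpha_i}) = 0$ for $i < k$, $\lambda(h_{\alpha_k}) \in \mathbb{N}_0$, $\lambda(h_{\alpha_{k+1}}) \notin \mathbb{N}_0$). This immediately yields
\[
W_{\alpha_{1}} \cap U(\mathfrak{n}_{1,K})V_{\lambda} = U(\mathfrak{n}_{1,K}) f_{\gamma_{1,2}} V_{\lambda},
\]
so that $U(\mathfrak{n}_{2,K})(W_{\alpha_{1}} \cap U(\mathfrak{n}_{1,K})V_{\lambda}) = \un f_{\gamma_{1,2}} V_{\lambda}$, using $\un = U(\mathfrak{n}_{2,K})U(\mathfrak{n}_{1,K})$.

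Next I would verify that $\lambda$ is not dominant integral, which is needed to apply Corollary \ref{hwcor1}. This is automatic: the assumption $\lambda(h_{\alpha_{k+1}}) \notin \mathbb{N}_0$ shows $\alpha_{k+1} \notin \Delta_{\lambda}$. Corollary \ref{hwcor1} then asserts that $M_{\Delta_{\lambda}}(\lambda)$ has no non-zero highest-weight vector in $\un f_{\gamma_{1,2}}V_{\lambda}$, which is exactly the first hypothesis of Theorem \ref{inductionprop}.

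Finally, the second hypothesis of Theorem \ref{inductionprop} — that $\lambda|_{\mathfrak{h}_{\Delta \setminus \alpha_{1},K}}$ satisfies condition (1) for $\mathfrak{g}_{\Delta \setminus \alpha_{1}}$ — is the hypothesis of the corollary we are proving. Applying Theorem \ref{inductionprop} thus yields condition (1) for $\lambda$. There is no real obstacle here: all the heavy lifting was done in Corollary \ref{inductioncor3} (which required the explicit $\mathfrak{sl}_2$-style argument to show $W_{\alpha_1} \cap V_\lambda = 0$ in this delicate setting) and in Corollary \ref{hwcor1} (where Proposition \ref{hwvectorprop} was applied with $\gamma_1 = \gamma_{1,2}$, $\beta_1 = \alpha_2$); the present corollary is a straightforward assembly of these pieces.
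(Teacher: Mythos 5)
Your proof is correct and follows essentially the same route as the paper: Corollary \ref{inductioncor3} gives the description of $W_{\alpha_1} \cap U(\mathfrak{n}_{1,K})V_\lambda$, Corollary \ref{hwcor1} rules out highest-weight vectors in $\un f_{\gamma_{1,2}}V_\lambda$, and Theorem \ref{inductionprop} closes the argument. The only additions you make over the paper's terse proof --- spelling out $U(\mathfrak{n}_{2,K})U(\mathfrak{n}_{1,K}) = \un$ and noting $\alpha_{k+1}\notin\Delta_\lambda$ so $\lambda$ is not dominant integral --- are welcome clarifications, not deviations.
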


\begin{proof}
By Corollary \ref{inductioncor3}, $W_{\alpha_{1}} \cap (U(\mathfrak{n}_{1,K})V_{\lambda}) = U(\mathfrak{n}_{1,K})f_{\gamma_{1,2}}V_{\lambda}$. Then Corollary \ref{hwcor1} implies that there is no highest-weight vector in the subset \\ $\un \left( W_{\alpha_{1}} \cap (U(\mathfrak{n}_{1,K})V_{\lambda}) \right) \subseteq M_{\Delta_{\lambda}}(\lambda)$.

The result then follows from Theorem \ref{inductionprop}.
\end{proof}

\subsection{The case with an abelian nilpotent subalgebra}
\label{abeliansection}

The main result of this section is as follows.

\begin{myprop}
\label{abeliancase}
Suppose $\Phi$ is any indecomposable root system. Suppose $\alpha_{1} \in \Delta$ such that any $\beta \in \Phi^{+}$ has $\alpha_{1}$-component 0 or 1. Suppose $\lambda \in \mathfrak{h}_{K}^{*}$ with $\lambda(p^{n}\mathfrak{h}_{R}) \subseteq R$ such that $\lambda(h_{\alpha_{1}}) \notin \mathbb{N}_{0}$, and $\lambda(h_{\alpha}) = 0$ for $\alpha \in \Delta \setminus \alpha_{1}$.

Then $\lambda$ satisfies condition (2).
\end{myprop}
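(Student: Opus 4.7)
The plan exploits the scalar-abelian structure forced by the hypotheses, reducing condition (2) to the injectivity of the inclusion $KN_{\lambda} \hookrightarrow \widehat{U(\mathfrak{n}_{\lambda})_{n,K}}$ in the regime where $v_{\lambda}$ is a free generator of the completion. Indeed, $\lambda(h_{\alpha}) = 0$ for $\alpha \in \Delta \setminus \{\alpha_{1}\}$ combined with $\lambda(h_{\alpha_{1}}) \notin \mathbb{N}_{0}$ forces $\Delta_{\lambda} = \Delta \setminus \{\alpha_{1}\}$, so $V_{\lambda} = K v_{\lambda}$ is one-dimensional and $M_{\Delta_{\lambda}}(\lambda) = U(\mathfrak{n}_{\lambda,K}) v_{\lambda}$ is a scalar generalized Verma module. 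The condition that every $\beta \in \Phi^{+}$ has $\alpha_{1}$-coefficient $0$ or $1$ implies $\mathfrak{n}_{\lambda}$ is abelian, so $L(\lambda) = U(\mathfrak{n}_{\lambda,K}) v_{\lambda}$ is a quotient of the polynomial ring $U(\mathfrak{n}_{\lambda,K})$ by the $\mathfrak{h}$-stable ideal $P = \Ann_{U(\mathfrak{n}_{\lambda,K})}(v_{\lambda})$, and $\widehat{L(\lambda)} = \widehat{U(\mathfrak{n}_{\lambda})_{n,K}} \cdot v_{\lambda}$.

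In the generic subcase $\lambda(h_{\alpha_{1}}) \notin \mathbb{Z}$, the Jantzen--Lepowsky simplicity criterion for scalar generalized Verma modules gives $P = 0$, so $v_{\lambda}$ freely generates $\widehat{L(\lambda)}$ over the completed algebra. Condition (2) then amounts to the injectivity $KN_{\lambda} \hookrightarrow \widehat{U(\mathfrak{n}_{\lambda})_{n,K}}$, which is the statement already cited from \cite[Theorem 5.3]{verma}.

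For the integer subcase $\lambda(h_{\alpha_{1}}) \in \mathbb{Z}_{\leq -1}$ I plan to argue by contradiction, mirroring Case 1 of the proof of Proposition \ref{abeliantranslationfunctorprop}. Suppose condition (2) fails; a standard manipulation (analogous to the one invoked there via \cite[Proposition 4.2.13]{me}) produces a relation $(x_{0} + \sum_{i} x_{i} b_{i}) v_{\lambda} = 0$ in $\widehat{L(\lambda)}$ with $x_{0} \in KN_{\lambda} \setminus \{0\}$, $x_{i} \in KN_{\lambda}$, and $b_{i} \in U(\mathfrak{n}_{\lambda,K})$ weight vectors of non-zero weight (the replacement of a general $a_{i} \in U(\mathfrak{n}^{-}_{K})$ by $b_{i} \in U(\mathfrak{n}_{\lambda,K})$ is automatic since $L(\lambda) = U(\mathfrak{n}_{\lambda,K}) v_{\lambda}$). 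I would then choose a dominant integral translate $\nu$ such that $\lambda + \nu$ falls in the generic subcase handled above and $\lambda + \nu$ lies in the upper closure of the facet of $\lambda$ (possibly requiring a two-step detour through an auxiliary weight at facet boundaries), and apply the moreover clause of Lemma \ref{abeliantranslationlem} to obtain $(x_{0} + \sum_{i} x_{i} b_{i})^{\ell} v_{\lambda + \nu} = 0$ in $\widehat{L(\lambda + \nu)}$ for some $\ell$. Sorting the expansion of this $\ell$-th power by weight isolates a top component that is a scalar multiple of $x_{0}^{\ell} v_{\lambda + \nu}$; freeness of $v_{\lambda + \nu}$ over $\widehat{U(\mathfrak{n}_{\lambda + \nu})_{n,K}}$ then forces $x_{0}^{\ell} = 0$, and since $KN_{\lambda}$ is a commutative domain this gives $x_{0} = 0$, the desired contradiction.

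The hard part will be the boundary case $\lambda = -\omega_{1}$, where the facet of $\lambda$ is a single point and Lemma \ref{abeliantranslationlem} cannot be applied with any non-trivial dominant integral translate while preserving the upper-closure hypothesis. I anticipate handling this either via a direct analysis of $P$ using the BGG embeddings $M(s_{\alpha_{j}} \cdot (-\omega_{1})) \hookrightarrow M(-\omega_{1})$ for $j \neq 1$ to pin down the annihilator, or via a flatness argument for the inclusion $KN_{\lambda} \hookrightarrow \widehat{U(\mathfrak{n}_{\lambda})_{n,K}}$ of commutative Tate-like algebras (which makes sense because $\mathfrak{n}_{\lambda}$ is abelian) to deduce $KN_{\lambda} \cap \widehat{U(\mathfrak{n}_{\lambda})_{n,K}} \cdot P = KN_{\lambda} \cdot P$ in this setting.
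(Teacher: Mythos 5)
Your proposal takes a genuinely different route from the paper: the paper makes no case split on integrality at all, instead defining $V = KN L(\lambda) / \sum_{v,\,\mathrm{wt}(v) < \lambda} KNv$, proving it is a $KP^{-}$-module, and invoking \cite[Proposition 5.4]{metaplectic} together with the irreducibility of $\mathfrak{n}$ as a $\mathfrak{p}^{-}$-representation to conclude that $KN$ acts faithfully on $V$. That result from \cite{metaplectic} is the load-bearing input, and your proposal never touches $\mathfrak{p}^{-}$, $V$, or that irreducibility.

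Your non-integral subcase ($\lambda(h_{\alpha_{1}}) \notin \mathbb{Z}$) is plausible and pleasantly elementary. One caveat: the identity $\Phi_{\lambda} = \Phi_{\Delta \setminus \alpha_{1}}$, on which the $P = 0$ conclusion implicitly rests, is automatic only in the simply laced case, where $\langle \lambda + \rho, \beta^{\lor} \rangle$ is a $\mathbb{Z}$-linear combination of $\langle \lambda + \rho, \alpha^{\lor} \rangle$ with coefficient $1$ on $\alpha_{1}$ for any $\beta$ containing $\alpha_{1}$; in type $B$, for instance, $(\alpha_{1}+\alpha_{2})^{\lor}$ has $\alpha_{1}^{\lor}$-coefficient $2$, so $\langle \lambda + \rho, (\alpha_{1}+\alpha_{2})^{\lor}\rangle$ can be an integer when $\lambda(h_{\alpha_{1}}) \in \tfrac{1}{2} + \mathbb{Z}$. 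Since the proposition is stated for arbitrary indecomposable $\Phi$, you would need to verify the hypotheses of Jantzen's criterion genuinely, not merely assert it gives $P=0$.

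The integral subcase is where there is a real gap. Your plan is internally inconsistent: you propose choosing a dominant integral $\nu$ so that $\lambda + \nu$ ``falls in the generic subcase handled above,'' but $\lambda + \nu$ remains integral whenever $\lambda$ is, so $\lambda(h_{\alpha_{1}}) + \nu(h_{\alpha_{1}}) \in \mathbb{Z}$ no matter what $\nu$ you pick, and there is nowhere to escape to. The best you can do by that mechanism is reduce $\lambda(h_{\alpha_{1}}) < -1$ to $\lambda(h_{\alpha_{1}}) = -1$, i.e.\ to $\lambda = -\omega_{1}$ after a facet normalization --- but that is precisely the proposition's own Proposition~\ref{abeliantranslationfunctorprop}, which already reduces everything to $-\omega_{1}$, and $-\omega_{1}$ is exactly what you yourself identify as ``the hard part.'' Your two proposed alternatives for $-\omega_{1}$ (BGG embeddings pinning down $P$; flatness of $KN \hookrightarrow \widehat{U(\mathfrak{n})_{n,K}}$) are not developed, and there is no reason to expect either to succeed easily: $L(-\omega_{1})$ is a proper quotient of $M_{\Delta_{\lambda}}(-\omega_{1})$ even in rank $2$, so the flatness route would need to control a nontrivial ideal $P$, and the BGG-embedding route would need to show that the closure of $P$ in $\widehat{U(\mathfrak{n})_{n,K}}$ intersects $KN$ only in $KN\cdot P$, which is precisely the kind of ``controller subgroup'' claim the cited \cite{metaplectic} machinery exists to prove. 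In short, you have reconstructed the reduction to $-\omega_{1}$, but not the hard faithfulness statement for $-\omega_{1}$ itself, which is the entire content of the paper's appeal to \cite[Proposition 5.4]{metaplectic}.
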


\noindent Fix $\Phi$ to be any indecomposable root system, and fix a choice of $\alpha_{1} \in \Delta$ such that all $\beta \in \Phi^{+}$ have $\alpha_{1}$-coefficient either $0$ or $1$. Let $\mathfrak{p} = \mathfrak{p}_{\Delta \setminus \alpha_{1}},\mathfrak{n} = \mathfrak{n}_{\Delta \setminus \alpha_{1}}$ be the corresponding parabolic and nilpotent subalgebras, so $\mathfrak{n} = \Of \{f_{\alpha} \mid \alpha_{1} \in \alpha \}$ and $\mathfrak{g} = \mathfrak{n} \oplus \mathfrak{p}$. Let $P, N$ be the subgroups of $G$ corresponding to $\mathfrak{p}, \mathfrak{n}$.

Also define $\mathfrak{p}^{-} = \mathfrak{n}^{-} \oplus \mathfrak{h} \oplus \Of \{ e_{\alpha} \mid \alpha_{1} \notin \alpha \}$, which is the transpose of $\mathfrak{p}$. Write $P^{-}$ for the $F$-uniform group corresponding to $\mathfrak{p}^{-}$.

We wish to apply \cite[Proposition 5.4]{metaplectic} with the abelian subgroup $N \subseteq P^{-}$. 

\begin{myrem}
If \cite[Proposition 5.4]{metaplectic} could be shown to be true for $K$ any completely discretely valued field extension of $F$ (not necessarily finite), then it would be possible to prove  Theorem \ref{maintheorem} without the assumption that $K / F$ is finite.
\end{myrem}

\noindent In order to apply \cite[Proposition 5.4]{metaplectic}, it will first be necessary to prove the following lemma.

\begin{mylem}
$\mathfrak{n}$ is an ideal in $\mathfrak{p}^{-}$.
\end{mylem}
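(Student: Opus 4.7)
The plan is to verify directly, using the Chevalley basis relations, that $[\mathfrak{p}^-, \mathfrak{n}] \subseteq \mathfrak{n}$. Since $\mathfrak{p}^-$ is spanned as an $\Of$-module by $\mathfrak{h}$, the $f_\beta$ for $\beta \in \Phi^+$, and the $e_\gamma$ for $\gamma \in \Phi^+$ with $\alpha_1 \notin \gamma$, and since $\mathfrak{n}$ is spanned by the $f_\alpha$ with $\alpha_1 \in \alpha$, it suffices to check the bracket $[x, f_\alpha] \in \mathfrak{n}$ for each such generator $x$ of $\mathfrak{p}^-$ and each $\alpha \in \Phi^+$ with $\alpha_1 \in \alpha$. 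Throughout I will use the key standing hypothesis that every $\beta \in \Phi^+$ has $\alpha_1$-coefficient either $0$ or $1$.

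The first case, $x = h \in \mathfrak{h}$, is immediate from the defining relation $[h_\beta, f_\alpha] = -\langle \alpha, \beta^\vee\rangle f_\alpha$, which lies in $\mathfrak{n}$. For $x = f_\beta$ with $\beta \in \Phi^+$, we use the Chevalley relation $[e_{-\beta}, e_{-\alpha}] = C_{-\beta,-\alpha}\, e_{-\alpha-\beta}$, which is nonzero only if $\alpha + \beta \in \Phi^+$. If $\beta$ has $\alpha_1$-coefficient $1$, then $\alpha + \beta$ would have $\alpha_1$-coefficient $2$, contradicting the hypothesis, so the bracket is zero. If $\beta$ has $\alpha_1$-coefficient $0$, then $\alpha + \beta$ still has $\alpha_1$-coefficient $1$, so $f_{\alpha+\beta} \in \mathfrak{n}$ (when $\alpha + \beta$ is a root); in either case the bracket lies in $\mathfrak{n}$.

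Finally, consider $x = e_\gamma$ with $\gamma \in \Phi^+$ and $\alpha_1 \notin \gamma$. Then $\gamma \neq \alpha$ because $\alpha_1 \in \alpha$, so the Chevalley relations give $[e_\gamma, f_\alpha] = C_{\gamma,-\alpha}\, e_{\gamma-\alpha}$, nonzero only when $\gamma - \alpha \in \Phi$. The element $\gamma - \alpha$ has $\alpha_1$-coefficient $-1$, so it cannot be a positive root; if it is a root at all, it is negative of the form $-\delta$ with $\delta = \alpha - \gamma \in \Phi^+$ and $\alpha_1 \in \delta$, so that $e_{\gamma - \alpha} = f_\delta \in \mathfrak{n}$. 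Combining the three cases yields $[\mathfrak{p}^-, \mathfrak{n}] \subseteq \mathfrak{n}$, which is what was required. There is no real obstacle here beyond bookkeeping with the $\alpha_1$-coefficients; the fact that all positive roots have $\alpha_1$-coefficient in $\{0,1\}$ is doing all the work.
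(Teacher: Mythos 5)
Your proof is correct and follows essentially the same approach as the paper's: a direct case check over the generators of $\mathfrak{p}^-$ (Cartan, $f_\beta$ for $\beta\in\Phi^+$, and $e_\gamma$ for $\alpha_1\notin\gamma$) using the standing hypothesis that every positive root has $\alpha_1$-coefficient $0$ or $1$, just with the roles of the letters swapped and a small amount of extra explicit bookkeeping in the $f_\beta$ case.
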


\begin{proof}
Suppose $f_{\beta} \in \mathfrak{n}$, where $\beta \in \Phi^{+}$ with $\alpha_{1} \in \beta$. 
\begin{itemize}
\item If $h \in \mathfrak{h}$, $[h,f_{\beta}]$ is a scalar multiple of $f_{\beta}$. 

\item Suppose $\alpha \in \Phi^{+}$. If $\alpha + \beta \notin \Phi^{+}$, then $[f_{\alpha},f_{\beta}] = 0$. If $\alpha + \beta \in \Phi^{+}$, then $[f_{\alpha},f_{\beta}]$ is a scalar multiple of $f_{\alpha + \beta}$, which lies in $\mathfrak{n}$ since $\alpha_{1} \in \alpha + \beta$.

\item Suppose $\alpha \in \Phi^{+}$ such that $e_{\alpha} \in \mathfrak{p}^{-}$, that is, $\alpha_{1} \notin \alpha$. Then $[e_{\alpha},f_{\beta}] = 0$ if $\beta - \alpha \notin \Phi$. If $\beta - \alpha \in \Phi$, then $\beta - \alpha$ must have positive $\alpha_{1}$-coefficient, so $f_{\beta - \alpha} \in \mathfrak{n}$ and $[e_{\alpha},f_{\beta}]$ is a scalar multiple of $f_{\beta - \alpha}$.
\end{itemize}
\end{proof}

\noindent This in particular means $\mathfrak{n}$ can be considered as a $\mathfrak{p}^{-}$-representation, using the adjoint action of $\mathfrak{p}^{-}$.

\begin{mylem}
\label{nirreduciblelem}
$\mathbb{Q}_{p}\otimes_{\mathbb{Z}_{p}}\mathfrak{n}$ is an irreducible $\mathbb{Q}_{p} \otimes_{\mathbb{Z}_{p}} \mathfrak{p}^{-}$-representation (with the adjoint action).
\end{mylem}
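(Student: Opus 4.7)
The plan is to reduce the statement to irreducibility of $\mathfrak{n}_F$ as a module over the Levi subalgebra $\mathfrak{l}_F$ of $\mathfrak{p}^-_F$, and then to invoke a standard chain lemma for root systems. Since $\Of$ is finite free over $\mathbb{Z}_p$ with $\mathbb{Q}_p \otimes_{\mathbb{Z}_p} \Of = F$, we may identify $\mathbb{Q}_p \otimes_{\mathbb{Z}_p} \mathfrak{n} = \mathfrak{n}_F$ and $\mathbb{Q}_p \otimes_{\mathbb{Z}_p} \mathfrak{p}^- = \mathfrak{p}^-_F$. The Levi $\mathfrak{l}_F$, spanned over $F$ by $\mathfrak{h}_F$ together with the $e_\beta, f_\beta$ for $\beta \in \Phi^+$ with $\alpha_1 \notin \beta$, is contained in $\mathfrak{p}^-_F$, so it suffices to prove $\mathfrak{n}_F$ is irreducible as an $\mathfrak{l}_F$-module.

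Write $\Phi^+_1 := \{\alpha \in \Phi^+ : \alpha_1 \in \alpha\}$, so $\mathfrak{n}_F = \bigoplus_{\alpha \in \Phi^+_1} F f_\alpha$. Any nonzero $\mathfrak{l}_F$-submodule $V$ is stable under $\mathfrak{h}_F$, which acts diagonally with pairwise distinct weights $-\alpha$ ($\alpha \in \Phi^+_1$), so $V = \bigoplus_{\alpha \in S} F f_\alpha$ for some nonempty $S \subseteq \Phi^+_1$; the remaining task is to show $S = \Phi^+_1$.

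The key tool is a standard chain lemma for root systems: if $\mu, \nu \in \Phi^+$ and $\nu - \mu$ is a nonzero non-negative integer combination of simple roots, then the simple roots appearing in $\nu - \mu$ admit an ordering $\beta_{j_1}, \ldots, \beta_{j_k}$ (with multiplicity) such that every partial sum $\mu + \beta_{j_1} + \cdots + \beta_{j_t}$ lies in $\Phi^+$. Pick any $\alpha \in S$; if $\alpha \neq \alpha_1$, applying the chain lemma to $\alpha_1 < \alpha$ produces $\beta_{j_1}, \ldots, \beta_{j_k} \in \Delta \setminus \{\alpha_1\}$ for which each partial difference $\alpha - \beta_{j_1} - \cdots - \beta_{j_t}$ lies in $\Phi^+_1$ (the $\alpha_1$-coefficient is preserved throughout). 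By Definition \ref{chevalley}, the bracket $[e_{\beta_{j_t}}, f_\delta]$ is a nonzero scalar multiple of $f_{\delta - \beta_{j_t}}$ whenever $\delta, \delta - \beta_{j_t} \in \Phi^+$, so iterated application of $e_{\beta_{j_t}} \in \mathfrak{l}_F$ reaches $f_{\alpha_1}$, giving $\alpha_1 \in S$. For the other direction, given any $\gamma \in \Phi^+_1$, the chain lemma applied to $\alpha_1 < \gamma$ combined with iterated application of $f_{\beta_{j_t}} \in \mathfrak{l}_F$ starting from $f_{\alpha_1}$ yields $\gamma \in S$. Hence $S = \Phi^+_1$ and $V = \mathfrak{n}_F$.

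The only delicate point is ensuring the Chevalley brackets are nonzero scalar multiples of the expected root vectors at every stage, which is immediate from Definition \ref{chevalley} once the chain lemma has supplied valid roots at each intermediate step; I therefore do not anticipate any substantial obstacle.
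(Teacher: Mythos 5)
Your proof is correct and takes the same core approach as the paper: decompose an invariant subspace by $\mathfrak{h}$-weights to pin down individual root vectors $f_\beta$, and then propagate through $\Phi^+_1 = \{\alpha \in \Phi^+ : \alpha_1 \in \alpha\}$ by applying Chevalley generators. The difference is in how the propagation is organised. The paper uses Humphreys' Lemma A directly (every non-simple $\alpha \in \Phi^+$ admits some $\beta \in \Delta$ with $\alpha - \beta \in \Phi^+$) and handles the awkward case $\beta = \alpha_1$ — where $f_{\alpha - \alpha_1}$ would leave $\mathfrak{n}$ — by instead applying $e_{\alpha - \alpha_1} \in \mathfrak{p}^-$, a non-simple root vector, to jump from $f_\alpha$ straight to $f_{\alpha_1}$. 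You avoid that case split by insisting on a chain of positive roots from $\alpha_1$ to $\alpha$ whose increments are simple roots $\neq \alpha_1$, so that every intermediate step stays in $\Phi^+_1$ and only simple root vectors $e_\beta, f_\beta$ are ever used. That is a clean variant, but the "chain lemma" you invoke is not quite Humphreys' Lemma A: the latter supplies a chain from \emph{some} simple root up to $\alpha$, whereas you need to prescribe the bottom of the chain to be $\alpha_1$. The prescribed-endpoint version is true, and in the simply-laced case it is quick: for $\alpha \in \Phi^+_1$ with $\alpha \neq \alpha_1$ one has $(\alpha, \alpha - \alpha_1) = (\alpha,\alpha) - (\alpha,\alpha_1) > 0$ (equal lengths, distinct roots), so some $\beta \in \Delta \setminus \{\alpha_1\}$ in the support of $\alpha - \alpha_1$ satisfies $(\alpha,\beta) > 0$, whence $\alpha - \beta \in \Phi^+_1$; iterating descends to $\alpha_1$. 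You should either include this short argument or give a precise reference, since the statement as you give it is not one of the textbook lemmas about roots.

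A second point, which both your write-up and the paper's glide past: the lemma asserts irreducibility of $\mathbb{Q}_p \otimes_{\mathbb{Z}_p} \mathfrak{n}$ as a representation over $\mathbb{Q}_p$, so a priori one must rule out $\mathbb{Q}_p$-subspaces that are not $F$-subspaces. Your decomposition $V = \bigoplus_{\alpha \in S} F f_\alpha$ already assumes $V$ is $F$-linear. To close this, observe that $\mathfrak{h}_{\mathbb{Q}_p} = \mathfrak{h}_F$ contains $c\,h_\beta$ for every $c \in F$ and $\beta \in \Delta$, and for a fixed $\alpha \in \Phi^+_1$ one can choose $\beta$ with $\langle \alpha, \beta^\vee \rangle \neq 0$; the adjoint action of $c\,h_\beta$ then scales the $\alpha$-weight component by $-c\langle \alpha, \beta^\vee\rangle$, so the set $\{a \in F : a f_\alpha \in V\}$ is closed under multiplication by $F$ and hence is $0$ or $F$. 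With that sentence added, and the chain lemma justified, your argument is complete.
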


\begin{proof}
Denote $\mathfrak{n}_{\mathbb{Q}_{p}} = \mathbb{Q}_{p}\otimes_{\mathbb{Z}_{p}}\mathfrak{n}$ and $\mathfrak{p}_{\mathbb{Q}_{p}}^{-} = \mathbb{Q}_{p} \otimes_{\mathbb{Z}_{p}} \mathfrak{p}^{-}$. Suppose $\mathfrak{n}' \subseteq \mathfrak{n}_{\mathbb{Q}_{p}}$ is an ideal in $\mathfrak{p}_{\mathbb{Q}_{p}}^{-}$. First, we show that $\mathfrak{n}'$ is spanned by elements of the form $f_{\beta}$. 

\noindent \emph{Claim: } if $x=\overset{r}{\underset{i=1}{\sum}} a_{i} f_{\beta_{i}} \in \mathfrak{n}'$ with $a_{i} \neq 0$ and the $\beta_{i}$ distinct, then all $f_{\beta_{i}} \in \mathfrak{n}'$.

\noindent Proof of claim: induction on $r$. The case $r=1$ is trivial. If $r > 1$, then choose $h \in \mathfrak{h}_{\mathbb{Q}_{p}}$ such that $\beta_{1}(h) \neq \beta_{2}(h)$. Then $(h - \beta_{1}(h))x \in \mathfrak{n}'$ has the form $\overset{r}{\underset{i=2}{\sum}} b_{i} f_{\beta_{i}}$, with $b_{2} \neq 0$. By induction, we have $f_{\beta_{2}} \in \mathfrak{n}'$. Then $\underset{i \neq 2}{\sum} a_{i} f_{\beta_{i}} \in \mathfrak{n}'$, so by induction all $f_{\beta_{i}} \in \mathfrak{n}'$.

\bigskip

\noindent \emph{Claim: } for $f_{\alpha} \in \mathfrak{n}$,  $f_{\alpha} \in \mathfrak{n}'$ if and only if $f_{\alpha_{1}} \in \mathfrak{n}'$.

\noindent Proof of claim: induction on the length of $\alpha$ (that is, the sum of the coefficients of the simple roots in $\alpha$). Trivial for length 1, so assume $\alpha$ has length greater than 1. Then there exists $\beta \in \Delta$ such that $\alpha - \beta \in \Phi^{+}$, by \cite[\S 10.2 Lemma A]{otherHumphreys}.

If $\beta = \alpha_{1}$, then $e_{\alpha - \alpha_{1}} \in \mathfrak{p}^{-}$ (since $\alpha$ has $\alpha_{1}$-coefficient 1, and so $\alpha_{1} \notin \alpha - \alpha_{1}$). Then $[e_{\alpha - \alpha_{1}}, f_{\alpha}]$ is a non-zero integer multiple of $f_{\alpha_{1}}$, and $[f_{\alpha - \alpha_{1}},f_{\alpha_{1}}]$ is a non-zero integer multiple of $f_{\alpha}$. The claim follows for $\alpha$.

If instead $\beta \neq \alpha_{1}$, then $e_{\beta} \in \mathfrak{p}^{-}$. Then $[e_{\beta},f_{\alpha}]$ is a non-zero integer multiple of $f_{\alpha - \beta}$, and $[f_{\beta},f_{\alpha - \beta}]$ is a non-zero integer multiple of $f_{\alpha}$. So $f_{\alpha} \in \mathfrak{n}'$ if and only if $f_{\alpha - \beta} \in \mathfrak{n}'$, and the claim follows by applying the induction hypothesis to $f_{\alpha - \beta}$.

\bigskip

\noindent We now know that $\mathfrak{n}'$ is spanned by elements $f_{\alpha}$, and that if any $f_{\alpha} \in \mathfrak{n}'$ then $\mathfrak{n}' = \mathfrak{n}_{\mathbb{Q}_{p}}$. The lemma follows.

\end{proof}

Fix a weight $\lambda \in \mathfrak{h}_{K}^{*}$ such that $\lambda(p^{n} \mathfrak{h}_{R}) \subseteq R$, with $\lambda(h_{\alpha_{1}}) \notin \mathbb{N}_{0}$ and $\lambda(h_{\alpha}) = 0 $ for $\alpha \in \Delta \setminus \alpha_{1}$.

\noindent Recall that condition (2) on $\lambda$ is the statement that the multiplication map $KN \otimes_{K} L(\lambda) \rightarrow \widehat{L(\lambda)}$ is injective. We are now interested in studying the $KN$-submodule $KN L(\lambda)$ of $\widehat{L(\lambda)}$, which is the image of the above multiplication map.

\begin{mylem}
\label{abeliansubmodlem}
\begin{itemize}
\item $KN L(\lambda)$ is a $KP^{-}$-submodule of $\widehat{L(\lambda)}$,
\item $\underset{v \text{ weight } < \lambda}{\sum} KNv$ is a $KP^{-}$-submodule of $\widehat{L(\lambda)}$ (here the sum runs over all weight vectors $v \in L(\lambda)$ with weight not equal to $\lambda$).

\end{itemize}
\end{mylem}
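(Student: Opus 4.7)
The plan is to reduce both statements to showing that explicit cyclic submodules of $\widehat{L(\lambda)}$ are $KP^{-}$-stable, then to apply the Levi decomposition of $\mathfrak{p}^{-}$ established in the preceding lemma. First I would use the hypothesis $\lambda(h_{\alpha}) = 0$ for $\alpha \in \Delta \setminus \alpha_{1}$: this forces $f_{\alpha} v_{\lambda} = 0$ for every $\alpha \in \Phi^{+}_{\Delta \setminus \alpha_{1}}$, so $v_{\lambda}$ generates a trivial representation of the semisimple part of the Levi $\mathfrak{l}$, and PBW gives $L(\lambda) = \un v_{\lambda}$. Using the inclusion $\un \subseteq KN$ inside $\Ug$ (coming from $\mathfrak{n}_{K} \subseteq KN$), this yields $KN \cdot L(\lambda) = KN v_{\lambda}$. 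A similar analysis gives $L(\lambda)_{<\lambda} = \un_{+} v_{\lambda}$, where $\un_{+}$ is the augmentation ideal of $\un$; since $N$ is abelian pro-$p$, the Iwasawa algebra $KN$ is a formal power series ring over $K$ with augmentation ideal $I_{KN}$ generated by $\mathfrak{n}_{K}$, and this identifies $\underset{v \text{ weight } < \lambda}{\sum} KNv$ with $I_{KN} \cdot v_{\lambda}$.

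Next I would verify two structural facts and combine them. (a) The vector $v_{\lambda}$ spans a one-dimensional $KL_{0}$-representation, where $L_{0}$ denotes the $F$-uniform Levi subgroup of $P^{-}$ with Lie algebra $\mathfrak{l}$: the topological generators $\exp(p^{n+1}x)$ for $x \in \mathfrak{l}_{R}$ all act on $v_{\lambda}$ by scalars, since $e_{\alpha}$ and $f_{\alpha}$ annihilate $v_{\lambda}$ for $\alpha \in \Phi^{+}_{\Delta \setminus \alpha_{1}}$, and the series $\exp(p^{n+1}h) v_{\lambda}$ converges in $K$ thanks to $\lambda(p^{n}\mathfrak{h}_{R}) \subseteq R$. (b) Because $\mathfrak{n}$ is an ideal of $\mathfrak{p}^{-}$, $N$ is normal in $P^{-}$, giving the semidirect decomposition $P^{-} = L_{0} \ltimes N$ and hence $KP^{-} = KN \cdot KL_{0}$ inside $\Ug$, together with $KL_{0} \cdot KN \subseteq KN \cdot KL_{0}$ (conjugation by $L_{0}$ preserves $KN$) and the analogous containment for $I_{KN}$ (since the augmentation $KN \to K$ sends every element of $N$ to $1$ and is therefore $L_{0}$-conjugation invariant). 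Given $y \in KP^{-}$ written as $\sum_{i} z_{i} y_{i}$ with $z_{i} \in KN$, $y_{i} \in KL_{0}$, and given $x \in KN$ (resp.\ $a \in I_{KN}$), the identities $y_{i} x = x'_{i} y_{i}$ with $x'_{i} \in KN$ (resp.\ $y_{i} a = a'_{i} y_{i}$ with $a'_{i} \in I_{KN}$) together with $y_{i} v_{\lambda} \in K v_{\lambda}$ from (a) yield $y(x v_{\lambda}) \in KN v_{\lambda}$ and $y(a v_{\lambda}) \in I_{KN} v_{\lambda}$, proving both claims.

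The principal difficulty is technical rather than conceptual: the Iwasawa-algebra identities $KP^{-} = KN \cdot KL_{0}$ and $KL_{0} \cdot KN \subseteq KN \cdot KL_{0}$ inside $\Ug$, together with the $L_{0}$-conjugation-invariance of the augmentation ideal $I_{KN}$, all descend from the group-level semidirect structure $P^{-} = L_{0} \ltimes N$ and the corresponding group-level invariance of the augmentation, but require careful handling in the completed algebra.
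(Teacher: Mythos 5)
Your proof contains a genuine gap at the very first reduction: you assert $\mathfrak{n}_{K}\subseteq KN$ (and hence $\un \subseteq KN$) inside $\Ug$, and from this conclude $KN\cdot L(\lambda)=KN v_{\lambda}$. This inclusion is false. The Iwasawa algebra $KN$ sits inside $\Un$ as the power series $\sum_{t}c_{t}(\exp(p^{n+1}x_{1})-1)^{t_{1}}\cdots(\exp(p^{n+1}x_{r})-1)^{t_{r}}$ with \emph{bounded} coefficients $c_{t}$. But if $b=\exp(p^{n+1}x)-1$ then $p^{n}x=\frac{1}{p}\log(1+b)=\sum_{k\geq 1}\frac{(-1)^{k+1}}{pk}b^{k}$, and the coefficients $\frac{1}{pk}$ are unbounded (for $k=p^{m}$, $v_{p}(1/pk)=-1-m\to-\infty$), so $p^{n}x\notin KN$. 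Already in rank one this shows $f_{\alpha_{1}}v_{\lambda}\in L(\lambda)\subseteq KN\,L(\lambda)$ but $f_{\alpha_{1}}v_{\lambda}\notin KN v_{\lambda}$: in $K[[b]]v_{\lambda}$ every element has a bounded $b$-expansion, whereas $f_{\alpha_{1}}v_{\lambda}$ does not. So $KN\,L(\lambda)$ is strictly larger than $KN v_{\lambda}$, your identification $\sum_{\text{wt}<\lambda}KN v=I_{KN}v_{\lambda}$ fails for the same reason, and the entire reduction to cyclic modules collapses.

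The paper circumvents this by never reducing to a cyclic module: it takes $v\in L(\lambda)$ arbitrary, uses that $L(\lambda)$ is a locally finite $\ul$-module (so $KLv$ is a finite-dimensional subspace of $L(\lambda)$, spanned by some $v_{1},\dots,v_{r}\in L(\lambda)$), and uses the decomposition of elements of $KP^{-}$ as convergent sums $\sum x_{i}y_{i}$ with $x_{i}\in KN$, $y_{i}\in KL$ to get $KP^{-}v\subseteq\sum KN v_{i}\subseteq KN\,L(\lambda)$. For the second part it observes that the scalar hypothesis ($\lambda(h_{\alpha})=0$ for $\alpha\neq\alpha_{1}$) forces every non-highest weight of $L(\lambda)$ to have strictly positive $\alpha_{1}$-coefficient, so $KLv$ stays inside $\sum_{\text{wt}<\lambda}KN v$. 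Your structural facts (a) and (b) about the semidirect decomposition $P^{-}=L_{0}\ltimes N$ and the $L_{0}$-invariance of $I_{KN}$ are correct in spirit, but they only help once you are dealing with a genuinely cyclic module, which $KN\,L(\lambda)$ is not.
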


\begin{proof}
We decompose $\mathfrak{p}^{-} = \mathfrak{n} \oplus \mathfrak{l}$, where $\mathfrak{l}$ is the Levi subalgebra $\mathfrak{l} = \mathfrak{h} \oplus \Of \{e_{\alpha},f_{\alpha} \mid \alpha_{1} \notin \alpha \}$.

Now, $L(\lambda)$ is a quotient of $M_{\Delta \setminus \alpha_{1}}(\lambda)$ which is a locally finite $\up$-module by \cite[Proposition 2.3.1]{me}. Therefore $L(\lambda)$ is a locally finite $\up$-module, and hence a locally finite $\ul$-module since $\mathfrak{l} \subseteq \mathfrak{p}$. By \cite[Proposition 2.2.18]{me}, this means $L(\lambda)$  is a $\Ul$-module and hence a $KL$-module.

Let $v \in L(\lambda)$. By the local finiteness of $L(\lambda)$, say $KLv \subseteq L(\lambda)$ is spanned by $v_{1},\dots,v_{r} \in L(\lambda)$. Now using \cite[Corollary 2.5.5]{me}, we can see that any element of $KP^{-}$ is a convergent sum of elements of the form $xy$ with $x \in KN, y \in KL$. So $KP^{-}v = \sum KN v_{i} \subseteq KN L(\lambda)$. 

Thus if $x \in KP^{-}, y \in KN$, then $xyv \in KN L(\lambda)$. So $KN L(\lambda)$ is indeed a $KP^{-}$-module.

Now suppose $v \in L(\lambda)$ is a weight vector, with weight not equal to $\lambda$. Since $\lambda(h_{\alpha}) = 0$ for all $\alpha \in \Delta \setminus \alpha_{1}$, $M_{\Delta_{\lambda}}(\lambda)$ is a scalar generalised Verma module and so is generated over $\un$ by the highest-weight vector $w_{\lambda}$. Thus $L(\lambda)$ is generated over $\un$ by $v_{\lambda}$.

This means the weight of $v$ must be of the form $\lambda - \underset{\alpha \in \Delta}{\sum} c_{\alpha} \alpha$ with $c_{\alpha_{1}} \neq 0$. Then since every weight vector in $\mathfrak{l}$ has zero $\alpha_{1}$-coefficient in its weight, we can see that any element of $KL v$ cannot have a non-zero $\lambda$-weight coefficient. So again say $KLv \subseteq L(\lambda)$ has $K$-basis $v_{1},\dots,v_{r}$, where the $v_{i}$ all have $\lambda$-weight component $0$.

We can then see that $KP^{-}v = \sum KN v_{i} \subseteq \underset{v \text{ weight } < \lambda}{\sum} KNv$, and thus $\underset{v \text{ weight } < \lambda}{\sum} KNv$ is a $KP^{-}$-module as required.
\end{proof}

\noindent In \cite[\S 5.1]{metaplectic}, a filtration on Iwasawa algebras is used as follows. Choose a $\mathbb{Q}_{p}$-basis $x_{1},\dots,x_{r}$ for $\mathfrak{g}$. Then by \cite[Corollary 2.5.5]{me}, we can describe $KG$ as $\{ \underset{t \in \mathbb{N}_{0}^{r}}{\sum} c_{t} (\exp(p^{n+1}x_{1})-1)^{t_{1}} \dots ( \exp(p^{n+1}x_{r}) - 1)^{t_{r}} \mid c_{t} \in K \text{ bounded} \} \subseteq \Ug$.

Now write $v_{p}$ for the valuation of $K$ given by extending the $p$-adic valuation on $\mathbb{Q}_{p}$. We can then define a filtration of $KG$ using:

\begin{equation*}
w_{p}(\underset{t \in \mathbb{N}_{0}^{r}}{\sum} c_{t} (\exp(p^{n+1}x_{1})-1)^{t_{1}} \dots ( \exp(p^{n+1}x_{r}) - 1)^{t_{r}}) = \underset{t}{\inf} v_{p} (c_{t}).
\end{equation*}

\noindent This filtration takes values in $\frac{1}{e} \mathbb{Z}$, where $e$ is the ramification index of $K / \mathbb{Q}_{p}$. The filtration $w_{p}$ is then equivalent to the $\mathbb{Z}$-filtration $e \cdot w_{p}$.

We want to check compatibility between this filtration, and the one given by the restriction of our filtration $\Gamma$ from $\Ug$ to $KG$. To this end, let $\Gamma'_{i}KG = \{ x \in KG \mid e \cdot w_{p}(x) \geq i\}$ for each $i \in \mathbb{Z}$.

\begin{mylem}
For each $i \in \mathbb{Z}$, $\Gamma'_{i}KG \subseteq \Gamma_{i} KG = KG \cap \Gamma_{i}\Ug$.
\end{mylem}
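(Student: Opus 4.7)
The plan is to unpack the definitions and rewrite an element of $\Gamma'_i KG$ manifestly as an element of $\pi^i \widehat{U(\mathfrak{g})_{n,R}}$. By definition, $x \in \Gamma'_i KG$ means $x$ has a bounded expansion
\[
x = \sum_{t \in \mathbb{N}_0^r} c_t \, y^t, \qquad y_j := \exp(p^{n+1}x_j) - 1,
\]
with $v_p(c_t) \geq i/e$ for all $t$. Using $v_\pi = e \cdot v_p$, this is precisely the condition $c_t \in \pi^i R$ for every $t$. So I can factor out $\pi^i$ and write $x = \pi^i \sum_t d_t y^t$ with $d_t := c_t/\pi^i \in R$ bounded, and the whole problem reduces to checking that $\sum_t d_t y^t$ converges in the complete ring $\widehat{U(\mathfrak{g})_{n,R}}$.

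The key preparatory step, and essentially the only substantive calculation, is to show that each $y_j$ lies in $p \widehat{U(\mathfrak{g})_{n,R}}$. Expanding the exponential,
\[
y_j = \sum_{k \geq 1} \frac{(p^{n+1}x_j)^k}{k!} = \sum_{k \geq 1} \frac{p^k}{k!} (p^n x_j)^k,
\]
I note that $(p^n x_j)^k \in U(p^n \mathfrak{g}_R) = \Gamma_0 \ug$ because $x_j \in \mathfrak{g}$, while the standard Legendre-formula estimate $v_p(k!) = (k - s_p(k))/(p-1)$ gives $v_p(p^k/k!) = k - v_p(k!) \geq 1$ for all $k \geq 1$ and tending to infinity. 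Hence the series converges $\pi$-adically in $p \widehat{U(\mathfrak{g})_{n,R}}$, and $y_j$ lies there.

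Taking products now yields $y^t \in p^{|t|} \widehat{U(\mathfrak{g})_{n,R}} = \pi^{e|t|} \widehat{U(\mathfrak{g})_{n,R}}$ (since $p \in \pi^e R^{\times}$), so $y^t \to 0$ in the $\pi$-adic topology as $|t| \to \infty$. Combined with the uniform boundedness of the $d_t \in R$, this makes $\sum_t d_t y^t$ Cauchy in the complete ring $\widehat{U(\mathfrak{g})_{n,R}}$, and therefore convergent there. Consequently $x = \pi^i \sum_t d_t y^t \in \pi^i \widehat{U(\mathfrak{g})_{n,R}} = \Gamma_i \Ug$, and combined with $x \in KG$ this gives $x \in KG \cap \Gamma_i \Ug = \Gamma_i KG$.

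The only point requiring care is bookkeeping between the two normalisations ($w_p$ takes values in $\tfrac{1}{e}\mathbb{Z}$ while $v_\pi$ is integer-valued, and $p$ differs from $\pi^e$ by a unit), and confirming that the expansion used to read off $w_p(x)$ is the same one in which the coefficients $c_t$ live; neither of these is a real obstacle.
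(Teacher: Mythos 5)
Your proof is correct and follows essentially the same approach as the paper: expand each $\exp(p^{n+1}x_j)-1$ as a series, estimate $v_p(p^k/k!)$ via Legendre's formula, and combine with $c_t \in \pi^i R$. Your version is slightly more explicit in that you show each $y_j$ lands in $p\,\widehat{U(\mathfrak{g})_{n,R}}$ and spell out the resulting convergence of $\sum_t d_t y^t$, whereas the paper only records $y_j \in \widehat{U(\mathfrak{g})_{n,R}}$ and treats convergence implicitly; this is a minor refinement, not a different route.
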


\begin{proof}

Let $x \in \mathfrak{g}$. Then consider $\exp(p^{n+1}x) - 1 = \underset{j\geq 1}{\sum} \frac{p^{j}}{j!} (p^{n}x)^{j}$.

We can calculate that for $j \geq 0$, $v_{p}\left(\frac{p^{j}}{j!} \right) = j - v_{p}(j!) = j - \underset{k \geq 1}{\sum} \lfloor j / p^{k} \rfloor \geq j - \underset{k \geq 1}{\sum}  j / p^{k} = j(1 - \frac{1}{p-1}) $. So $\frac{p^{j}}{j!} \in R$ for each $j$, and $\frac{p^{j}}{j!} \rightarrow 0$ as $j \rightarrow \infty$. This implies that $\exp(p^{n+1}x) - 1 \in \widehat{U(\mathfrak{g})_{n,R}} = \Gamma_{0} \Ug$.

Now suppose $y = \underset{t \in \mathbb{N}_{0}^{r}}{\sum} c_{t} (\exp(p^{n+1}x_{1})-1)^{t_{1}} \dots ( \exp(p^{n+1}x_{r}) - 1)^{t_{r}} \in \Gamma'_{i}KG$. So $e v_{p}(c_{t}) \geq i$ for each $t$. 

We therefore have $c_{t} \in \pi^{i}R$ for each $t$. We also know $(\exp(p^{n+1}x_{1})-1)^{t_{1}} \dots ( \exp(p^{n+1}x_{r}) - 1)^{t_{r}} \in \Gamma_{0}\Ug$ for each $t$. It now follows that $y \in \Gamma_{i} \Ug$.

\end{proof}

\noindent This lemma implies that if $M$ is a $KG$-module with a $\Gamma$-filtration $\Gamma_{i}M$, we can also consider $\Gamma_{i}M$ as a $\Gamma'$-filtration since $\Gamma'_{i}KG \cdot \Gamma_{j}M \subseteq \Gamma_{i}KG \cdot \Gamma_{j}M \subseteq \Gamma_{i+j}M$ for each $i,j$. Since $\Gamma'$ is equivalent to the filtration $w_{p}$, it follows that any $KG$-module with a $\Gamma$-filtration can also be considered as a filtered module with respect to $w_{p}$.

\begin{proof}[Proof of Proposition \ref{abeliancase}]
Define $V = KN L(\lambda) / \underset{v \text{ weight } < \lambda}{\sum} KNv$. Then by Lemma \ref{abeliansubmodlem}, $V$ is a $KP^{-}$-module.

We can consider $KP^{-}$ as a filtered $K$-algebra by the restriction of $\Gamma$. Since $L(\lambda) \in \mathcal{O}_{n}(\mathfrak{g})$, we can choose a filtration $\Gamma$ on $L(\lambda)$, and extend this to a filtration on $\widehat{L(\lambda)}$. This then induces a filtration on $V$, say $F_{i}V$.

Now define $\End^{d}_{K}V = \{ \phi \in \End_{K}V \mid \phi(F_{i}V) \subseteq F_{i+d}V \text{ for all } i \}$, and define $T = \underset{i \in \mathbb{Z}}{\bigcup} \End^{d}_{K}V \subseteq \End_{K}V$. Then $T$ is a filtered $K$-algebra with filtration given by the $\End^{d}_{K}V$.

Then define $\Psi : KP^{-} \rightarrow T$ to be the map given by the action of $KP^{-}$ on $V$, which is a filtered $K$-algebra homomorphism with respect to the filtrations $\Gamma$ on $KP^{-}$ and $\End^{d}_{K}V$ on $T$. We then wish to show that $\Psi |_{KN}$ is injective, by using \cite[Proposition 5.4]{metaplectic}. We already know by \ref{nirreduciblelem} that $\mathfrak{n}_{\mathbb{Q}_{p}}$ is irreducible as a $\mathfrak{p}^{-}_{\mathbb{Q}_{p}}$-representation with the adjoint action. So now for this result to apply, we need to show that $\Psi(KN)$ is infinite-dimensional.

\bigskip

\noindent Define $\mathfrak{a} = \Of f_{\alpha_{1}}$, and $A \subseteq N$ to be the corresponding $F$-uniform group with $L_{A} = p^{n+1} \mathfrak{a}$. We will claim that $\Psi |_{KA}$ is injective, and hence $\Psi(KA) \subseteq \Psi(KN)$ is infinite-dimensional. 

The map $\widehat{U(\mathfrak{a})_{n,K}} \rightarrow \widehat{L(\lambda)}$ given by $a \mapsto a v_{\lambda}$ is injective. We can see this by using Lemma \ref{calculationlem} together with the fact that $\lambda(h_{\alpha_{1}}) \notin \mathbb{N}_{0}$ to see that all $f_{\alpha_{1}}^{i} v_{\lambda}$ are non-zero in $L(\lambda)$. Then they are linearly independent as their weights are different.

Now suppose $a \in KA$ such that $\Psi(a) = 0$. Then $a v_{\lambda} \in \underset{v \text{ wt} < \lambda}{\sum}KNv$. So say $a v_{\lambda} = \sum x_{i} y_{i} v_{\lambda}$, where $x_{i} \in KN$, the $y_{i} \in U(\mathfrak{n}^{-}_{K})$ are weight vectors with non-zero weight, and the sum is finite. 

Since $a v_{\lambda}$ has only weight components with weight in $\lambda - \mathbb{N}_{0}\alpha_{1}$, we can assume $y_{i} \in U(\mathfrak{a}_{K})$ and $x_{i} \in KA$. But by the proof of \cite[Theorem 5.3]{verma}, the multiplication map $KA \otimes_{K} U(\mathfrak{a}_{K}) \rightarrow \widehat{U(\mathfrak{a})_{n,K}}$ is injective, so $a = 0$.

This establishes that $\Psi |_{KA}$ is injective, so $\Psi(KN)$ is infinite-dimensional.

\bigskip

It now follows from \cite[Proposition 5.4]{metaplectic} that $\Psi |_{KN}$ is injective.

Suppose for contradiction that condition (2) does not hold for $\lambda$. By \cite[Proposition 4.2.13]{me}, we can find an expression $(x_{0} + \sum x_{i} y_{i})v_{\lambda} = 0$, where $x_{i} \in KN$, $x_{0} \neq 0$, the $y_{i} \in U(\mathfrak{n}_{K})$ are weight vectors with non-zero weight, and the sum is finite.

Now given any $y \in KN$, we have $(x_{0} + \sum x_{i} y_{i}) y v_{\lambda} = 0$ (using the fact that $\Un$ is commutative), so $x_{0} y v_{\lambda} = 0$ in $V$. Thus $x_{0} V = 0$. So $\Psi(x_{0}) = 0$. As $\Psi |_{KN}$ is injective, $x_{0} = 0$, a contradiction.

\end{proof}

\noindent We now slightly generalise Proposition \ref{abeliancase} by combining it with our results from \S \ref{transfunctorssection}. We continue to assume that $\alpha_{1} \in \Delta$ satisfies the condition that all $\beta \in \Phi^{+}$ have $\alpha_{1}$-component $0$ or $1$.

\begin{mycor}
\label{abelianintegral}
Suppose $\lambda$ is an integral weight such that $\lambda(h_{\alpha_{1}}) \in \mathbb{Z}_{< 0}$ and $\langle \lambda + \rho, \beta^{\lor} \rangle > 0$ for all $\beta \in \Phi^{+} \setminus \alpha_{1}$. Then $\lambda$ satisfies condition (2).
\end{mycor}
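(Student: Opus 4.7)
The plan is to combine Proposition \ref{abeliancase} with the translation functor result Proposition \ref{abeliantranslationfunctorprop}, which was set up precisely to handle this kind of reduction.

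First I would verify that the weight $-\omega_{1}$ satisfies the hypotheses of Proposition \ref{abeliancase}. By the definition of the fundamental weight, $-\omega_{1}(h_{\alpha_{1}}) = -1 \notin \mathbb{N}_{0}$ and $-\omega_{1}(h_{\alpha}) = 0$ for all $\alpha \in \Delta \setminus \alpha_{1}$; moreover $-\omega_{1}(p^{n}\mathfrak{h}_{R}) \subseteq R$ trivially since $-\omega_{1}$ is integral. The standing assumption on $\alpha_{1}$ (all $\beta \in \Phi^{+}$ have $\alpha_{1}$-coefficient $0$ or $1$) is also in force. So Proposition \ref{abeliancase} applies, and condition (2) holds for $-\omega_{1}$.

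Next I would check that $\lambda$ satisfies the hypothesis $\Phi_{\lambda}^{+} = \Phi^{+} \setminus \alpha_{1}$ needed to invoke Proposition \ref{abeliantranslationfunctorprop}. Since $\lambda$ is integral, $\langle \lambda + \rho, \beta^{\lor} \rangle \in \mathbb{Z}$ for every $\beta \in \Phi$. The corollary's assumption gives $\langle \lambda + \rho, \beta^{\lor} \rangle \in \mathbb{N}$ for all $\beta \in \Phi^{+} \setminus \alpha_{1}$, while $\langle \lambda + \rho, \alpha_{1}^{\lor} \rangle = \lambda(h_{\alpha_{1}}) + 1 \leq 0$ since $\lambda(h_{\alpha_{1}}) \in \mathbb{Z}_{<0}$. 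Therefore $\Phi_{\lambda}^{+} = \Phi^{+} \setminus \alpha_{1}$ as required, and also $\lambda(p^{n}\mathfrak{h}_{R}) \subseteq R$ holds automatically from integrality.

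The corollary now follows immediately from Proposition \ref{abeliantranslationfunctorprop}: condition (2) for $-\omega_{1}$ implies condition (2) for $\lambda$. There is no real obstacle here --- the substance of the argument is carried entirely by the two previous propositions, and this corollary is just the clean packaging of their combination into a statement applicable to any weight in the upper closure of the facet of $-\omega_{1}$.
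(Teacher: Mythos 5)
Your proof is correct and follows exactly the same two-step route as the paper: apply Proposition \ref{abeliancase} to $-\omega_{1}$ to get condition (2) there, then transport it to $\lambda$ via Proposition \ref{abeliantranslationfunctorprop}. The extra verifications you spell out (that $-\omega_{1}$ meets the hypotheses of Proposition \ref{abeliancase} and that $\Phi_{\lambda}^{+} = \Phi^{+} \setminus \alpha_{1}$) are accurate and simply make explicit what the paper leaves implicit.
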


\begin{proof}
By Proposition \ref{abeliancase}, the weight $- \omega_{1}$ satisfies condition (2) (with $\omega_{1}$ the fundamental weight corresponding to $\alpha_{1}$). Then by Proposition \ref{abeliantranslationfunctorprop}, condition (2) holds for $\lambda$.
\end{proof}

\begin{mycor}
\label{abeliannonintegral}
Suppose $\Phi$ is a simply laced root system, and $\lambda \in \mathfrak{h}_{K}^{*}$ with $\lambda(p^{n}\mathfrak{h}_{R}) \subseteq R$ such that $\lambda(h_{\alpha_{1}}) \notin \mathbb{Z}$, $\lambda(h_{\alpha}) \in \mathbb{N}_{0}$ for all $\alpha \in \Delta \setminus \alpha_{1}$. Then $\lambda$ satisfies condition (2).
\end{mycor}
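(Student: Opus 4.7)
The plan is to reduce to Proposition \ref{abeliancase} via the translation functor result Proposition \ref{translationfunctorsprop}. Define $\mu \in \mathfrak{h}_{K}^{*}$ by setting $\mu(h_{\alpha_{1}}) = \lambda(h_{\alpha_{1}})$ and $\mu(h_{\alpha}) = 0$ for $\alpha \in \Delta \setminus \alpha_{1}$. Then $\mu(p^{n}\mathfrak{h}_{R}) \subseteq R$, and $\mu(h_{\alpha_{1}}) \notin \mathbb{Z} \supseteq \mathbb{N}_{0}$, so Proposition \ref{abeliancase} applies directly to $\mu$ and yields condition (2) for $\mu$.

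Next, $\lambda - \mu$ is dominant integral, since $(\lambda - \mu)(h_{\alpha_{1}}) = 0$ and $(\lambda - \mu)(h_{\alpha}) = \lambda(h_{\alpha}) \in \mathbb{N}_{0}$ for $\alpha \in \Delta \setminus \alpha_{1}$. Hence, once I verify that $\lambda$ and $\mu$ lie in a common facet, Proposition \ref{translationfunctorsprop} will upgrade condition (2) for $\mu$ to condition (2) for $\lambda$ and complete the proof.

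The facet calculation is the only substantive step. Since $\Phi$ is simply laced, I may normalise $(\alpha,\alpha) = 2$ for every root, so that $\alpha^{\lor} = \alpha$ and $\langle \gamma, \beta^{\lor} \rangle = (\gamma, \beta)$ is $\mathbb{Z}$-linear in $\beta$. Writing $\beta = \sum_{\alpha \in \Delta} c_{\alpha} \alpha \in \Phi^{+}$, the standing hypothesis on $\alpha_{1}$ forces $c_{\alpha_{1}} \in \{0,1\}$. In the case $c_{\alpha_{1}} = 0$, both $\langle \lambda + \rho, \beta^{\lor} \rangle$ and $\langle \mu + \rho, \beta^{\lor} \rangle$ expand into non-negative integer combinations of the pairings against the remaining simple roots (each of which is a positive integer for $\lambda$, and equal to $1$ for $\mu$), so both pairings lie in $\mathbb{N}$. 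In the case $c_{\alpha_{1}} = 1$, both pairings differ from an integer only by the common non-integer $\lambda(h_{\alpha_{1}}) = \mu(h_{\alpha_{1}})$, so neither is an integer. Thus $\Phi_{\lambda}^{+} = \Phi_{\mu}^{+} = \{ \beta \in \Phi^{+} : \alpha_{1} \notin \beta \}$ and $\Phi_{\lambda}^{0} = \Phi_{\mu}^{0} = \Phi_{\lambda}^{-} = \Phi_{\mu}^{-} = \emptyset$, placing $\lambda$ and $\mu$ in the same facet.

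No step is really an obstacle given the prior infrastructure; the one place the hypothesis genuinely enters is the simply-laced assumption, which is precisely what makes $\langle \cdot, \beta^{\lor} \rangle$ additive in $\beta$ and ensures that non-integrality at $\alpha_{1}$ propagates identically for $\lambda$ and $\mu$.
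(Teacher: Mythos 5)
Your proof is correct and is essentially the paper's own argument: you define the same auxiliary weight $\mu$, verify that $\lambda - \mu$ is dominant integral, use the simply-laced hypothesis to show $\lambda$ and $\mu$ share a facet ($\Phi_{\lambda}^{+} = \Phi_{\mu}^{+} = \Phi_{\Delta\setminus\alpha_{1}}^{+}$, $\Phi^{0} = \Phi^{-} = \emptyset$), and then chain Proposition \ref{abeliancase} with Proposition \ref{translationfunctorsprop}.
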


\begin{proof}
Suppose $\beta \in \Phi^{+}$, and say $\beta = \underset{\alpha \in \Delta}{\sum} c_{\alpha} \alpha$. By assumption, $c_{\alpha_{1}} \in \{ 0, 1 \}$.

By the fact $\Phi$ is simply laced, $\langle \lambda + \rho, \beta^{\lor} \rangle = \sum c_{\alpha} \langle \lambda + \rho, \alpha^{\lor} \rangle$. We know $\langle \lambda + \rho, \alpha^{\lor} \rangle \in \mathbb{Z}$ for all $\alpha \in \Delta \setminus \alpha_{1}$, and $\langle \lambda + \rho, \alpha_{1}^{\lor} \rangle \notin \mathbb{Z}$. So $\langle \lambda + \rho, \beta^{\lor} \rangle \notin \mathbb{Z}$ if $c_{\alpha} =  1$, and $\langle \lambda + \rho, \beta^{\lor} \rangle \in \mathbb{N}$ if $c_{\alpha} = 0$.

So we can describe the facet of $\lambda$ as $\Phi_{\lambda}^{+} = \Phi_{\Delta \setminus \alpha_{1}}^{+}, \Phi_{\lambda}^{0} = \Phi_{\lambda}^{-} = \emptyset$.

Now define $\mu \in \mathfrak{h}_{K}^{*}$ by $\mu(h_{\alpha_{1}}) = \lambda(h_{\alpha_{1}})$ and $\mu(h_{\alpha}) = 0$ for $\alpha \in \Delta \setminus \alpha_{1}$. Then $\mu$ is in the same facet as $\lambda$, and $\lambda - \mu$ is a dominant integral weight.

By Proposition \ref{abeliancase}, condition (2) holds for $\mu$. Then by Proposition \ref{translationfunctorsprop}, condition (2) holds for $\lambda$.
\end{proof}

\subsection{The case of type $A_{3}$}

Although our main concern in this paper is the case of type D, we will need to look at the case of type $A_{3}$ for the purposes of our proof by induction. For this section, assume $\Phi$ has type $A_{m}$ for some $m$. We use a labelling of the set $\Delta$ of simple roots as follows.

\begin{dynkinDiagram}[
edge length=1cm,
labels={1,2,m-1,m},
label macro/.code={\alpha_{\drlap{#1}}}
]A{}
\end{dynkinDiagram}

\noindent We can then describe the set of positive roots as follows: $\Phi^{+} = \{ \alpha_{i} + \dots + \alpha_{j} \mid 1 \leq i \leq j \leq m \}$. In particular, every $\alpha_{i}$ satisfies the condition that for all $\alpha \in \Phi^{+}$, $\alpha$ has $\alpha_{i}$-coefficient $0$ or $1$.

We note that \cite[Theorem 2]{me} proves the faithfulness of all infinite-dimensional $\widehat{L(\lambda)}$ for type $A_{m}$ as long as $p > m+1$, with an approach involving proving condition (1) for certain $\lambda$. However, there are some additional cases we will need to prove condition (1) for now.

As before, assume $\lambda \in \mathfrak{h}_{K}^{*}$ is a weight with $\lambda(p^{n}\mathfrak{h}_{R}) \subseteq R$, and $\mathfrak{n}, \mathfrak{p}$ are the corresponding nilpotent and parabolic subalgebras.

\begin{mylem}
\label{ainductionlem}
Suppose $\Phi$ has type $A_{m}$ where $m \geq 2$, and suppose $\lambda \in \mathfrak{h}_{K}^{*}$ with $\lambda(p^{n}\mathfrak{h}_{R}) \subseteq R$ such that $\alpha_{2} \notin \Delta_{\lambda}$. If $\lambda |_{\mathfrak{h}_{\Delta \setminus \alpha_{1},K}}$ satisfies condition (1), then so does $\lambda$.
\end{mylem}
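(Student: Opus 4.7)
The plan is to deduce the lemma from Theorem \ref{inductionprop} applied with the simple root $\alpha_1$. Given the hypothesis that $\lambda|_{\mathfrak{h}_{\Delta \setminus \alpha_1, K}}$ satisfies condition (1), the only remaining task is to produce a workable description of $W_{\alpha_1} \cap U(\mathfrak{n}_{1,K}) V_\lambda$ and rule out non-zero highest-weight vectors in $U(\mathfrak{n}_{2,K})\bigl(W_{\alpha_1} \cap U(\mathfrak{n}_{1,K}) V_\lambda\bigr)$. I will in fact establish the stronger statement $W_{\alpha_1} \cap U(\mathfrak{n}_{1,K}) V_\lambda = 0$ directly from Proposition \ref{Walphadescription}.

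Two of the three hypotheses of Proposition \ref{Walphadescription} hold automatically. The condition that every $\alpha \in \Phi^+$ has $\alpha_1$-coefficient in $\{0,1\}$ is immediate from the description of type $A_m$ positive roots as $\alpha_{i,j} = \alpha_i + \dots + \alpha_j$. For the third condition, if $\alpha \in \Phi^+$ satisfies $\alpha_1 \in \alpha$ and $\alpha \neq \alpha_1$, then $\alpha = \alpha_{1,j}$ for some $j \geq 2$, so $\alpha - \alpha_1 = \alpha_{2,j}$ has $\alpha_2$ in its support; since $\alpha_2 \notin \Delta_\lambda$, we obtain $\alpha - \alpha_1 \in \Phi^+ \setminus \Phi^+_{\Delta_\lambda}$, making the third hypothesis vacuous. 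The same observation shows that the set of $\alpha$'s generating the left ideal $I$ in the conclusion of Proposition \ref{Walphadescription} is empty, so $I = 0$, and hence $W_{\alpha_1} \cap U(\mathfrak{n}_{1,K}) V_\lambda = 0$ will follow once the middle hypothesis $W_{\alpha_1} \cap \sum_{i \geq 0} f_{\alpha_1}^i V_\lambda = 0$ is established.

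The main obstacle is thus verifying this middle hypothesis. Since $W_{\alpha_1}$ is closed under taking weight components, it suffices to show that every non-zero weight vector $w$ of $\sum_{i \geq 0} f_{\alpha_1}^i V_\lambda$ fails to lie in $W_{\alpha_1}$. I would argue this uniformly: because $\alpha_1$ is only adjacent to $\alpha_2$ in the Dynkin diagram, the assumption $\alpha_2 \notin \Delta_\lambda$ isolates $\alpha_1$ in $\Delta_\lambda$ (or makes it absent entirely), so $V_\lambda$ factors as an outer tensor product of an $\mathfrak{sl}_2$-representation in the $\alpha_1$-direction (trivial in the case $\alpha_1 \notin \Delta_\lambda$) and a representation of the remaining Levi. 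Any such weight vector $w$ can therefore be written as $f_{\alpha_1}^k v'$ with $v' \in V_\lambda$ an $\alpha_1$-free weight vector. Using $\alpha_j(h_{\alpha_1}) = 0$ for $j \geq 3$ together with $\alpha_2 \notin \Delta_\lambda$, the $h_{\alpha_1}$-eigenvalue on $v'$ equals $\lambda(h_{\alpha_1})$. Lemma \ref{calculationlem} then yields $e_{\alpha_1}^k f_{\alpha_1}^k v' = c \cdot v'$ for a non-zero scalar $c$: when $\alpha_1 \notin \Delta_\lambda$ this is because $\lambda(h_{\alpha_1}) \notin \mathbb{N}_0$, and when $\alpha_1 \in \Delta_\lambda$ the non-vanishing of $f_{\alpha_1}^k v'$ inside the $\mathfrak{sl}_2$-factor forces $k \leq \lambda(h_{\alpha_1})$. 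Since $v'$ is $\alpha_1$-free and non-zero, this shows $w \notin W_{\alpha_1}$, completing the verification and hence the proof via Theorem \ref{inductionprop}.
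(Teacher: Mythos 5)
Your proof is correct and takes essentially the same route as the paper: both apply Theorem \ref{inductionprop} after showing $W_{\alpha_1} \cap U(\mathfrak{n}_{1,K})V_\lambda = 0$ via Proposition \ref{Walphadescription}, using the observation that every positive root containing $\alpha_1$ (other than $\alpha_1$ itself) also contains $\alpha_2$, and verifying the middle hypothesis with Lemma \ref{calculationlem}. The only cosmetic difference is that the paper treats the cases $\alpha_1 \in \Delta_\lambda$ and $\alpha_1 \notin \Delta_\lambda$ by referring to the arguments of Corollaries \ref{inductioncor2} and \ref{inductioncor3}, whereas you give a single unified argument, but the substance and the inputs are identical.
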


\begin{proof}
Recall the subspace $W_{\alpha_{1}} \subseteq \widehat{M_{\Delta_{\lambda}}(\lambda)}$ and the subalgebra $\mathfrak{n}_{1}$ of $\mathfrak{n}$ from \S \ref{inductionsection}.

We aim to apply Theorem \ref{inductionprop}. We will use Proposition \ref{Walphadescription} to show that $W_{\alpha_{1}} \cap (U(\mathfrak{n}_{1,K})V_{\lambda}) = 0$.

By our description of the positive roots, if $\alpha \in \Phi^{+}$ with $\alpha_{1} \in \alpha$ and $\alpha_{1} \neq \alpha$, then $\alpha_{2} \in \alpha$. Therefore $\alpha - \alpha_{1} \notin \Phi_{\Delta_{\lambda}}^{+}$. Thus using Proposition \ref{Walphadescription}, if we can show that $W_{\alpha_{1}} \cap \sum f_{\alpha_{1}}^{i} V_{\lambda} = 0$, then it follows that $W_{\alpha_{1}} \cap (U(\mathfrak{n}_{1,K})V_{\lambda}) = 0$.

In the case $\alpha_{1} \notin \Delta_{\lambda}$, we can use a similar proof to Corollary \ref{inductioncor2}. In the case $\alpha_{1} \in \Delta_{\lambda}$, we can use a similar proof to Corollary \ref{inductioncor3} (in the $k=1$ case).

So now Proposition \ref{Walphadescription} implies $W_{\alpha_{1}} \cap (U(\mathfrak{n}_{1,K})V_{\lambda}) = 0$. The result now follows from Theorem \ref{inductionprop}.

\end{proof}

\begin{mylem}
\label{A2case}
Suppose $\Phi$ has type $A_{1}$ or $A_{2}$ and $\lambda \in \mathfrak{h}_{K}^{*}$ with $\lambda(p^{n}\mathfrak{h}_{R}) \subseteq R$ is not dominant integral. Then $\lambda$ satisfies condition (1).
\end{mylem}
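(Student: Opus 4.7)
The plan is to split on the two types and treat $A_1$ as a base case, then reduce the $A_2$ case to it using Lemma \ref{ainductionlem} together with the Dynkin diagram symmetry $\alpha_1 \leftrightarrow \alpha_2$.

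For type $A_1$, I would argue that $\lambda$ not dominant integral forces $\lambda(h_{\alpha_1}) \notin \mathbb{N}_0$, so $\Delta_\lambda = \emptyset$ and $M_{\Delta_\lambda}(\lambda)$ equals the ordinary Verma module $M(\lambda)$. Since in type $A_1$ the Verma module $M(\lambda)$ is irreducible under this hypothesis, the projection $M_{\Delta_\lambda}(\lambda) \to L(\lambda)$ is an isomorphism, so condition (1) reduces to the injectivity of the multiplication map $KN \otimes_K V_\lambda \to \widehat{M_{\Delta_\lambda}(\lambda)}$. Using the description $\widehat{M_{\Delta_\lambda}(\lambda)} \cong \widehat{U(\mathfrak{n})_{n,K}} \otimes_K V_\lambda$ from \S \ref{gvm}, this in turn reduces to the known embedding $KN \hookrightarrow \widehat{U(\mathfrak{n})_{n,K}}$ from the proof of \cite[Theorem 5.3]{verma}.

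For type $A_2$, since $\lambda$ is not dominant integral, at least one of $\alpha_1, \alpha_2$ is outside $\Delta_\lambda$. If $\alpha_2 \notin \Delta_\lambda$, I would apply Lemma \ref{ainductionlem} directly to reduce condition (1) for $\lambda$ to condition (1) for $\lambda|_{\mathfrak{h}_{\{\alpha_2\},K}}$, an $A_1$ weight that is not dominant integral and hence handled by the base case. If instead $\alpha_2 \in \Delta_\lambda$, then necessarily $\alpha_1 \notin \Delta_\lambda$, and I would invoke the analog of Lemma \ref{ainductionlem} obtained by exchanging $\alpha_1$ and $\alpha_2$ via the Dynkin diagram automorphism of $A_2$. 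The proof of the lemma goes through verbatim under this swap because every $\beta \in \Phi^+$ has $\alpha_2$-coefficient $0$ or $1$, and the only positive root containing $\alpha_2$ other than $\alpha_2$ itself is $\alpha_1+\alpha_2$, for which $(\alpha_1+\alpha_2) - \alpha_2 = \alpha_1 \notin \Phi^+_{\Delta_\lambda}$. This reduces condition (1) for $\lambda$ to condition (1) for $\lambda|_{\mathfrak{h}_{\{\alpha_1\},K}}$, again an $A_1$ non-dominant-integral weight handled by the base case.

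The main technical point to justify carefully is that the proof of Lemma \ref{ainductionlem} really depends only on the structural properties of the distinguished simple root and is therefore symmetric in $\alpha_1, \alpha_2$ for type $A_2$. The hypotheses of Proposition \ref{Walphadescription} on which it rests, namely the $0/1$-coefficient condition, the vanishing $W_{\alpha_1} \cap \sum f_{\alpha_1}^i V_\lambda = 0$, and the condition on $f_{\alpha - \alpha_1}V_\lambda$, each transfer across the swap in $A_2$; I do not expect any genuine obstacle, so the lemma should follow cleanly from this two-case reduction.
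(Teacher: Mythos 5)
Your proposal matches the paper's own proof: the $A_1$ case is handled by observing $M_{\Delta_\lambda}(\lambda) = M(\lambda)$ is irreducible so $\widehat{M_{\Delta_\lambda}(\lambda)} = \widehat{L(\lambda)}$, and the $A_2$ case reduces via Lemma \ref{ainductionlem} (or its mirror image under the Dynkin diagram automorphism) to the $A_1$ base case. You spell out the symmetry argument and the source of injectivity in the $A_1$ case somewhat more explicitly than the paper, but the reasoning is the same.
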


\begin{proof}
First, suppose $\Phi$ has type $A_{1}$. So $\lambda(h_{\alpha_{1}}) \notin \mathbb{N}_{0}$ since $\lambda$ is not dominant integral. Then $M_{\Delta_{\lambda}}(\lambda) = M(\lambda)$ is irreducible by standard results about the representation theory of $\mathfrak{sl}_{2}$. So $\widehat{M_{\Delta_{\lambda}}(\lambda)} = \widehat{L(\lambda)}$, and condition (1) holds for $\lambda$.

Now suppose $\Phi$ has type $A_{2}$. Since $\lambda$ is not dominant integral, either $\alpha_{1} \notin \Delta_{\lambda}$ or $\alpha_{2} \notin \Delta_{\lambda}$. If $\alpha_{2} \notin \Delta_{\lambda}$, then the result follows from Lemma \ref{ainductionlem} together with the $A_{1}$ case. The case $\alpha_{1} \notin \Delta_{\lambda}$ then follows by symmetry between $\alpha_{1}, \alpha_{2}$.
\end{proof}

\noindent We can now state the main result of this section, in type $A_{3}$.

\begin{myprop}
\label{A3case}
Suppose $\Phi$ has type $A_{3}$ and $\lambda \in \mathfrak{h}_{K}^{*}$ with $\lambda(p^{n}\mathfrak{h}_{R}) \subseteq R$ satisfies the following conditions:

\begin{itemize}
\item $\lambda$ is not dominant integral,

\item $\lambda(h_{\alpha}) \notin \mathbb{Z}_{<-1}$ for all $\alpha \in \Delta$,

\item $\Delta \setminus \Phi_{\lambda} \neq \{ \alpha_{1},\alpha_{3} \}$ (i.e. $\Delta \setminus \Phi_{\lambda}$ is a connected subset of $\Delta$).
\end{itemize}

\noindent Then $\lambda$ satisfies condition (1).
\end{myprop}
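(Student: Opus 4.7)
The plan is to proceed by cases on $\Delta_\lambda$, in each case first normalising the integer part of $\lambda$ via translation functors and then closing the argument with either the abelian machinery of \S\ref{abeliansection} or the inductive framework of \S\ref{inductionsection}. Hypothesis (2) is used to keep each translation inside a single facet, and Hypothesis (3) excludes the only configuration that resists the inductive approach.

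When $\alpha_2 \notin \Delta_\lambda$, I would invoke Lemma \ref{ainductionlem}: the restriction $\lambda|_{\mathfrak{h}_{\Delta \setminus \alpha_1, K}}$ is a weight for the $A_2$ subsystem $\{\alpha_2,\alpha_3\}$, and since $\lambda(h_{\alpha_2}) \notin \mathbb{N}_0$ it is not dominant integral, so Lemma \ref{A2case} gives condition (1) for the restriction and hence for $\lambda$. The next case is $\alpha_2 \in \Delta_\lambda$ with exactly one of $\alpha_1, \alpha_3$ lying outside $\Delta_\lambda$; by the $\alpha_1 \leftrightarrow \alpha_3$ symmetry I may take $\Delta_\lambda = \{\alpha_2,\alpha_3\}$, so Hypothesis (2) forces $\lambda(h_{\alpha_1}) \in \{-1\} \cup (K \setminus \mathbb{Z})$. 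I would define $\mu$ by $\mu(h_{\alpha_1}) = \lambda(h_{\alpha_1})$ and $\mu(h_{\alpha_2}) = \mu(h_{\alpha_3}) = 0$; checking $\langle \mu+\rho, \beta^{\lor}\rangle$ against $\langle \lambda+\rho, \beta^{\lor}\rangle$ for each of the six positive roots shows $\mu$ and $\lambda$ lie in the same facet, while $\lambda-\mu$ is dominant integral, so Proposition \ref{translationfunctorsprop} reduces to proving condition (1) for $\mu$. Since every positive root of $A_3$ has $\alpha_1$-coefficient $0$ or $1$, Proposition \ref{abeliancase} applied with $\alpha_1$ gives condition (2) for $\mu$, which implies condition (1).

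The remaining case, $\Delta_\lambda = \{\alpha_2\}$, is the main obstacle. Hypothesis (3) prevents $\lambda(h_{\alpha_1})$ and $\lambda(h_{\alpha_3})$ from both being non-integer, so a facet check again lets me translate to $\mu$ with $\mu(h_{\alpha_2}) = 0$ and $\mu(h_{\alpha_i}) = \lambda(h_{\alpha_i})$ for $i \in \{1, 3\}$, making $M_{\{\alpha_2\}}(\mu)$ scalar. I would apply Theorem \ref{inductionprop} with $\alpha_1$: the restriction $\mu|_{\mathfrak{h}_{\Delta \setminus \alpha_1, K}}$ is a non-dominant-integral $A_2$-weight and hence satisfies condition (1) by Lemma \ref{A2case}. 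The hypotheses of Proposition \ref{Walphadescription} hold, since $\mu(h_{\alpha_1}) \notin \mathbb{N}_0$ together with Lemma \ref{calculationlem} forces $W_{\alpha_1} \cap \sum_i f_{\alpha_1}^i V_\mu = 0$, and $f_{\alpha_2} V_\mu = 0$ because $V_\mu$ is scalar; they yield $W_{\alpha_1} \cap U(\mathfrak{n}_{1,K})V_\mu = U(\mathfrak{n}_{1,K}) f_{\alpha_1 + \alpha_2} w_\mu$, and by PBW $U(\mathfrak{n}_{2,K})\bigl(W_{\alpha_1} \cap U(\mathfrak{n}_{1,K})V_\mu\bigr) = U(\mathfrak{n}_K) f_{\alpha_1+\alpha_2} w_\mu$. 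Proposition \ref{hwvectorprop} with $\gamma_1 = \alpha_1+\alpha_2$ and $\beta_1 = \alpha_2$ (so $\gamma_1 - \beta_1 = \alpha_1 \in \Phi^+ \setminus \Phi^+_{\{\alpha_2\}}$, and $r=1$ makes the third bulleted condition vacuous) then rules out any highest-weight vector in this set, and Theorem \ref{inductionprop} delivers condition (1) for $\mu$, which transfers to $\lambda$.

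The delicate step is this final case: both the connectedness hypothesis and the $\geq -1$ hypothesis are used essentially to keep the translation within a single facet, and the explicit computation of $W_{\alpha_1} \cap U(\mathfrak{n}_{1,K})V_\mu$ combined with the highest-weight-vector exclusion via Proposition \ref{hwvectorprop} is where the real content of the argument sits; the other cases reduce to previously established tools with little extra work.
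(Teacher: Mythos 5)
Your proof is correct, and it takes a genuinely different route from the paper's. The paper's own argument is short: it handles $\alpha_2 \notin \Delta_\lambda$ with Lemma \ref{ainductionlem} and Lemma \ref{A2case}, and disposes of all the remaining cases by verifying that $\lambda$ lies in the set $T_{\mathfrak{g}}$ from \cite[\S 4.3.1]{me} (after computing $\Phi_\lambda = \Phi_{\{\alpha_2,\alpha_3\}}$ in the last case) and then invoking \cite[Proposition 4.3.15]{me}, a type-A result established externally in the author's thesis. You avoid that external input entirely and build the proof from the machinery developed inside the present paper: after the same $\alpha_2 \notin \Delta_\lambda$ step, you use translation to a same-facet weight $\mu$ with the scalar property (Proposition \ref{translationfunctorsprop}), then close the $\Delta_\lambda = \{\alpha_2,\alpha_3\}$ case with Proposition \ref{abeliancase} and the $\Delta_\lambda = \{\alpha_2\}$ case with the inductive framework (Theorem \ref{inductionprop} via Proposition \ref{Walphadescription} and Proposition \ref{hwvectorprop}, with $\gamma_1 = \alpha_1+\alpha_2$, $\beta_1 = \alpha_2$). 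You also identify correctly where the two nontrivial hypotheses enter: hypothesis (2) keeps the translation within a facet when an entry equals $-1$, and hypothesis (3) excludes the configuration where $\lambda(h_{\alpha_1})$ and $\lambda(h_{\alpha_3})$ are both non-integral (in which case $c + a + d + 3$ and $c + d + 3$ could fall on opposite sides of zero, breaking the facet match on $\alpha_1+\alpha_2+\alpha_3$). The trade-off is clear: the paper's proof is shorter and leans on prior work, while yours costs extra casework and facet bookkeeping but is self-contained and more in the spirit of the type-D induction developed in this paper.
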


\begin{proof}

In \cite[\S 4.3.1]{me} a set of weights for $\mathfrak{g}$ is defined:

\begin{equation*}
\begin{split}
T_{\mathfrak{g}} = \{ & \lambda \in \mathfrak{h}_{K}^{*} \mid \lambda(p^{n} \mathfrak{h}_{R}) \subseteq R, \Delta_{\lambda} \subset \Delta \text{ is proper }, \Phi_{\lambda} = \Phi_{I} \text{ for some } \\ & I \subseteq \Delta , \langle \lambda + \rho, \alpha^{\lor} \rangle \geq 0 \text{ for all } \alpha \in I \}.
\end{split}
\end{equation*}

\noindent In \cite[Proposition 4.3.15]{me}, condition (1) is proved for all weights in $T_{\mathfrak{g}}$ (in a more general setting where $\mathfrak{g}$ is a direct sum of components of the form $\mathfrak{sl}_{m+1}(\Of)$). 

So by our assumption that $\lambda(h_{\alpha}) \notin \mathbb{Z}_{< -1}$ for all $\alpha \in \Delta$, condition (1) will hold for $\lambda$ as long as $\Phi_{\lambda}$ is the subalgebra spanned by some subset of $\Delta$. We now proceed in a case-by-case fashion.

\begin{itemize}
\item If $\lambda$ is integral: $\Phi_{\lambda} = \Phi = \Phi_{\Delta}$, so condition (1) holds for $\lambda$ by \cite[Proposition 4.3.15]{me}.

\item If $\alpha_{2} \notin \Delta_{\lambda}$, then condition (1) holds for $\lambda$ by Lemma \ref{ainductionlem} together with Lemma \ref{A2case}.

\item If $\lambda$ is not integral and $\alpha_{2} \in \Delta_{\lambda}$: either $\alpha_{1} \notin \Phi_{\lambda}$ or $\alpha_{3} \notin \Phi_{\lambda}$. By symmetry, we can assume without loss of generality that $\alpha_{1} \notin \Phi_{\lambda}$. Then by assumption $\alpha_{3} \in \Phi_{\lambda}$.

Now if $\alpha = \alpha_{i}+\dots+\alpha_{j} \in \Phi^{+}$ with $1 \leq i \leq j \leq 3$, then $\langle \lambda + \rho, \alpha^{\lor} \rangle = \overset{j}{\underset{k=i}{\sum}} \langle \lambda + \rho, \alpha_{k}^{\lor} \rangle$ since $A_{3}$ is a simply laced root system. Thus $\langle \lambda + \rho, \alpha^{\lor} \rangle \in \mathbb{Z}$ if and only if $i > 1$: that is, $\Phi_{\lambda} = \Phi_{ \{\alpha_{2},\alpha_{3} \}}$. Condition (1) for $\lambda$ now holds by \cite[Proposition 4.3.15]{me}.

\end{itemize}

\end{proof}

\subsection{Proof of main theorems}

In this section we take $\Phi$ to be type $D_{m}$, and use the labelling of simple roots from \S \ref{Liealgsec}. We write $W$ for the Weyl group of $\Phi$.

We will assume that $p \geq 5$, which in this case implies that $p$ does not divide the determinant of the Cartan matrix of any root subsystem of $\Phi$. Thus Theorem \ref{condition1application} shows us that if $\lambda \in \mathfrak{h}_{K}^{*}$ with $\lambda(p^{n} \mathfrak{h}_{R}) \subseteq R$ satisfies condition (1) (or condition (2)), then $\widehat{L(\lambda)}$ is faithful over $KG$.

Also in this section we take type $D_{3}$ to be equal to type $A_{3}$, with the following ordering of simple roots:

\begin{dynkinDiagram}[
edge length=1cm,
labels={2,1,3},
label directions={,,,right,,},
label macro/.code={\alpha_{\drlap{#1}}}
]A3
\end{dynkinDiagram}

\noindent as this will be helpful for our induction.

We will break down Theorem \ref{maintheoremvariant} into two main cases: the case where the highest weight is regular integral, and the case where it is not. 

Recall our definition of a connected subset of $\Delta$ from Definition \ref{connected}.

\begin{mydef}
If $\mathfrak{g}$ has type $D_{m}$ with $m \geq 3$ and we use our usual labelling of simple roots, denote $S_{\mathfrak{g}}$ for the set of weights $\lambda \in \mathfrak{h}_{K}^{*}$ with $\lambda(p^{n}\mathfrak{h}_{R}) \subseteq R$ such that:

\begin{itemize}
\item $\lambda$ is not dominant integral,

\item $\Delta \setminus \Phi_{\lambda}$ is either empty, or a connected subset of $\Delta$ containing $\alpha_{1}$,

\item $\langle \lambda + \rho, \alpha^{\lor} \rangle \notin \mathbb{Z}_{<0}$ for all $\alpha \in \Delta$. 
\end{itemize}
\end{mydef}

\noindent The following proposition together with Proposition \ref{simplerefsprop} then shows that in order to prove Theorem \ref{maintheoremvariant} for all $\lambda$ not regular integral, it is sufficient to prove it for all weights in $S_{\mathfrak{g}}$.

\begin{myprop}
\label{reducetoSg}
Suppose $\Phi$ has type $D_{m}$ for $m \geq 3$, and suppose $\lambda \in \mathfrak{h}_{K}^{*}$ with $\lambda(p^{n}\mathfrak{h}_{R}) \subseteq R$ such that $\lambda$  is not a regular integral weight.

Then there exist $\beta_{1},\dots,\beta_{r} \in \Delta$ such that $(s_{\beta_{i-1}} \dots s_{\beta_{1}} \cdot \lambda)(h_{\beta_{i}}) \notin \mathbb{N}_{0}$ for all $1 \leq i \leq r$, and $s_{\beta_{r}} \dots s_{\beta_{1}} \cdot \lambda \in S_{\mathfrak{g}}$.
\end{myprop}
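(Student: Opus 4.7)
The plan is to proceed by cases based on whether $\lambda$ is integral. Since $\Phi$ is simply laced, a dominant integral weight is automatically regular (because for simply laced $\Phi$ and $\beta = \sum c_{\alpha} \alpha \in \Phi^{+}$, $\langle \lambda + \rho, \beta^{\lor} \rangle = \sum c_{\alpha}(\lambda(h_{\alpha}) + 1)$, which is at least $1$ when $\lambda$ is dominant integral), so the hypothesis that $\lambda$ is not regular integral forces either $\lambda$ to be non-integral, or $\lambda$ to be integral but not regular. Both non-integrality and non-regularity are preserved under the dot action (the former because $\Phi_{w \cdot \lambda} = w^{-1}(\Phi_{\lambda})$, the latter by $W$-invariance of the bilinear form), so every weight in the dot-orbit of $\lambda$ fails to be dominant integral. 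Hence condition (1) of $S_{\mathfrak{g}}$ will hold for any $\lambda'$ I produce, and I only need to arrange conditions (2) and (3).

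For the integral non-regular case, I would apply the standard algorithm for moving $\lambda + \rho$ into the dot-closure of the dominant chamber. Starting from $\mu = \lambda$, while some $\alpha \in \Delta$ satisfies $\mu(h_{\alpha}) \leq -2$ I apply $s_{\alpha}$; the inequality certifies $\mu(h_{\alpha}) \notin \mathbb{N}_{0}$ as required. The quantity $\#\{\beta \in \Phi^{+} : \langle \mu + \rho, \beta^{\lor} \rangle < 0\}$ strictly decreases at each step, giving termination. The final $\lambda'$ satisfies $\langle \lambda' + \rho, \alpha^{\lor} \rangle \geq 0$ for all $\alpha \in \Delta$, which is condition (3); condition (2) holds trivially with $\Delta \setminus \Phi_{\lambda'} = \emptyset$.

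For the non-integral case I would proceed in two stages. First, invoke Corollary \ref{simplerefsapplication} (applicable since $\Phi$ is simply laced) to produce $\lambda^{(1)}$ via a sequence of simple reflections meeting the non-$\mathbb{N}_{0}$ condition, such that $C := \Delta \setminus \Phi_{\lambda^{(1)}}$ is connected and contains $\alpha_{1}$. Since non-integrality is preserved, $C \neq \emptyset$. Second, run the dominant chamber algorithm restricted to reflections $s_{\alpha}$ with $\alpha \in \Delta \setminus C = \Delta \cap \Phi_{\lambda^{(1)}}$, triggered whenever the current $\mu$ has $\mu(h_{\alpha}) \leq -2$; termination follows from the same length argument applied inside the parabolic subgroup $W_{\Delta \setminus C}$. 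The resulting $\lambda'$ satisfies condition (3) because for $\alpha \in C$ we have $\lambda'(h_{\alpha}) \notin \mathbb{Z}$ (so $\langle \lambda' + \rho, \alpha^{\lor} \rangle \notin \mathbb{Z}$), while for $\alpha \in \Delta \setminus C$ we have $\lambda'(h_{\alpha}) \geq -1$ by construction.

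The main obstacle is verifying that condition (2) is preserved during the second stage, i.e.\ that $\Delta \setminus \Phi_{\mu} = C$ remains true under each reflection $s_{\alpha}$ with $\alpha \in \Delta \cap \Phi_{\mu}$. The containment $C \subseteq \Delta \setminus \Phi_{s_{\alpha} \cdot \mu}$ is Lemma \ref{connectedcompslem}. For the reverse, using $\Phi_{s_{\alpha} \cdot \mu} = s_{\alpha}(\Phi_{\mu})$, one has $\beta \in \Phi_{s_{\alpha} \cdot \mu} \iff s_{\alpha}(\beta) \in \Phi_{\mu}$; for $\beta \in \Delta$ with $\beta \neq \alpha$, either $s_{\alpha}(\beta) = \beta$ (when $\alpha, \beta$ are non-adjacent in the Dynkin diagram) or $s_{\alpha}(\beta) = \alpha + \beta$ (when adjacent, using simply-lacedness). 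In the latter case, since $\Phi_{\mu}$ is a root subsystem containing $\alpha$, one gets $\alpha + \beta \in \Phi_{\mu}$ iff $\beta \in \Phi_{\mu}$: the forward direction is closure under addition of roots, and the reverse uses closure under addition of $-\alpha$ to $\alpha + \beta$. Together with the trivial observation $\alpha \in \Phi_{s_{\alpha} \cdot \mu}$, this gives $\Delta \cap \Phi_{s_{\alpha} \cdot \mu} = \Delta \cap \Phi_{\mu}$, hence $\Delta \setminus \Phi_{s_{\alpha} \cdot \mu} = C$, closing the argument and showing $\lambda' \in S_{\mathfrak{g}}$.
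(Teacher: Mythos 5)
Your proposal is correct and takes essentially the same approach as the paper: first apply Corollary \ref{simplerefsapplication} (when $\lambda$ is non-integral) to make $\Delta \setminus \Phi_{\lambda}$ connected and containing $\alpha_{1}$, then push towards the dominant chamber by reflecting at simple roots where $\mu(h_{\alpha}) \in \mathbb{Z}_{<-1}$. Your verification that this second stage preserves $\Delta \setminus \Phi_{\mu}$ is more elaborate than necessary: since each $\alpha$ chosen lies in $\Phi_{\mu}$ and $\Phi_{\mu}$ is a root subsystem (hence closed under $s_{\alpha}$), one has $\Phi_{s_{\alpha}\cdot\mu} = s_{\alpha}(\Phi_{\mu}) = \Phi_{\mu}$ outright, which dispenses with both Lemma \ref{connectedcompslem} and the adjacent/non-adjacent case analysis, and does not require the simply-laced hypothesis; this is the argument the paper uses (it also uses strict increase in the weight partial order rather than your inversion count for termination, but both are standard and correct).
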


\begin{proof}
The condition that $\lambda$ is not regular integral implies that $w \cdot \lambda$ is not dominant integral for all $w \in W$.

First, if $\lambda$ is not integral (or equivalently $\Delta \setminus \Phi_{\lambda}$ is non-empty) then Corollary \ref{simplerefsapplication} implies we can choose $\beta_{1},\dots,\beta_{r}$ such that $(s_{\beta_{i-1}} \dots s_{\beta_{1}} \cdot \lambda)(h_{\beta_{i}}) \notin \mathbb{N}_{0}$ for all $1 \leq i \leq r$, and $\Delta \setminus \Phi_{s_{\beta_{r}} \dots s_{\beta_{1}} \cdot \lambda}$ is a connected subset of $\Delta$ containing $\alpha_{1}$. 

So it is sufficient to prove the result for all $\lambda$ such that $\Delta \setminus \Phi_{\lambda}$ is either empty or a connected subset of $\Delta$ containing $\alpha_{1}$. Assume now $\lambda$ satisfies this condition.

Then choose $\beta_{1},\beta_{2}, \dots \in \Delta$ inductively as follows. If $(s_{\beta_{i}} \dots s_{\beta_{1}} \cdot \lambda)(h_{\alpha}) \notin \mathbb{Z}_{<-1}$ for all $\alpha \in \Delta$, stop. Otherwise, choose $\beta_{i+1} \in \Delta$ with $(s_{\beta_{i}} \dots s_{\beta_{1}} \cdot \lambda)(h_{\beta_{i+1}}) \in \mathbb{Z}_{<-1}$.

Then $\beta_{i+1} \in \Phi_{s_{\beta_{i}} \dots s_{\beta_{1}} \cdot \lambda}$, so $\Phi_{s_{\beta_{i+1}} \dots s_{\beta_{1}} \cdot \lambda} = s_{\beta_{i+1}}(\Phi_{s_{\beta_{i}} \dots s_{\beta_{1}} \cdot \lambda}) = \Phi_{s_{\beta_{i}} \dots s_{\beta_{1}} \cdot \lambda}$. Thus $\Phi_{s_{\beta_{i+1}} \dots s_{\beta_{1}} \cdot \lambda} = \Phi_{\lambda}$.

Moreover, $s_{\beta_{i+1}} \dots s_{\beta_{1}} \cdot \lambda > s_{\beta_{i}} \dots s_{\beta_{1}} \cdot \lambda$ in the partial ordering on weights. Since the orbit of $\lambda$ under the dot action of $W$ is finite, this process must terminate.

So say we obtain the finite sequence $\beta_{1}, \dots, \beta_{r}$. Then $(s_{\beta_{r}} \dots s_{\beta_{1}} \cdot \lambda)(h_{\alpha}) \notin \mathbb{Z}_{<-1}$ for all $\alpha \in \Delta$, and $\Phi_{s_{\beta_{r}} \dots s_{\beta_{1}} \cdot \lambda} = \Phi_{\lambda}$. So $s_{\beta_{r}} \dots s_{\beta_{1}} \cdot \lambda \in S_{\mathfrak{g}}$.
\end{proof}

\begin{myprop}
\label{Sgprop}
Suppose $\Phi$ has type $D_{m}$ with $m \geq 3$ and $\lambda \in S_{\mathfrak{g}}$. Then $\lambda$ satisfies condition (1).
\end{myprop}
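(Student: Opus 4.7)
The plan is to prove Proposition \ref{Sgprop} by induction on $m \geq 3$. For the base case $m = 3$, I would invoke Proposition \ref{A3case} using the identification $D_3 = A_3$ fixed at the start of this section. Under the relabeling which places $\alpha_1^{D_3}$ at the central node of the $A_3$-diagram, the condition on $\Delta \setminus \Phi_\lambda$ in the definition of $S_\mathfrak{g}$ (empty or connected containing $\alpha_1$) excludes precisely the disconnected pair $\{\alpha_1^{A_3}, \alpha_3^{A_3}\}$ also excluded by Proposition \ref{A3case}, while the bound $\langle \lambda + \rho, \alpha^\lor\rangle \notin \mathbb{Z}_{<0}$ is equivalent to $\lambda(h_\alpha) \notin \mathbb{Z}_{<-1}$.

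For the inductive step ($m \geq 4$), let $\lambda \in S_\mathfrak{g}$ and split into cases according to the structure of $\Delta_\lambda$. If $\alpha_1, \alpha_2 \notin \Delta_\lambda$, Corollary \ref{inductioncase2} reduces condition (1) for $\lambda$ to condition (1) for the restriction $\lambda' = \lambda|_{\mathfrak{h}_{\Delta \setminus \alpha_1, K}}$ on $\mathfrak{g}_{\Delta \setminus \alpha_1}$; since $\alpha_1$ is a leaf of the $D_m$-diagram, the set $(\Delta \setminus \Phi_\lambda) \setminus \{\alpha_1\}$ is either empty or remains connected and contains the new ``first'' node of the $D_{m-1}$-diagram, so $\lambda'$ is either dominant integral (with condition (1) holding trivially) or lies in $S_{\mathfrak{g}'}$ and is handled by the inductive hypothesis. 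If $\lambda$ is non-integral with $\alpha_2 \in \Delta_\lambda$, the connectedness of $\Delta \setminus \Phi_\lambda$ forces $\Delta \setminus \Phi_\lambda = \{\alpha_1\}$; if moreover $\lambda(h_\alpha) \geq 0$ for every $\alpha \neq \alpha_1$ then Corollary \ref{abeliannonintegral} yields condition (2) directly, and otherwise I would apply Proposition \ref{translationfunctorsprop} to replace $\lambda$ by its scalar companion $\mu$ (defined by $\mu(h_\alpha) = 0$ for $\alpha \in \Delta_\lambda$ and $\mu(h_\alpha) = \lambda(h_\alpha)$ otherwise) and apply Corollary \ref{inductioncase1} to $\mu$. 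The remaining integral singular sub-cases---either $\alpha_1 \in \Delta_\lambda$, or $\alpha_1 \notin \Delta_\lambda$ with $\alpha_2 \in \Delta_\lambda$---are handled analogously: Corollary \ref{abelianintegral} applies directly in the minimal configuration $\Delta \setminus \Delta_\lambda = \{\alpha_1\}$ with $\lambda(h_{\alpha_1}) = -1$, and the same scalar-translation strategy covers the rest.

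The main obstacle is verifying the same-facet condition required by Proposition \ref{translationfunctorsprop}. Using that $D_m$ is simply laced, for each $\alpha = \sum c_\beta(\alpha) \beta \in \Phi^+$ one has $\langle \lambda + \rho, \alpha^\lor\rangle - \langle \mu + \rho, \alpha^\lor\rangle = \sum_{\beta \in \Delta_\lambda} c_\beta(\alpha) \lambda(h_\beta) \geq 0$. For an integer root $\alpha \in \Phi_\lambda \cap \Phi^+$, every simple root in the support of $\alpha$ lies in $\Phi_\lambda$, and the contribution from $\beta \in \Phi_\lambda \setminus \Delta_\lambda$ vanishes in both inner products (since $\lambda(h_\beta) + 1 = 0$ and $\mu(h_\beta) + 1 = 0$), so $\langle \lambda + \rho, \alpha^\lor\rangle$ and $\langle \mu + \rho, \alpha^\lor\rangle$ are non-negative integers vanishing simultaneously exactly when the support of $\alpha$ is contained in the singular simple roots; this yields $\Phi_\lambda^+ = \Phi_\mu^+$, $\Phi_\lambda^0 = \Phi_\mu^0$, and $\Phi_\lambda^- = \Phi_\mu^- = \emptyset$, as required. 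A secondary technicality is that Corollary \ref{inductioncase1} excludes the index $k = m-1$; in the isolated instance where the only candidate singular index in $\Delta \setminus \Delta_\lambda \setminus \{\alpha_1\}$ is $\alpha_{m-1}$, I would appeal to the diagram automorphism of $D_m$ exchanging $\alpha_{m-1}$ with $\alpha_m$ to reduce to an equivalent configuration for which $k = m$ is admissible.
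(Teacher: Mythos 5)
Your argument is correct and follows essentially the same strategy as the paper: induction on $m$ with base case from Proposition \ref{A3case}, the case $\alpha_{1},\alpha_{2}\notin\Delta_{\lambda}$ handled by Corollary \ref{inductioncase2} together with the inductive hypothesis, and the remaining cases (where $\Delta\setminus\Phi_{\lambda}\subseteq\{\alpha_{1}\}$) reduced via Proposition \ref{translationfunctorsprop} to a scalar companion $\mu$ and then dealt with by Corollary \ref{inductioncase1} and Corollaries \ref{abeliannonintegral}/\ref{abelianintegral}. The paper packages these steps into intermediate statements --- the unnamed lemma on dominant-integral restrictions, Lemma \ref{Sgscalar} (the scalar case), and Proposition \ref{Sgcase1} ($|\Delta\setminus\Phi_{\lambda}|\leq 1$) --- before assembling them, whereas you run the same reductions inline, organized by the membership of $\alpha_{1},\alpha_{2}$ in $\Delta_{\lambda}$; the content is the same. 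Two small remarks. First, the restriction $k\neq m-1$ that you worry about appears only in the hypotheses of Corollary \ref{inductioncor1}; Corollary \ref{inductioncase1} as stated has no such restriction, because its proof already absorbs the $\alpha_{m-1}\leftrightarrow\alpha_{m}$ diagram symmetry, so your extra appeal to that automorphism at the level of \ref{inductioncase1} is redundant rather than necessary. Second, in your case $\alpha_{1},\alpha_{2}\notin\Delta_{\lambda}$ you hedge about $\lambda'=\lambda|_{\mathfrak{h}_{\Delta\setminus\alpha_{1},K}}$ possibly being dominant integral, but this cannot occur there since $\lambda'(h_{\alpha_{2}})=\lambda(h_{\alpha_{2}})\notin\mathbb{N}_{0}$. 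Neither point affects the validity of your argument.
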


\noindent We will proceed case-by-case. The general approach will be by induction on $m$: note that $\Delta \setminus \alpha_{1}$ spans a root system of type $D_{m-1}$ (with the labelling of the simple roots being shifted by 1 compared with our usual labelling). We can see that if $\lambda \in S_{\mathfrak{g}}$ such that $\lambda |_{\mathfrak{h}_{\Delta \setminus \alpha_{1},K}}$ is not dominant integral, then $\lambda |_{\mathfrak{h}_{\Delta \setminus \alpha_{1},K}} \in S_{\mathfrak{g}_{\Delta \setminus \alpha_{1}}}$.

\begin{mylem}
Suppose $\Phi$ has type $D_{m}$ with $m \geq 3$ and $\lambda \in S_{\mathfrak{g}}$ such that $\lambda |_{\mathfrak{h}_{\Delta \setminus \alpha_{1},K}}$ is dominant integral. Then $\lambda$ satisfies condition (1).
\end{mylem}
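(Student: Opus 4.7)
The plan is to reduce this lemma directly to the abelian-nilpotent results of \S \ref{abeliansection}. The hypothesis that $\lambda|_{\mathfrak{h}_{\Delta \setminus \alpha_{1},K}}$ is dominant integral gives $\lambda(h_{\alpha}) \in \mathbb{N}_{0}$ for all $\alpha \in \Delta \setminus \alpha_{1}$, and the hypothesis that $\lambda$ itself is not dominant integral then forces $\lambda(h_{\alpha_{1}}) \notin \mathbb{N}_{0}$. Moreover, by inspection of the positive roots of $D_{m}$ listed in \S \ref{Liealgsec}, every $\alpha \in \Phi^{+}$ has $\alpha_{1}$-coefficient either $0$ or $1$, so the standing assumption of \S \ref{abeliansection} is satisfied at $\alpha_{1}$.

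I split into two cases according to whether $\lambda$ is integral. If $\lambda$ is not integral, then $\lambda(h_{\alpha_{1}}) \notin \mathbb{Z}$, while $\lambda(h_{\alpha}) \in \mathbb{N}_{0}$ for $\alpha \in \Delta \setminus \alpha_{1}$. Since type $D_{m}$ is simply laced, Corollary \ref{abeliannonintegral} applies verbatim and yields condition (2) for $\lambda$.

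If $\lambda$ is integral, then $\lambda(h_{\alpha_{1}}) \in \mathbb{Z}$. The condition $\langle \lambda + \rho, \alpha_{1}^{\lor} \rangle \notin \mathbb{Z}_{<0}$ built into $S_{\mathfrak{g}}$ rules out $\lambda(h_{\alpha_{1}}) < -1$, while non-dominance rules out $\lambda(h_{\alpha_{1}}) \geq 0$, forcing $\lambda(h_{\alpha_{1}}) = -1$. To apply Corollary \ref{abelianintegral} it remains to check $\langle \lambda + \rho, \beta^{\lor} \rangle > 0$ for every $\beta \in \Phi^{+} \setminus \alpha_{1}$. Writing $\beta = \sum_{\alpha \in \Delta} c_{\alpha} \alpha$, simple lacedness gives $\langle \lambda + \rho, \beta^{\lor} \rangle = \sum_{\alpha} c_{\alpha}(\lambda(h_{\alpha}) + 1)$; the $\alpha_{1}$-term vanishes, and the remaining terms are non-negative with at least one being strictly positive (since $\beta \neq \alpha_{1}$ implies some $c_{\alpha} > 0$ with $\alpha \neq \alpha_{1}$, and $\lambda(h_{\alpha}) + 1 \geq 1$ there). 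Hence Corollary \ref{abelianintegral} yields condition (2) for $\lambda$.

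In both cases condition (2) holds, which is stronger than condition (1), so $\lambda$ satisfies condition (1). There is no substantive obstacle: the lemma is essentially a bookkeeping exercise matching the hypotheses packaged into $S_{\mathfrak{g}}$ against the two corollaries of the abelian-nilpotent machinery, with the dichotomy integral/non-integral being the only case split needed.
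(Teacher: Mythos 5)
Your proof is correct and takes essentially the same approach as the paper: the same observation that $\Delta_\lambda = \Delta \setminus \alpha_1$, the same dichotomy on whether $\lambda(h_{\alpha_1})$ is integral, and the same appeals to Corollaries \ref{abeliannonintegral} and \ref{abelianintegral}. You merely spell out a bit more explicitly the verification that $\langle \lambda + \rho, \beta^{\lor} \rangle > 0$ for $\beta \in \Phi^+ \setminus \alpha_1$ in the integral case, which the paper leaves to the reader.
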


\begin{proof}
In this case, we must have $\alpha_{1} \notin \Delta_{\lambda}$ (since $\lambda$ is not dominant integral by the definition of $S_{\mathfrak{g}}$). So $\Delta_{\lambda} = \Delta \setminus \alpha_{1}$.

If $\lambda(h_{\alpha_{1}}) \notin \mathbb{Z}$: then Corollary \ref{abeliannonintegral} implies that $\lambda$ satisfies condition (2) and hence also condition (1).

If $\lambda(h_{\alpha_{1}}) \in \mathbb{Z}$: by definition of $S_{\mathfrak{g}}$, we must have $\lambda(h_{\alpha_{1}}) = -1$. Then $\langle \lambda + \rho, \alpha^{\lor} \rangle > 0$ for all $\alpha \in \Phi^{+} \setminus \alpha_{1}$, and Corollary \ref{abelianintegral} implies that $\lambda$ satisfies condition (2) and hence also condition (1).
\end{proof}

\begin{mylem}
\label{Sgscalar}
Suppose $\Phi$ has type $D_{m}$ with $m \geq 3$ and $\lambda \in S_{\mathfrak{g}}$ such that $M_{\Delta_{\lambda}}(\lambda)$ is a scalar generalised Verma module. Then $\lambda$ satisfies condition (1).
\end{mylem}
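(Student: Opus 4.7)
My plan is to induct on $m \geq 3$. For the base case $m = 3$, I would invoke Proposition \ref{A3case}: the hypothesis that $\lambda$ is not dominant integral and the connectedness of $\Delta \setminus \Phi_{\lambda}$ both follow from $\lambda \in S_{\mathfrak{g}}$ (the latter since $\Delta \setminus \Phi_{\lambda}$ is either empty or a connected subset containing $\alpha_{1}$, which is the central node in the $D_{3} = A_{3}$ labelling), while the condition $\lambda(h_{\alpha}) \notin \mathbb{Z}_{<-1}$ unpacks from $\langle \lambda + \rho, \alpha^{\lor} \rangle \notin \mathbb{Z}_{<0}$.

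For the inductive step $m \geq 4$, I would split into two cases based on whether some $\alpha_{k}$ with $k > 1$ fails to lie in $\Delta_{\lambda}$. If $\alpha_{k} \in \Delta_{\lambda}$ for every $k > 1$, then the scalar hypothesis forces $\lambda(h_{\alpha_{k}}) = 0$ for all such $k$, and since $\lambda$ is not dominant integral we must have $\lambda(h_{\alpha_{1}}) \notin \mathbb{N}_{0}$. Since every positive root in type $D_{m}$ has $\alpha_{1}$-coefficient $0$ or $1$, Proposition \ref{abeliancase} directly yields condition (2), which is stronger than condition (1).

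Otherwise, some $\alpha_{k} \notin \Delta_{\lambda}$ with $k > 1$, and Corollary \ref{inductioncase1} reduces the problem to verifying condition (1) for the restriction $\lambda' := \lambda|_{\mathfrak{h}_{\Delta \setminus \alpha_{1}, K}}$ on $\mathfrak{g}_{\Delta \setminus \alpha_{1}}$, which has type $D_{m-1}$. If $\lambda'$ is dominant integral for $\mathfrak{g}_{\Delta \setminus \alpha_{1}}$, then $\mathfrak{n}_{\Delta_{\lambda'}}$ vanishes and condition (1) holds trivially. If $\lambda'$ is not dominant integral, I would verify that $\lambda' \in S_{\mathfrak{g}_{\Delta \setminus \alpha_{1}}}$ (with $\alpha_{2}$ now playing the role of the distinguished first simple root in the $D_{m-1}$ labelling) and that $M_{\Delta_{\lambda'}}(\lambda')$ remains scalar. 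Scalarness is immediate since $\Delta_{\lambda'} \subseteq \Delta_{\lambda}$; the condition $\langle \lambda' + \rho', \alpha^{\lor} \rangle \notin \mathbb{Z}_{<0}$ for $\alpha \in \Delta \setminus \alpha_{1}$ follows from the same condition on $\lambda$; and the connectedness of $\Delta' \setminus \Phi_{\lambda'} = (\Delta \setminus \Phi_{\lambda}) \setminus \alpha_{1}$ holds because $\alpha_{1}$ is a leaf of the $D_{m}$ Dynkin diagram whose unique neighbor is $\alpha_{2}$, so removing it from a connected subset containing $\alpha_{1}$ leaves a subset that is either empty or a connected subset containing $\alpha_{2}$. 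The inductive hypothesis then delivers condition (1) for $\lambda'$.

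The main obstacle I expect is the bookkeeping in the inductive step: specifically, confirming that each defining condition of $S_{\mathfrak{g}}$ is preserved under the restriction $\lambda \mapsto \lambda'$ under the natural identification of $\mathfrak{g}_{\Delta \setminus \alpha_{1}}$ with a $D_{m-1}$ Lie algebra. This is the point at which the very particular choice of $\alpha_{1}$ as a leaf of the Dynkin diagram with a unique neighbor is essential, and it is what makes the chain $D_{m} \supset D_{m-1} \supset \cdots \supset D_{3} = A_{3}$ behave well for induction.
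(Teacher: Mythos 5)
Your proof is correct and follows essentially the same inductive structure as the paper's: base case $m = 3$ via Proposition \ref{A3case}, and for $m \geq 4$ a split on whether $\lambda' = \lambda|_{\mathfrak{h}_{\Delta\setminus\alpha_1,K}}$ is dominant integral, with Corollary \ref{inductioncase1} plus the inductive hypothesis handling the non-dominant-integral case. The only minor deviation is that you exploit the scalar hypothesis to invoke Proposition \ref{abeliancase} directly when $\lambda'$ is dominant integral, whereas the paper routes through its preceding lemma (built on Corollaries \ref{abelianintegral} and \ref{abeliannonintegral}), which also covers the non-scalar case; both are valid, and note that the sub-case ``$\lambda'$ dominant integral'' inside your second branch is vacuous once some $\alpha_k \notin \Delta_\lambda$ with $k>1$.
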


\begin{proof}
By induction on $m$. The base case $m = 3$ follows from Proposition \ref{A3case}. So now assume $m > 3$, and assume true for $m-1$.

We have seen that if $\lambda' = \lambda |_{\mathfrak{h}_{\Delta \setminus \alpha_{1},K}} $ is dominant integral, then $\lambda$ satisfies condition (1). So assume not.

Then $\lambda' \in S_{\mathfrak{g}_{\Delta \setminus \alpha_{1}}}$, and also $\lambda'(h_{\alpha}) = 0$ whenever $\alpha \in \Delta \setminus \alpha_{1}$ with $\lambda'(h_{\alpha}) \in \mathbb{N}_{0}$ (i.e. $\lambda'$ also satisfies the scalar condition). So by the induction hypothesis, $\lambda'$ satisfies condition (1). By Corollary \ref{inductioncase1}, $\lambda$ satisfies condition (1).
\end{proof}

\noindent The approach now will be to show that if $|\Delta \setminus \Phi_{\lambda}| \leq 1$ we can use translation functors to reduce to the scalar case, and otherwise we can apply Corollary \ref{inductioncase2} to reduce to the $m-1$ case and use induction.

\begin{myprop}
\label{Sgcase1}
Suppose $\Phi$ has type $D_{m}$ with $m \geq 3$ and $\lambda \in S_{\mathfrak{g}}$ such that $|\Delta \setminus \Phi_{\lambda}| \leq 1$. Then $\lambda$ satisfies condition (1).
\end{myprop}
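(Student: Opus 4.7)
The plan is to reduce to the scalar case handled by Lemma \ref{Sgscalar} via the translation functor result Proposition \ref{translationfunctorsprop}. The hypothesis $|\Delta \setminus \Phi_{\lambda}| \leq 1$ leaves two possibilities: either $\lambda$ is integral (Case A, with $\Delta \setminus \Phi_{\lambda} = \emptyset$), or $\alpha_{1}$ is the unique simple root with $\lambda(h_{\alpha_{1}}) \notin \mathbb{Z}$ (Case B). In both cases the third defining condition of $S_{\mathfrak{g}}$, namely $\langle \lambda + \rho, \alpha^{\lor} \rangle \notin \mathbb{Z}_{<0}$, forces every $\alpha \in \Delta \setminus \Delta_{\lambda}$ with $\alpha \neq \alpha_{1}$ to satisfy $\lambda(h_{\alpha}) = -1$.

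My candidate scalar weight $\mu$ would be defined by $\mu(h_{\alpha}) = 0$ for $\alpha \in \Delta_{\lambda}$ and $\mu(h_{\alpha}) = \lambda(h_{\alpha})$ otherwise. A direct check then gives $\mu(p^{n}\mathfrak{h}_{R}) \subseteq R$, $\Delta_{\mu} = \Delta_{\lambda}$ with $\mu$ vanishing on $\Delta_{\mu}$ (so $M_{\Delta_{\mu}}(\mu)$ is scalar), $\Phi_{\mu} = \Phi_{\lambda}$, $\mu$ not dominant integral, and $\mu(h_{\alpha}) \geq -1$ for all $\alpha \in \Delta$; hence $\mu \in S_{\mathfrak{g}}$. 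By Lemma \ref{Sgscalar}, condition (1) holds for $\mu$, and $\lambda - \mu = \sum_{\alpha \in \Delta_{\lambda}} \lambda(h_{\alpha}) \omega_{\alpha}$ is visibly dominant integral.

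The main obstacle is verifying that $\lambda$ and $\mu$ lie in the same facet, the remaining hypothesis of Proposition \ref{translationfunctorsprop}. For any $\beta = \sum c_{\alpha} \alpha \in \Phi^{+}$, simple-lacedness of $D_{m}$ gives $\langle \lambda + \rho, \beta^{\lor} \rangle = \sum c_{\alpha}(1 + \lambda(h_{\alpha}))$, and similarly for $\mu$. If $c_{\alpha_{1}} > 0$ in Case B then both values are non-integral (since they differ from $\lambda(h_{\alpha_{1}})$ and $\mu(h_{\alpha_{1}}) = \lambda(h_{\alpha_{1}})$ only by integers), so $\beta$ enters none of $\Phi_{\lambda}^{+}, \Phi_{\lambda}^{0}, \Phi_{\lambda}^{-}$ or their $\mu$-counterparts. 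Otherwise each summand is a non-negative integer, the total is non-negative, and it vanishes precisely when $\lambda(h_{\alpha}) = -1$ for every $\alpha$ with $c_{\alpha} > 0$; since $\mu(h_{\alpha}) = -1 \Leftrightarrow \lambda(h_{\alpha}) = -1$ for $\alpha \neq \alpha_{1}$, the same vanishing condition governs $\mu$. Hence $\Phi_{\lambda}^{+} = \Phi_{\mu}^{+}$, $\Phi_{\lambda}^{0} = \Phi_{\mu}^{0}$, and $\Phi_{\lambda}^{-} = \Phi_{\mu}^{-} = \emptyset$, so $\lambda$ and $\mu$ occupy the same facet. Proposition \ref{translationfunctorsprop} then transfers condition (1) from $\mu$ to $\lambda$, completing the proof.
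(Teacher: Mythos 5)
Your proof is correct and follows the same approach as the paper: you define the identical auxiliary weight $\mu$ (zero on $\Delta_{\lambda}$, equal to $\lambda$ elsewhere), invoke Lemma \ref{Sgscalar} for $\mu$, and transfer condition (1) to $\lambda$ via Proposition \ref{translationfunctorsprop}. The only difference is that you supply the detailed verification that $\lambda$ and $\mu$ lie in the same facet, which the paper asserts more briefly; the paper also opens by disposing of $m=3$ via Proposition \ref{A3case}, but your uniform argument covers that case as well since Lemma \ref{Sgscalar} already handles $m \geq 3$.
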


\begin{proof}

First, note that the $m = 3 $ case follows from Proposition \ref{A3case}, so assume $m \geq 4$.

\bigskip

\noindent  If $|\Delta \setminus \Phi_{\lambda}| = 1$: this means $\langle \lambda + \rho, \alpha_{1}^{\lor} \rangle \notin \mathbb{Z}$, and $\langle \lambda + \rho, \alpha_{i}^{\lor} \rangle \in \mathbb{N}_{0}$ for $i > 1$. 

Now suppose $\alpha \in \Phi^{+}$, and say $\alpha = \sum c_{i} \alpha_{i}$. We know that $c_{i} \in \{0,1 \}$. Then since $\Phi$ is simply laced, we have $\langle \lambda + \rho, \alpha^{\lor} \rangle = \sum c_{i} \langle \lambda + \rho, \alpha_{i}^{\lor} \rangle$. We can see that if $c_{1} = 1$, then $\langle \lambda + \rho, \alpha^{\lor} \rangle \notin \mathbb{Z}$.

Let $I = \Delta \setminus ( \{ \alpha_{1} \} \cup \Delta_{\lambda})$. So $\langle \lambda + \rho, \alpha_{i}^{\lor} \rangle = 0$ whenever $\alpha_{i} \in I$. We can calculate the facet of $\lambda$ as $\Phi_{\lambda}^{+} = \Phi^{+}_{\Delta \setminus \alpha_{1}} \setminus \Phi_{I}^{+}$, $\Phi_{\lambda}^{0} = \Phi_{I}$, $\Phi_{\lambda}^{-} = \emptyset$.

Define $\mu \in \mathfrak{h}_{K}^{*}$ by $\mu(h_{\alpha_{i}}) = 0$ if $\alpha_{i} \in \Delta_{\lambda}$, and $\mu(h_{\alpha_{i}}) = \lambda(h_{\alpha_{i}}) $ otherwise. Then $\lambda - \mu$ is a dominant integral weight, $\Phi_{\lambda} = \Phi_{\mu}$,
 and $\lambda, \mu$ are in the same facet.
 
 Now $\mu \in S_{\mathfrak{g}}$ and $M_{\Delta_{\mu}}(\mu)$ is scalar. So $\mu$ satisfies condition (1) by Lemma \ref{Sgscalar}. Then $\lambda$ satisfies condition (1) by Proposition \ref{translationfunctorsprop}.
 
 \bigskip
 
\noindent If $|\Delta \setminus \Phi_{\lambda}| = 0$: this means $\lambda$ is integral. We take a similar approach to the $|\Delta \setminus \Phi_{\lambda}| = 1$ case. By definition of $S_{\mathfrak{g}}$ we have $\langle \lambda + \rho, \alpha_{i}^{\lor} \rangle = 0$ whenever $\alpha_{i} \notin \Delta_{\lambda}$. We can then describe the facet of $\lambda$ by $\Phi_{\lambda}^{+} = \Phi^{+} \setminus \Phi_{\Delta \setminus \Delta_{\lambda}}^{+}$, $\Phi_{\lambda}^{0} = \Phi_{\Delta \setminus \Delta_{\lambda}}^{+}$, and $\Phi_{\lambda}^{-} = 0$.

Define $\mu \in \mathfrak{h}_{K}^{*}$ by $\mu(h_{\alpha_{i}}) = 0$ if $\alpha_{i} \in \Delta_{\lambda}$, and $\mu(h_{\alpha_{i}}) = \lambda(h_{\alpha_{i}}) $ otherwise. Then $\lambda - \mu$ is a dominant integral weight, $\mu$ is integral, and $\lambda, \mu$ are in the same facet.

Again, $\mu \in S_{\mathfrak{g}}$ and $M_{\Delta_{\mu}}(\mu)$ is scalar. So $\mu$ satisfies condition (1) by Lemma \ref{Sgscalar}. Then $\lambda$ satisfies condition (1) by Proposition \ref{translationfunctorsprop}.

\end{proof}

\noindent We can now finish the proof of Proposition \ref{Sgprop}.

\begin{proof}[Proof of Proposition \ref{Sgprop}]
By induction on $m$. The case $m = 3$ follows from Proposition \ref{A3case}.

So now let $m \geq 4$, and assume true for $m-1$. If $|\Delta \setminus \Phi_{\lambda} | \leq 1$, then true by Proposition \ref{Sgcase1}. So assume $|\Delta \setminus \Phi_{\lambda} | \geq 2$.

By definition of $S_{\mathfrak{g}}$ we then have $\alpha_{1},\alpha_{2} \notin \Delta_{\lambda}$. Note that $\lambda |_{\mathfrak{h}_{\Delta \setminus \alpha_{1},K}} \in S_{\mathfrak{g}_{\Delta \setminus \alpha_{1}}}$, so by the induction hypothesis $\lambda |_{\mathfrak{h}_{\Delta \setminus \alpha_{1},K}}$ satisfies condition (1). Then by Corollary \ref{inductioncase1}, $\lambda$ satisfies condition (1).
\end{proof}

\begin{mycor}
\label{nonregintcor}
Suppose $\Phi$ has type $D_{m}$ with $m \geq 3$, and $\lambda \in \mathfrak{h}_{K}^{*}$ such that $\lambda(p^{n} \mathfrak{h}_{R}) \subseteq R$. If $\lambda$ is not a regular integral weight, then $\widehat{L(\lambda)}$ is faithful as a $KG$-module.
\end{mycor}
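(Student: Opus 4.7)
The proof is now a straightforward assembly of the machinery developed in this section, so I expect no real obstacle. The plan is to produce a weight $\lambda' \in S_{\mathfrak{g}}$ obtained from $\lambda$ by a controlled sequence of dot-action reflections, deduce faithfulness for $\widehat{L(\lambda')}$ via Condition~(1), and then transfer faithfulness back to $\lambda$ using the annihilator inclusions provided by Proposition \ref{simplerefsprop}.

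Since $\lambda$ is not a regular integral weight, Proposition \ref{reducetoSg} produces simple roots $\beta_{1}, \dots, \beta_{r} \in \Delta$ such that $(s_{\beta_{i-1}} \cdots s_{\beta_{1}} \cdot \lambda)(h_{\beta_{i}}) \notin \mathbb{N}_{0}$ for every $1 \leq i \leq r$, and such that $\lambda' := s_{\beta_{r}} \cdots s_{\beta_{1}} \cdot \lambda$ lies in $S_{\mathfrak{g}}$. Writing $\mu_{i} := s_{\beta_{i}} \cdots s_{\beta_{1}} \cdot \lambda$ (with $\mu_{0} = \lambda$ and $\mu_{r} = \lambda'$), I would first check that the integrality condition $\mu_{i}(p^{n} \mathfrak{h}_{R}) \subseteq R$ is preserved along the chain. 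This is immediate from the formula $s_{\alpha} \cdot \mu = \mu - (\mu(h_{\alpha}) + 1)\alpha$ together with $p^{n}(\mu(h_{\alpha}) + 1) = \mu(p^{n} h_{\alpha}) + p^{n} \in R$, so each $\mu_{i}$ lies in the domain of Proposition \ref{simplerefsprop}.

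By Proposition \ref{Sgprop} the weight $\lambda'$ satisfies Condition~(1), and the assumption $p \geq 5$ supplies the Cartan-determinant hypothesis of Theorem \ref{condition1application}, so $\widehat{L(\lambda')}$ is a faithful $KG$-module. Applying Proposition \ref{simplerefsprop} iteratively to each pair $(\mu_{i-1}, \beta_{i})$ yields the chain of annihilator inclusions
\begin{equation*}
\Ann_{\Ug} \widehat{L(\lambda)} \subseteq \Ann_{\Ug} \widehat{L(\mu_{1})} \subseteq \cdots \subseteq \Ann_{\Ug} \widehat{L(\lambda')}.
\end{equation*}
Intersecting with the subalgebra $KG \subseteq \Ug$ gives $\Ann_{KG} \widehat{L(\lambda)} \subseteq \Ann_{KG} \widehat{L(\lambda')} = 0$, and hence $\widehat{L(\lambda)}$ is a faithful $KG$-module, completing the proof.
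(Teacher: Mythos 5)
Your proof is correct and follows the same route as the paper: obtain $\lambda' \in S_{\mathfrak{g}}$ from $\lambda$ via Proposition \ref{reducetoSg}, deduce Condition~(1) for $\lambda'$ from Proposition \ref{Sgprop}, and transfer faithfulness back along the chain using Proposition \ref{simplerefsprop}. The only difference is that you fill in two routine details the paper leaves implicit --- verifying that $\mu_{i}(p^{n}\mathfrak{h}_{R}) \subseteq R$ is preserved along the chain, and spelling out the intersection with $KG$ that converts the $\Ug$-annihilator inclusion into a $KG$-faithfulness statement --- both of which are worth having.
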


\begin{proof}
By Proposition \ref{reducetoSg}, there exist $\beta_{1},\dots,\beta_{r} \in \Delta$ such that $s_{\beta_{r}} \dots s_{\beta_{1}} \cdot \lambda \in S_{\mathfrak{g}}$, and such that $(s_{\beta_{i-1}} \dots s_{\beta_{1}} \cdot \lambda)(h_{\beta_{i}}) \notin \mathbb{N}_{0}$ for each $1 \leq i \leq r$. Write $\lambda' = s_{\beta_{r}} \dots s_{\beta_{1}} \cdot \lambda$.

By repeated application of Proposition \ref{simplerefsprop}, we see that $\Ann_{\Ug} \widehat{L(\lambda)} \subseteq \Ann_{\Ug} \widehat{L(\lambda')}$. By Proposition \ref{Sgprop}, $\lambda'$ satisfies condition (1). This means $\widehat{L(\lambda')}$ is faithful over $KG$ (as seen in \S \ref{outline}), which implies $\widehat{L(\lambda)}$ is faithful over $KG$.
\end{proof}

\noindent So now we need to consider the case of regular integral weights.

\begin{mylem}
\label{integralreductionlem}
If $\Phi$ is any root system and $\lambda \in \mathfrak{h}_{K}^{*}$ is a regular integral weight that is not dominant integral, then there exist $\beta_{1},\dots,\beta_{r} \in \Delta$ such that $(s_{\beta_{i-1}} \dots s_{\beta_{1}} \cdot \lambda)(h_{\beta_{i}}) < -1$ for all $i$, and $ s_{\beta_{r}} \dots s_{\beta_{1}} \cdot \lambda$ is dominant integral.

In particular, there exists a weight $\lambda'$ of the form $\lambda' = s_{\alpha} \cdot \mu$ with $\alpha \in \Delta$, $\mu$ dominant integral, such that $\Ann_{\Ug} \widehat{L(\lambda)} \subseteq \Ann_{\Ug} \widehat{L(\lambda')}$.
\end{mylem}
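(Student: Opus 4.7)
The plan is to build the sequence $\beta_1,\dots,\beta_r$ greedily, using the fact that regular integrality of $\lambda$ prevents any value $\lambda(h_\beta)$ from equalling $-1$. The key observation is that for $\beta \in \Delta$ one has $\langle \rho, \beta^\lor\rangle = 1$, so $\langle \lambda+\rho,\beta^\lor\rangle = \lambda(h_\beta)+1 \in \mathbb{Z}\setminus\{0\}$; hence whenever $\lambda(h_\beta) \notin \mathbb{N}_0$ we automatically have $\lambda(h_\beta) \leq -2 < -1$. Moreover, regular integrality is preserved by the dot action, since $\langle w\cdot\lambda+\rho,\alpha^\lor\rangle = \langle \lambda+\rho, w^{-1}(\alpha)^\lor\rangle$ and $w^{-1}$ permutes $\Phi$.

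Next, I would run the following algorithm. Set $\lambda_0 = \lambda$. Given $\lambda_{i-1}$: if it is dominant integral, stop; otherwise pick any $\beta_i \in \Delta$ with $\lambda_{i-1}(h_{\beta_i}) \notin \mathbb{N}_0$ and set $\lambda_i = s_{\beta_i}\cdot\lambda_{i-1}$. By the preceding paragraph, $\lambda_{i-1}(h_{\beta_i}) < -1$, so from the explicit formula $s_{\beta_i}\cdot\lambda_{i-1} = \lambda_{i-1} - (\lambda_{i-1}(h_{\beta_i})+1)\beta_i$ we read off that $\lambda_i = \lambda_{i-1} + c\beta_i$ with $c \in \mathbb{Z}_{>0}$. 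Thus $\lambda_i > \lambda_{i-1}$ strictly in the usual partial order on weights.

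Termination then follows because all $\lambda_i$ lie in the finite $W$-orbit $W\cdot\lambda$, and a strictly increasing chain in a finite set must be finite. The algorithm therefore halts at some $\lambda_r$, which by construction is dominant integral, and the chosen $\beta_1,\dots,\beta_r$ satisfy $(s_{\beta_{i-1}}\dots s_{\beta_1}\cdot\lambda)(h_{\beta_i}) < -1$ for every $i$, as required.

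For the final ``in particular'' assertion, note that since $\lambda$ is not dominant integral we have $r \geq 1$. Take $\mu = \lambda_r$ (dominant integral) and $\lambda' = \lambda_{r-1}$; using $s_{\beta_r}^2 = 1$ we have $\lambda' = s_{\beta_r}\cdot\mu$ with $\alpha := \beta_r \in \Delta$. Apply Proposition \ref{simplerefsprop} inductively at steps $i = 1,\dots,r-1$: the hypothesis $\lambda_{i-1}(h_{\beta_i}) \notin \mathbb{N}_0$ holds by construction, and $\lambda_{i-1}(p^n\mathfrak{h}_R) \subseteq R$ is automatic since $\lambda_{i-1}$ is integral. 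This yields the chain $\Ann_{\Ug}\widehat{L(\lambda)} \subseteq \Ann_{\Ug}\widehat{L(\lambda_1)} \subseteq \dots \subseteq \Ann_{\Ug}\widehat{L(\lambda_{r-1})} = \Ann_{\Ug}\widehat{L(\lambda')}$, concluding the argument. There is no serious obstacle: the only point requiring care is the strict jump $\lambda(h_\beta) \leq -2$ forced by regularity, which is what makes the dot-action step genuinely increasing and guarantees the greedy procedure terminates at a dominant integral weight.
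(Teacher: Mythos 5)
Your proof is correct and follows essentially the same route as the paper's: the same greedy choice of $\beta_i$, the same observation that regularity forces $\lambda_{i-1}(h_{\beta_i}) \leq -2$ so the dot-reflection strictly increases the weight, the same finite-orbit termination argument, and the same reading-off of the "in particular" conclusion via Proposition \ref{simplerefsprop}. You have also usefully spelled out that regularity is invariant under the dot action and that integrality automatically gives $\lambda(p^n\mathfrak{h}_R)\subseteq R$, details the paper leaves implicit.
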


\begin{proof}
Choose $\beta_{1},\beta_{2},\dots$ inductively as follows. Assume we have already chosen $\beta_{1},\dots,\beta_{i-1}$, where $i \geq 1$.

If $s_{\beta_{i-1}} \dots s_{\beta_{1}} \cdot \lambda$ is dominant integral, stop. Otherwise, choose $\beta_{i} \in \Delta$ such that $(s_{\beta_{i-1}} \dots s_{\beta_{1}} \cdot \lambda)(h_{\beta_{i}}) < 0$, so then $(s_{\beta_{i-1}} \dots s_{\beta_{1}} \cdot \lambda)(h_{\beta_{i}}) < -1$ since $\lambda$ is regular.

Then $s_{\beta_{i}} \dots s_{\beta_{1}} \cdot \lambda > s_{\beta_{i-1}} \dots s_{\beta_{1}} \cdot \lambda$ in the partial ordering on weights. As the orbit of $\lambda$ under the dot action of $W$ is finite, this process must terminate. This proves the first part.

Now if we have $\beta_{1},\dots,\beta_{r}$ as above such that $s_{\beta_{r}} \dots s_{\beta_{1}} \cdot \lambda$ is dominant integral: take $\lambda' = s_{\beta_{r-1}} \dots s_{\beta_{1}} \cdot \lambda$. Then $\Ann_{\Ug} \widehat{L(\lambda)} \subseteq \Ann_{\Ug} \widehat{L(\lambda')}$ by repeated application of Proposition \ref{simplerefsprop}, and $s_{\beta_{r}} \cdot \lambda'$ is dominant integral.
\end{proof}

\noindent In light of this Lemma, we only need to consider regular integral weights of the form $s_{\alpha_{i}} \cdot \mu$ with $\mu$ dominant integral.

\begin{myprop}
\label{regintprop}
Suppose $\Phi$ has type $D_{m}$ with $m \geq 3$, and suppose $\lambda = s_{\alpha_{i}} \cdot \mu$ where $1 \leq i \leq m$ and $\mu$ is a dominant integral weight. Then $\lambda$ satisfies condition (1).
\end{myprop}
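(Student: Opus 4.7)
The plan is to proceed by induction on $m$, splitting the inductive step according to whether $\alpha_i$ is a ``boundary'' node of the Dynkin diagram ($i \in \{1, m-1, m\}$) or an ``interior'' node ($i \in \{2, \dots, m-2\}$). First I dispose of the trivial subcase $\mu(h_{\alpha_i}) = 0$, in which $\lambda = \mu$ is dominant integral, $\mathfrak{n}_\lambda = 0$, and condition (1) is automatic, so I will assume $\mu(h_{\alpha_i}) > 0$. A direct calculation then shows that $\lambda$ lies in the facet with $\Phi_\lambda^+ = \Phi^+ \setminus \{\alpha_i\}$, $\Phi_\lambda^- = \{\alpha_i\}$, $\Phi_\lambda^0 = \emptyset$, and moreover that \emph{all} weights of the form $s_{\alpha_i} \cdot \mu'$ with $\mu'$ dominant integral and $\mu'(h_{\alpha_i}) > 0$ share this same facet. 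This uniformity is what will license the translation-functor reduction in the interior case. The base case $m = 3$, where $D_3 = A_3$, is handled uniformly for all $i$: every positive root has coefficient $0$ or $1$ in every simple root, so Proposition \ref{abeliantranslationfunctorprop} applied to $\lambda$, together with Proposition \ref{abeliancase} applied to $-\omega_i$, yields condition (2) and hence condition (1).

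For the inductive step $m \geq 4$ and $i \in \{1, m-1, m\}$, inspection of the positive roots of $D_m$ listed in \S\ref{Liealgsec} shows that $\alpha_i$ still satisfies the property that every positive root has $\alpha_i$-coefficient $0$ or $1$. The same pair of propositions (Proposition \ref{abeliantranslationfunctorprop} with $\alpha_i$ playing the role of its ``$\alpha_1$'', combined with Proposition \ref{abeliancase} applied to $-\omega_i$) then delivers condition (2), and hence condition (1), for $\lambda$.

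The main obstacle is the interior case $i \in \{2, \dots, m-2\}$, because $\alpha_i$ appears with coefficient $2$ in the roots $\gamma_{k,j}$ with $j \leq i$, and so the abelian argument is unavailable. My plan here is a two-step reduction: use a translation functor to pass to a weight at which Corollary \ref{inductioncase3} applies, then invoke the induction hypothesis on the restriction to $\Delta \setminus \alpha_1$. Specifically, I define $\mu_0 \in \mathfrak{h}_K^*$ by $\mu_0(h_{\alpha_\ell}) = 0$ for $\ell < i-1$ and $\mu_0(h_{\alpha_\ell}) = \mu(h_{\alpha_\ell})$ for $\ell \geq i-1$; this $\mu_0$ is dominant integral with $\mu_0(h_{\alpha_i}) > 0$, so $\lambda_0 := s_{\alpha_i} \cdot \mu_0$ lies in the same facet as $\lambda$. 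Because each $\omega_\ell$ with $\ell \neq i$ is fixed by $s_{\alpha_i}$, one has
\begin{equation*}
\lambda - \lambda_0 = s_{\alpha_i}(\mu - \mu_0) = \mu - \mu_0 = \sum_{\ell < i-1} \mu(h_{\alpha_\ell}) \omega_\ell,
\end{equation*}
which is dominant integral. Proposition \ref{translationfunctorsprop} then reduces the problem to condition (1) for $\lambda_0$. Now $\lambda_0(h_{\alpha_\ell}) = 0$ for $\ell < i-1$, $\lambda_0(h_{\alpha_{i-1}}) \geq 1 \in \mathbb{N}_0$, and $\lambda_0(h_{\alpha_i}) < -1 \notin \mathbb{N}_0$, so Corollary \ref{inductioncase3} applies with $k = i - 1 < m-2$, reducing further to condition (1) for $\lambda_0|_{\mathfrak{h}_{\Delta \setminus \alpha_1, K}}$ in the $D_{m-1}$ subsystem (which is $D_3 = A_3$ when $m = 4$). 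A short check that restriction commutes with the dot action (using that $\rho^{D_m}$ and $\rho^{D_{m-1}}$ both evaluate to $1$ on every $h_{\alpha_\ell}$ with $\ell \geq 2$) identifies this restriction with $s_{\alpha_i} \cdot (\mu_0|_{\mathfrak{h}_{\Delta \setminus \alpha_1, K}})$, whose dot-action argument is dominant integral in the subsystem. The induction hypothesis on $D_{m-1}$ (or the $m = 3$ base case when $m = 4$) then finishes the proof.
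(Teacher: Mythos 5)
Your proof takes essentially the same route as the paper: dispatch the boundary cases $i \in \{1, m-1, m\}$ (and the base case $m=3$) via Proposition \ref{abeliancase} and Proposition \ref{abeliantranslationfunctorprop} (the paper packages this as Corollary \ref{abelianintegral}), and reduce the interior case $2 \leq i \leq m-2$ by a translation functor to a weight meeting the hypotheses of Corollary \ref{inductioncase3}, then induct. Your choice of $\mu_0$ (retain the values of $\mu$ at $\alpha_\ell$ for $\ell \geq i-1$) differs from the paper's $\mu' = a\omega_i$ (zero out everything except $\alpha_i$), but both produce a weight in the same facet as $\lambda$ with $\lambda - \mu_0$ (resp.\ $\lambda - \mu'$) dominant integral and with $\lambda_0(h_{\alpha_\ell}) = 0$ for $\ell < i-1$, so the rest is the same.

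There is one incorrect step, though it turns out to be harmless. You ``dispose'' of the subcase $\mu(h_{\alpha_i}) = 0$ by asserting $\lambda = \mu$; but the statement uses the \emph{dot} action, so $s_{\alpha_i} \cdot \mu = \mu - (\mu(h_{\alpha_i}) + 1)\alpha_i$, which when $\mu(h_{\alpha_i}) = 0$ gives $\lambda = \mu - \alpha_i$ with $\lambda(h_{\alpha_i}) = -2 \notin \mathbb{N}_0$, so $\lambda$ is \emph{not} dominant integral and the subcase is not trivial. The fix is simply to delete the disposal: every subsequent step of your argument (the facet computation, $\lambda_0(h_{\alpha_{i-1}}) = \mu(h_{\alpha_{i-1}}) + a + 1 \geq 1$, $\lambda_0(h_{\alpha_i}) = -a - 2 < -1$) holds verbatim when $a = \mu(h_{\alpha_i}) = 0$, and the paper makes no such case split. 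Relatedly, your claim that the ``same facet'' statement needs $\mu'(h_{\alpha_i}) > 0$ is also unnecessary; it holds for all dominant integral $\mu'$ since $\langle s_{\alpha_i}\cdot\mu' + \rho, \alpha_i^\lor\rangle = -(\mu'(h_{\alpha_i}) + 1) \leq -1$.
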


\begin{proof}

By induction on $m$. First, consider the base case $m = 3$. In this case, any simple root $\alpha_{i} \in \Delta$ has the condition that all $\alpha \in \Phi^{+}$ have $\alpha_{i}$-coefficient $0$ or $1$. So Corollary \ref{abelianintegral} implies that $\lambda$ satisfies condition (2), and hence condition (1). So now let $m \geq 4$, and assume true for $m-1$.

If $i \in \{ 1, m-1,m \}$ then all $\alpha \in \Phi^{+}$ have $\alpha_{i}$-coefficient $0$ or $1$ (by inspection of the positive roots). So in these cases, Corollary \ref{abelianintegral} implies that $\lambda$ satisfies condition (2) and hence condition (1).

So now assume $2 \leq i \leq m-2$.

\bigskip

\noindent Define $\mu' \in \mathfrak{h}_{K}^{*}$ by $\mu'(h_{\alpha_{i}}) = \mu(h_{\alpha_{i}}), \mu'(h_{\alpha_{j}}) = 0$ for $j \neq i$, so $\mu'$ is a dominant integral weight. Moreover $\mu - \mu'$ is dominant integral. Define $\lambda' = s_{\alpha_{i}} \cdot \mu'$. Then $\lambda, \lambda'$ are in the same facet.

Moreover, $\lambda - \lambda' = s_{\alpha_{i}} \cdot \mu - s_{\alpha_{i}} \cdot \mu' = s_{\alpha_{i}}(\mu -\mu') = \mu - \mu'$ since $\mu(h_{\alpha_{i}}) = \mu'(h_{\alpha_{i}})$, and so $\lambda - \lambda'$ is dominant integral.

Write $a = \mu(h_{\alpha_{i}}) \in \mathbb{N}_{0}$, then we can write $\mu' = a \omega_{i}$ where $\omega_{i}$ is the fundamental weight corresponding to $\alpha_{i}$. We can then see that $\lambda' = a \omega_{i} - (a+1) \alpha_{i}$, and calculate that $\lambda'(h_{\alpha_{j}}) =
\begin{cases}
0, &  j < i-1, \\
a+1, &  j = i-1, \\
-a-2, &  j = i.
\end{cases}
$

\noindent In particular, $\lambda'$ meets the conditions for Corollary \ref{inductioncase3} (with $k = i-1$). Now $\lambda'|_{\mathfrak{h}_{\Delta \setminus \alpha_{1},K}} = s_{\alpha_{i}} \cdot \mu'|_{\mathfrak{h}_{\Delta \setminus \alpha_{1},K}}$, so the induction hypothesis implies that $\lambda'|_{\mathfrak{h}_{\Delta \setminus \alpha_{1},K}}$ satisfies condition (1). Then Corollary \ref{inductioncase3} implies that $\lambda'$ satisfies condition (1).

Finally, Proposition \ref{translationfunctorsprop} implies that $\lambda$ satisfies condition (1).
\end{proof}

\begin{mycor}
\label{regintcor}
If $\Phi$ has type $D_{m}$ with $m \geq 3$ and $\lambda \in \mathfrak{h}_{K}^{*}$ is a regular integral weight that is not dominant integral, then $\widehat{L(\lambda)}$ is a faithful $KG$-module.
\end{mycor}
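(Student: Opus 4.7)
The plan is to combine Lemma \ref{integralreductionlem} with Proposition \ref{regintprop}, using Theorem \ref{condition1application} to pass from condition (1) to faithfulness over $KG$.

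First, since $\lambda$ is regular integral but not dominant integral, Lemma \ref{integralreductionlem} produces a weight $\lambda' = s_\alpha \cdot \mu$ for some $\alpha \in \Delta$ and some dominant integral $\mu$, such that $\Ann_{\Ug} \widehat{L(\lambda)} \subseteq \Ann_{\Ug} \widehat{L(\lambda')}$. Note that the lemma implicitly uses that the dot-orbit of $\lambda$ under $W$ is finite together with repeated application of Proposition \ref{simplerefsprop}, so the inclusion of annihilators is already available.

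Next, Proposition \ref{regintprop} applies directly to $\lambda'$ (which has exactly the form $s_{\alpha_i} \cdot \mu$ required), and tells us that $\lambda'$ satisfies condition (1). Since we are assuming $p \geq 5$, the hypothesis on Cartan determinants in Theorem \ref{condition1application} is satisfied (as is already noted at the start of the current subsection), so $\widehat{L(\lambda')}$ is a faithful $KG$-module, i.e.\ $\Ann_{KG}\widehat{L(\lambda')} = 0$.

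Finally, because the embedding $KG \hookrightarrow \Ug$ identifies $\Ann_{KG} \widehat{L(\lambda)}$ with $\Ann_{\Ug} \widehat{L(\lambda)} \cap KG$, the inclusion $\Ann_{\Ug} \widehat{L(\lambda)} \subseteq \Ann_{\Ug} \widehat{L(\lambda')}$ gives $\Ann_{KG} \widehat{L(\lambda)} \subseteq \Ann_{KG} \widehat{L(\lambda')} = 0$, whence $\widehat{L(\lambda)}$ is faithful over $KG$. There is no real obstacle here: everything hard was done in Proposition \ref{regintprop} (the induction on $m$ handling the $2 \leq i \leq m-2$ case via Corollary \ref{inductioncase3}, with the boundary cases $i \in \{1, m-1, m\}$ handled by the abelian-nilpotent machinery of Corollary \ref{abelianintegral}); the present corollary is just the packaging step.
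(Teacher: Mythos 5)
Your proof is correct and takes essentially the same approach as the paper: apply Lemma \ref{integralreductionlem} to reduce to a weight $\lambda' = s_{\alpha_i}\cdot\mu$ with controlled annihilator, invoke Proposition \ref{regintprop} for condition (1), and pass to faithfulness over $KG$. The extra unwinding of the annihilator inclusion is fine but not needed beyond what the paper states.
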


\begin{proof}
By Lemma \ref{integralreductionlem}, there exists $\lambda'$ of the form $\lambda' = s_{\alpha_{i}} \cdot \mu$ for some $i$ and for $\mu$ dominant integral, such that $\Ann_{\Ug} \widehat{L(\lambda)} \subseteq \Ann_{\Ug} \widehat{L(\lambda')}$.

By Proposition \ref{regintprop}, $\lambda'$ satisfies condition (1). So $\widehat{L(\lambda')}$ is faithful over $KG$, hence so is $\widehat{L(\lambda)}$.
\end{proof}

\begin{proof}[Proof of Theorem \ref{maintheorem}]
By Corollary \ref{nonregintcor} and Corollary \ref{regintcor}, for any $\lambda \in \mathfrak{h}_{K}^{*}$ such that $\lambda(p^{n} \mathfrak{h}_{R}) \subseteq R$ and $\lambda$ is not dominant integral, $\widehat{L(\lambda)}$ is a faithful $KG$-module.

By Lemma \ref{irreducibleslem}, any infinite-dimensional affinoid highest-weight module has a subquotient isomorphic to some such $\widehat{L(\lambda)}$, and therefore must be a faithful $KG$-module.

\end{proof}

\noindent Theorem \ref{maintheoremapplication} now follows from Theorem \ref{maintheorem}, using the proof of \cite[Theorem 5.0.1]{me}. The statement of \cite[Theorem 5.0.1]{me} uses the assumption that faithfulness of $\widehat{L(\lambda)}$ holds for any $K$ which is a completely discretely valued field extension of $F$, not necessarily finite. However, the proof only uses the faithfulness result for a finite field extension $K' / K$, and so this proof still works in our setting.

\end{document}